\let\old@setaddresses\@setaddresses
\def\@setaddresses{\bigskip{\parindent 0pt\let\scshape\relax\let\ttfamily\relax\old@setaddresses}}
\newcommand{\tikzrect}[1]{}
\newcommand{\tikzgener}[1]{}
\tikzstyle{wall}=[line width=1pt]
\definecolor{p1col1}{rgb}{0.266122, 0.486664, 0.802529}
\definecolor{p1col2}{rgb}{0.513417, 0.72992, 0.440682}
\definecolor{p1col3}{rgb}{0.863512, 0.670771, 0.236564}
\definecolor{p1col4}{rgb}{0.857359, 0.131106, 0.132128}
\definecolor{p1col5}{rgb}{0.52734, 0.24218, 0.24218}
\definecolor{p1col6}{rgb}{0.45, 0.45, 0.45}
\definecolor{p2col1}{rgb}{0.264135, 0.201429, 0.745889}
\definecolor{p2col2}{rgb}{0.256326, 0.430921, 0.808553}
\definecolor{p2col3}{rgb}{0.324106, 0.60897, 0.708341}
\definecolor{p2col4}{rgb}{0.439128, 0.704968, 0.52925}
\definecolor{p2col5}{rgb}{0.597181, 0.742185, 0.36771}
\definecolor{p2col6}{rgb}{0.764712, 0.728302, 0.273608}
\definecolor{p2col7}{rgb}{0.88018, 0.631684, 0.227665}
\definecolor{p2col8}{rgb}{0.897354, 0.41824, 0.185007}
\definecolor{p2col9}{rgb}{0.857359, 0.131106, 0.132128}
\newtheorem{theorem}{Theorem}
\newtheorem{lemma}[theorem]{Lemma}
\theoremstyle{remark}
\newtheorem{remark}[theorem]{Remark}
\theoremstyle{definition}
\title{Combinatorial generation via permutation languages. \\ III. Rectangulations}
\author{Arturo Merino}
\address[Arturo Merino]{Department of Mathematics, TU Berlin, Germany}
\email{merino@math.tu-berlin.de}
\author{Torsten M\"utze}
\address[Torsten M\"utze]{Department of Computer Science, University of Warwick, United Kingdom}
\email{torsten.mutze@warwick.ac.uk}
\thanks{An extended abstract of this paper appeared in the Proceedings of SoCG~2021.
Arturo Merino was supported by ANID Becas Chile 2019-72200522. Torsten M\"utze is also affiliated with the Faculty of Mathematics and Physics, Charles University Prague, Czech Republic, and he was supported by Czech Science Foundation grant GA~19-08554S. Both authors were supported by German Science Foundation grant~413902284.}
\begin{document}

\begin{abstract}
A generic rectangulation is a partition of a rectangle into finitely many interior-disjoint rectangles, such that no four rectangles meet in a point.
In this work we present a versatile algorithmic framework for exhaustively generating a large variety of different classes of generic rectangulations.
Our algorithms work under very mild assumptions, and apply to a large number of rectangulation classes known from the literature, such as generic rectangulations, diagonal rectangulations, 1-sided/area-universal, block-aligned rectangulations, and their guillotine variants, including aspect-ratio-universal rectangulations.
They also apply to classes of rectangulations that are characterized by avoiding certain patterns, and in this work we initiate a systematic investigation of pattern avoidance in rectangulations.
Our generation algorithms are efficient, in some cases even loopless or constant amortized time, i.e., each new rectangulation is generated in constant time in the worst case or on average, respectively.
Moreover, the Gray codes we obtain are cyclic, and sometimes provably optimal, in the sense that they correspond to a Hamilton cycle on the skeleton of an underlying polytope.
These results are obtained by encoding rectangulations as permutations, and by applying our recently developed permutation language framework.
\end{abstract}

\keywords{Exhaustive generation, Gray code, flip graph, polytope, generic rectangulation, diagonal rectangulation, cartogram, floorplan, permutation pattern}

\maketitle

\section{Introduction}
\label{sec:intro}

Partitioning a geometric shape into smaller shapes is a fundamental theme in discrete and combinatorial geometry.
In this paper we consider \emph{rectangulations}, i.e., partitions of a rectangle into finitely many interior-disjoint rectangles.
Such partitions have an abundance of practical applications, which motivates their combinatorial and algorithmic study.
For example, rectangulations are an appealing way to represent geographic information as a cartogram.
This is a map where each country is represented as a rectangle, the adjacencies between rectangles correspond to those between countries, and the areas of the rectangles are determined by some geographic variable, such as population size~\cite{MR2331063}.
If the rectangulation is \emph{area-universal}~\cite{MR2967467} or \emph{aspect-ratio-universal}~\cite{felsner_nathenson_toth_2021}, respectively, then such an adjacency-preserving cartogram can be drawn for any assignment of area values or aspect ratios to the rectangles.
Another important use of rectangulations is as floorplans in VLSI design and architectural design.
These problems often involve additional constraints on top of adjacency, such as extra space for wires~\cite{DBLP:conf/dac/Otten82} or proportion limits for the rooms~\cite{mitchell_steadman_liggett_1976}.
An important notion in this context are \emph{slicing} floorplans~\cite{DBLP:conf/dac/Otten82}, also known as \emph{guillotine} floorplans, i.e., floorplans that can be subdivided into their constituent rectangles by a sequence of straight vertical or horizontal cuts.

Rectangulations have rich combinatorial properties, and a task that has received a lot of attention is counting, i.e., determining the number of rectangulations of a particular type with $n$ rectangles, either exactly as a function of~$n$~\cite{DBLP:journals/todaes/YaoCCG03} or asymptotically as $n$ grows~\cite{shen_chu_2003}.
This led to several beautiful bijections of rectangulations with pattern-avoiding permutations~\cite{MR2233287,MR2864445,MR3084577} or with twin binary trees~\cite{DBLP:journals/todaes/YaoCCG03}.
The focus of this paper is on another fundamental algorithmic task, which is more fine-grained than counting, namely exhaustive generation, meaning that every rectangulation from a given class must be produced exactly once.
While such generation algorithms are known for many other discrete objects such as permutations, combinations, subsets, trees etc.\ and covered in standard textbooks such as Knuth's~\cite{MR3444818}, much less is known about the generation of geometric objects such as rectangulations.

The ultimate goal for a generation algorithm is to produce each new object in time~$\cO(1)$, which requires that consecutively generated objects differ only by a `small local change'.
Such a minimum change listing of combinatorial objects is often called a \emph{Gray code}~\cite{MR1491049}.
If the time bound~$\cO(1)$ for producing the next object holds in every step, then the algorithm is called \emph{loopless}~\cite{MR0366085}, and if it holds on average it is called \emph{constant amortized time} (CAT)~\cite{MR3523863}.
The Gray code problem entails the definition of a \emph{flip graph}, which has as nodes all the combinatorial objects to be generated, and an edge between any two objects that differ in the specified small way.
Clearly, computing a Gray code ordering of the objects is equivalent to traversing a Hamilton path or cycle in the corresponding flip graph.
It turns out that some interesting flip graphs arising from rectangulations can be equipped with a natural lattice structure~\cite{meehan_2019}, analogous to the Tamari lattice on triangulations, and realized as polytopes in high-dimensional space~\cite{MR2871762}, analogous to the associahedron (see \cite{padrol_pilaud_ritter_2021} for generalizations).
This ties in the Gray code problem with deep methods and results from lattice and polytope theory.

\subsection{Our results}
\label{sec:results}

The main contribution of this paper is a versatile algorithmic framework for generating a large variety of different classes of generic rectangulations, i.e., rectangulations with the property that no four rectangles meet in a point.
In particular, we obtain efficient generation algorithms for several interesting classes known from the literature, in some cases loopless or CAT algorithms; see Table~\ref{tab:rect}.

The initialization time and memory requirement for all these algorithms is linear in the number of rectangles.
The classes of rectangulations shown in the table arise from generic rectangulations by imposing structural constraints, such as the guillotine property or forbidden configurations, or by equivalence relations, and they will be defined in Section~\ref{sec:flips}.
We implemented the algorithms generating the classes of rectangulations from the table in C++, and we made the code available for download and experimentation on the Combinatorial Object Server~\cite{cos_rect}.

\begin{table}[t!]
\small
\caption{Classes of rectangulations that can be generated by our algorithms.
The second column gives a description of the class in terms of forbidden rectangulation patterns (n/a means not applicable), and one or more bijectively equivalent classes of pattern-avoiding permutations.
Underlined permutation patterns are so-called vincular patterns; see~\cite{perm_series_i} and the papers referenced in the table.
The last column specifies the obtained runtime bound for generating each rectangulation, where $n$ is the number of rectangles.
These are all worst case bounds that apply in every step (in particular, LL=loopless), with the exception of the $\cO(1)$~bound for generic rectangulations, which holds on average (CAT=constant amortized time).
For more extensive counting results on pattern-avoiding rectangulations, see Section~\ref{sec:counting}.}
\label{tab:rect}
\renewcommand{\arraystretch}{1.1}
\setlength\tabcolsep{3pt}
\hspace{-22mm}
\parbox{15cm}{
\begin{tabular}{|p{3mm}:p{22mm}|p{46mm}|p{53mm}|p{20mm}|p{16mm}|}
\hline
\multicolumn{2}{|l|}{\textbf{Class}} & \textbf{Forbidden patterns} & \textbf{Counts/OEIS} \cite{oeis} & \textbf{Refs.} & \textbf{Runtime} \\ \hline
\multicolumn{2}{|l|}{generic} & $\emptyset$ \newline $\{3\ul{51}24,3\ul{51}42,24\ul{51}3,42\ul{51}3\}$: \newline 2-clumped permutations
         & $1,2,6,24,116,642,3938,26194,\ldots$ \newline \href{https://oeis.org/A342141}{A342141} & \cite{MR2864445,meehan_2019} & $\cO(1)$ CAT \newline Thm.~\ref{thm:next-gen} \\ \hline
\multicolumn{2}{|l|}{\parbox[t]{23mm}{diagonal \newline =mosaic floorpl. \newline /R-equivalence}} & \raisebox{-3.5pt}{$\big\{\zvu,\zhr\big\}$} \newline $\{2\ul{41}3,3\ul{14}2\}$: Baxter \newline $\{2\ul{41}3,3\ul{41}2\}$: twisted Baxter \newline $\{2\ul{14}3,3\ul{14}2\}$
         & $1,2,6,22,92,422,2074,10754,\ldots$ \newline \href{https://oeis.org/A001181}{A001181} (Baxter numbers) & \cite{DBLP:journals/todaes/YaoCCG03,MR2233287,MR2871762,MR3878132} & $\cO(1)$ LL \newline Thm.~\ref{thm:next-diag} \\ \hline
\multicolumn{2}{|l|}{\parbox[t]{26mm}{1-sided \newline =area-universal}} & \raisebox{-3.5pt}{$\big\{\zvu,\zhr,\zvd,\zhl\big\}$} \newline $\{2\ul{41}3,3\ul{14}2,2\ul{14}3,3\ul{41}2\}$ & $1,2,6,20,72,274,1088,4470,\ldots$ & \cite{MR2967467,leifheit_2021} & $\cO(n)$ \newline Thm.~\ref{thm:nextjumpP-RD} \\ \hline
\multicolumn{2}{|l|}{\parbox[t]{23mm}{block-aligned \\ /S-equivalence}} & n/a \newline $\{2\ul{14}3,3\ul{41}2\}$
         & $1,1,2,6,22,88,374,1668,7744,\ldots$ \newline \href{https://oeis.org/A214358}{A214358} & \cite{MR3084577} & $\cO(1)$ LL \newline Thm.~\ref{thm:next-block} \\ \hline\hline
\multirow{4}{*}{\rotatebox{90}{guillotine\hspace{25mm}}} & generic & \raisebox{-3.5pt}{$\big\{\millr,\milll\big\}$} \newline & $1,2,6,24,114,606,3494,21434,\ldots$ & &  $\cO(n)$ \newline Thm.~\ref{thm:nextjumpP-RD} \\ \cline{2-6}
 & diagonal \newline =slicing fl.pl. \newline /R-equiv. & \raisebox{-3.5pt}{$\big\{\millr,\milll,\zvu,\zhr\big\}$} \newline $\{2413,3142\}$: separable
           & $1,2,6,22,90,394,1806,8558,\ldots$ \newline \href{https://oeis.org/A006318}{A006318} (Schr\"oder numbers) & \cite{DBLP:journals/todaes/YaoCCG03,MR2233287,MR2601798,MR3084577} & $\cO(n)$ \newline Thm.~\ref{thm:nextjumpP-RD} \\ \cline{2-6}
 & 1-sided \newline =aspect-ratio-universal & \raisebox{-3.5pt}{$\big\{\millr,\milll,\zvu,\zhr,\zvd,\zhl\big\}$} \newline $\{2413,3142,2\ul{14}3,3\ul{41}2\}$
           & $1,2,6,20,70,254,948,3618,\ldots$ \newline \href{https://oeis.org/A078482}{A078482} & \cite{MR2601798,felsner_nathenson_toth_2021,leifheit_2021} & $\cO(n)$ \newline Thm.~\ref{thm:nextjumpP-RD} \\ \cline{2-6}
 & & \raisebox{-3.5pt}{$\big\{\millr,\milll,\zvu,\zhr,\zvd,\zhl,$} \newline \raisebox{-3.5pt}{$\hvert,\hhor\big\}$} \newline $\{2413,3142,2143,3412\}$
           & $1,2,6,20,68,232,792,2704,\ldots$ \newline \href{https://oeis.org/A006012}{A006012} & \cite{MR2601798} & $\cO(n^2)$ \newline Thm.~\ref{thm:nextjumpP-RD} \\ \cline{2-6}
 & block-aligned \newline /S-equiv. & n/a \newline $\{2413,3142,2\ul{14}3,3\ul{41}2\}$
           & $1,1,2,6,20,70,254,948,3618,\ldots$ \newline \href{https://oeis.org/A078482}{A078482} & \cite{MR3084577} & $\cO(n)$ \newline Thm.~\ref{thm:nextjumpP-S} \\ \hline
\end{tabular}
}
\end{table}

The classes of rectangulations that our algorithms can generate are not limited to the examples shown in Table~\ref{tab:rect}, but can be described by the following \emph{closure property}; see Figure~\ref{fig:closure}.
Given an infinite class of rectangulations~$\cC$, we require that if a rectangulation~$R$ is contained in~$\cC$, then the rectangulation obtained from~$R$ by deleting the bottom-right rectangle is also in~$\cC$, and the two rectangulations obtained from~$R$ by inserting a new rectangle at the bottom or right, respectively, are also in~$\cC$ (formal definitions of deletion and insertion are given in Section~\ref{sec:prelim}).
If $\cC$ satisfies this property, then our algorithms allow generating the set $\cC_n\seq\cC$ of all rectangulations from~$\cC$ with exactly~$n$ rectangles, for every $n\geq 1$, by so-called \emph{jumps}, a minimum change operation that generalizes simple flips, T-flips, and wall slides studied in~\cite{MR2864445,MR3878132} (the formal definition of jumps is in Section~\ref{sec:jumps}).
Moreover, if the class~$\cC$ is symmetric, i.e., if $R$ is in~$\cC$ then the rectangulation obtained from~$R$ by reflection at the diagonal from top-left to bottom-right is also in~$\cC$, then the jump Gray code for~$\cC_n$ is cyclic, i.e., the last rectangulation differs from the first one only by a jump.
In other words, we not only obtain a Hamilton path in the corresponding flip graph, but a Hamilton cycle.
In fact, all the classes of rectangulations listed in Table~\ref{tab:rect} satisfy the aforementioned closure and symmetry properties, so in all those cases we obtain cyclic jump Gray codes.

Generic rectangulations and diagonal rectangulations, shown in the first two rows of Table~\ref{tab:rect}, have an underlying lattice and polytope structure~\cite{MR2871762,meehan_2019}, and in those two cases our Gray codes form a Hamilton cycle on the skeleton of this polytope, i.e., jumps are provably optimal minimum change operations.
The Gray codes for these two rectangulation classes with $n=1,\ldots,5$ rectangles are shown in the appendix.

\begin{wrapfigure}{r}{0.6\textwidth}
\centering
\includegraphics{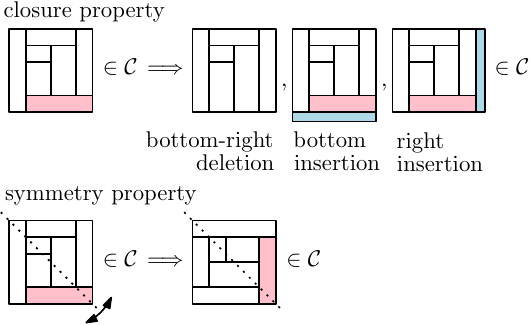}
\caption{Closure property and symmetry property.}
\label{fig:closure}
\end{wrapfigure}
It turns out that many interesting classes of rectangulations can be characterized by pattern avoidance; see the second column in Table~\ref{tab:rect}.
Under very mild conditions on the patterns, these classes satisfy the aforementioned closure property, and can hence be generated by our framework.
In this work we initiate a systematic investigation of pattern avoidance in rectangulations, and we obtain the first counting results for many known and new classes; see the third column in Table~\ref{tab:rect} and the more extensive tables in Section~\ref{sec:counting}.

Our generation framework for rectangulations consists of two main algorithms.
The first is a simple greedy algorithm that generates a jump Gray code ordering for any set of rectangulations~$\cC_n\seq\cC$ for which~$\cC$ satisfies the aforementioned closure property; see Algorithm~\Jrect{} and Theorem~\ref{thm:jump-rect} in Section~\ref{sec:algo}.
The second is a memoryless version of the first algorithm, which computes the same ordering of rectangulations; see Algorithm~\Mrect{} and Theorem~\ref{thm:algo-rect} in Section~\ref{sec:efficient}.
This algorithm can be fine-tuned to derive efficient algorithms for several known rectangulation classes such as the ones listed in Table~\ref{tab:rect}, by providing corresponding jump oracles for the class~$\cC$.

To prove Theorems~\ref{thm:jump-rect} and~\ref{thm:algo-rect}, we encode rectangulations by permutations as described by Reading~\cite{MR2864445}, and we then apply our framework for exhaustively generating permutation languages presented in~\cite{DBLP:conf/soda/HartungHMW20,perm_series_i,perm_series_ii}.
The minimum change operations on permutations used in that framework translate to jumps on rectangulations.
Generating different classes of rectangulations efficiently is thus another major new application of our permutation language framework, and in this paper we flesh out the details of this application.

\subsection{Related work}

There has been some prior work on generating a few special classes of rectangulations, all based on Avis and Fukuda's reverse search method~\cite{MR1380066}.
Specifically, Nakano~\cite{MR1917735} described a CAT generation algorithm for generic rectangulations, which does not produce a Gray code, however.
This algorithm has been adapted by Takagi and Nakano~\cite{DBLP:journals/scjapan/TakagiN04} to generate generic rectangulations with bounds on the number of rectangles that do not touch the outer face.
Yoshii, Chigira, Yamanaka and Nakano~\cite{DBLP:journals/ieicet/YoshiiCYN06} gave a Gray code for generic rectangulations based on a generating tree that is different from ours, resulting in a loopless algorithm.
Their Gray code changes at most 3 edges of the rectangulation in each step, whereas our algorithm changes only 1~edge in each step for generic and for diagonal rectangulations.
Consequently, none of the listings produced by these earlier algorithms corresponds to a walk along the skeleton of the underlying polytope.

There has been a lot of work on combinatorial properties of rectangulations.
Yao, Chen, Cheng and Graham~\cite{DBLP:journals/todaes/YaoCCG03} showed that diagonal rectangulations are counted by the Baxter numbers and that guillotine diagonal rectangulations are counted by the Schr\"oder numbers, using a bijection between diagonal rectangulations and twin binary trees.
Ackerman, Barequet and Pinter~\cite{MR2233287} presented another bijection between diagonal rectangulations and Baxter permutations, which also yields a bijection between guillotine diagonal rectangulations and separable permutations.
Leifheit~\cite{leifheit_2021} showed that this bijection can be restricted to the 1-sided variants of these two rectangulation classes by adding two permutation patterns; see Table~\ref{tab:rect}.
Shen and Chu~\cite{shen_chu_2003} provided asymptotic estimates for diagonal rectangulations and their guillotine variant.
Moreover, He~\cite{MR3192492} presented an optimal encoding of diagonal rectangulations with $n$ rectangles using only $3n-3$ bits, which is optimal.

The term `generic rectangulation' was coined by Reading~\cite{MR2864445}, who established a bijection between generic rectangulations and 2-clumped permutations, proving that these permutations are representatives of equivalence classes of a lattice congruence of the weak order on the symmetric group.
Earlier, generic rectangulations had been studied under the name `rectangular drawings' by Amano, Nakano and Yamanaka~\cite{amano_nakano_yamanaka_2007} and by Inoue, Takahashi and Fujimaki~\cite{fujimaki_inoue_takahashi_2009,DBLP:journals/ieicet/InoueTF09}, who established recursion formulas and asymptotic bounds for their number.
More general classes of rectangular partitions were analyzed by Conant and Michaels~\cite{MR3167602}.

Ackerman, Barequet and Pinter~\cite{MR2244135} considered the setting where we are given a set of $n$ points in general position in a rectangle, and the goal is to partition the rectangle into smaller rectangles by $n$ walls, such that each point from the set lies on a distinct wall.
They showed that for every set of points that forms a separable permutation in the plane, the number of possible rectangulations is the $(n+1)$st Baxter number, and for every point set the number of possible guillotine rectangulations is the $n$th Schr\"oder number.
They also presented a counting and generation procedure based on simple flips and T-flips using reverse search, which was later improved by Yamanaka, Rahman and Nakano~\cite{DBLP:journals/ieicet/YamanakaRN18}.

\subsection{Outline of this paper}

In Section~\ref{sec:prelim} we provide basic definitions and concepts that will be used throughout the paper.
In Section~\ref{sec:algo} we present a greedy algorithm for generating a set of rectangulations by jumps, and we provide a sufficient condition for the algorithm to succeed.
In Section~\ref{sec:avoidance} we show that the algorithm applies to a large number of rectangulation classes that are characterized by pattern avoidance.
In Section~\ref{sec:efficient} we demonstrate how to make our generation algorithm memoryless and efficient.
The implementation details for our algorithms are provided in Sections~\ref{sec:implementation} and~\ref{sec:oracles}.
The proofs of Theorems~\ref{thm:jump-rect} and~\ref{thm:algo-rect} are presented in Section~\ref{sec:proofs}, by establishing a connection between rectangulations and permutations and by applying our permutation language framework.
The results for one special class of rectangulations mentioned in Table~\ref{tab:rect} are deferred to Section~\ref{sec:Sequiv}.
In Section~\ref{sec:counting} we report on our computer experiments about counting pattern-avoiding rectangulations.
We conclude the paper with some interesting open questions in Section~\ref{sec:open}.
Several visualizations of Gray codes produced by our algorithms are shown in the appendix.

\section{Preliminaries}
\label{sec:prelim}

\subsection{Generic rectangulations}

A \emph{generic rectangulation}, or rectangulation for short, is a partition of a rectangle into finitely many interior-disjoint axis-aligned rectangles, such that no four rectangles of the partition have a point in common; see Figure~\ref{fig:rect}.
In other words, every point where three rectangles meet, or where two rectangles meet the outer face forms a T-joint with the incident rectangle boundaries.
Given rectangles $r$ and~$s$, we say that $r$ is \emph{left} of~$s$, and $s$ is \emph{right} of~$r$, if the right side of~$r$ intersects the left side of~$s$ (necessarily in a line segment, rather than a single point).
Similarly, we say that $r$ is \emph{below}~$s$, and $s$ is \emph{above}~$r$, if the top side of~$r$ intersects the bottom side of~$s$.
We consider generic rectangulations up to equivalence that preserves the left/right and below/above relations between rectangles, and we use $\cR_n$, $n\geq 1$, to denote the set of all rectangulations with $n$ rectangles.
We write $\square$ for the unique rectangulation in~$\cR_1$, i.e., the rectangulation consisting of a single rectangle.

\begin{wrapfigure}{r}{0.4\textwidth}
\centering
\includegraphics[page=1]{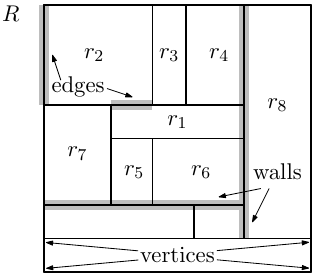}
\caption{Generic rectangulation~$R$ with 11 rectangles.
The rectangle~$r_1$ is below $r_2$, $r_3$ and $r_4$, above $r_5$ and~$r_6$, right of~$r_7$ and left of~$r_8$.}
\label{fig:rect}
\vspace{-8mm}
\end{wrapfigure}
We refer to every rectangle corner in a rectangulation as a \emph{vertex}, to every minimal line segment between two vertices as an \emph{edge}, and to every maximal line segment between two vertices that are not corners of the rectangulation as a \emph{wall}.
The \emph{type} of a vertex that is not a corner of the rectangulation describes the shape of the T-joint at this vertex, and it is one of $\bottomT$, $\rightT$, $\topT$, or $\leftT$.

\subsection{Flip operations and classes of rectangulations}
\label{sec:flips}

Our Gray codes use three types of local change operations on rectangulations; see Figure~\ref{fig:flips}.

\begin{wrapfigure}{r}{0.40\textwidth}
\centering
\vspace{-3mm}
\includegraphics[page=1]{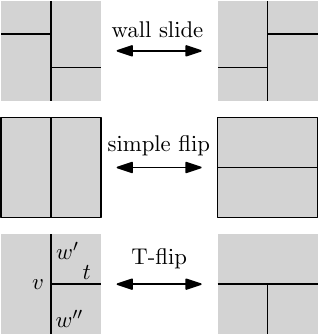}
\caption{Local change operations on rectangulations.}
\label{fig:flips}
\vspace{-1mm}
\end{wrapfigure}
A \emph{wall slide} swaps the order of two neighboring vertices of types~$\rightT$ and~$\leftT$ along a vertical wall, or of types~$\bottomT$ and~$\topT$ along a horizontal wall.
A \emph{simple flip} swaps the orientation of a wall that separates two rectangles.
Given a vertex~$v$ that belongs to three rectangles, we consider the wall~$w$ that goes through~$v$ and the wall~$t$ that ends at~$v$, and we let $w'$ and~$w''$ be the two halves of~$w$ meeting in~$v$.
If $w'$ or~$w''$ is an edge, respectively, then a \emph{T-flip} swaps the orientation of this edge so that it merges with~$t$.

We now define various interesting subclasses of generic rectangulations that have been studied in the literature and that appear in Table~\ref{tab:rect}.
Examples illustrating these classes are in Figure~\ref{fig:classes}.
A \emph{diagonal} rectangulation is one in which every rectangle intersects the \emph{main diagonal} that goes from the top-left to the bottom-right corner of the rectangulation.
We write $\cD_n\seq\cR_n$ for the set of all diagonal rectangulations with $n$ rectangles.
Diagonal rectangulations are characterized by avoiding the wall patterns~$\zvu$ and~$\zhr$~\cite{MR3878132}.
Consider the equivalence relation on~$\cR_n$ obtained from wall slides, sometimes referred to as \emph{R-equivalence}~\cite{MR3084577}.
The equivalence classes are referred to as \emph{mosaic floorplans}, and every equivalence class contains exactly one diagonal rectangulation, obtained by repeatedly destroying occurrences of~$\zvu$ or~$\zhr$ by wall slides~\cite{MR3878132}.
Consequently, in a diagonal rectangulation, along every vertical wall, all $\rightT$-vertices are below all $\leftT$-vertices, and along every horizontal wall, all $\topT$-vertices are to the left of all $\bottomT$-vertices.

In a \emph{1-sided} rectangulation, every wall is the side of at least one rectangle, i.e., these rectangulations are characterized by avoiding the four patterns~\zvu,\zhr,\zvd and~\zhl.
The notion of 1-sidedness was introduced by Eppstein, Mumford, Speckmann, and Verbeek~\cite{MR2967467} to characterize \emph{area-universal} rectangulations, i.e., for any assignment of areas to the rectangles, the rectangulation can be drawn so that each rectangle has the prescribed area.

Asinowski et al.~\cite{MR3084577} also considered the equivalence relation on~$\cR_n$ obtained from wall slides and simple flips, and they called it \emph{S-equivalence}.
By definition, S-equivalence is a coarser relation than R-equivalence, i.e., the equivalence classes are obtained by identifying mosaic floorplans that differ in simple flips.
In Section~\ref{sec:Sequiv} we introduce \emph{block-aligned} rectangulations, which are a subset of diagonal rectangulations with the property that every equivalence class of S-equivalence contains exactly one block-aligned rectangulation.

A rectangulation is \emph{guillotine}, if each of its rectangles can be cut out from the entire rectangulation by a sequence of straight vertical or horizontal cuts.
Guillotine rectangulations are characterized by avoiding the windmill patterns~$\millr$ and~$\milll$, which is a folklore result.
Various special classes of guillotine diagonal rectangulations, characterized by the avoidance of certain wall configurations, were introduced by Asinowski and Mansour~\cite{MR2601798} (see Section~\ref{sec:avoidance} for precise definitions of these configurations).
Mosaic floorplans that are guillotine are also known as \emph{slicing} floorplans.

Felsner, Nathenson, and T\'oth~\cite{felsner_nathenson_toth_2021} showed that 1-sided guillotine rectangulations are precisely the \emph{aspect-ratio-universal} rectangulations, i.e., for any assignment of aspect ratios to the rectangles, the rectangulation can be drawn so that each rectangle has the prescribed aspect ratio.

\begin{figure}
\includegraphics[page=2]{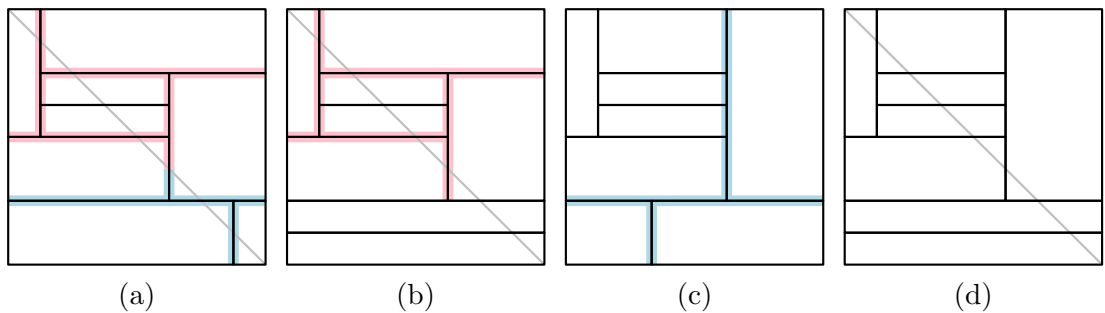}
\caption{Examples of different classes of rectangulations: (a) diagonal, but neither 1-sided nor guillotine (b) 1-sided, but not guillotine (c) guillotine, but not diagonal (d) guillotine and 1-sided.
Occurrences of the corresponding forbidden patterns are highlighted.}
\label{fig:classes}
\end{figure}

\begin{wrapfigure}{r}{0.35\textwidth}
\includegraphics{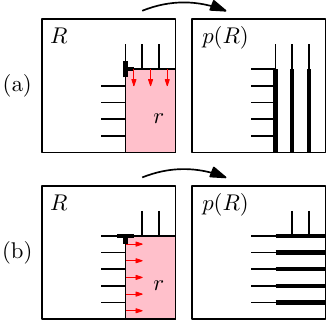}
\caption{Deletion operation.}
\label{fig:delete}
\end{wrapfigure}

\subsection{Deletion of rectangles}
\label{sec:deletion}

We now describe two operations on a generic rectangulation~$R$, namely deleting a rectangle and inserting a rectangle.
The resulting rectangulations will be denoted by~$p(R)$ and~$c_i(R)$, notations that refer to the parent and children of~$R$, in a tree structure that will be discussed shortly.
The deletion and insertion operations were introduced in~\cite{DBLP:conf/iccad/HongHCGDCG00} and heavily used e.g.\ in~\cite{MR2233287} and~\cite{MR1917735}.

The idea of deletion is to contract the rectangle in the bottom-right corner of the rectangulation.
Formally, given a rectangulation~$R\in\cR_n$, $n\geq 2$, we consider the rectangle~$r$ in the bottom-right corner, and we consider the top-left vertex of~$r$.
If this vertex has type~$\rightT$, then we collapse~$r$ by sliding its top side, which forms a wall, downwards until it merges with the bottom side of~$r$; see Figure~\ref{fig:delete}~(a).
Similarly, if this vertex has type~$\bottomT$, then we collapse~$r$ by sliding its left side, which forms a wall, to the right until it merges with the right side of~$r$; see Figure~\ref{fig:delete}~(b).
We denote the resulting rectangulation with $n-1$ rectangles by~$p(R)\in\cR_{n-1}$, and we say that $p(R)$ is obtained from~$R$ by \emph{deletion}.

\begin{figure}
\includegraphics{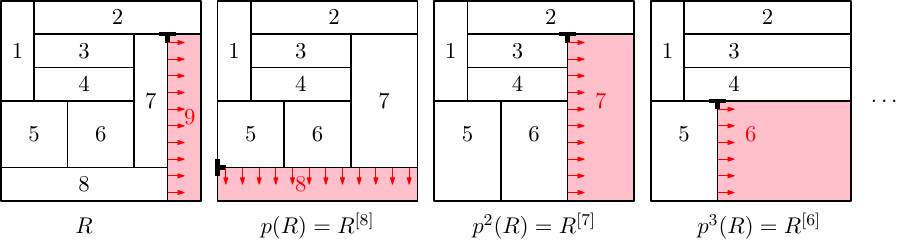}
\caption{A rectangulation and the indexing of its rectangles given by repeated deletion.}
\label{fig:numbering}
\end{figure}

Moreover, we denote the $n$ rectangles of~$R$ by $r_n,r_{n-1},\ldots,r_1$ in the order in which they are deleted when applying the deletion operation exhaustively; see Figure~\ref{fig:numbering}.
Clearly, if $r_i$ is deleted and its top-left vertex has type~$\rightT$, then the rightmost rectangle above~$r_i$ is~$r_{i-1}$.
Similarly, if the top-left vertex has type~$\bottomT$, then the lowest rectangle to the left of~$r_i$ is~$r_{i-1}$.

For any $R\in\cR_n$ and $i=1,\ldots,n$ we define $R^{[i]}:=p^{n-i}(R)$, i.e., this is the sub-rectangulation of~$R$ formed by the first $i$ rectangles; see Figure~\ref{fig:numbering}.

\subsection{Insertion of rectangles}
\label{sec:insertion}

The idea of insertion is to add a new rectangle into the bottom-right corner of the rectangulation.
Given a rectangulation~$R\in\cR_{n-1}$, we first define a set of points in~$R$ that can become the top-left corner of the newly added rectangle; see Figure~\ref{fig:insert-order}.

For any rectangle~$r$ in~$R\in\cR_{n-1}$, $n\geq 2$, that touches the bottom boundary of~$R$, we consider all edges forming the left side of~$r$, and from every such edge we select one interior point, and we refer to it as a \emph{vertical insertion point}.

\begin{wrapfigure}{r}{0.35\textwidth}
\centering
\vspace{-5mm}
\includegraphics[page=1]{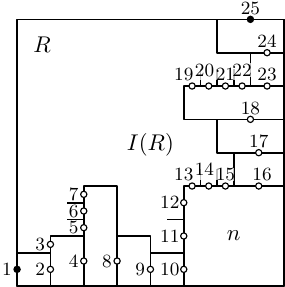}
\caption{Linear ordering of insertion points. First and last insertion point are filled.}
\vspace{-4mm}
\label{fig:insert-order}
\end{wrapfigure}
Similarly, for any rectangle~$r$ in~$R$ that touches the right boundary of~$R$, we consider the set of all edges forming the top side of~$r$, and from every such edge we select one interior point, and we refer to it as a \emph{horizontal insertion point}.
Combinatorially it does not make a difference which interior point of each edge is selected.

We order the insertion points linearly, by sorting all vertical insertion points lexicographically by their $(x,y)$-coordinates, followed by all horizontal insertion points sorted lexicographically by their $(y,x)$-coordinates; see Figure~\ref{fig:insert-order}.
We write $I(R)=(q_1,q_2,\ldots,q_\nu)$ for the sequence of all insertion points ordered in this linear order.
In particular, $\nu=\nu(R)$ denotes the number of insertion points.

\begin{lemma}
\label{lem:nu}
For any rectangulation~$R\in\cR_{n-1}$ we have $\nu(R)\leq n$.
\end{lemma}

\begin{proof}
Each rectangle in~$R$ has at most one vertical insertion point on its right side, and at most one horizontal insertion point on its bottom side.
Moreover, no rectangle has both, the bottom-right rectangle~$r_{n-1}$ has neither of the two, and exactly 2 insertion points lie on the boundary of~$R$.
Combining these observations shows that $\nu(R)\leq ((n-1)-1)+2=n$.
\end{proof}

Clearly, the upper bound in Lemma~\ref{lem:nu} is attained if every rectangle touches the bottom or right boundary of~$R$.

Given $R\in\cR_n$ and the sequence of insertion points $I(R)=(q_1,\ldots,q_\nu)$, for each $i=1,\ldots,\nu$ we define a rectangulation $c_i(R)\in\cR_n$ as follows:
If $q_i$ is a vertical insertion point, then $c_i(R)$ is obtained from~$R$ by inserting a new rectangle~$r_n$ in the bottom-right corner such that $r_n$ has above it exactly all rectangles which in~$R$ lie to the right of~$q_i$ and touch the bottom boundary of~$R$, and such that $r_n$ has to its left exactly all rectangles which in~$R$ touch the vertical wall through~$q_i$ below~$q_i$; see Figure~\ref{fig:insert}~(a).
Similarly, if $q_i$ is a horizontal insertion point, then $c_i(R)$ is obtained from~$R$ by inserting a new rectangle~$r_n$ in the bottom-right corner such that $r_n$ has to its left exactly all rectangles which in~$R$ lie below~$q_i$ and touch the right boundary of~$R$, and such that $r_n$ has above it exactly all rectangles which in~$R$ touch the horizontal wall through~$q_i$ to the right of~$q_i$; see Figure~\ref{fig:insert}~(b).
We say that $c_i(R)$ is obtained from~$R$ by \emph{insertion}.

\begin{figure}
\includegraphics[page=2]{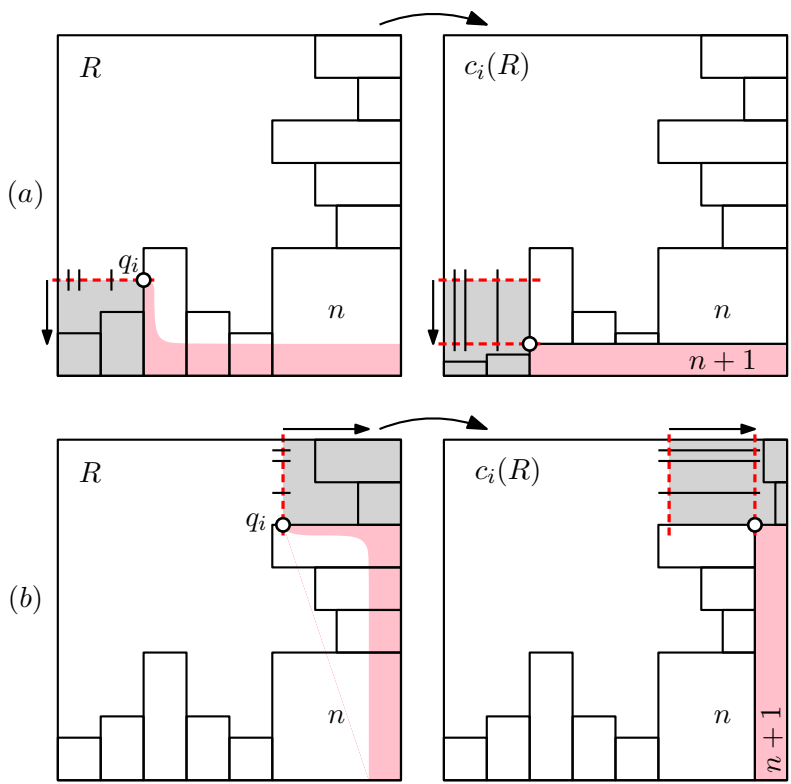}
\caption{Insertion operation.}
\label{fig:insert}
\end{figure}

By these definitions, the operations of deletion and insertion are inverse to each other, which we record in the following lemma.

\begin{lemma}
\label{lem:del-ins}
For any rectangulation $R\in\cR_{n-1}$ and any two distinct insertion points~$q_i$ and~$q_j$ from~$I(R)$, the rectangulations~$c_i(R)\in\cR_n$ and~$c_j(R)\in\cR_n$ are distinct, and we have $R=p(c_i(R))=p(c_j(R))$.
Moreover, for any $R'\in\cR_n$ with $p(R')=R$ there is an insertion point~$q_i$ in~$I(R)$ such that~$c_i(R)=R'$.
\end{lemma}

The first and last insertion point play a special role in our arguments, which is why they are highlighted in Figure~\ref{fig:insert}.
We say that $R$ is \emph{bottom-based} if $R$ has a rectangle whose bottom side is the entire bottom boundary of~$R$, and $R$ is \emph{right-based} if $R$ has a rectangle whose right side is the entire right boundary of~$R$.
Note that the rectangulation $\square\in\cR_1$ is both bottom-based and right-based, and if $n\geq 2$, then $R\in\cR_n$ is bottom-based if and only if $R=c_1(p(R))$ and right-based if and only if $R=c_{\nu(p(R))}(p(R))$.

\section{The basic algorithm}
\label{sec:algo}

In this section we present the basic algorithm that we use to generate a set of rectangulations~$\cC_n\seq\cR_n$.

\subsection{Jumps in rectangulations}
\label{sec:jumps}

To state the algorithm, we first introduce a local change operation that generalizes the three kinds of flips introduced in Section~\ref{sec:flips} (recall Figure~\ref{fig:flips}) and that will be applied when moving from one rectangulation in~$\cC_n$ to the next in the algorithm.
A \emph{jump} changes the insertion point for exactly one rectangle of the rectangulation.
Formally, for a rectangulation~$R\in\cR_n$, we say that $R'\in\cR_n$ differs from~$R$ by a \emph{right jump of rectangle~$r_j$ by $d$ steps}, denoted $R'=\rvec{J}(R,j,d)$, where $2\leq j\leq n$ and $d>0$, if one of the following conditions holds; see Figure~\ref{fig:jumps-def}:
\begin{itemize}[leftmargin=5mm, noitemsep, topsep=3pt plus 3pt]
\item $j=n$, and we have $p(R)=p(R')=:P\in\cR_{n-1}$, $R=c_k(P)$ and $R'=c_{k+d}(P)$ for some $k>0$;
\item $j<n$, and $R$ and $R'$ are either both bottom-based or both right-based, and $p(R')$ differs from $p(R)$ in a right jump of rectangle~$r_j$ by $d$ steps.
\end{itemize}
In words, the first condition asserts that the first $n-1$ rectangles in~$R$ and~$R'$ form the same rectangulation~$P\in\cR_{n-1}$, and~$R$ and~$R'$ are obtained by insertion from~$P$ using the $k$th and $(k+d)$th insertion point, respectively.
The second condition asserts that~$R$ and~$R'$ agree in the rectangle~$r_n$, which either forms the bottom boundary or the right boundary of those rectangulations, and $p(R')$ differs from $p(R)$ in a right jump with the same parameters.

\begin{wrapfigure}{r}{0.4\textwidth}
\centering
\includegraphics[page=2]{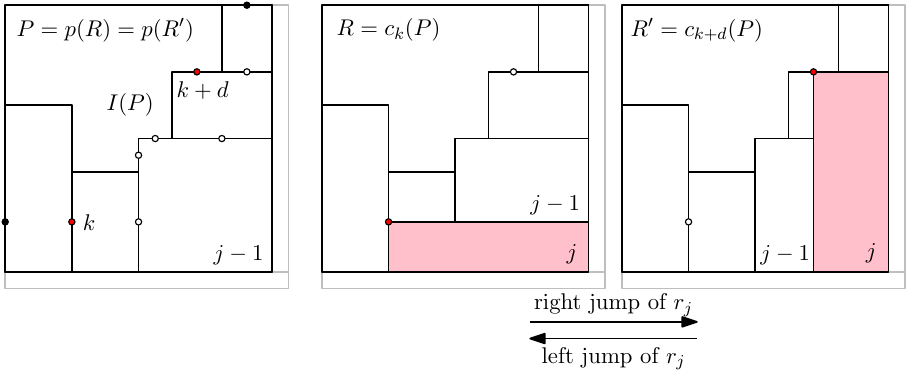}
\caption{Jumps generalize wall slides, simple flips and T-flips.}
\vspace{-2mm}
\label{fig:jumps-flips}
\end{wrapfigure}
A right jump as before is called \emph{minimal} w.r.t.\ to a set of rectangulations~$\cC_n\seq\cR_n$, if in the first condition above there is no index~$\ell$ with $k<\ell<k+d$ such that $c_\ell(P)\in\cC_n$.

A \emph{(minimal) left jump}, denoted $R'=\lvec{J}(R,j,d)$, is defined analogously by replacing $c_{k+d}$ by $c_{k-d}$ and $k<\ell<k+d$ by $k>\ell>k-d$ in the definitions above.
Clearly, if $R'$ differs from~$R$ by a right jump of rectangle~$r_j$ by $d$~steps, then $R$ differs from~$R'$ by a left jump of rectangle~$r_j$ by $d$~steps, and vice versa, i.e., we have $R'=\rvec{J}(R,j,d)$ if and only if $R=\lvec{J}(R',j,d)$.
We sometimes simply say that $R$ and $R'$ differ in a jump, without specifying the direction left or right.

We state the following simple observations for further reference; see Figure~\ref{fig:jumps-flips}.

\begin{lemma}
\label{lem:jumps-flips}
Consider two rectangulations $R,R'\in\cR_n$ that differ in a jump of rectangle~$r_j$, define $P:=R^{[j-1]}=R'^{[j-1]}\in\cR_{j-1}$, and let $q_k$ and $q_\ell$ be the insertion points in~$I(P)$ such that $R^{[j]}=c_k(P)$ and $R'^{[j]}=c_\ell(P)$.
\begin{enumerate}[label=(\alph*),leftmargin=7mm, noitemsep, topsep=3pt plus 3pt]
\item If $q_k$ and $q_\ell$ are consecutive (w.r.t.~$I(P)$) on a common wall of~$P$, then $R$ and $R'$ differ in a wall slide.
\item If $q_k$ lies on the last vertical wall and $q_\ell$ on the first horizontal wall of~$P$ (w.r.t.~$I(P)$), then $R$ and $R'$ differ in a simple flip.
\item If $q_k$ lies on a vertical wall and $q_\ell$ is the first insertion point on the next vertical wall of~$P$ (w.r.t.~$I(P)$), or if $q_k$ lies on a horizontal wall and $q_\ell$ is the last insertion point on the previous horizontal wall, then $R$ and $R'$ differ in a T-flip.
\end{enumerate}
\end{lemma}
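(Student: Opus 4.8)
The plan is to prove all three parts simultaneously by induction on $n-j\ge 0$, with the case $j=n$ as the base case (where the statement is verified directly from the insertion operation of Section~\ref{sec:insertion}) and the case $j<n$ handled by transporting the conclusion from~$p(R),p(R')$ to~$R,R'$. I expect the base case to be the delicate part: one has to determine exactly which rectangles become adjacent to the newly inserted rectangle~$r_n$ as the insertion point is moved from~$q_k$ to~$q_\ell$, and then identify the resulting local change of the rectangulation with a wall slide, simple flip, or T-flip as defined in Section~\ref{sec:flips} (see Figure~\ref{fig:jumps-flips}). The inductive step is essentially bookkeeping.

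\emph{Base case $j=n$.} Here $R=c_k(P)$ and $R'=c_\ell(P)$ with $P=R^{[n-1]}$, and after possibly interchanging~$R$ and~$R'$ we may assume $k<\ell$. For part~(a) assume first that $q_k$ and~$q_\ell$ lie on a common \emph{vertical} wall~$w$ of~$P$; the case of a common horizontal wall is completely analogous. I would first observe that every vertical insertion point of~$P$ on~$w$ lies on the left side of one and the same bottom-touching rectangle~$a$: the left sides of distinct bottom-touching rectangles are pairwise interior-disjoint vertical segments that all meet the bottom boundary of~$P$, so at most one of them can lie on the vertical line carrying~$w$. Hence $q_k$ and~$q_\ell$ lie on two edges of the left side of~$a$ that share a vertex~$v$, and~$v$ is a T-joint with bar~$w$ whose stem points to the side of~$w$ not containing~$a$. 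Inserting~$r_n$ at~$q_k$ makes~$q_k$ the top-left corner of~$r_n$, which is again a T-joint with bar~$w$, but now with its stem (the common horizontal side of~$a$ and~$r_n$) pointing to the side containing~$a$; so~$v$ and this new vertex carry the two T-joint types that a wall slide swaps along a vertical wall, no vertex of~$c_k(P)$ lies on~$w$ strictly between them, and passing from~$c_k(P)$ to~$c_\ell(P)$ moves the new vertex from just below~$v$ to just above~$v$ on~$w$ while~$v$ retains its type. Therefore $c_k(P)$ and~$c_\ell(P)$ differ exactly by swapping the order of the neighbouring vertices~$v$ and the top-left corner of~$r_n$ along~$w$, which is a wall slide. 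For part~(b), inserting~$r_n$ at the last vertical insertion point and at the first horizontal insertion point of~$P$ produces the same region occupied by~$r_n$, but there is exactly one rectangle~$s$ of~$P$ that lies to the left of~$r_n$ in~$c_k(P)$ and above~$r_n$ in~$c_\ell(P)$; the wall separating~$s$ from~$r_n$ flips its orientation, which is a simple flip. For part~(c), if $q_k$ is the last insertion point on a vertical wall~$w_1$ and $q_\ell$ the first on the next vertical wall~$w_2$ (lying to the right of~$w_1$), then the insertion region for~$r_n$ changes between~$c_k(P)$ and~$c_\ell(P)$ by detaching one of the two halves of a wall at a suitable T-vertex and merging it with the perpendicular wall ending there, which is a T-flip; the variant with horizontal walls is handled analogously.

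\emph{Inductive step $j<n$.} By the definition of a jump in Section~\ref{sec:jumps}, $R$ and~$R'$ are either both bottom-based or both right-based, so either $R=c_1(S)$, $R'=c_1(S')$, or $R=c_{\nu(S)}(S)$, $R'=c_{\nu(S')}(S')$, where $S:=p(R)$ and $S':=p(R')$ differ in a jump of rectangle~$r_j$ with the same~$P$, $q_k$, $q_\ell$; in particular $S$ and~$S'$ fall into the same one of the cases (a)--(c), so by the induction hypothesis they differ in the corresponding wall slide, simple flip, or T-flip. Now $c_1(\cdot)$ inserts a rectangle whose bottom side is the entire bottom boundary, and $c_{\nu(\cdot)}(\cdot)$ one whose right side is the entire right boundary; in either case the only vertices created lie on the outer boundary, so every interior wall of~$S$ (resp.~$S'$) persists in~$R$ (resp.~$R'$) together with the order of the vertices along it, and no new vertex appears between two previously neighbouring interior vertices. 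Consequently the two vertices swapped by the wall slide stay neighbours of the same two types, the wall reversed by the simple flip still separates the same two rectangles, and the local configuration of the T-flip is unchanged, so $R$ and~$R'$ differ in a flip of the same kind. This completes the induction.
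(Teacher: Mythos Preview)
The paper does not give a formal proof of this lemma at all; it introduces it as ``simple observations'' and refers the reader to Figure~\ref{fig:jumps-flips}. Your induction on $n-j$ is a perfectly reasonable way to make this rigorous, and the inductive step is clean: since the definition of a jump forces $R^{[k]}$ and $R'^{[k]}$ to be simultaneously bottom-based or simultaneously right-based for all $k>j$, the extra rectangles $r_{j+1},\dots,r_n$ only contribute vertices on the outer boundary and therefore do not interfere with the local configuration witnessing the flip.

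Two small corrections in the base case. In part~(b) you have the adjacencies reversed: when $q_k$ is the last vertical insertion point, the rectangle $s=r_{n-1}$ lies \emph{above}~$r_n$ in~$c_k(P)$ (vertical insertion places $r_n$ underneath), and when $q_\ell$ is the first horizontal insertion point, $s$ lies to the \emph{left} of~$r_n$ in~$c_\ell(P)$. Also, $r_n$ does not occupy ``the same region'' in the two rectangulations; what is true (and what makes this a simple flip) is that $r_{n-1}\cup r_n$ forms the same block in both, with only the orientation of the separating wall changing. In part~(c) you tacitly assume that $q_k$ is the \emph{last} insertion point on its wall; this is indeed how the lemma is used throughout (cf.\ \eqref{eq:jump-seq-gen}), but your verification of the T-flip is little more than a restatement of the definition. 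Naming the relevant T-vertex explicitly---it is the common corner of the two adjacent bottom-touching rectangles between $w_1$ and $w_2$, which has type~$\topT$ in~$c_k(P)$ and type~$\rightT$ in~$c_\ell(P)$---would make the argument self-contained.
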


For any rectangulation $R\in\cR_n$, we say that two insertion points from~$I(R)$ belong to the same~\emph{vertical or horizontal group}, if they lie on the same vertical or horizontal wall in~$R$, respectively.
In the sequence~$I(R)$, insertion points belonging to the same group appear consecutively.

\begin{figure}[h!]
\includegraphics[page=1]{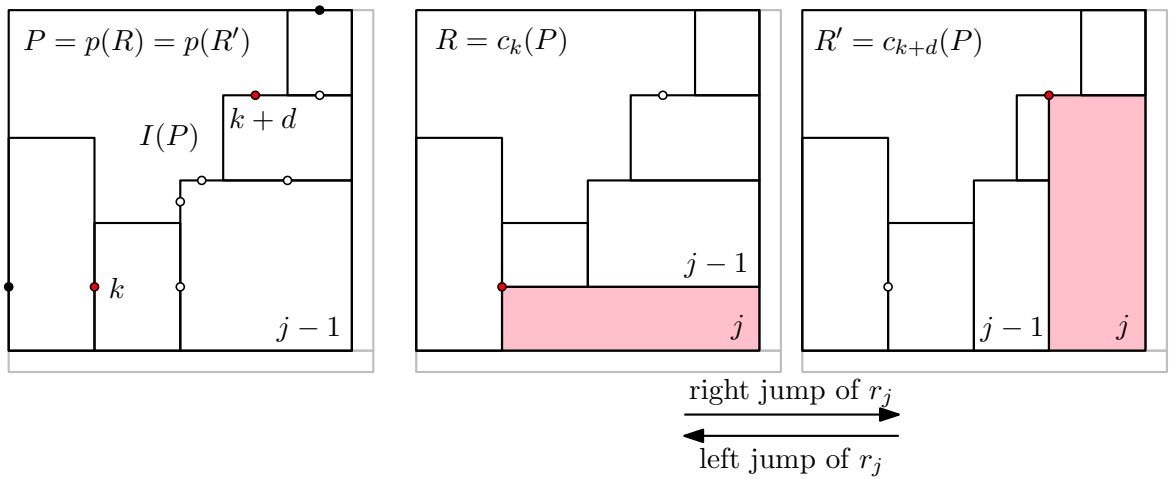}
\caption{Illustration of jumps.}
\label{fig:jumps-def}
\end{figure}

\subsection{Generating rectangulations by minimal jumps}

Consider the following algorithm that attempts to greedily generate a set of rectangulations $\cC_n\seq\cR_n$ using minimal jumps.

\begin{algo}{\bfseries Algorithm~\Jrect{}}{Greedy minimal jumps}
This algorithm attempts to greedily generate a set of rectangulations $\cC_n\subseteq \cR_n$ using minimal jumps starting from an initial rectangulation $R_0\in\cC_n$.
\begin{enumerate}[label={\bfseries J\arabic*.}, leftmargin=9mm, noitemsep, topsep=3pt plus 3pt]
\item{} [Initialize] Visit the initial rectangulation~$R_0$.
\item{} [Jump] Generate an unvisited rectangulation from~$\cC_n$ by performing a minimal jump of the rectangle with largest possible index in the most recently visited rectangulation.
If no such jump exists, or the jump direction is ambiguous, then terminate.
Otherwise visit this rectangulation and repeat~J2.
\end{enumerate}
\end{algo}

To illustrate how Algorithm~\Jrect{} works, we consider the set of five rectangulations $\cC_4=\{R_1,\ldots,R_5\}\seq\cR_4$ shown in Figure~\ref{fig:example}.
If initialized with $R_0:=R_1$, then the algorithm performs a left jump of rectangle~4 by one step (a right jump of rectangle~4 is impossible) to reach $R_2$, i.e., we have $R_2=\lvec{J}(R_1,4,1)$.
In~$R_2$, there are two options, either a right jump of rectangle~4 by one step, leading back to~$R_1$, which has been visited before, or a left jump of rectangle~4 by two steps, leading to~$R_3$, so we visit $R_3=\lvec{J}(R_2,4,2)$.
In~$R_3$, the jumps involving rectangle~4 lead to rectangulations that were visited before ($R_1$ and~$R_2$).
Moreover, a jump of rectangle~3 does not lead to a rectangulation in~$\cC_4$.
However, a right jump of rectangle~2 by one step leads to~$R_4$ (a left jump of rectangle~2 is impossible), so we visit $R_4=\rvec{J}(R_3,2,1)$.
Finally, in $R_4$ a right jump of rectangle~4 by two steps leads to~$R_5=\rvec{J}(R_4,4,2)$ (a left jump of rectangle~4 is impossible).
In this example, Algorithm~\Jrect{} successfully visits every rectangulation from~$\cC_4$ exactly once.

\begin{figure}[h!]
\includegraphics{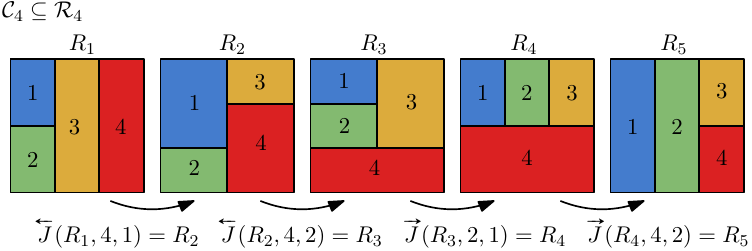}
\caption{Example execution of Algorithm~\Jrect{}.}
\label{fig:example}
\end{figure}

On the other hand, suppose we instead initialize the algorithm with~$R_0:=R_3$.
The algorithm will then visit $R_2:=\rvec{J}(R_3,4,2)$ followed by $R_1:=\rvec{J}(R_2,4,1)$, and then terminates without success, as from~$R_1$ no jump leads to an unvisited rectangulation from~$\cC_4$.
Lastly, suppose we initialize Algorithm~\Jrect{} with $R_0:=R_2$.
As before, in~$R_2$, there are two possibilities, either a right jump or a left jump of rectangle~4, both leading to an unvisited rectangulation from~$\cC_4$.
Both are minimal jumps in opposite directions, and as the jump direction is ambiguous, the algorithm terminates immediately without success.

\begin{remark}
\label{rem:inefficient}
\emph{We do not recommend using Algorithm~\Jrect{} in the stated form to generate a set of rectangulations efficiently!}
This is because the algorithm requires to maintain the list of all previously visited rectangulations (possibly exponentially many), and to look up this list in each step to check whether a rectangulation obtained by a jump from the current one has been visited before.
For us, Algorithm~\Jrect{} is merely a tool to define a Gray code ordering of the rectangulations in the given set~$\cC_n$ in way that is easy to remember (cf.~\cite{DBLP:conf/wads/Williams13}).
In fact, in Section~\ref{sec:efficient} we will present a modified algorithm that dispenses with the costly lookup operations, and that computes the very same sequence of rectangulations.
\end{remark}

\subsection{A guarantee for success}

By definition, Algorithm~\Jrect{} visits every rectangulation from a given set~$\cC_n\seq\cR_n$ at most once, but it may terminate before having visited all.
We now provide a sufficient condition guaranteeing that Algorithm~\Jrect{} visits every rectangulation from~$\cC_n$ exactly once.

A set of generic rectangulations $\cC_n\seq \cR_n$ is called \emph{zigzag}, if either $n=1$ and $\cC_1=\{\square\}$, or if $n\geq 2$ and $\cC_{n-1}:=\{p(R)\mid R\in\cC_n\}$ is zigzag and for every $R\in\cC_{n-1}$ we have $c_1(R)\in\cC_n$ and $c_{\nu(R)}(R)\in\cC_n$.
In words, the set~$\cC_n$ must be closed under repeatedly deleting bottom-right rectangles and replacing them by rectangles inserted either below or to the right of the remaining ones; recall Figure~\ref{fig:closure}.
The name `zigzag' does not refer to the shape of a rectangulation, but to the order in which they are visited by Algorithm~\Jrect{}, which will become clear momentarily.
We also say that $\cC_n$ is \emph{symmetric}, if reflection at the main diagonal is an involution of~$\cC_n$, i.e., if $R\in\cC_n$, then the rectangulation obtained from~$R$ by reflection at the main diagonal is also in~$\cC_n$.
We write~\idrect{} for the rectangulation that consists of $n$ vertically stacked rectangles.

\begin{theorem}
\label{thm:jump-rect}
Given any zigzag set of rectangulations~$\cC_n$ and initial rectangulation $R_0=\idrect$, Algorithm~\Jrect{} visits every rectangulation from~$\cC_n$ exactly once.
Moreover, if $\cC_n$ is symmetric, then the ordering of rectangulations generated by Algorithm~\Jrect{} is cyclic, i.e., the first and last rectangulation differ in a minimal jump.
\end{theorem}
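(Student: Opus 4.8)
The plan is to prove both assertions simultaneously by induction on~$n$, exploiting the recursion built into the notions of zigzag set, jump, minimality, and reflection at the main diagonal (which I write~$\rho$). The base case $n=1$ is trivial, as $\cC_1=\{\square\}$. For $n\geq 2$ set $\cC_{n-1}:=\{p(R)\mid R\in\cC_n\}$, which is again zigzag (and symmetric whenever $\cC_n$ is, since $\rho$ maps the bottom-right rectangle to itself and hence commutes with~$p$); by the inductive hypothesis Algorithm~\Jrect{} applied to~$\cC_{n-1}$ with initial rectangulation~$\idrect\in\cC_{n-1}$ visits $\cC_{n-1}$ in some order $P_1=\idrect,P_2,\ldots,P_m$ with $m=\lvert\cC_{n-1}\rvert$. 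By Lemma~\ref{lem:del-ins} every $R\in\cC_n$ has a unique parent $P_t=p(R)$ and a unique index~$i$ with $R=c_i(P_t)$, so $\cC_n$ partitions into the \emph{groups} $G_t:=\{c_i(P_t)\in\cC_n\}$, $t=1,\ldots,m$; since $\cC_n$ is zigzag and $\nu(P_t)\geq 2$, each $G_t$ is nonempty with two extreme members $c_1(P_t)$ (bottom-based) and $c_{\nu(P_t)}(P_t)$ (right-based). Two members of a group that are consecutive in the restricted order on~$I(P_t)$ are related by a minimal jump of~$r_n$, whereas the bottom-based (resp.\ right-based) member of~$G_t$ and that of~$G_{t+1}$ are related by a minimal jump of some~$r_j$ with $j<n$ exactly when $P_t$ and $P_{t+1}$ are --- this is the recursive clause in the definition of jumps and of minimality. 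Finally, one checks directly that the $n$-rectangle stack~$\idrect$ equals $c_1$ of the $(n-1)$-rectangle stack, so $\idrect$ is bottom-based and, for $n\geq 2$, not right-based.

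The heart of the proof is the \emph{order claim}: Algorithm~\Jrect{} on~$\cC_n$, started at $R_0=\idrect=c_1(P_1)$, sweeps $G_1$ in increasing order of insertion index, then $G_2$ in decreasing order, then $G_3$ in increasing order, and so on ($G_t$ increasingly iff $t$ is odd), and halts after~$G_m$. I would prove this by tracing the execution. While inside a group and not at an extreme member, the maximal-index rectangle~$r_n$ admits a unique minimal jump to an unvisited rectangulation, namely the one advancing to the next group member in the current direction, and it is unambiguous (the opposite direction either re-enters the group or does not exist), so that step is taken. At a \emph{turning point} --- an extreme member of some~$G_t$ --- the rectangle~$r_n$ has no minimal jump to an unvisited rectangulation, so the algorithm considers rectangles~$r_j$ with $j<n$; as the current rectangulation is bottom-based or right-based, every such jump fixes~$r_n$ and has as target the bottom-based (resp.\ right-based) member of a group over some parent~$P'$ joined to~$P_t$ by a jump of~$r_j$, and such a~$P'$ is visited iff its whole group is, which holds for exactly $P_1,\ldots,P_t$. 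Comparing with the $\cC_{n-1}$-execution --- itself greedy with minimal jumps, by induction --- then forces the largest admissible~$j$, the step size, the direction, minimality, and unambiguity all to be inherited from the step $P_t\to P_{t+1}$, say a jump of~$r_{j_t}$: if $t<m$ the algorithm moves to the matching extreme member of~$G_{t+1}$ and the pattern continues, and if $t=m$ no minimal jump to an unvisited rectangulation remains, so the algorithm halts. Greediness --- always the largest feasible index, terminate on ambiguity --- makes this the unique run. I expect the main obstacle to be making this turning-point comparison watertight: checking that minimality and direction-unambiguity genuinely transfer between the two levels, and that no spurious jump of an intermediate-index~$r_j$ with $j_t<j<n$ to an unvisited rectangulation can occur.

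The first assertion now follows at once: the groups~$G_t$ partition~$\cC_n$, each is swept exactly once without repetition, and the algorithm halts afterwards. For the cyclic statement, suppose $\cC_n$ is symmetric; then (as noted) $\cC_{n-1}$ is symmetric, and by induction $P_m$ and $P_1$ differ in a minimal jump of some~$r_j$ with $j<n$. By the order claim it suffices to show the run ends at $R_N=c_1(P_m)$: for then $R_N$ and $R_0=c_1(P_1)$, both bottom-based with parents $P_m$ and~$P_1$, differ in the minimal jump of~$r_j$ induced --- through the recursive clause of the definition --- by that jump between the parents, which establishes the cyclic property. Now $R_N=c_1(P_m)$ holds precisely when $m=\lvert\cC_{n-1}\rvert$ is even, since $G_m$ is then swept toward its bottom-based end. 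For $n\geq 3$ this~$m$ is even because $\rho$ acts freely on~$\cC_{n-1}$: any rectangulation~$R$ with at least two rectangles satisfies $\rho(R)\neq R$, as $\rho(R)=R$ would fix the bottom-right rectangle of~$R$ and hence fix its top-left vertex, which for such~$R$ is a T-joint of type~$\rightT$ or~$\bottomT$, whereas $\rho$ interchanges these two types --- the same interchange that swaps the two cases of the deletion rule. The remaining cases are $n=1$ (nothing to prove) and $n=2$, where necessarily $\cC_2=\cR_2=\{\idrect,c_2(\square)\}$ and these two rectangulations differ by a single jump of~$r_2$; this closes the induction. An alternative route, taken in Section~\ref{sec:proofs}, encodes rectangulations by permutations following Reading and transfers the corresponding statement from the permutation-language framework of~\cite{perm_series_i}.
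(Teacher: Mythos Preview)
Your direct inductive argument is a genuinely different route from the paper's proof, which instead passes through Reading's bijection~$\gamma$ between generic rectangulations and 2-clumped permutations (Lemma~\ref{lem:gamma-jump}) and then invokes the already-established Theorem~\ref{thm:jump} for zigzag languages of permutations. Your approach is more self-contained---it avoids the permutation machinery entirely---whereas the paper's approach is shorter precisely because that machinery has been developed elsewhere. The cyclicity argument via the fixed-point-free involution~$\rho$ is essentially the same in both; the paper calls it~$\lambda$ and uses it to deduce that $|\cC_i|$ is even for $2\leq i\leq n-1$, which is the hypothesis needed in Theorem~\ref{thm:jump}.

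There is, however, a systematic orientation error in your proposal. The rectangulation~$\idrect$ consists of $n$ side-by-side vertical columns (see Table~\ref{tab:algo} or the appendix figures), so for $n\geq 2$ it is \emph{right-based}, not bottom-based: $\idrect=c_{\nu(P_1)}(P_1)$, not $c_1(P_1)$. Consequently the sweep of~$G_1$ is \emph{decreasing} (left jumps of~$r_n$ from the last insertion point toward the first), $G_2$ is swept increasingly, and in general $G_t$ is swept decreasingly iff $t$ is odd. The terminal rectangulation is therefore $R_N=c_{\nu}(P_m)$ when $m$ is even (not $c_1(P_m)$), and the cyclic step links $c_\nu(P_m)$ to $c_\nu(P_1)=\idrect$ through the right-based clause of the jump recursion. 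Swapping $c_1\leftrightarrow c_\nu$ and ``bottom-based''$\leftrightarrow$``right-based'' throughout fixes everything; the parity conclusion ($m$ even) and the fixed-point-free argument for it are unaffected. Your turning-point analysis---transferring maximality of the jumping index, minimality, and unambiguity from the $\cC_{n-1}$-execution to the $\cC_n$-execution via the recursive definition of jumps---is sound once the orientation is corrected.
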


The proof of Theorem~\ref{thm:jump-rect} is provided in Section~\ref{sec:proofs}.

Note that the rectangulation $R_0=\idrect$ is contained in every zigzag set by definition, so this is a valid initialization for Algorithm~\Jrect{}.
We write $\Jrectm(\cC_n)$ for the sequence of rectangulations generated by Algorithm~\Jrect{} for a zigzag set~$\cC_n$ when initialized with $R_0=\idrect$.

It is easy to see that the number of distinct zigzag sets of generic rectangulations is at least $2^{|\cR_n|(1-o(1))}\geq 2^{\Omega(11.56^n)}$ (the latter estimate uses the best known lower bound on~$|\cR_n|$ from~\cite{amano_nakano_yamanaka_2007}), i.e., at least double-exponential in~$n$.
In other words, Algorithm~\Jrect{} exhaustively generates a given set of generic rectangulations in a vast number of cases.
Moreover, many natural classes of rectangulations are in fact zigzag.
In particular, \emph{all} the different classes introduced in Section~\ref{sec:flips} and shown in Table~\ref{tab:rect} satisfy the aforementioned closure property.
Moreover, all of these classes are symmetric, so for each of them we obtain cyclic jump orderings.
Several such Gray codes are visualized in the appendix.

\subsection{Tree of rectangulations}
\label{sec:tree}

The notion of zigzag sets and the operation of Algorithm~\Jrect{} can be interpreted combinatorially in the so-called \emph{tree of rectangulations}, which is an infinite rooted tree, defined recursively as follows; see Figure~\ref{fig:tree}:
The root of the tree is a single rectangle $\square\in\cR_1$.
For any node $R\in\cR_{n-1}$, $n\geq 2$, of the tree we consider all insertion points of the rectangulation~$R$, and the set of children of~$R$ in the tree is $\{c_i(R)\in\cR_n\mid i=1,\ldots,\nu(R)\}$.
Conversely, the parent of each $R\in\cR_n$, $n\geq 2$, is $p(R)\in\cR_{n-1}$.
In words, insertion leads to the children of a node, and deletion leads to the parent of a node.
By Lemma~\ref{lem:del-ins}, each generic rectangulation appears exactly once in the tree, and the set of nodes at distance~$n$ from the root of the tree is precisely the set~$\cR_{n+1}$ of generic rectangulations with $n+1$ rectangles.
We emphasize that this tree is \emph{unordered}, i.e., there is no specified ordering among the children of a node.

By Lemma~\ref{lem:nu}, a node $R\in\cR_n$ in the tree has at most $n+1$ children, i.e., we have~$|\cR_n|\leq n!$.
As we see from Figure~\ref{fig:tree}, this inequality is tight up to $n=4$, but starting from $n=4$, there are nodes~$R\in\cR_n$ with strictly less than $n+1$ children, i.e., we have $|\cR_5|<5!$.
In fact, it was shown in~\cite{amano_nakano_yamanaka_2007} that $|\cR_n|=\cO(28.3^n)$.

A subset $\cC_n\seq \cR_n$ of nodes in depth~$n-1$ of this tree is zigzag, if and only if it arises from the full tree of rectangulations by pruning some subtrees whose roots are neither bottom-based nor right-based rectangulations.
In Figure~\ref{fig:tree}, all bottom-based or right-based rectangulations are highlighted by gray boxes, and can therefore not be pruned, while all other nodes can possibly be pruned.
If no nodes are pruned, then we have $\cC_n=\cR_n$, and if all possible nodes are pruned, then $\cC_n$ is the set~$\cB_n$ of $2^{n-1}$ rectangulations obtained by repeatedly stacking a new rectangle either below or to the right of the previous ones, i.e., $\cB_n=\{c_1(R),c_{\nu(R)}(R)\mid R\in\cB_{n-1}\}$ for $n\geq 2$ and $\cB_1=\{\square\}$.
Moreover, we have $\cB_n\seq\cC_n\seq\cR_n$ for any zigzag set~$\cC_n$.

\begin{figure}
\includegraphics[scale=0.7]{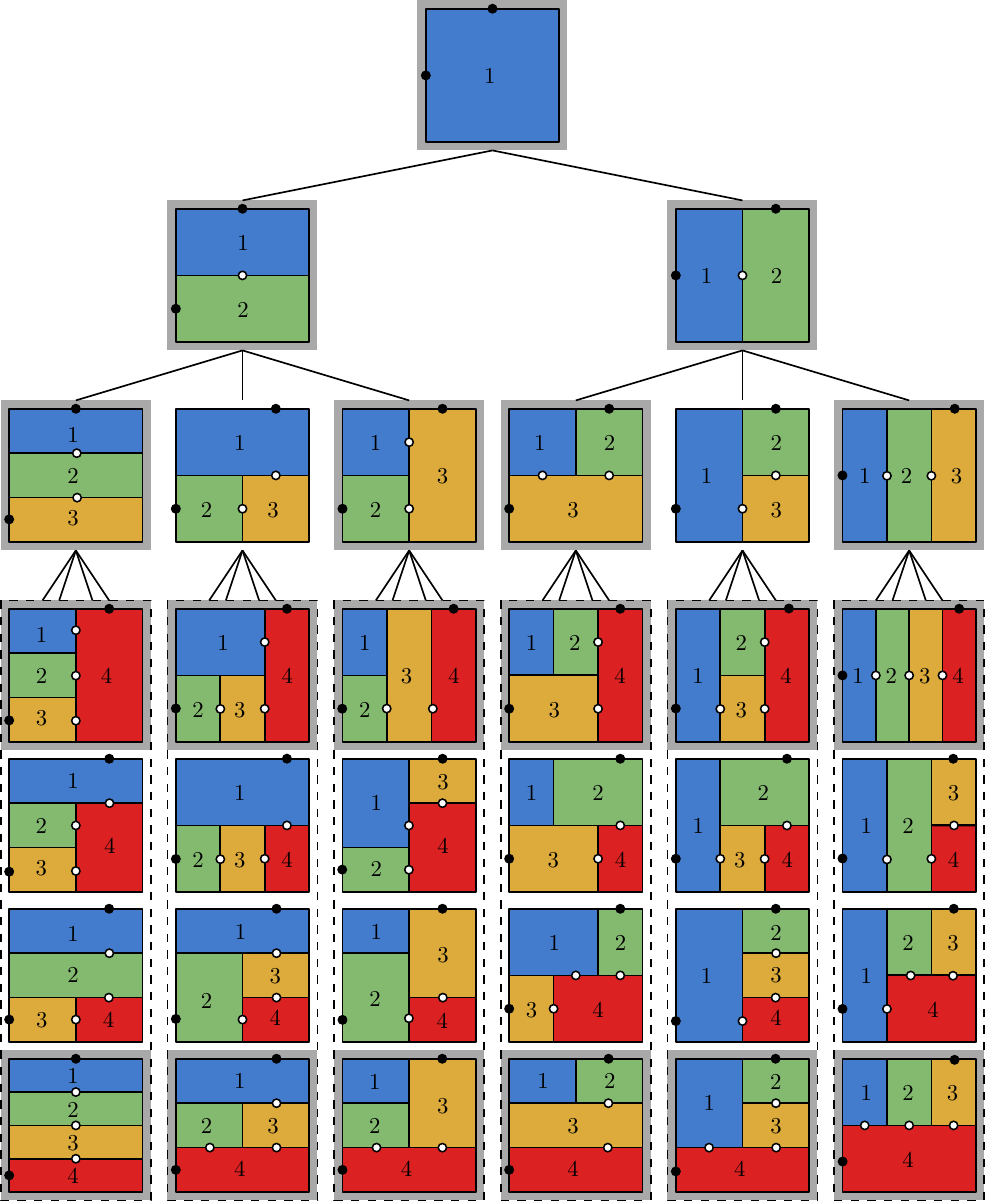}
\caption{Tree of generic rectangulations up to depth~3 with insertion points highlighted, where first and last insertion point are filled.
The rectangulations in the dashed boxes at the bottom level~$\cR_4$ are stacked on top of each other due to space constraints, but they are children of a common parent node.
Bottom- or right-based rectangulations, corresponding to insertion at the first or last insertion point, are marked by gray boxes.
}
\label{fig:tree}
\end{figure}

The operation of Algorithm~\Jrect{} for a zigzag set~$\cC_n$ as input can be interpreted as follows:
Given the pruned tree corresponding to~$\cC_n$, we consider the set of nodes on all previous levels of the tree, i.e., the sets $\cC_{i-1}:=\{p(R)\mid R\in\cC_i\}$ for $i=n,n-1,\ldots,2$, which are all zigzag sets by definition.
Moreover, we consider the orderings $\Jrectm(\cC_i)$, $i=1,\ldots,n$, defined by Algorithm~\Jrect{} for each of these sets.
These sequences turn the unordered tree corresponding to~$\cC_n$ into an ordered tree, where the children~$c_i(R)$ of each node~$R$ from left to right appear alternatingly in increasing order $i=1,\ldots,\nu(R)$ or in decreasing order $i=\nu(R),\nu(R)-1,\ldots,1$.
Consequently, in the sequence $\Jrectm(\cC_i)$, $i\geq 2$, which forms the left-to-right sequence of all nodes in depth~$i-1$ of this ordered tree, the rectangle~$r_i$ alternatingly jumps left and right between the first and last insertion point, which motivates the name `zigzag' set; see also the animations provided in~\cite{cos_rect}.

It is important to realize that these orderings are not consistent with respect to taking subsets, i.e., if we have two zigzag sets $\cC_n'\seq \cC_n$, then the entries of the sequence~$\Jrectm(\cC_n')$ do not necessarily appear in the same relative order in the sequence~$\Jrectm(\cC_n)$.

\section{Pattern-avoiding rectangulations}
\label{sec:avoidance}

In this section we show that Algorithm~\Jrect{} applies to a large number of rectangulation classes that are defined by pattern avoidance, under some very mild conditions on the patterns; recall Table~\ref{tab:rect}.

A \emph{rectangulation pattern} is a configuration of walls with prescribed directions and incidences.
For example, the windmill patterns~$\millr$ and~$\milll$ describe four walls such that when considering the walls in clockwise or counterclockwise order, respectively, the end vertex of one wall lies in the interior of the next wall.
We can also think of a pattern as the rectangulation formed by the given walls and incidences.
For example, we can think of the windmill patterns as rectangulations with~5 rectangles.
We say that a rectangulation~$R$ \emph{contains} the pattern~$P$, if $R$ contains a subset of walls with the directions and incidences specified by~$P$.
Otherwise we say that~$R$ \emph{avoids}~$P$.
For any set of rectangulation patterns~$\cP$ and for any set of rectangulations~$\cC$, we write~$\cC(\cP)$ for the rectangulations from~$\cC$ that avoid each pattern from~$\cP$.
For example, diagonal rectangulations are given by $\cD_n=\cR_n\big(\big\{\zvu,\zhr\big\}\big)$.
Examples of rectangulations containing and avoiding various patterns are shown in Figure~\ref{fig:classes}.

We say that a rectangulation pattern~$P$ is \emph{tame}, if for any rectangulation~$R$ that avoids~$P$, we also have that~$c_1(R)$ and~$c_{\nu(R)}(R)$ avoid~$P$.
In words, inserting a new rectangle below~$R$ or to the right of~$R$ must not create the forbidden pattern~$P$.
The next lemma follows directly from these definitions.

\begin{lemma}
\label{lem:tame}
If a rectangulation pattern is neither bottom-based nor right-based, then it is tame.
In particular, each of the patterns $\millr,\milll,\zvu,\zhr,\zvd,\zhl,\hvert,\hhor$ is tame.
\end{lemma}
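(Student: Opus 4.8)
The plan is to prove the two assertions of Lemma~\ref{lem:tame} in sequence. For the first assertion, suppose $P$ is a rectangulation pattern that is neither bottom-based nor right-based, and let $R$ be any rectangulation that avoids~$P$. I want to show that $c_1(R)$ and $c_{\nu(R)}(R)$ also avoid~$P$. Argue by contradiction: suppose $c_1(R)$ contains~$P$, i.e., there is a set $W$ of walls of $c_1(R)$ realizing the directions and incidences of~$P$. Recall that $c_1(R)$ is obtained from~$R$ by inserting the new rectangle $r_{n+1}$ at the first insertion point $q_1$, which lies on the left side of the leftmost bottom-touching rectangle; consequently $r_{n+1}$ forms the entire bottom boundary of $c_1(R)$, i.e., $c_1(R)$ is bottom-based. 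The key observation is that the only walls of $c_1(R)$ that are not (restrictions of) walls of~$R$ are those incident to the newly created top side of $r_{n+1}$, which runs along the whole bottom of $c_1(R)$. If the copy of~$P$ inside $c_1(R)$ uses none of these new walls, then the same walls already occur in~$R$ with the same incidences (deletion only shortens the walls above the bottom boundary but preserves their directions and mutual incidences away from the bottom), contradicting that $R$ avoids~$P$. Hence the copy of~$P$ must use at least one new wall, and therefore the pattern~$P$ itself must contain a wall that forms part of (or is incident to) the bottom boundary of the rectangulation $c_1(R)$ restricted to the walls of $W$ — that is, $P$ has a wall lying on its own bottom boundary, making $P$ bottom-based, a contradiction.

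I would make the last step precise as follows. A copy of $P$ in a rectangulation is a sub-rectangulation in the combinatorial sense of Section~\ref{sec:prelim}: it picks out walls with specified directions and incidences, and together these walls (plus the outer frame they induce) form a rectangulation isomorphic to the pattern~$P$ viewed as a rectangulation. The new walls created by the insertion $c_1$ all lie along the bottom boundary of $c_1(R)$; if the copy of~$P$ uses such a wall, then that wall lies on the bottom side of the sub-rectangulation realizing~$P$, which means precisely that~$P$ is bottom-based (some rectangle of~$P$ has its bottom side equal to the entire bottom boundary of~$P$). Symmetrically, if $c_{\nu(R)}(R)$ contains~$P$, then since $c_{\nu(R)}(R)$ is right-based and its only new walls lie along its right boundary, the copy of~$P$ must use a wall on the right boundary, so~$P$ is right-based. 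Both conclusions contradict our hypothesis, so $c_1(R)$ and $c_{\nu(R)}(R)$ avoid~$P$, i.e., $P$ is tame.

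For the second assertion, it suffices to check that each of the eight listed patterns $\millr,\milll,\zvu,\zhr,\zvd,\zhl,\hvert,\hhor$ is neither bottom-based nor right-based, and then invoke the first assertion. For the windmill patterns $\millr,\milll$: viewed as rectangulations with five rectangles (a central rectangle surrounded by four others in pinwheel fashion), no single rectangle has its bottom side equal to the whole bottom boundary and none has its right side equal to the whole right boundary — one checks this directly from the cyclic wall configuration. For the four Z-type wall patterns $\zvu,\zhr,\zvd,\zhl$ and the two patterns $\hvert,\hhor$: each of these is a small configuration of two walls (or the associated small rectangulation) in which, again, no rectangle spans the entire bottom or entire right boundary; this is immediate from the pictures. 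So each listed pattern falls under the first assertion and is therefore tame.

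The main obstacle is making the informal phrase ``the only new walls lie on the bottom/right boundary'' fully rigorous at the level of the combinatorial (rather than geometric) description of rectangulations, and correspondingly pinning down what it means for an occurrence of~$P$ to ``use'' such a wall in a way that forces~$P$ to be bottom-based. Concretely, one must verify that the insertion operation $c_1$ does not alter the directions or incidences among the pre-existing walls (it only truncates them where they met the old bottom boundary, without changing any T-joint type or any left/right/above/below relation among the surviving pieces), so that a copy of~$P$ avoiding the new walls would already be present in~$R$; this is a routine but slightly fiddly case analysis on how the bottom side of $r_{n+1}$ interacts with the walls of~$R$, and it is the only place where real work is needed.
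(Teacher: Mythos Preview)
Your approach is correct and is exactly the intended argument; the paper itself offers no proof beyond stating that the lemma ``follows directly from these definitions.'' One small wording correction: the single new wall created in $c_1(R)$ is the top side of $r_{n+1}$, an interior horizontal wall rather than something ``along the bottom boundary of $c_1(R)$''; the clean way to close your contradiction is to observe that if the occurrence uses this wall $w^*$, then every other wall in the occurrence lies strictly above $w^*$, so the preimage $w_0$ in $P$ has all other walls of $P$ above it, whence $w_0$'s endpoints cannot lie on vertical walls of $P$ (such a wall would extend below $w_0$ at its T-joint), forcing $w_0$ to span the full width of $P$---so $P$ is bottom-based.
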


The following powerful theorem allows to obtain many new zigzag sets of rectangulations from a given zigzag set~$\cC_n\seq\cR_n$ by forbidding one or more tame patterns.
All of these zigzag sets can then be generated by our Algorithm~\Jrect{}.

\begin{theorem}
\label{thm:pattern}
Let $\cC_n\seq \cR_n$ be a zigzag set of rectangulations, and let $\cP$ be a set of tame rectangulation patterns.
Then $\cC_n(\cP)$ is a zigzag set of rectangulations.
Moreover, if $\cP$ is symmetric, then $\cC_n(\cP)$ is symmetric.
\end{theorem}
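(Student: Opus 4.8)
The plan is to prove the statement by induction on $n$, following the recursive structure of the definition of a zigzag set. For the base case $n=1$, we have $\cC_1=\{\square\}$, and since $\square$ avoids no pattern other than $\emptyset$-like trivialities... more precisely, if any pattern in $\cP$ is contained in $\square$ then $\cC_1(\cP)=\emptyset$, which is degenerate; otherwise $\cC_1(\cP)=\{\square\}$, which is the unique zigzag set with one rectangle. (One should treat tame patterns as having at least two rectangles, or simply observe that the empty set is not a zigzag set and the statement is vacuous in that case; I would state the convention that patterns are nontrivial so $\square$ avoids all of them.) For the inductive step, assume $n\geq 2$ and that the claim holds for $n-1$.

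\medskip

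The key steps for the inductive step are as follows. First I would verify that $\cC_{n-1}(\cP)$ equals $\{p(R)\mid R\in\cC_n(\cP)\}$. The inclusion $\{p(R)\mid R\in\cC_n(\cP)\}\subseteq\cC_{n-1}(\cP)$ requires showing that deletion does not create a forbidden pattern: if $R$ avoids $P$, then $p(R)$ avoids $P$. This holds because the walls of $p(R)$, after reversing the deletion operation (which is an insertion of $r_n$ below or to the right), correspond to a subset of the walls of $R$ up to the wall-slide/merge that defines deletion — more carefully, any occurrence of $P$ in $p(R)$ would lift to an occurrence in $R$, since deletion only removes the bottom-right rectangle by collapsing a wall, and every wall configuration in $p(R)$ persists in $R$. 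Actually the cleanest route is: $R = c_i(p(R))$ for some $i$, so if $p(R)$ contained $P$, then since $P$ is a pattern and $c_i$ only adds walls (the new rectangle's boundary), $R=c_i(p(R))$ would also contain $P$ — contradiction. For the reverse inclusion $\cC_{n-1}(\cP)\subseteq\{p(R)\mid R\in\cC_n(\cP)\}$: given $Q\in\cC_{n-1}(\cP)$, we know $Q\in\cC_{n-1}$ so $c_1(Q),c_{\nu(Q)}(Q)\in\cC_n$ by the zigzag property of $\cC_n$; by tameness of every $P\in\cP$, both $c_1(Q)$ and $c_{\nu(Q)}(Q)$ avoid $\cP$, hence lie in $\cC_n(\cP)$, and their parent is $Q$. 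So $Q\in\{p(R)\mid R\in\cC_n(\cP)\}$, and moreover this shows $c_1(Q),c_{\nu(Q)}(Q)\in\cC_n(\cP)$ — which is exactly the second condition needed for $\cC_n(\cP)$ to be zigzag. Combined with the inductive hypothesis applied to $\cC_{n-1}(\cP)=\{p(R)\mid R\in\cC_n(\cP)\}$ (which is zigzag), this completes the verification that $\cC_n(\cP)$ is zigzag.

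\medskip

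For the symmetry claim, suppose $\cP$ is symmetric (i.e., reflection at the main diagonal maps $\cP$ to itself) and $\cC_n$ is symmetric. Let $R\in\cC_n(\cP)$ and let $R^\ast$ be its reflection. Since $\cC_n$ is symmetric, $R^\ast\in\cC_n$. It remains to check $R^\ast$ avoids every $P\in\cP$: if $R^\ast$ contained $P$, then reflecting back, $R$ would contain the reflected pattern $P^\ast$, which lies in $\cP$ by symmetry of $\cP$ — contradicting $R\in\cC_n(\cP)$. Hence $R^\ast\in\cC_n(\cP)$, so $\cC_n(\cP)$ is symmetric. This part is essentially immediate once the containment relation is seen to commute with reflection, which follows because reflection is a combinatorial isomorphism preserving wall directions up to the vertical/horizontal swap encoded in the reflected pattern.

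\medskip

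The main obstacle I anticipate is making precise the claim that ``insertion only adds walls, so it cannot destroy an existing pattern occurrence'' and its partial converse role in the deletion direction. One must be careful that the wall through the new rectangle's top-left corner, and the way the new rectangle's sides subdivide existing edges, genuinely does not interfere with an occurrence of $P$ already present — equivalently, that an occurrence of $P$ in $c_i(R)$ using none of $r_{n+1}$'s boundary walls is literally an occurrence in $R$. This is intuitively clear from the definitions of $c_1$ and $c_{\nu(R)}$ in Section~\ref{sec:insertion} (the new rectangle is stacked entirely below or entirely to the right, so it only appends a wall along the bottom or right boundary and subdivides nothing essential), but writing it carefully is where the real content lies; the rest is bookkeeping driven by the recursive definition of zigzag.
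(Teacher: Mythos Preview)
Your proof is correct and follows essentially the same inductive approach as the paper's. You are in fact more careful than the paper on the point that deletion preserves pattern avoidance (the paper simply asserts $\{p(R)\mid R\in\cC_{i+1}(\cP)\}=\cC_i(\cP)$ without justifying the $\subseteq$ direction), and you rightly flag that the symmetry conclusion also needs $\cC_n$ itself to be symmetric; for the base case, the paper handles it more cleanly by observing that $\square$ has no walls and hence avoids every pattern, which removes the need for your caveats there.
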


Recall that $\cP$ is symmetric if for each pattern~$P\in\cP$, we have that the pattern obtained from~$P$ by reflection at the main diagonal is also in~$\cP$.
The significance of the second part of the theorem is that if $\cC_n(\cP)$ is symmetric, then the ordering of rectangulations of~$\cC_n(\cP)$ generated by Algorithm~\Jrect{} is cyclic by Theorem~\ref{thm:jump-rect}.

\begin{proof}
As $\cC_n$ is a zigzag set of rectangulations, we know that $\cC_{i-1}:=\{p(R)\mid R\in\cC_i\}$ for $i=n,n-1,\ldots,2$ are also zigzag sets.
We argue by induction that~$\cC_i(\cP)$ is also a zigzag set for all $i=1,\ldots,n$.
For the induction basis $i=1$ note that the rectangulation~$\square$ that consists of a single rectangle has no walls, so it avoids any pattern, showing that $\cC_1(\cP)=\cC_1=\{\square\}$.
For the induction step we assume that $\cC_i(\cP)$, $i\in\{1,\ldots,n-1\}$, is a zigzag set, and we prove it for~$\cC_{i+1}(\cP)$.
Note that $\{p(R)\mid R\in\cC_{i+1}(\cP)\}=\cC_i(\cP)$, and so we only need to check that~$c_1(R)$ and~$c_{\nu(R)}(R)$ are in~$\cC_{i+1}(\cP)$ for all $R\in\cC_i(\cP)$, which is guaranteed by the assumption that each pattern~$P\in\cP$ is tame.
This proves the first part of the theorem.

It remains to prove the second part.
If $R\in\cC_n(\cP)$, then $R$ avoids every pattern from~$\cP$.
Let $R'$ be the rectangulation obtained from~$R$ by reflection at the main diagonal.
$R'$ must also avoid every pattern from~$\cP$, because if it contained a pattern $P$ from~$\cP$, then $R$ would contain the corresponding reflected pattern~$P'$, which is in~$\cP$ because of the assumption that~$\cP$ is symmetric.
It follows that $R'\in\cC_n(\cP)$, completing the proof.
\end{proof}

\section{Efficient computation}
\label{sec:efficient}

Recall from Remark~\ref{rem:inefficient} that Algorithm~\Jrect{} in its stated form is unsuitable for efficient implementation.
We now discuss how to make the algorithm efficient, so as to achieve the time bounds claimed in Table~\ref{tab:rect} for several interesting classes of rectangulations.

\subsection{Memoryless algorithm}

Consider Algorithm~\Mrect{} below, which takes as input a zigzag set of rectangulations~$\cC_n\seq\cR_n$ and generates them exhaustively by minimal jumps in the same order as Algorithm~\Jrect{}, i.e., in the order~$\Jrectm(\cC_n)$.
After initialization in line~M1, the algorithm loops over lines M2--M5, visiting the current rectangulation~$R$ at the beginning of each iteration (line~M2), until it terminates (line~M3).

The key idea of the algorithm is to track explicitly which rectangle jumps in each step, and the direction of the jump.
With this information, the jump is determined by the condition that it must be minimal w.r.t.~$\cC_n$, i.e., starting from the current insertion point of the given rectangle, we choose the first insertion point (w.r.t.\ their linear ordering) for that rectangle in the given direction that creates the next rectangulation from~$\cC_n$.

\begin{algo}{\bfseries Algorithm~\Mrect{}}{Memoryless minimal jumps}
This algorithm generates all rectangulations of a zigzag set $\cC_n\seq\cR_n$ by minimal jumps in the same order as Algorithm~\Jrect{}.
It maintains the current rectangulation in the variable~$R$, and auxiliary arrays $o=(o_1,\ldots,o_n)$ and $s=(s_1,\ldots,s_n)$.
\begin{enumerate}[label={\bfseries M\arabic*.}, leftmargin=8mm, noitemsep, topsep=3pt plus 3pt]
\item{} [Initialize] Set $R\gets\idrect$, and $o_j\gets \dirl$, $s_j\gets j$ for $j=1,\ldots,n$.
\item{} [Visit] Visit the current rectangulation~$R$.
\item{} [Select rectangle] Set $j\gets s_n$, and terminate if $j=1$.
\item{} [Jump rectangle] In the current rectangulation~$R$, perform a jump of rectangle~$r_j$ that is minimal w.r.t.~$\cC_n$, where the jump direction is left if $o_j=\dirl$ and right if $o_j=\dirr$.
\item{} [Update $o$ and $s$] Set $s_n\gets n$.
If $o_j=\dirl$ and $R^{[j]}$ is bottom-based set $o_j\gets\dirr$, or if $o_j=\dirr$ and $R^{[j]}$ is right-based set $o_j\gets \dirl$, and in both cases set $s_j\gets s_{j-1}$ and $s_{j-1}\gets j-1$. Go back to~M2.
\end{enumerate}
\end{algo}

\begin{table}
\caption{Execution of Algorithm~\Mrect{} for the set~$\cC_4=\cD_4$ of diagonal rectangulations with 4~rectangles.
Empty entries in the $o$ and~$s$ column are unchanged compared to the previous row.}
\label{tab:algo}
\renewcommand{\arraystretch}{1.1}
\setlength\tabcolsep{3pt}
\begin{tabular}{>{\raggedleft}b{3mm}@{\hskip 2mm}c|c|c|cc>{\raggedleft}b{3mm}@{\hskip 2mm}c|c|c|c}
 & $\Jrectm(\cC_4)$ & jump & $o_1o_2o_3o_4$ & $s_1s_2s_3s_4$ & \hspace{2mm} &  & $\Jrectm(\cC_4)$ & jump & $o_1o_2o_3o_4$ & $s_1s_2s_3s_4$ \\ \cline{1-5}\cline{7-11}
1  & \raisebox{-6pt}{\begin{tikzpicture}[scale=24/100,font=\tiny]
\fill[p1col1] (0,4) rectangle (1,0);
\fill[p1col2] (1,4) rectangle (2,0);
\fill[p1col3] (2,4) rectangle (3,0);
\fill[p1col4] (3,4) rectangle (4,0);
\draw (0.50,3.50) node{1};
\draw (1.50,2.50) node{2};
\draw (2.50,1.50) node{3};
\draw (3.50,0.50) node{4};
\draw (0,4)--(4,0);
\end{tikzpicture}
} & $\lvec{J}(R,4,1)$ & \biv{$\dirl$}{$\dirl$}{$\dirl$}{$\dirl$} & \biv{1}{2}{3}{4} & & 12 & \raisebox{-6pt}{\begin{tikzpicture}[scale=24/100,font=\tiny]
\fill[p1col4] (0,1) rectangle (4,0);
\fill[p1col3] (0,2) rectangle (4,1);
\fill[p1col2] (0,3) rectangle (4,2);
\fill[p1col1] (0,4) rectangle (4,3);
\draw (0.50,3.50) node{1};
\draw (1.50,2.50) node{2};
\draw (2.50,1.50) node{3};
\draw (3.50,0.50) node{4};
\draw (0,4)--(4,0);
\end{tikzpicture}
} & $\rvec{J}(R,4,1)$ & \biv{}{$\dirr$}{}{} & \biv{1}{1}{}{4} \\
2  & \raisebox{-6pt}{\begin{tikzpicture}[scale=24/100,font=\tiny]
\fill[p1col1] (0,4) rectangle (1,0);
\fill[p1col2] (1,4) rectangle (2,0);
\fill[p1col4] (2,1) rectangle (4,0);
\fill[p1col3] (2,4) rectangle (4,1);
\draw (0.50,3.50) node{1};
\draw (1.50,2.50) node{2};
\draw (2.50,1.50) node{3};
\draw (3.50,0.50) node{4};
\draw (0,4)--(4,0);
\end{tikzpicture}
} & $\lvec{J}(R,4,1)$ & \biv{}{}{}{} & \biv{}{}{}{4} & & 13 & \raisebox{-6pt}{\begin{tikzpicture}[scale=24/100,font=\tiny]
\fill[p1col3] (0,2) rectangle (3,0);
\fill[p1col4] (3,2) rectangle (4,0);
\fill[p1col2] (0,3) rectangle (4,2);
\fill[p1col1] (0,4) rectangle (4,3);
\draw (0.50,3.50) node{1};
\draw (1.50,2.50) node{2};
\draw (2.50,1.50) node{3};
\draw (3.50,0.50) node{4};
\draw (0,4)--(4,0);
\end{tikzpicture}
} & $\rvec{J}(R,4,1)$ & \biv{}{}{}{} & \biv{}{}{}{4} \\
3  & \raisebox{-6pt}{\begin{tikzpicture}[scale=24/100,font=\tiny]
\fill[p1col1] (0,4) rectangle (1,0);
\fill[p1col4] (1,1) rectangle (4,0);
\fill[p1col2] (1,4) rectangle (2,1);
\fill[p1col3] (2,4) rectangle (4,1);
\draw (0.50,3.50) node{1};
\draw (1.50,2.50) node{2};
\draw (2.50,1.50) node{3};
\draw (3.50,0.50) node{4};
\draw (0,4)--(4,0);
\end{tikzpicture}
} & $\lvec{J}(R,4,1)$ & \biv{}{}{}{} & \biv{}{}{}{4} & & 14 & \raisebox{-6pt}{\begin{tikzpicture}[scale=24/100,font=\tiny]
\fill[p1col3] (0,2) rectangle (3,0);
\fill[p1col2] (0,3) rectangle (3,2);
\fill[p1col4] (3,3) rectangle (4,0);
\fill[p1col1] (0,4) rectangle (4,3);
\draw (0.50,3.50) node{1};
\draw (1.50,2.50) node{2};
\draw (2.50,1.50) node{3};
\draw (3.50,0.50) node{4};
\draw (0,4)--(4,0);
\end{tikzpicture}
} & $\rvec{J}(R,4,1)$ & \biv{}{}{}{} & \biv{}{}{}{4} \\
4  & \raisebox{-6pt}{\begin{tikzpicture}[scale=24/100,font=\tiny]
\fill[p1col4] (0,1) rectangle (4,0);
\fill[p1col1] (0,4) rectangle (1,1);
\fill[p1col2] (1,4) rectangle (2,1);
\fill[p1col3] (2,4) rectangle (4,1);
\draw (0.50,3.50) node{1};
\draw (1.50,2.50) node{2};
\draw (2.50,1.50) node{3};
\draw (3.50,0.50) node{4};
\draw (0,4)--(4,0);
\end{tikzpicture}
} & $\lvec{J}(R,3,1)$ & \biv{}{}{}{$\dirr$} & \biv{}{}{3}{3} & & 15 & \raisebox{-6pt}{\begin{tikzpicture}[scale=24/100,font=\tiny]
\fill[p1col3] (0,2) rectangle (3,0);
\fill[p1col2] (0,3) rectangle (3,2);
\fill[p1col1] (0,4) rectangle (3,3);
\fill[p1col4] (3,4) rectangle (4,0);
\draw (0.50,3.50) node{1};
\draw (1.50,2.50) node{2};
\draw (2.50,1.50) node{3};
\draw (3.50,0.50) node{4};
\draw (0,4)--(4,0);
\end{tikzpicture}
} & $\rvec{J}(R,3,1)$ & \biv{}{}{}{$\dirl$} & \biv{}{}{3}{3} \\
5  & \raisebox{-6pt}{\begin{tikzpicture}[scale=24/100,font=\tiny]
\fill[p1col4] (0,1) rectangle (4,0);
\fill[p1col1] (0,4) rectangle (1,1);
\fill[p1col3] (1,2) rectangle (4,1);
\fill[p1col2] (1,4) rectangle (4,2);
\draw (0.50,3.50) node{1};
\draw (1.50,2.50) node{2};
\draw (2.50,1.50) node{3};
\draw (3.50,0.50) node{4};
\draw (0,4)--(4,0);
\end{tikzpicture}
} & $\rvec{J}(R,4,1)$ & \biv{}{}{}{} & \biv{}{}{}{4} & & 16 & \raisebox{-6pt}{\begin{tikzpicture}[scale=24/100,font=\tiny]
\fill[p1col2] (0,3) rectangle (2,0);
\fill[p1col3] (2,3) rectangle (3,0);
\fill[p1col1] (0,4) rectangle (3,3);
\fill[p1col4] (3,4) rectangle (4,0);
\draw (0.50,3.50) node{1};
\draw (1.50,2.50) node{2};
\draw (2.50,1.50) node{3};
\draw (3.50,0.50) node{4};
\draw (0,4)--(4,0);
\end{tikzpicture}
} & $\lvec{J}(R,4,1)$ & \biv{}{}{}{} & \biv{}{}{}{4} \\
6  & \raisebox{-6pt}{\begin{tikzpicture}[scale=24/100,font=\tiny]
\fill[p1col1] (0,4) rectangle (1,0);
\fill[p1col4] (1,1) rectangle (4,0);
\fill[p1col3] (1,2) rectangle (4,1);
\fill[p1col2] (1,4) rectangle (4,2);
\draw (0.50,3.50) node{1};
\draw (1.50,2.50) node{2};
\draw (2.50,1.50) node{3};
\draw (3.50,0.50) node{4};
\draw (0,4)--(4,0);
\end{tikzpicture}
} & $\rvec{J}(R,4,1)$ & \biv{}{}{}{} & \biv{}{}{}{4} & & 17 & \raisebox{-6pt}{\begin{tikzpicture}[scale=24/100,font=\tiny]
\fill[p1col2] (0,3) rectangle (2,0);
\fill[p1col3] (2,3) rectangle (3,0);
\fill[p1col4] (3,3) rectangle (4,0);
\fill[p1col1] (0,4) rectangle (4,3);
\draw (0.50,3.50) node{1};
\draw (1.50,2.50) node{2};
\draw (2.50,1.50) node{3};
\draw (3.50,0.50) node{4};
\draw (0,4)--(4,0);
\end{tikzpicture}
} & $\lvec{J}(R,4,1)$ & \biv{}{}{}{} & \biv{}{}{}{4} \\
7  & \raisebox{-6pt}{\begin{tikzpicture}[scale=24/100,font=\tiny]
\fill[p1col1] (0,4) rectangle (1,0);
\fill[p1col3] (1,2) rectangle (3,0);
\fill[p1col4] (3,2) rectangle (4,0);
\fill[p1col2] (1,4) rectangle (4,2);
\draw (0.50,3.50) node{1};
\draw (1.50,2.50) node{2};
\draw (2.50,1.50) node{3};
\draw (3.50,0.50) node{4};
\draw (0,4)--(4,0);
\end{tikzpicture}
} & $\rvec{J}(R,4,1)$ & \biv{}{}{}{} & \biv{}{}{}{4} & & 18 & \raisebox{-6pt}{\begin{tikzpicture}[scale=24/100,font=\tiny]
\fill[p1col2] (0,3) rectangle (2,0);
\fill[p1col4] (2,1) rectangle (4,0);
\fill[p1col3] (2,3) rectangle (4,1);
\fill[p1col1] (0,4) rectangle (4,3);
\draw (0.50,3.50) node{1};
\draw (1.50,2.50) node{2};
\draw (2.50,1.50) node{3};
\draw (3.50,0.50) node{4};
\draw (0,4)--(4,0);
\end{tikzpicture}
} & $\lvec{J}(R,4,1)$ & \biv{}{}{}{} & \biv{}{}{}{4} \\
8  & \raisebox{-6pt}{\begin{tikzpicture}[scale=24/100,font=\tiny]
\fill[p1col1] (0,4) rectangle (1,0);
\fill[p1col3] (1,2) rectangle (3,0);
\fill[p1col2] (1,4) rectangle (3,2);
\fill[p1col4] (3,4) rectangle (4,0);
\draw (0.50,3.50) node{1};
\draw (1.50,2.50) node{2};
\draw (2.50,1.50) node{3};
\draw (3.50,0.50) node{4};
\draw (0,4)--(4,0);
\end{tikzpicture}
} & $\lvec{J}(R,3,1)$ & \biv{}{}{}{$\dirl$} & \biv{}{}{3}{3} & & 19 & \raisebox{-6pt}{\begin{tikzpicture}[scale=24/100,font=\tiny]
\fill[p1col4] (0,1) rectangle (4,0);
\fill[p1col2] (0,3) rectangle (2,1);
\fill[p1col3] (2,3) rectangle (4,1);
\fill[p1col1] (0,4) rectangle (4,3);
\draw (0.50,3.50) node{1};
\draw (1.50,2.50) node{2};
\draw (2.50,1.50) node{3};
\draw (3.50,0.50) node{4};
\draw (0,4)--(4,0);
\end{tikzpicture}
} & $\rvec{J}(R,3,1)$ & \biv{}{}{}{$\dirr$} & \biv{}{}{3}{3} \\
9  & \raisebox{-6pt}{\begin{tikzpicture}[scale=24/100,font=\tiny]
\fill[p1col3] (0,2) rectangle (3,0);
\fill[p1col1] (0,4) rectangle (1,2);
\fill[p1col2] (1,4) rectangle (3,2);
\fill[p1col4] (3,4) rectangle (4,0);
\draw (0.50,3.50) node{1};
\draw (1.50,2.50) node{2};
\draw (2.50,1.50) node{3};
\draw (3.50,0.50) node{4};
\draw (0,4)--(4,0);
\end{tikzpicture}
} & $\lvec{J}(R,4,1)$ & \biv{}{}{$\dirr$}{} & \biv{}{2}{2}{4} & & 20 & \raisebox{-6pt}{\begin{tikzpicture}[scale=24/100,font=\tiny]
\fill[p1col4] (0,1) rectangle (4,0);
\fill[p1col2] (0,3) rectangle (2,1);
\fill[p1col1] (0,4) rectangle (2,3);
\fill[p1col3] (2,4) rectangle (4,1);
\draw (0.50,3.50) node{1};
\draw (1.50,2.50) node{2};
\draw (2.50,1.50) node{3};
\draw (3.50,0.50) node{4};
\draw (0,4)--(4,0);
\end{tikzpicture}
} & $\rvec{J}(R,4,1)$ & \biv{}{}{$\dirl$}{} & \biv{}{2}{1}{4} \\
10 & \raisebox{-6pt}{\begin{tikzpicture}[scale=24/100,font=\tiny]
\fill[p1col3] (0,2) rectangle (3,0);
\fill[p1col1] (0,4) rectangle (1,2);
\fill[p1col4] (3,2) rectangle (4,0);
\fill[p1col2] (1,4) rectangle (4,2);
\draw (0.50,3.50) node{1};
\draw (1.50,2.50) node{2};
\draw (2.50,1.50) node{3};
\draw (3.50,0.50) node{4};
\draw (0,4)--(4,0);
\end{tikzpicture}
} & $\lvec{J}(R,4,2)$ & \biv{}{}{}{} & \biv{}{}{}{4} & & 21 & \raisebox{-6pt}{\begin{tikzpicture}[scale=24/100,font=\tiny]
\fill[p1col2] (0,3) rectangle (2,0);
\fill[p1col1] (0,4) rectangle (2,3);
\fill[p1col4] (2,1) rectangle (4,0);
\fill[p1col3] (2,4) rectangle (4,1);
\draw (0.50,3.50) node{1};
\draw (1.50,2.50) node{2};
\draw (2.50,1.50) node{3};
\draw (3.50,0.50) node{4};
\draw (0,4)--(4,0);
\end{tikzpicture}
} & $\rvec{J}(R,4,2)$ & \biv{}{}{}{} & \biv{}{}{}{4} \\
11 & \raisebox{-6pt}{\begin{tikzpicture}[scale=24/100,font=\tiny]
\fill[p1col4] (0,1) rectangle (4,0);
\fill[p1col3] (0,2) rectangle (4,1);
\fill[p1col1] (0,4) rectangle (1,2);
\fill[p1col2] (1,4) rectangle (4,2);
\draw (0.50,3.50) node{1};
\draw (1.50,2.50) node{2};
\draw (2.50,1.50) node{3};
\draw (3.50,0.50) node{4};
\draw (0,4)--(4,0);
\end{tikzpicture}
} & $\lvec{J}(R,2,1)$ & \biv{}{}{}{$\dirr$} & \biv{}{}{3}{2} & & 22 & \raisebox{-6pt}{\begin{tikzpicture}[scale=24/100,font=\tiny]
\fill[p1col2] (0,3) rectangle (2,0);
\fill[p1col1] (0,4) rectangle (2,3);
\fill[p1col3] (2,4) rectangle (3,0);
\fill[p1col4] (3,4) rectangle (4,0);
\draw (0.50,3.50) node{1};
\draw (1.50,2.50) node{2};
\draw (2.50,1.50) node{3};
\draw (3.50,0.50) node{4};
\draw (0,4)--(4,0);
\end{tikzpicture}
} & & \biv{}{}{}{$\dirl$} & \biv{}{}{3}{1} \\
\end{tabular}
\end{table}

Specifically, the jump directions are maintained by an array $o=(o_1,\ldots,o_n)$, where $o_j=\dirl$ means that rectangle~$r_j$ performs a left jump in the next step, and $o_j=\dirr$ means that rectangle~$r_j$ performs a right jump in the next step (line~M4).
All sub-rectangulations of the initial rectangulation~$\idrect$ are right-based, so the initial jump directions are $o_j=\dirl$ for $j=1,\ldots,n$ (line~M1).
Whenever rectangle~$r_j$ jumps left and reaches the first insertion point, which means that $R^{[j]}$ is bottom-based, or if it jumps right and reaches the last insertion point, which means that $R^{[j]}$ is right-based, then the jump direction~$o_j$ is reversed (line~M5).

The array $s=(s_1,\ldots,s_n)$ is used to determine which rectangle jumps in each step.
Specifically, the last entry~$s_n$ determines the rectangle that jumps in the current iteration (line~M3).
This array simulates a stack in a loopless fashion, following an idea first used by Bitner, Ehrlich, and Reingold~\cite{MR0424386}.
The stack is initialized by $(s_1,\ldots,s_n)=(1,2,\ldots,n)$ (line~M1), with $s_n$ being the value on the top of the stack.
The stack is popped (by the instruction $s_j\gets s_{j-1}$ in line~M5) when rectangle~$r_j$ reaches its first or last insertion point in this step, meaning that this rectangle is not eligible to jump in the next step, but becomes eligible again after the next step, which is achieved by pushing the value~$j$ on the stack again (by the instructions $s_n\gets n$ and $s_{j-1}\gets j-1$ in line~M5).

Table~\ref{tab:algo} shows the execution of Algorithm~\Mrect{} with input~$\cC_4=\cD_4$ being the set of all diagonal rectangulations with 4~rectangles.

\begin{theorem}
\label{thm:algo-rect}
For any zigzag set of rectangulations $\cC_n\seq\cR_n$, Algorithm~\Mrect{} visits every rectangulation from~$\cC_n$ exactly once, in the order $\Jrectm(\cC_n)$ defined by Algorithm~\Jrect{}.
\end{theorem}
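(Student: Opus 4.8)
The plan is to prove the theorem by induction on $n$, showing that Algorithm~\Mrect{} run on a zigzag set $\cC_n$ reproduces, step by step, the sequence that Algorithm~\Jrect{} produces, the only difference being that the expensive ``has this rectangulation been visited before?'' test of Algorithm~\Jrect{} is replaced by the local bookkeeping in the arrays $o$ and $s$. Since Theorem~\ref{thm:jump-rect} is available, I treat $\Jrectm(\cC_n)$ as fully understood through the ordered-tree description of Section~\ref{sec:tree}: writing $\Jrectm(\cC_{n-1})=(P_1,\ldots,P_m)$, the sequence $\Jrectm(\cC_n)$ is obtained by replacing each $P_t$ with the block $B(P_t)$ consisting of the children of $P_t$ lying in $\cC_n$, listed in increasing or decreasing order of insertion index, the directions alternating strictly so that consecutive blocks meet at a matching extremal child (either both at the bottom-based child $c_1$ or both at the right-based child $c_\nu$), with the first block decreasing because $\idrect^{[n]}=c_\nu(\idrect^{[n-1]})$ is right-based. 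The alternation is forced: two consecutive entries of $\Jrectm(\cC_n)$ lying in different blocks differ in a jump of a rectangle of index $<n$, and the second clause of the definition of a jump then makes the rectangle $r_n$ sit at the same extreme of both. So inside a block the active rectangle is $r_n$, moving by a minimal jump to the next child of a fixed parent in the scanning direction, while at the transition from $B(P_t)$ to $B(P_{t+1})$ the rectangle that moves is exactly the rectangle $r_j$, $j<n$, distinguishing $P_t$ from $P_{t+1}$ in $\Jrectm(\cC_{n-1})$, its jump being the one at level $n-1$ lifted via that same clause.

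The core of the argument is a loop invariant that I would establish for the state of Algorithm~\Mrect{} every time control reaches line~M2: (i) $R$ is the next not-yet-visited entry of $\Jrectm(\cC_n)$; (ii) for every $j$ the entry $o_j$ is the direction of the next jump that $r_j$ will perform in the ordered-tree traversal, equivalently $R^{[j]}$ currently sits at the far end of the current monotone scan of $r_j$, so $o_j$ must flip precisely when $r_j$ reaches a bottom-based configuration while $o_j=\dirl$ or a right-based configuration while $o_j=\dirr$; and (iii) the array $s$ encodes, in the loopless stack discipline of Bitner, Ehrlich and Reingold~\cite{MR0424386} spelled out after the algorithm, the set of indices of rectangles currently eligible to jump, with $s_n$ naming the eligible rectangle of largest index. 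Granting the invariant, correctness is immediate: by the greedy rule of Algorithm~\Jrect{} together with the ordered-tree picture, $s_n$ is exactly the rectangle that jumps next, $o_{s_n}$ is the right direction, and the minimal jump of $r_{s_n}$ in that direction lands on the correct next entry of $\Jrectm(\cC_n)$ --- inside a block this is a tautology, and at a block boundary it is the lifted jump above, whose minimality w.r.t.\ $\cC_n$ coincides with minimality w.r.t.\ $\cC_{n-1}$ since both unpeel, through the definition of a jump, to the absence of an intermediate child of a common ancestor lying in the class. Finally, termination at line~M3 (i.e.\ $j=s_n=1$) occurs exactly when the stack has been drained, which by the same discipline happens precisely once all of $\Jrectm(\cC_n)$ has been output, and no ``ambiguous direction'' ever arises, consistently with Theorem~\ref{thm:jump-rect}.

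It remains to check that one pass through lines M3--M5 preserves the invariant, and this is where the induction on $n$ enters. If $s_n=n$ we are inside a block: $r_n$ performs a minimal jump in direction $o_n$, which by (ii) is the move to the next child of $R^{[n-1]}$ lying in $\cC_n$; line~M5 sets $s_n\gets n$ and leaves the rest of the stack untouched unless $R^{[n]}$ has reached its extreme in direction $o_n$, in which case $o_n$ is flipped and the pop $s_n\gets s_{n-1}$, $s_{n-1}\gets n-1$ hands control to the rectangle that Algorithm~\Mrect{} run on $\cC_{n-1}$ would move next. If instead $s_n=j<n$ we are at a block boundary, and the key observation is that the low coordinates $o_1,\ldots,o_{n-1}$ and $s_1,\ldots,s_{n-1}$ evolve exactly as in the run of Algorithm~\Mrect{} on the zigzag set $\cC_{n-1}$; the block boundaries of the length-$n$ run are therefore in bijection with the steps of the length-$(n-1)$ run, and the inductive hypothesis makes the latter correct, so the transitions of $\Jrectm(\cC_n)$ are traversed correctly; combined with the within-block analysis this yields the invariant and hence the theorem. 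One also notes that after a jump of $r_j$ with $j<n$ the rectangles $r_{j+1},\ldots,r_n$ (all of which had reached an extreme, since they had been popped from the stack) are automatically returned to the same type of extreme by the ``both bottom-based or both right-based'' clause, which is exactly why their stored directions $o_{j+1},\ldots,o_n$ need no resetting in line~M5.

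I expect the main obstacle to be precisely this bookkeeping: proving rigorously that the Bitner--Ehrlich--Reingold stack models the ``carry'' of the variable-arity mixed-radix counter encoded by the ordered tree (so that $s_n$ always names the right rectangle and the run terminates at the right moment), and that no entry of $o$ ever goes stale. An alternative, and the route matching this paper's overall strategy, is to defer everything to Section~\ref{sec:proofs}: encode rectangulations as permutations following Reading~\cite{MR2864445}, verify that jumps correspond to the minimum-change operation of the permutation-language framework, and deduce Theorem~\ref{thm:algo-rect} directly from the correctness of the memoryless algorithm of \cite{perm_series_i,perm_series_ii}, which already has exactly this structure and whose analysis disposes of the stack bookkeeping once and for all.
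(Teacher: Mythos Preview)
Your proposal is correct, and you have in fact identified both routes. The paper takes your ``alternative'': in the proof of Theorem~\ref{thm:jump-rect} it is shown that $\Jrectm(\cC_n)=\gamma(J(L_n))$ for a zigzag language~$L_n$ of 2-clumped permutations, and Theorem~\ref{thm:algo-rect} then follows in two lines from Theorem~\ref{thm:algo}, the correctness of the memoryless Algorithm~M on permutation languages. One small correction: Theorem~\ref{thm:algo} is not imported from~\cite{perm_series_i,perm_series_ii} but proved in this paper, and its proof is precisely the Bitner--Ehrlich--Reingold bookkeeping you anticipate. The paper makes this rigorous by defining, for every $\pi$ in $J(L_n)$, a sequence $s_n^\pi$ recursively via~\eqref{eq:spi1}--\eqref{eq:spi2}, establishing its structural properties in Lemma~\ref{lem:sn-prop}~(a)--(g), and then verifying as loop invariants that the arrays $o$ and $s$ of Algorithm~M track the jump direction and~$s_n^\pi$ respectively.

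Your primary route---carrying out the same invariant analysis directly on rectangulations---would work, and your invariants (i)--(iii) match the paper's invariants (A)--(B) for Algorithm~M. The trade-off is that the paper's detour through permutations factors the argument: the stack bookkeeping is done once in the abstract setting (and is reusable across the paper series), while the rectangulation-specific content is isolated in Lemma~\ref{lem:gamma-jump}. Your direct approach would be more self-contained for this one theorem but would have to reprove Lemma~\ref{lem:sn-prop} in rectangulation language, with no real simplification since the ``both bottom-based or both right-based'' clause in the definition of a jump mirrors exactly the ``value to the left/right is bigger'' test in Algorithm~M's line~M5.
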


The proof of Theorem~\ref{thm:algo-rect} is provided in Section~\ref{sec:proofs}.

To make meaningful statements about the running time of Algorithm~\Mrect{}, we need to specify the data structures used to represent the current rectangulation~$R$, and the operations on this data structure to perform jumps in line~M4 and to check the bottom-based and right-based property in line~M5.
Most importantly, we will develop oracles which efficiently compute the next minimal jump w.r.t.~$\cC_n$ for some interesting zigzag sets~$\cC_n$.
One should think of~$\cC_n$ here as a class of rectangulations specified by some properties or forbidden patterns, such as `diagonal guillotine rectangulations', and not as a large precomputed set of rectangulations.
All of these details are described in the following sections, and they are part of our C++ implementation provided in~\cite{cos_rect}.

\section{Implementation details}
\label{sec:implementation}

In the following we describe the data structures we use to represent and manipulate generic rectangulations, and the efficient implementation of jump operations using those data structures.

\subsection{Data structures}
\label{sec:data}

We represent a generic rectangulation with $n$ rectangles as follows; see Figure~\ref{fig:data}:
Rectangles are stored in the variables $r_1,\ldots,r_n$, indexed by the reverse deletion order described in Section~\ref{sec:deletion} (recall Figure~\ref{fig:numbering}).
Vertices and edges are stored in variables $v_1,\dots,v_{2n+2}$ and $e_1,\ldots,e_{3n+1}$, respectively (indexed in no particular order).

\begin{figure}
\setlength{\tabcolsep}{2pt}
\renewcommand{\arraystretch}{0.9}
\includegraphics{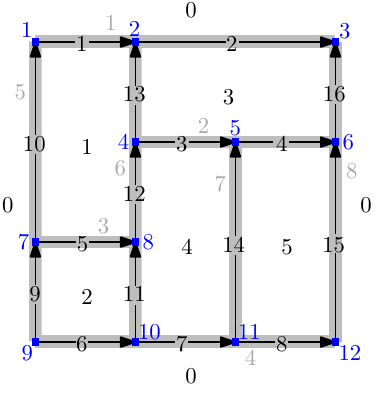}
\begin{tabular}{cccc}
\parbox{5cm}{
\begin{tabular}[b]{c|llllllll}
$e_i$ & \rotatebox[origin=l]{90}{\dir} & \rotatebox[origin=l]{90}{\etail} & \rotatebox[origin=l]{90}{\ehead} & \rotatebox[origin=l]{90}{\eprev} & \rotatebox[origin=l]{90}{\enext} & \rotatebox[origin=l]{90}{\eleft} & \rotatebox[origin=l]{90}{\eright} & \rotatebox[origin=l]{90}{\ewall} \\ \hline
1 & $\dirr$ & 1 & 2 & 0 & 2 & 0 & 1 & 1 \\
2 & $\dirr$ & 2 & 3 & 1 & 0 & 0 & 3 & 1 \\
3 & $\dirr$ & 4 & 5 & 0 & 4 & 3 & 4 & 2 \\
4 & $\dirr$ & 5 & 6 & 3 & 0 & 3 & 5 & 2 \\
5 & $\dirr$ & 7 & 8 & 0 & 0 & 1 & 2 & 3 \\
6 & $\dirr$ & 9 & 10 & 0 & 7 & 2 & 0 & 4 \\
7 & $\dirr$ & 10 & 11 & 6 & 8 & 4 & 0 & 4 \\
8 & $\dirr$ & 11 & 12 & 7 & 0 & 5 & 0 & 4 \\
9 & $\diru$ & 9 & 7 & 0 & 10 & 0 & 2 & 5 \\
10 & $\diru$ & 7 & 1 & 9 & 0 & 0 & 1 & 5 \\
11 & $\diru$ & 10 & 8 & 0 & 12 & 2 & 4 & 6 \\
12 & $\diru$ & 8 & 4 & 11 & 13 & 1 & 4 & 6 \\
13 & $\diru$ & 4 & 2 & 12 & 0 & 1 & 3 & 6 \\
14 & $\diru$ & 11 & 5 & 0 & 0 & 4 & 5 & 7 \\
15 & $\diru$ & 12 & 6 & 0 & 16 & 5 & 0 & 8 \\
16 & $\diru$ & 6 & 3 & 15 & 0 & 3 & 0 & 8 \\
\end{tabular}
}
&
\parbox{3.5cm}{
\begin{tabular}[b]{c|lllll}
$v_i$ & \rotatebox[origin=l]{90}{\vnorth} & \rotatebox[origin=l]{90}{\veast} & \rotatebox[origin=l]{90}{\vsouth} & \rotatebox[origin=l]{90}{\vwest} & \rotatebox[origin=l]{90}{\vtype} \\ \hline
1 & 0 & 1 & 10 & 0 & 0 \\
2 & 0 & 2 & 13 & 1 & $\bottomT$ \\
3 & 0 & 0 & 16 & 2 & 0 \\
4 & 13 & 3 & 12 & 0 & $\rightT$ \\
5 & 0 & 4 & 14 & 3 & $\bottomT$ \\
6 & 16 & 0 & 15 & 4 & $\leftT$ \\
7 & 10 & 5 & 9 & 0 & $\rightT$ \\
8 & 12 & 0 & 11 & 5 & $\leftT$ \\
9 & 9 & 6 & 0 & 0 & 0 \\
10 & 11 & 7 & 0 & 6 & $\topT$ \\
11 & 14 & 8 & 0 & 7 & $\topT$ \\
12 & 15 & 0 & 0 & 8 & 0 \\
\end{tabular}
}
&
\parbox{2.7cm}{
\begin{tabular}[b]{c|ll}
$w_i$ & \rotatebox[origin=l]{90}{\wfirst} & \rotatebox[origin=l]{90}{\wlast} \\ \hline
1 & 1 & 3 \\
2 & 4 & 6 \\
3 & 7 & 8 \\
4 & 9 & 12 \\
5 & 9 & 1 \\
6 & 10 & 2 \\
7 & 11 & 5 \\
8 & 12 & 3 \\
\end{tabular}
}
&
\parbox{3cm}{
\begin{tabular}[b]{c|lllll}
$r_i$ & \rotatebox[origin=l]{90}{\rne} & \rotatebox[origin=l]{90}{\rse} & \rotatebox[origin=l]{90}{\rsw} & \rotatebox[origin=l]{90}{\rnw} \\ \hline
1 & 2 & 8 & 7 & 1 \\
2 & 8 & 10 & 9 & 7 \\
3 & 3 & 6 & 4 & 2 \\
4 & 5 & 11 & 10 & 4 \\
5 & 6 & 12 & 11 & 5 \\
\end{tabular}
}
\end{tabular}
\caption{Data structures used to represent a generic rectangulation with $n=5$ rectangles.
Edges are labeled black, and walls are labeled gray.}
\label{fig:data}
\end{figure}

Each vertex~$v$ points to the edges incident to it in the four directions by $v.\vnorth$, $v.\veast$, $v.\vsouth$, and $v.\vwest$.
Some of these can be~0, indicating that no edge is incident.
This information determines the type~$v.\vtype$, which is one of $\bottomT$, $\rightT$, $\topT$, $\leftT$, or 0 at the corner vertices of the rectangulation.
We give all edges a default orientation from left to right, or from bottom to top.
The $\dir$ entry of each edge~$e$ specifies its direction, which is either $e.\dir=\dirr$ for a horizontal edge or $e.\dir=\diru$ for a vertical edge.
Each edge~$e$ points to its two end vertices, specifically to its tail by~$e.\etail$ and to its head by~$e.\ehead$ (with respect to the default orientation).
It also points to the previous and next edge, in the direction of its orientation, by $e.\eprev$ and $e.\enext$, respectively, which can be~0 if no such edge exists.
The rectangle to the left and right side of an edge~$e$, in the direction of its orientation, are stored in $e.\eleft$ and $e.\eright$, which can be~0 at the boundary of the rectangulation.
Each rectangle~$r$ points to its four corner vertices by $r.\rne$, $r.\rse$, $r.\rsw$, and $r.\rnw$ in the corresponding directions.

For some rectangulation classes it is useful to store information about walls, i.e., maximal sequences of edges between two vertices that are not corners of the rectangulation.
These are stored in $w_1,\ldots,w_{n+3}$, where for simplicity we also keep track of the four maximal line segments between corners of the rectangulation (which are not walls in our definition).
We also think of walls having a default orientation from left to right, or from bottom to top, and each wall~$w$ points to its first and last vertex by~$w.\wfirst$ and $w.\wlast$, respectively, in the direction of its orientation.
Moreover, each edge~$e$ has an entry $e.\ewall$ pointing to the wall that contains it.

\begin{remark}
The aforementioned data structures are natural in the sense that they also capture the dual graph of the rectangulation, i.e., the graph obtained by replacing every rectangle by a vertex, and by joining any two vertices that correspond to rectangles sharing a common edge.
This allows constructing the so-called \emph{transversal structure}~\cite{MR2509359} (also known as \emph{regular edge labeling}~\cite{MR1432861}), which is useful for computing a layout of the rectangulation; see Felsner's survey~\cite{MR3205156}.
Our data structures also allow to easily extract the twin binary tree representation of diagonal rectangulations described in~\cite{DBLP:journals/todaes/YaoCCG03}.
\end{remark}

We now use these data structures for implementing jumps efficiently.
Recall the conditions stated in Lemma~\ref{lem:jumps-flips} when a jump is one of the three flip operations shown in Figure~\ref{fig:flips}.
We refer to a jump as in~(a), (b) or~(c) in the lemma as a \emph{W-jump}, \emph{S-jump}, or \emph{T-jump}, respectively.
By these definitions, a W-jump is a special wall slide, an S-jump is a special simple flip, and a T-jump is a special T-flip.
We refer to W-, S- and T-jump as \emph{local} jumps collectively.
Moreover, a W-jump or T-jump between two horizontal insertion points, or between two vertical insertion points, is referred to as a \emph{horizontal or vertical W- or T-jump}, respectively.

Consider two rectangulations~$R$ and~$R'$ that differ in a jump of rectangle~$r_j$.
If the jump is an S-jump or a horizontal T-jump, we let $h(R,R')$ denote the number of horizontal insertion points of $I(R^{[j-1]})=I(R'^{[j-1]})$ that lie in the interior of the top side of rectangle~$r_j$ in~$R$ or~$R'$; see Figure~\ref{fig:jump-time}.
Similarly, if the jump is an S-jump or a vertical T-jump, we let $v(R,R')$ denote the number of vertical insertion points of $I(R^{[j-1]})=I(R'^{[j-1]})$ that lie in the interior of the left side of rectangle~$r_j$ in~$R$ or~$R'$.

\begin{figure}
\includegraphics[width=\textwidth]{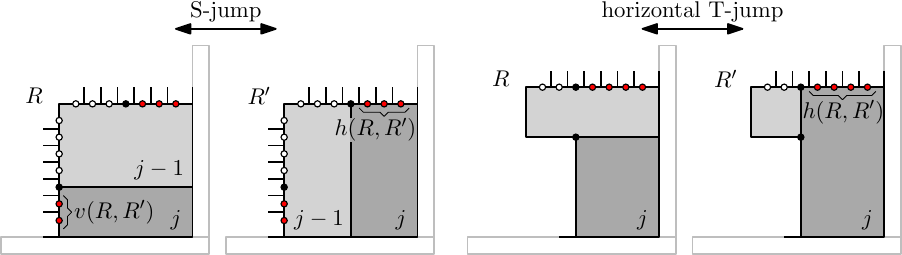}
\caption{Illustration of Lemma~\ref{lem:jump-time}.}
\label{fig:jump-time}
\end{figure}

\begin{lemma}
\label{lem:jump-time}
Local jumps can be implemented with the following time guarantees:
\begin{enumerate}[label=(\alph*),leftmargin=7mm, noitemsep, topsep=3pt plus 3pt]
\item A W-jump takes time~$\cO(1)$.
\item An S-jump between rectangulations $R$ and $R'$ takes time $\cO(h(R,R')+v(R,R')+1)$.
\item A horizontal T-jump between rectangulations $R$ and $R'$ takes time $\cO(h(R,R')+1)$ and a vertical T-jump takes time $\cO(v(R,R')+1)$.
\end{enumerate}
\end{lemma}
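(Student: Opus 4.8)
The plan is to implement each type of local jump directly on the data structures described in Section~\ref{sec:data}, and to bound the running time by the number of pointer updates required. For a W-jump (part~(a)), recall from Lemma~\ref{lem:jumps-flips}(a) that the two insertion points $q_k$ and $q_\ell$ are consecutive on a common wall of $P:=R^{[j-1]}$, so the change from $R$ to $R'$ amounts to transposing two neighboring $\rightT/\leftT$ vertices (or $\bottomT/\topT$ vertices) along that wall, together with the small wall segment between them and the edges incident to it. Concretely, this touches only a bounded number of vertices ($v.\vnorth,\ldots,v.\vwest$, $v.\vtype$), a bounded number of edges ($e.\etail$, $e.\ehead$, $e.\eprev$, $e.\enext$, $e.\eleft$, $e.\eright$, $e.\ewall$), and the corner pointers $r.\rne,\ldots,r.\rnw$ of the two or three rectangles meeting there. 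First I would enumerate precisely which records change in the two mirror-image cases (vertical wall vs.\ horizontal wall), and verify that the count is an absolute constant independent of~$n$; this gives the $\cO(1)$ bound in~(a).

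For the T-jump (part~(c)), recall from Lemma~\ref{lem:jumps-flips}(c) that $q_k$ lies on one wall of~$P$ and $q_\ell$ is the extreme insertion point on the adjacent wall; geometrically, rectangle~$r_j$ and the wall through the relevant T-joint swap which half of the wall merges with the stem~$t$. In terms of the layout this means one half-wall changes orientation so that it lies flush with the top (resp.\ left) side of~$r_j$; the edges along that side must be relinked to the new wall, their $\eleft/\eright$ pointers must be flipped, and their $\dir$ entries updated for a horizontal (resp.\ vertical) T-jump. The number of such edges is exactly the number of insertion points of~$P$ lying in the interior of that side, which by definition is $h(R,R')$ (resp.\ $v(R,R')$); hence a horizontal T-jump costs $\cO(h(R,R')+1)$ and a vertical one $\cO(v(R,R')+1)$, where the $+1$ accounts for the constant-size bookkeeping at the two T-joints and at the corner pointers of the at most three rectangles involved. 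The main conceptual step here is to argue that \emph{only} edges along that one side of~$r_j$ need to be relinked, i.e.\ that the rest of the rectangulation is combinatorially unchanged; this follows from the local nature of a T-flip (Figure~\ref{fig:flips}) together with the fact that along a wall of~$P$ the insertion points appear consecutively in $I(P)$ (as noted at the end of Section~\ref{sec:jumps}), so the affected side is an identifiable contiguous block.

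For the S-jump (part~(b)), Lemma~\ref{lem:jumps-flips}(b) tells us $q_k$ lies on the last vertical wall and $q_\ell$ on the first horizontal wall of~$P$, so rectangle~$r_j$ changes from being attached by a horizontal wall to being attached by a vertical wall (a simple flip of the wall separating~$r_j$ from its neighbor). This is the superposition of the two one-dimensional updates above: the edges along the top side of~$r_j$ and along its left side both get relinked and reoriented, their $\eleft/\eright$ and $\dir$ entries flipped, a new separating wall created and the old one removed, plus constant-size updates at the corners and at the two rectangles sharing the wall. The edge counts along the two sides are $h(R,R')$ and $v(R,R')$ respectively, giving the bound $\cO(h(R,R')+v(R,R')+1)$. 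I expect the main obstacle to be purely expository rather than mathematical: one must carefully spell out, in each of the mirror-image subcases, exactly which records are modified and check that walls (the $w.\wfirst$, $w.\wlast$ pointers and the $e.\ewall$ back-pointers) are kept consistent, since walls are global objects that can shrink, grow, split, or merge under a flip. Once that case analysis is laid out, the time bounds fall out by simply tallying the updates against $h(R,R')$, $v(R,R')$, and a constant.
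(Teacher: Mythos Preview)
Your approach is the paper's: implement each local jump on the data structures of Section~\ref{sec:data} and bound the running time by counting pointer updates, with the only non-constant work being the walk along the top and/or left side of~$r_j$ (which contributes the $h(R,R')$ and $v(R,R')$ terms). The explicit implementations in Section~\ref{sec:jump-impl} confirm exactly this picture.

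There is, however, one observation the paper singles out as ``the crucial point'' and which your plan does not address. You repeatedly assume that only ``two or three rectangles'' are involved and that all remaining bookkeeping is constant-size, but the jump is carried out in the \emph{full} rectangulation~$R$, not just in~$R^{[j]}$; a priori the rectangles $r_{j+1},\ldots,r_n$ could also be adjacent to the flip region and require many updates. What makes this harmless is built into the recursive definition of a jump (Section~\ref{sec:jumps}): for every $k>j$ the sub-rectangulation~$R^{[k]}$ is bottom-based or right-based, so $r_k$ lies along the bottom or right boundary and meets the region where~$r_j$ moves in only~$\cO(1)$ places. Without invoking this, your ``constant-size bookkeeping'' claims are unsupported. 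A smaller inaccuracy: in a T-jump the edges along the top (resp.\ left) side of~$r_j$ do not change their $\dir$ or $\ewall$ entries; only their $\eright$ pointer is rewritten in the loop (see the while-loop in $\Tjumph$), while the single segment that actually rotates is handled separately in~$\cO(1)$.
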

Clearly, every jump can be performed as a sequence of local jumps, and then the time bounds given by Lemma~\ref{lem:jump-time} can be added up.

\begin{proof}
The time bounds follow from the number of incidences that change during local jumps.
The crucial point is that during a jump of rectangle~$r_j$ between~$R$ and~$R'$, all rectangles~$r_k$ for $k=j+1,\ldots,n$ are right-based or bottom-based in~$R^{[k]}$ and~$R'^{[k]}$, entailing that only constantly many incidences with such rectangles have to be modified.
\end{proof}

\subsection{Auxiliary functions}

In Section~\ref{sec:jump-impl} we provide implementations of local jumps with the runtime guarantees stated in Lemma~\ref{lem:jump-time}.
Before doing so, we introduce some auxiliary functions to add and remove edges from a rectangulation.
These auxiliary functions only update the incidences between edges, vertices and walls, but not the incidences between rectangles and any other objects and the type of vertices (this will be done separately later).

The following function $\eremoveh(\beta)$ removes the edge~$e_\beta$ together with its head vertex.

\begin{algo}{{\bfseries Function} $\eremoveh(\beta)$}{Remove $e_\beta$ and its head}
\begin{enumerate}[label={\bfseries \arabic*.}, leftmargin=6mm, noitemsep, topsep=3pt plus 3pt]
\item{}[Prepare] Set $\alpha\gets e_\beta.\eprev$, $\gamma\gets e_\beta.\enext$, $a\gets e_\beta.\etail$.
\item{}[Update edges/vertices]
If $\alpha\neq 0$, set $e_\alpha.\enext\gets \gamma$.
If $\gamma\neq 0$, set $e_\gamma.\eprev\gets \alpha$ and $e_\gamma.\etail\gets a$.
If $e_\beta.\dir=\dirr$, set $v_a.\veast\gets \gamma$.
Otherwise we have $e_\beta.\dir=\diru$ and set $v_a.\vnorth\gets \gamma$.
\item{}[Update wall] Set $x\gets e_\beta.\ewall$.
If $e_\beta.\ehead = w_x.\wlast$, set $w_x.\wlast\gets a$.
\end{enumerate}
\end{algo}
After defining some auxiliary variables in the first step, the function $\eremoveh(\beta)$ updates the incidences between edges and vertices in the second step, and the incidences between walls and edges in the third step.
We also define an analogous function $\eremovet(\beta)$ that removes~$e_\beta$ and its tail instead of its head.
For details, see our C++ implementation~\cite{cos_rect}.

The following two functions $\einsertb(\beta,a,\gamma)$ and $\einserta(\alpha,a,\beta)$ insert the edge $e_\beta$ with head~$v_a$ or tail~$v_a$, respectively, before or after the edge~$e_\gamma$ or $e_\alpha$.

\begin{algo}{{\bfseries Function} $\einsertb(\beta,a,\gamma)$}{Insertion of $e_\beta$ with head~$v_a$ before $e_\gamma$}
\begin{enumerate}[label={\bfseries \arabic*.}, leftmargin=6mm, noitemsep, topsep=3pt plus 3pt]
\item{}[Prepare] Set $\alpha\gets e_\gamma.\eprev$ and $b\gets e_\gamma.\etail$.
\item{}[Update edges/vertices] Set $e_\beta.\etail \gets b$, $e_\beta.\ehead \gets a$, $e_\beta.\eprev\gets\alpha$, $e_\beta.\enext\gets\gamma$, $e_\gamma.\etail \gets a$, $e_\gamma.\eprev\gets\beta$, and if $\alpha\neq 0$ set $e_\alpha.\enext\gets\beta$.
If $e_\gamma.\dir=\dirr$, set $e_\beta.\dir\gets\dirr$, $v_a.\vwest \gets \beta$, $v_a.\veast \gets \gamma$ and $v_b.\veast \gets \beta$.
Otherwise we have $e_\gamma.\dir=\diru$ and set $e_\beta.\dir \gets \diru$, $v_a.\vsouth\gets \beta$, $v_a.\vnorth \gets \gamma$ and $v_b.\vnorth \gets \beta$.
\item{}[Update wall] Set $e_\beta.\ewall\gets e_\gamma.\ewall$.
\end{enumerate}
\end{algo}

\begin{algo}{{\bfseries Function} $\einserta(\alpha,a,\beta)$}{Insertion of $e_\beta$ with tail~$v_a$ after $e_\alpha$}
\begin{enumerate}[label={\bfseries \arabic*.}, leftmargin=6mm, noitemsep, topsep=3pt plus 3pt]
\item{}[Prepare] Set $\gamma\gets e_\alpha.\enext$ and $b\gets e_\alpha.\ehead$.
\item{}[Update edges/vertices] Set $e_\beta.\etail \gets a$, $e_\beta.\ehead \gets b$, $e_\beta.\eprev\gets\alpha$, $e_\beta.\enext\gets\gamma$, $e_\alpha.\ehead\gets a$, $e_\alpha.\enext\gets\beta$, and if $\gamma\neq 0$ set $e_\gamma.\eprev\gets\beta$.
If $e_\alpha.\dir=\dirr$, set $e_\beta.\dir \gets \dirr$, $v_a.\vwest \gets \alpha$, $v_a.\veast \gets \beta$ and $v_b.\vwest \gets \beta$.
Otherwise we have $e_\alpha.\dir=\diru$ and set $e_\beta.\dir \gets \diru$, $v_a.\vsouth \gets \alpha$, $v_a.\vnorth \gets \beta$ and $v_b.\vsouth \gets \beta$.
\item{}[Update wall] Set $e_\beta.\ewall\gets e_\alpha.\ewall$.
\end{enumerate}
\end{algo}

\subsection{Local jumps}
\label{sec:jump-impl}

Armed with these auxiliary functions, we now tackle the implementation of local jumps with the time guarantees stated in Lemma~\ref{lem:jump-time}.
Each of the functions $\Wjump(R,j,d,\alpha)$, $\Sjump(R,j,d,\alpha)$ and $\Tjump(R,j,d,\alpha)$ below takes as input the current rectangulation~$R$ in which the jump is performed, the index~$j$ of the rectangle~$r_j$ to be jumped, the direction $d \in \{\dirl,\dirr\}$ of the jump, and the index~$\alpha$ of the edge $e_\alpha$ which contains the insertion point of~$R^{[j-1]}$ that will become the top-left vertex of~$r_j$ after the jump.
In the pseudocode of these algorithms, all references to rectangles~$r_i$, edges~$e_i$, vertices~$v_i$ or walls~$w_i$ are with respect to the current rectangulation~$R$.

We first present the implementation of W-jumps.
For simplicity, we only show the implementation of left horizontal W-jumps in the function $\Wjumph(R,j,\dirl,\alpha)$ below; see Figure~\ref{fig:swtjump}~(a).
The implementation of right horizontal W-jumps, and of left and right vertical W-jumps in a function $\Wjumpv(R,j,d,\alpha)$ is very similar; we omit the details here.

\begin{algo}{{\bfseries Function} $\Wjumph(R,j,\dirl,\alpha)$}{left horizontal W-jump}
\begin{enumerate}[label={\bfseries \arabic*.}, leftmargin=6mm, noitemsep, topsep=3pt plus 3pt]
\item{}[Prepare] Set $a\gets r_j.\rnw$, $\beta\gets v_a.\vwest$ and $k\gets e_\alpha.\eleft$.
\item{}[Flip and update rectangles] Call $\eremoveh(\beta)$ and $\einserta(\alpha,a,\beta)$.
Then set $e_\beta.\eleft\!\gets\! k$ and $e_\beta.\eright\gets j$.
\end{enumerate}
\end{algo}
The running time of $\Wjumph(R,j,\dirl,\alpha)$ is clearly $\cO(1)$, as claimed in part~(a) of Lemma~\ref{lem:jump-time}.

We proceed with the implementation of S-jumps.
For simplicity, we only provide the implementation of left S-jumps in the function $\Sjump(R,j,\dirl,\alpha)$ below; see Figure~\ref{fig:swtjump}~(b).
The implementation of right S-jumps is very similar.

\begin{figure}
\includegraphics[page=1]{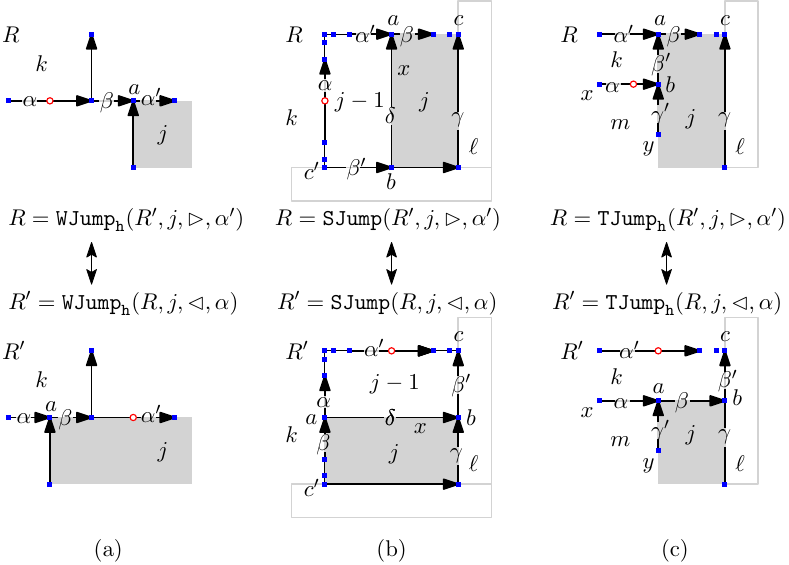}
\caption{Implementation of local jumps: (a) horizontal W-jumps; (b) S-jumps; (c) horizontal T-jumps.
Vertices are drawn as squares, insertion points as circles.
}
\label{fig:swtjump}
\end{figure}

\begin{algo}{{\bfseries Function} $\Sjump(R,j,\dirl,\alpha)$}{left S-jump}
\begin{enumerate}[label={\bfseries \arabic*.}, leftmargin=6mm, noitemsep, topsep=3pt plus 3pt]
\item{}[Prepare] Set $a\gets r_j.\rnw$, $b\gets r_j.\rsw$, $c\gets r_j.\rne$, $\alpha'\gets v_a.\vwest$, $\beta\gets v_a.\veast$, $\beta'\gets v_b.\vwest$, $\gamma\gets v_c.\vsouth$, $\delta\gets v_a.\vsouth$, $c'\gets e_{\beta'}.\etail$, $k\gets e_\alpha.\eleft$, $\ell\gets e_\gamma.\eright$ and $x\gets e_\delta.\ewall$.
\item{}[Flip] Call $\eremovet(\beta)$, $\eremoveh(\beta')$, $\einsertb(\beta,a,\alpha)$ and $\einserta(\gamma,b,\beta')$.
Then set $e_\delta.\dir\gets \dirr$, $e_\delta.\etail\gets a$, $e_\delta.\ehead\gets b$, $v_a.\veast\gets \delta$, $v_a.\vwest\gets 0$, $v_a.\vtype\gets\rightT$, $v_b.\veast\gets 0$, $v_b.\vwest\gets \delta$, $v_b.\vtype\gets\leftT$, $w_x.\wfirst\gets a$ and $w_x.\wlast\gets b$.
\item{}[Update rectangles]
Set $r_j.\rne\gets b$, $r_j.\rsw\gets c'$, $r_{j-1}.\rne\gets c$, and $r_{j-1}.\rsw\!\gets\! a$.
Set $\nu\gets v_c.\vwest$, and while $\nu\neq\alpha'$ repeat $e_\nu.\eright \gets j-1$ and $\nu\gets e_\nu.\eprev$.
Set $\nu\gets v_{c'}.\vnorth$, and while $\nu\neq\alpha$ repeat $e_\nu.\eright\gets j$ and $\nu\gets e_\nu.\enext$.
Also set $e_\beta.\eleft\gets k$, $e_\beta.\eright\gets j$, $e_{\beta'}.\eleft\gets j-1$ and $e_{\beta'}.\eright \gets \ell$.
\end{enumerate}
\end{algo}
Let $R'$ be the rectangulation obtained from~$R$ by one call of $\Sjump(R,j,\dirl,\alpha)$.
The running time of this call is $\cO(h(R,R')+v(R,R')+1)$, as claimed in part~(b) of Lemma~\ref{lem:jump-time}.
This time is incurred by the while-loops in step~3.
Specifically, the first while-loop is iterated exactly $h(R,R')$ times, and the second while-loop is iterated exactly $v(R,R')+1$ times.

We complete this section by presenting the implementation of T-jumps; see Figure~\ref{fig:swtjump}~(c).
For simplicity, we only provide the implementation of left horizontal T-jumps in the function $\Tjumph(R,j,\dirl,\alpha)$ below.
The implementation of right horizontal T-jumps, and of left and right vertical T-jumps in a function $\Tjumpv(R,j,d,\alpha)$ is very similar.

\begin{algo}{{\bfseries Function} $\Tjumph(R,j,\dirl,\alpha)$}{left horizontal T-jump}
\begin{enumerate}[label={\bfseries \arabic*.}, leftmargin=6mm, noitemsep, topsep=3pt plus 3pt]
\item{}[Prepare] Set $a\gets r_j.\rnw$, $b\gets e_\alpha.\ehead$, $c\gets r_j.\rne$, $\alpha'\gets v_a.\vwest$, $\beta\gets v_a.\veast$, $\beta'\gets v_a.\vsouth$, $\gamma\gets v_c.\vsouth$, $\gamma'\gets v_b.\vsouth$, $k\gets e_{\beta'}.\eleft$, $\ell\!\gets\! e_\gamma.\eright$, $m\!\gets\! e_\alpha.\eright$, $x\gets e_\alpha.\ewall$ and $y\gets e_{\gamma'}.\ewall$.
\item{}[Flip] Call $\eremovet(\beta)$, $\eremovet(\beta')$, $\einserta(\alpha,a,\beta)$ and $\einserta(\gamma,b,\beta')$.
Then set $e_\beta.\ehead\gets b$, $e_{\gamma'}.\ehead\gets a$, $v_a.\vsouth\gets \gamma'$, $v_b.\vwest\gets \beta$, $w_x.\wlast\gets b$ and $w_y.\wlast\gets a$.
\item{}[Update rectangles]
Set $r_j.\rne\gets b$, $r_k.\rne\gets c$ and $r_m.\rne\gets a$.
Set $\nu\gets v_c.\vwest$, and while $\nu\neq \alpha'$ repeat $e_\nu.\eright\gets k$ and $\nu\gets e_\nu.\eprev$.
Also set $e_\beta.\eleft\gets k$ and $e_{\beta'}.\eright\gets \ell$.
\end{enumerate}
\end{algo}
Let $R'$ be the rectangulation obtained from~$R$ by one call of $\Tjumph(R,j,\dirl,\alpha)$.
The running time of this call is $\cO(h(R,R')+1)$, as claimed in part~(c) of Lemma~\ref{lem:jump-time}.
This time is incurred by the while-loop in step~3, which is iterated exactly $h(R,R')+1$ times.

\section{Minimal jump oracles}
\label{sec:oracles}

A \emph{minimal jump oracle} is a function that is called in line~M4 of Algorithm~\Mrect{} to compute a jump in the current rectangulation~$R$ that is minimal respect to the given zigzag set of rectangulations~$\cC_n\seq\cR_n$.
In this section we specify such oracles for the zigzag sets~$\cC_n$ mentioned in Table~\ref{tab:rect}, which allows us to establish the runtime bounds for Algorithm~\Mrect{} stated in the last column of the table (except for block-aligned rectangulations, which are handled in Section~\ref{sec:Sequiv}).
A minimal jump oracle has the form~$\nextjump_{\cC_n}(R,j,d)$, and this function call performs in the current rectangulation~$R$ a jump of rectangle~$r_j$ in direction~$d$ that is minimal w.r.t.~$\cC_n$, and the function will modify~$R$ accordingly.
Depending on~$\cC_n$, our minimal jump oracles perform a suitable W-, S-, or T-jump, or a combination thereof, as implemented in the previous section.

\subsection{Generic rectangulations}
\label{sec:oracle-gen}

We first consider the case $\cC_n=\cR_n$ of generic rectangulations.
Given the current rectangulation~$R$, upon a jump of rectangle~$r_j$ in direction~$d$, \emph{every} insertion from $I(R^{[j-1]})$ is used, so we simply need to detect the next one.

By Lemma~\ref{lem:jumps-flips}, a W-jump occurs between any two consecutive (w.r.t.~$I(R^{[j-1]})$) insertion points belonging to the same horizontal or vertical group, an S-jump occurs between the last vertical insertion point and the first horizontal insertion point, and a T-jump occurs between the last insertion point of a group and the first insertion point of the next group, if both groups are vertical or horizontal.
Specifically, suppose there are $\lambda$ vertical groups and $\mu$ horizontal groups with cardinalities $g_k$, $k=1,\ldots,\lambda$, and $h_k$, $k=1,\ldots,\mu$, respectively in $I(R^{[j-1]})$ (note that $g_1=h_\mu=1$).
Then the jump sequence consisting of letters $\{W,S,T\}$ that specifies the types of jumps performed with rectangle~$r_j$ from the first to the last insertion point is
\begin{equation}
\label{eq:jump-seq-gen}
(TW^{g_2-1})(TW^{g_3-1})\cdots (TW^{g_\lambda-1})\,S\,(W^{h_1-1}T)(W^{h_2-1}T)\cdots(W^{h_{\mu-1}-1}T);
\end{equation}
see Figure~\ref{fig:next}~(a).
Of course, during Algorithm~\Mrect{}, these jump operations are not consecutive, but they are interleaved with the jump sequences of other rectangles~$r_k$, $k>j$.

The details are spelled out in the function $\nextjump_{\cR_n}(R,j,d)$.

\begin{algo}{$\nextjump_{\cR_n}(R,j,d)$}{Minimal jump oracle for generic rectangulations}
\begin{enumerate}[label={\bfseries N\arabic*.}, leftmargin=8mm, noitemsep, topsep=3pt plus 3pt]
\item{}[Prepare] Set $a\gets r_j.\rnw$.
If $d=\dirl$ and $v_a.\vtype=\bottomT$, set $\alpha\gets v_a.\vwest$, $\beta\gets v_a.\vsouth$, $b\gets e_\beta.\etail$ and $c\gets e_\alpha.\etail$ and goto~N2.
If $d=\dirr$ and $v_a.\vtype=\bottomT$, set $\alpha\gets v_a.\veast$, $b\gets e_\alpha.\ehead$ and goto~N3.
If $d=\dirr$ and $v_a.\vtype=\rightT$, set $\alpha\gets v_a.\vnorth$, $\beta\gets v_a.\veast$, $b\gets e_\beta.\ehead$ and $c\gets e_\alpha.\ehead$ and goto~N4.
If $d=\dirl$ and $v_a.\vtype=\rightT$, set $\alpha\gets v_a.\vsouth$, $b\gets e_\alpha.\etail$ and goto~N5.
\item{}[Horizontal left jump] If $v_c.\vtype=\topT$, set $\gamma\gets v_c.\vwest$ and call $\Wjumph(R,j,\dirl,\gamma)$.
Else if $v_b.\vtype=\leftT$, set $\gamma\gets v_b.\vwest$ and call $\Tjumph(R,j,\dirl,\gamma)$.
Otherwise we have $v_b.\vtype=\topT$, set $\gamma\gets v_c.\vsouth$ and call $\Sjump(R,j,\dirl,\gamma)$. Return.
\item{}[Horizontal right jump] If $v_b.\vtype=\topT$, set $\gamma\gets v_b.\veast$ and call $\Wjumph(R,j,\dirr,\gamma)$.
Otherwise we have $v_b.\vtype=\leftT$, set $k\gets e_\alpha.\eleft$, $c\gets r_k.\rnw$ and $\gamma\gets v_c.\veast$ and call $\Tjumph(R,j,\dirr,\gamma)$. Return.
\item{}[Vertical right jump] If $v_c.\vtype=\leftT$, set $\gamma\gets v_c.\vnorth$ and call $\Wjumpv(R,j,\dirr,\gamma)$.
Else if $v_b.\vtype=\topT$, set $\gamma\gets v_b.\vnorth$ and call $\Tjumpv(R,j,\dirr,\gamma)$.
Otherwise we have $v_b.\vtype=\leftT$, set $\gamma\gets v_c.\veast$ and call $\Sjump(R,j,\dirr,\gamma)$. Return.
\item{}[Vertical left jump] If $v_b.\vtype=\leftT$, set $\gamma\gets v_b.\vsouth$ and call $\Wjumpv(R,j,\dirl,\gamma)$.
Otherwise we have $v_b.\vtype=\topT$, set $k\gets e_\alpha.\eleft$, $c\gets r_k.\rnw$ and $\gamma\gets v_c.\vsouth$ and call $\Tjumpv(R,j,\dirl,\gamma)$. Return.
\end{enumerate}
\end{algo}
The four distinct cases treated in lines~N2--N4 come from the directions $d\in\{\dirl,\dirr\}$ and whether the jump is horizontal or vertical.
The latter condition is determined in line~N1 by querying the type of the top-left vertex of~$r_j$, which is either $\bottomT$ or~$\rightT$.
Note that the code in lines~N2 and~N4 is symmetric by reflecting all directions at the main diagonal.
The same holds for the code in lines~N3 and~N5.

\begin{lemma}
\label{lem:gen-time}
Consider a rectangulation~$P\in\cR_{n-1}$ with $\nu=\nu(P)$ insertion points.
Then calling $\nextjump_{\cR_n}(R,n,\dirr)$ exactly $\nu-1$ times with initial rectangulation $R=c_1(P)$, yields $c_i(P)$ for $i=1,\ldots,\nu$, and the total time for these calls is~$\cO(\nu)$.
An analogous statement holds for $\nextjump_{\cR_n}(R,n,\dirl)$.
\end{lemma}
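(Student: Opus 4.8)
The plan is to prove the correctness and the timing claim separately, both by appealing to the structure of the jump sequence~\eqref{eq:jump-seq-gen} together with the per-jump time bounds from Lemma~\ref{lem:jump-time}. For correctness, I would argue that each call of $\nextjump_{\cR_n}(R,n,\dirr)$ performs exactly the next jump of rectangle~$r_n$ in the order dictated by the linear ordering of the insertion points $I(P)$. Since $\cC_n=\cR_n$, every insertion point is used, so the minimal right jump from $c_k(P)$ is simply the jump to $c_{k+1}(P)$; thus I need to verify that the case analysis in steps N2--N5 correctly identifies which of a W-, T-, or S-jump is required to advance by one insertion point, and which edge $e_\gamma$ plays the role of the target edge $\alpha$ in the corresponding local-jump routine. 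This is done by matching the vertex types queried in the pseudocode against Lemma~\ref{lem:jumps-flips}: two consecutive insertion points in the same group give a W-jump (detected by a $\topT$- or $\leftT$-vertex, depending on orientation), the transition from the last insertion point of one vertical/horizontal group to the first of the next gives a T-jump, and the transition from the last vertical to the first horizontal group gives the unique S-jump. Starting from $R=c_1(P)$ (the bottom-based rectangulation, where $r_n$ sits on the first insertion point) and iterating $\nu-1$ times therefore walks through $c_1(P),c_2(P),\dots,c_\nu(P)$ in order, proving the first claim.

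For the running time, the key observation is that the sequence of $\nu-1$ jumps performed during these calls is precisely the full jump sequence~\eqref{eq:jump-seq-gen}: it consists of $\mu+\lambda-1$ T-jumps, exactly one S-jump, and $\big(\sum_k (g_k-1)\big)+\big(\sum_k(h_k-1)\big)=(\nu-1)-(\lambda+\mu-1)$ W-jumps, where $\lambda,\mu$ and the $g_k,h_k$ are the group counts and sizes in $I(P)$. By Lemma~\ref{lem:jump-time}(a), each W-jump costs $\cO(1)$, so all W-jumps together cost $\cO(\nu)$. For the T-jumps and the S-jump I would use Lemma~\ref{lem:jump-time}(b),(c): a horizontal T-jump between $R$ and $R'$ costs $\cO(h(R,R')+1)$ and a vertical one $\cO(v(R,R')+1)$, and the S-jump costs $\cO(h+v+1)$. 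The crucial point is that the quantities $h(R,R')$ and $v(R,R')$ for the successive T-jumps, plus the $h$ and $v$ for the S-jump, \emph{telescope}: as $r_n$ sweeps from left to right, the horizontal insertion points of $I(P)$ lying in the interior of the top side of $r_n$ are, across the horizontal T-jumps and the S-jump, each counted a bounded number of times (in fact each such point is "passed over" exactly once), and similarly for vertical insertion points and vertical T-jumps / the S-jump. Hence $\sum (h(R,R')+1)+\sum(v(R,R')+1)=\cO(\nu)$, and adding the $\cO(\nu)$ contribution of the W-jumps gives total time $\cO(\nu)$.

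I would make the telescoping precise as follows. Write $q_1,\dots,q_\nu$ for the insertion points of $I(P)$ in their linear order, with $q_1,\dots,q_g$ the vertical ones (grouped into the $\lambda$ vertical walls) and $q_{g+1},\dots,q_\nu$ the horizontal ones. When $r_n$ moves from sitting on $q_i$ to sitting on $q_{i+1}$ within the vertical block, the points of $I(P)$ in the interior of the left side of $r_n$ change by exactly the points on the wall being crossed; over all vertical T-jumps the left side of $r_n$ only ever grows (its top-left corner moves down and left along successive walls), so $\sum v(R,R')$ over the vertical T-jumps is at most the total number of vertical insertion points, i.e.\ $\cO(\nu)$. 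The symmetric statement holds for horizontal insertion points and horizontal T-jumps, where the top side of $r_n$ only grows. The single S-jump contributes one more $\cO(h+v)$ term bounded by $\nu$. The analogous statement for $\nextjump_{\cR_n}(R,n,\dirl)$ follows by reversing the order of the jump sequence, or equivalently by the reflection symmetry between steps N2/N4 and N3/N5 noted after the algorithm. The main obstacle, and the place where care is needed, is exactly this amortization argument: one must check that the edge-incidence updates charged inside the T- and S-jump routines never revisit the same insertion point of $I(P)$ more than a constant number of times as $r_n$ sweeps across, which is what makes the sum telescope rather than blow up to $\cO(\nu^2)$.
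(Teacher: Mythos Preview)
Your approach is essentially the same as the paper's: both decompose the $\nu-1$ calls according to the jump sequence~\eqref{eq:jump-seq-gen}, bound the W-jumps by $\cO(1)$ each via Lemma~\ref{lem:jump-time}(a), and then argue that the costs of the T- and S-jumps sum to $\cO(\nu)$ via Lemma~\ref{lem:jump-time}(b),(c).

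One detail in your amortization is garbled, though. You write that during the right sweep through the vertical insertion points ``the left side of $r_n$ only ever grows (its top-left corner moves down and left along successive walls)''. This is backwards: for right jumps, the vertical T-jump takes $r_n$ from the \emph{last} (topmost) point of group~$k$ to the \emph{first} (bottommost) point of group~$k+1$, so the top-left corner moves \emph{right} to a new wall and the left side \emph{shrinks}. There is no monotone growth to invoke. The paper sidesteps this by computing the contributions exactly rather than via a telescoping heuristic: the vertical T-jump leaving group~$k$ has $v(R,R')=g_k-1$ (all of group~$k$ except the corner lies in the interior of the old left side, none in the new one), so $\sum_{k=1}^{\lambda-1}(v+1)=\sum_{k=1}^{\lambda-1}g_k$; symmetrically $\sum_{k=2}^{\mu}(h+1)=\sum_{k=2}^{\mu}h_k$; and the S-jump contributes $g_\lambda+h_1-1$. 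These three pieces add up to exactly $\nu-1$. Replacing your ``only ever grows'' paragraph with this direct count fixes the argument.
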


\begin{proof}
If the sequence of insertion points~$I(P)$ has $\lambda$ vertical groups and $\mu$ horizontal groups of cardinalities $g_k$, $k=1,\ldots,\lambda$, and $h_k$, $k=1,\ldots,\mu$, respectively, then the sequence of jumps performed by the calls to $\nextjump_{\cR_n}$ has the form~\eqref{eq:jump-seq-gen}.
We clearly have $\nu=\sum_{k=1}^\lambda g_k+\sum_{k=1}^\mu h_k$.
We use Lemma~\ref{lem:jump-time} to bound the overall time to perform those operations; see Figure~\ref{fig:next}~(a).
The number of W-jumps in~\eqref{eq:jump-seq-gen} is $w:=\sum_{k=1}^\lambda (g_k-1)+\sum_{k=1}^\mu (h_k-1)\leq \nu$.
The sum of the terms $v(R,R')+1$ and $h(R,R')+1$ over any two consecutive rectangulations $R,R'$ in this sequence that differ in a T-jump is $t:=\sum_{k=1}^{\lambda-1}g_k$ and $t':=\sum_{k=2}^\mu h_k$, respectively.
The sum $v(R,R')+h(R,R')+1$ for the two consecutive rectangulations $R,R'$ in this sequence that differ in an S-jump is $s:=g_\lambda+h_1-1$.
Clearly, we have $s+t+t'\leq \nu$.
Consequently, the overall time for those operations is $\cO(w+s+t+t')=\cO(\nu)$, as claimed.
\end{proof}

\begin{theorem}
\label{thm:next-gen}
Algorithm~\Mrect{} with the minimal jump oracle $\nextjump_{\cR_n}$ takes time~$\cO(1)$ on average to visit each generic rectangulation.
\end{theorem}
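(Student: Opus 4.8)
The plan is a standard amortized (CAT) analysis resting on Theorem~\ref{thm:algo-rect} and Lemma~\ref{lem:gen-time}. By Theorem~\ref{thm:algo-rect}, Algorithm~\Mrect{} with input $\cC_n=\cR_n$ visits each of the $|\cR_n|$ generic rectangulations with $n$ rectangles exactly once, so it performs $|\cR_n|-1$ calls to $\nextjump_{\cR_n}$, and each call executes a single local (W‑, S‑, or T‑)jump: since for $\cR_n$ every insertion point is active, a minimal jump never skips one and always has $d=1$; recall~\eqref{eq:jump-seq-gen}. Line~M1 costs $\cO(n)$, and lines~M2, M3, M5 cost $\cO(1)$ per step because the stack $s$ is maintained looplessly. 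Hence, up to an additive $\cO(n+|\cR_n|)$, the running time equals $\Sigma:=\sum(\text{cost of the jump performed in that step})$, and it remains to prove $\Sigma=\cO(|\cR_n|)$.

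The first step is to group the steps of the run by the index $j\in\{2,\dots,n\}$ of the rectangle $r_j$ that jumps and by the value $Q:=R^{[j-1]}\in\cR_{j-1}$ at that step. By the ordered‑tree description in Section~\ref{sec:tree}, the rectangulations $R$ of the run with $R^{[j-1]}=Q$ occupy one contiguous interval — the leaves of the subtree rooted at $Q$ — and during this interval the rectangle $r_j$ sweeps monotonically through all insertion points of $Q$, i.e., $R^{[j]}$ runs through $c_1(Q),c_2(Q),\dots,c_{\nu(Q)}(Q)$ or through this list reversed, one insertion point at a time, via $\nu(Q)-1$ jumps of $r_j$. These jumps are in general not consecutive in the run, being interleaved with jumps of $r_{j+1},\dots,r_n$; but whenever $r_j$ jumps, all of $r_{j+1},\dots,r_n$ are right‑ or bottom‑based in $R^{[j+1]},\dots,R^{[n]}$ (this is built into the definition of a jump of $r_j$), so by Lemma~\ref{lem:jump-time} the cost of each of these jumps is, up to a constant factor, a function of the pair $(R^{[j]},R'^{[j]})$ alone. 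Therefore the total cost of the $\nu(Q)-1$ scattered jumps of $r_j$ in this interval is $\cO$ of the cost of the same sweep performed by consecutive oracle calls, which is $\cO(\nu(Q))$ by the amortized estimate of Lemma~\ref{lem:gen-time} applied with $j$ in place of $n$.

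Summing over all $j$ and all $Q\in\cR_{j-1}$, and using that by Lemma~\ref{lem:del-ins} the children of the nodes of $\cR_{j-1}$ are exactly the nodes of $\cR_j$, each with a unique parent, so that $\sum_{Q\in\cR_{j-1}}\nu(Q)=|\cR_j|$, one obtains
\[
\Sigma=\sum_{j=2}^{n}\sum_{Q\in\cR_{j-1}}\cO\big(\nu(Q)\big)=\sum_{j=2}^{n}\cO\big(|\cR_j|\big).
\]
Since each $Q$ has at least the two distinct children $c_1(Q)$ and $c_{\nu(Q)}(Q)$, we have $|\cR_{j-1}|\le|\cR_j|/2$, hence $\sum_{j=2}^{n}|\cR_j|\le2|\cR_n|$ and $\Sigma=\cO(|\cR_n|)$. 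This yields a total running time of $\cO(n+|\cR_n|)=\cO(|\cR_n|)$, i.e., $\cO(1)$ amortized per generated rectangulation.

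The main obstacle I anticipate is precisely the middle step: a single sweep of $r_j$ can contain a T‑jump or an S‑jump whose individual cost is as large as $\Theta(\nu(Q))$, and these expensive jumps are not performed back to back but scattered across the entire run, so Lemma~\ref{lem:gen-time} cannot be invoked directly. The resolution is to observe that the cost of a jump of $r_j$ is (up to constants) a function of $R^{[j]}$ only — which is where one needs both the fact, baked into the definition of jumps, that the rectangles above $r_j$ are always in extremal position, and the precise form of the bounds in Lemma~\ref{lem:jump-time} — so that the scattered jumps constituting one sweep can be re‑assembled and the amortized bound applied.
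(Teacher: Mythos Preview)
Your proof is correct and follows essentially the same approach as the paper: both group the jumps by the index~$j$ of the jumping rectangle and by the fixed sub-rectangulation $Q=R^{[j-1]}$, observe that during a jump of~$r_j$ all higher-indexed rectangles are bottom- or right-based so that the cost depends only on~$R^{[j]}$, and then apply Lemma~\ref{lem:gen-time} to bound the total cost of each sweep by~$\cO(\nu(Q))$. Your write-up is somewhat more explicit than the paper's in closing the amortization---you sum $\sum_j\sum_{Q\in\cR_{j-1}}\nu(Q)=\sum_j|\cR_j|\le 2|\cR_n|$ via the branching factor~$\ge 2$, whereas the paper simply notes that $\nu(Q)-1\ge 1$ jumps in $\cO(\nu(Q))$ time is $\cO(1)$ per jump and that the sweeps partition all jumps---but this is a presentational difference, not a different argument.
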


\begin{figure}
\includegraphics{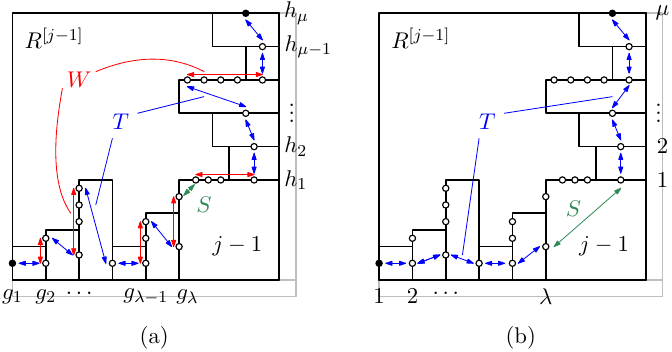}
\caption{Illustration of the proofs of (a) Theorem~\ref{thm:next-gen} for generic rectangulations and (b) Theorem~\ref{thm:next-diag} for diagonal rectangulations.}
\label{fig:next}
\end{figure}

\begin{proof}
For some fixed $j\in\{2,\ldots,n\}$, we consider all jumps of rectangle~$r_j$.
Whenever rectangle~$r_j$ jumps in a rectangulation~$R\in\cR_n$, then $R^{[k]}$ is either bottom-based or right-based for all $k=j+1,\ldots,n$.
Moreover, none of the rectangles~$k=j-1,j-2,\ldots,2$ jumps unless~$R^{[j]}$ is bottom-based or right-based.
Consequently, we can partition the jumps of~$r_j$ in the entire jump sequence into subsequences, such that in each subsequence, for each rectangulation~$R\in\cR_n$ of the subsequence, $R^{[j-1]}\in\cR_{j-1}$ is the same subrectangulation and in~$R$ the rectangle~$r_j$ jumps to the next insertion point of~$I(R^{[j-1]})$.
By Lemma~\ref{lem:gen-time}, the total time for visiting the $\nu=\nu(R^{[j-1]})$ many rectangulations of this subsequence is $\cO(\nu)$, which is $\cO(1)$ on average.
\end{proof}

\begin{remark}
By slightly modifying our data structures, we could even obtain a loopless algorithm for generic rectangulations.
The idea is to introduce an additional data structure called \emph{sides}.
Each rectangle is subdivided into four sides, and in the incidence relations, sides sit between edges and rectangles, i.e., edges do not point to the two touching rectangles directly, but to the relevant sides of those rectangles, and each side points to the rectangle it belongs to.
During S-jumps and T-jumps, a rectangle can be broken up into its four sides and the sides of two rectangles can be interchanged in constant time, avoiding the while-loops in the functions $\Sjump$ and $\Tjump$ that need to update possibly linearly many incidences between edges and rectangles.
To keep the presentation simple, we do not show these modifications.
Also, the resulting improvement is not substantial, and sides are a somewhat artificial concept.
\end{remark}

\subsection{Diagonal rectangulations}
\label{sec:oracle-diag}

Recall that in a diagonal rectangulation~$R\in\cD_n$, every rectangle intersects the main diagonal, or equivalently, $R$ avoids the patterns~$\zvu$ and~$\zhr$.
Consequently, during a jump of rectangle~$r_j$ in the current rectangulation~$R$, we need to consider precisely the insertion points from~$I(R^{[j-1]})$ that are the first insertion point of a vertical group, or the last insertion point of a horizontal group, as any other insertion point from each group would create one of the forbidden pattern; see Figure~\ref{fig:next}~(b).
Consequently, if the sequence $I(R^{[j-1]})$ has $\lambda$ vertical groups and $\mu$ horizontal groups, then the jump sequence that specifies the types of jumps with rectangle~$r_j$ from the first to the last insertion point is
\begin{equation*}
T^{\lambda-1}\,S\,T^{\mu-1}.
\end{equation*}
In particular, we do not perform any wall slides.

An implementation of this is provided in the function $\nextjump_{\cD_n}(R,j,d)$.

\begin{algo}{$\nextjump_{\cD_n}(R,j,d)$}{Minimal jump oracle for diagonal rectangulations}
\begin{enumerate}[label={\bfseries N\arabic*.}, leftmargin=8mm, noitemsep, topsep=3pt plus 3pt]
\item{}[Prepare] Set $a\gets r_j.\rnw$.
If $d=\dirl$ and $v_a.\vtype=\bottomT$, set $\alpha\gets v_a.\vsouth$ and $b\gets e_\alpha.\etail$ and goto~N2.
If $d=\dirr$ and $v_a.\vtype=\bottomT$, set $\alpha\gets v_a.\veast$ and goto~N3.
If $d=\dirr$ and $v_a.\vtype=\rightT$, set $\alpha\gets v_a.\veast$ and $b\gets e_\alpha.\ehead$ and goto~N4.
If $d=\dirl$ and $v_a.\vtype=\rightT$, set $\alpha\gets v_a.\vsouth$ and goto~N5.
\item{}[Horizontal left jump] If $v_b.\vtype=\leftT$, set $\gamma\gets v_b.\vwest$ and call $\Tjumph(R,j,\dirl,\gamma)$.
Otherwise we have $v_b.\vtype=\topT$, set $c\gets r_{j-1}.\rsw$ and $\gamma\gets v_c.\vnorth$ and call $\Sjump(R,j,\dirl,\gamma)$.
Return.
\item{}[Horizontal right jump] Set $k\gets e_\alpha.\eleft$, $b\gets r_k.\rne$ and $\gamma\gets v_b.\vwest$ and call $\Tjumph(R,j,\dirr,\gamma)$. Return.
\item{}[Vertical right jump] If $v_b.\vtype=\topT$, set $\gamma\gets v_b.\vnorth$ and call $\Tjumpv(R,j,\dirr,\gamma)$.
Otherwise we have $v_b.\vtype=\leftT$, set $c\gets r_{j-1}.\rne$ and $\gamma\gets v_c.\vwest$ and call $\Sjump(R,j,\dirr,\gamma)$.
Return.
\item{}[Vertical left jump] Set $k\gets e_\alpha.\eleft$, $b\gets r_k.\rsw$ and $\gamma\gets v_b.\vnorth$ and call $\Tjumpv(R,j,\dirl,\gamma)$. Return.
\end{enumerate}
\end{algo}
Similarly to before, the code in lines~N2 and~N4, and in lines~N3 and~N5 is symmetric by reflecting all directions at the main diagonal.
For diagonal rectangulations the runtime analysis is straightforward and gives a loopless algorithm.

\begin{lemma}
\label{lem:diag-time}
Each call $\nextjump_{\cD_n}(R,j,d)$ takes time~$\cO(1)$.
\end{lemma}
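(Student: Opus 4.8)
The plan is to reduce the bound to Lemma~\ref{lem:jump-time} and then to use the diagonal structure to show that the quantities $h(R,R')$ and $v(R,R')$ occurring there are $\cO(1)$. First I would observe that a single call of $\nextjump_{\cD_n}(R,j,d)$ reads only a constant number of pointers of~$r_j$ and of its incident edges and vertices in step~N1, and then dispatches, after $\cO(1)$ further lookups, to exactly one of the functions $\Tjumph$, $\Tjumpv$, or $\Sjump$ (no W-jump ever occurs, consistent with~\eqref{eq:jump-seq-diag}). Hence, writing $R':=\nextjump_{\cD_n}(R,j,d)$, Lemma~\ref{lem:jump-time} bounds the running time of the call by $\cO(1)$ plus $\cO(h(R,R')+1)$ for a horizontal T-jump, $\cO(v(R,R')+1)$ for a vertical T-jump, or $\cO(h(R,R')+v(R,R')+1)$ for an S-jump. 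So the lemma follows once we show that $h(R,R')$ and $v(R,R')$ are bounded by an absolute constant whenever $R,R'\in\cD_n$ differ in the jump produced by the oracle.

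This is the main point. Let $P:=R^{[j-1]}=R'^{[j-1]}$; since $\cD_n$ is zigzag (Theorem~\ref{thm:pattern}), $P$ is again a diagonal rectangulation. Recall that $h(R,R')$ counts the horizontal insertion points of~$P$ lying in the interior of the top side of~$r_j$ (in~$R$ or~$R'$), and symmetrically for $v(R,R')$ and the left side of~$r_j$. By the structural property of diagonal rectangulations recalled in Section~\ref{sec:prelim} --- along every horizontal wall all $\topT$-vertices precede all $\bottomT$-vertices, and dually along vertical walls --- the horizontal insertion points of~$P$ lying on a common horizontal wall form a single group, and likewise for vertical walls, so $\nextjump_{\cD_n}$ only ever moves $r_j$ between the first insertion point of a vertical group and the first of the next, between the last insertion point of a horizontal group and the last of the next, or, in the unique S-jump of~\eqref{eq:jump-seq-diag}, between the first insertion point of the last vertical group and the last insertion point of the first horizontal group. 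In each such configuration the top side of~$r_j$ (respectively the left side of~$r_j$) abuts exactly the wall of~$P$ carrying the current insertion point, on the side of that insertion point where no further insertion point of the wall lies; hence it contains at most a constant number of insertion points of~$P$ in its interior, see Figure~\ref{fig:next}~(b). Thus $h(R,R')=\cO(1)$ and $v(R,R')=\cO(1)$, and combining with the first paragraph yields the claimed $\cO(1)$ bound.

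The remaining work is the routine case analysis matching the five branches of $\nextjump_{\cD_n}$ against~\eqref{eq:jump-seq-diag} and checking that the edge index handed to $\Tjumph$, $\Tjumpv$, or $\Sjump$ really is the next relevant insertion point of~$P$; this is the same bookkeeping already verified for the generic oracle in Section~\ref{sec:oracle-gen}, restricted to the insertion points allowed by the diagonal constraint. I expect the only genuine obstacle to be the geometric claim in the second paragraph --- that in a diagonal rectangulation the relevant side of~$r_j$ meets only boundedly many insertion points of~$P$ --- since this is precisely where the diagonal hypothesis is used and where the argument breaks for generic rectangulations (there $h$ and $v$ can grow linearly, which is why only a CAT, rather than a loopless, bound is obtained in Theorem~\ref{thm:next-gen}).
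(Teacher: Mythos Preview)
Your proposal is correct and follows essentially the same approach as the paper: reduce to Lemma~\ref{lem:jump-time} and then argue that, because the oracle only ever places the top-left corner of~$r_j$ at the \emph{first} insertion point of a vertical group or the \emph{last} insertion point of a horizontal group, the relevant side of~$r_j$ lies on the part of the wall that carries no further insertion points. The paper's proof is simply a more compact version of your second paragraph: it observes directly that this positioning forces $h(R,R')=0$ and $v(R,R')=0$ (not merely $\cO(1)$), after which Lemma~\ref{lem:jump-time}~(b)+(c) gives the result in one line. Your third paragraph about verifying that the oracle hands the correct edge index to the jump subroutines is a correctness check rather than a running-time argument, and the paper accordingly omits it from this proof.
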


\begin{proof}
Let $R'$ be the rectangulation after the call $\nextjump_{\cD_n}(R,j,d)$, which differs from~$R$ in an S-jump or T-jump.
As we only consider the first insertion point of each vertical group and the last insertion point of each horizontal group of~$I(R^{[j-1]})$, we have $v(R,R')=0$ and $h(R,R')=0$.
The claim now follows from Lemma~\ref{lem:jump-time}~(b)+(c).
\end{proof}

Lemma~\ref{lem:diag-time} immediately yields the following result.

\begin{theorem}
\label{thm:next-diag}
Algorithm~\Mrect{} with the minimal jump oracle~$\nextjump_{\cD_n}$ takes time~$\cO(1)$ to visit each diagonal rectangulation.
\end{theorem}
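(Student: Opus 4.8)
The plan is to read the statement off from the correctness of Algorithm~\Mrect{} together with the constant per-call bound on the diagonal oracle. First I would record that $\cD_n=\cR_n(\{\zvu,\zhr\})$ is a zigzag set: $\cR_n$ itself is zigzag, the patterns $\zvu$ and $\zhr$ are tame by Lemma~\ref{lem:tame}, and Theorem~\ref{thm:pattern} then yields the claim. Hence Theorem~\ref{thm:algo-rect} applies with $\cC_n=\cD_n$ and the oracle $\nextjump_{\cD_n}$, so Algorithm~\Mrect{} visits every diagonal rectangulation exactly once, in the order $\Jrectm(\cD_n)$. It then remains only to bound the time spent between two consecutive visits, i.e., the cost of one pass through lines M2--M5.

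Next I would go through these lines one at a time. In line~M3 the algorithm reads the stack top $s_n$ and compares it with~$1$, which is $\cO(1)$. Line~M4 calls $\nextjump_{\cD_n}(R,j,d)$, which takes time $\cO(1)$ by Lemma~\ref{lem:diag-time}; this is the heart of the argument and rests on the fact that for $\cD_n$ the jump sequence of a fixed rectangle has the shape $T^{\lambda-1}\,S\,T^{\mu-1}$ of~\eqref{eq:jump-seq-diag}, with no wall slides and with $v(R,R')=h(R,R')=0$ throughout, so that by Lemma~\ref{lem:jump-time}(b)+(c) the S- and T-jumps incur no linear-time while-loops. The visit in line~M2 reports a single S- or T-jump, which changes only $\cO(1)$ edges of the rectangulation, hence is also $\cO(1)$. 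Finally, line~M5 performs the assignments $s_n\gets n$, $s_j\gets s_{j-1}$, $s_{j-1}\gets j-1$, which realize one pop/push of the simulated stack in the loopless fashion of Bitner, Ehrlich and Reingold~\cite{MR0424386}, together with flipping the single entry $o_j$, all $\cO(1)$. Summing lines M2--M5 gives $\cO(1)$ time per visited rectangulation, and since line~M1 is a one-time initialization of the $n$-entry arrays $o$ and $s$, the algorithm is loopless.

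The one point that needs a little care is the test in line~M5 of whether $R^{[j]}$ is bottom-based or right-based, since $R^{[j]}$ is a sub-rectangulation that is not stored explicitly. I would settle this by letting the oracle $\nextjump_{\cD_n}$ report, at no additional cost, whether the insertion point it selected was the first or the last of $I(R^{[j-1]})$: a left jump makes $R^{[j]}$ bottom-based exactly when it lands on the first insertion point, and a right jump makes $R^{[j]}$ right-based exactly when it lands on the last one, and both conditions are visible locally to the oracle from the vertex and wall pointers it already inspects while locating the target edge. With this small addition every step of lines M2--M5 is manifestly $\cO(1)$, which completes the proof. Apart from this bookkeeping detail I expect no genuine obstacle, as the theorem is essentially an immediate corollary of Theorem~\ref{thm:algo-rect} and Lemma~\ref{lem:diag-time}.
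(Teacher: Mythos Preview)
Your proposal is correct and follows essentially the same approach as the paper: the paper's proof is the single sentence ``Lemma~\ref{lem:diag-time} immediately yields the following result,'' relying on the fact that the non-oracle lines M2, M3, M5 are manifestly constant-time array operations. Your write-up simply makes these implicit steps explicit, including the bookkeeping for the bottom-based/right-based test in M5, which the paper leaves to the reader and the implementation.
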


\begin{remark}
Jumps as performed by the oracles~$\nextjump_{\cR_n}$ and~$\nextjump_{\cD_n}$ and shown in Figure~\ref{fig:next} correspond to cover relations in the lattice of generic rectangulations and the lattice of diagonal rectangulations described by Meehan~\cite{meehan_2019} and~Law and Reading~\cite{MR2871762}, respectively, which both arise as lattice quotients of the weak order on the symmetric group.
Consequently, our cyclic Gray codes correspond to Hamilton cycles in the cover graphs of those lattices.
In~\cite{perm_series_ii} we showed that our permutation language framework can be used to generate a Hamilton path on every lattice quotient of the weak order, which also yields a Hamilton path on the skeleton of the corresponding polytope~\cite{MR3964495} (see also~\cite{padrol_pilaud_ritter_2021}).
\end{remark}

\subsection{Pattern-avoiding rectangulations}
\label{sec:oracle-pattern}

For any zigzag set of rectangulations~$\cC_n\seq\cR_n$, and any set of tame patterns~$\cP$, Theorem~\ref{thm:pattern} guarantees that the set~$\cC_n(\cP)$ of rectangulations that avoid all patterns from~$\cP$ is also a zigzag set.
We now describe how we can obtain a minimal jump oracle for~$\cC_n(\cP)$ from a minimal jump oracle~$\nextjump_{\cC_n}(R,j,d)$ for~$\cC_n$.
The idea is simply to perform a minimal jump of~$r_j$ w.r.t.~$\cC_n$, and to test after each jump whether the resulting rectangulation contains any pattern from~$\cP$, repeating this process until we arrive at a rectangulation that avoids all patterns from~$\cP$.
This is guaranteed to terminate after at most~$j\leq n$ iterations, as the first and last insertion point of~$I(R^{[j-1]})$ will produce rectangulations that avoid all patterns from~$\cP$, due to the zigzag property.

\begin{algo}{$\nextjump_{\cC_n(\cP)}(R,j,d)$}{Minimal jump oracle for pattern-avoiding permutations}
\begin{enumerate}[label={\bfseries N\arabic*.}, leftmargin=8mm, noitemsep, topsep=3pt plus 3pt]
\item{}[Fast forward] While $R$ contains a pattern from~$\cP$ repeat $\nextjump_{\cC_n}(R,j,d)$.
\end{enumerate}
\end{algo}
We immediately obtain the following generic runtime bounds.

\begin{theorem}
\label{thm:nextjumpP-time}
Let $\cP$ be a finite set of tame rectangulation patterns.
If the zigzag set $\cC_n$ has a minimal jump oracle $\nextjump_{\cC_n}$ that runs in time~$f_n$, and containment of any pattern from~$\cP$ in $R$ can be tested in time~$t_n$, then $\nextjump_{\cC_n(\cP)}$ is a minimal jump oracle for $\cC_n(\cP)$ that runs in time~$\cO(n\cdot (f_n+t_n))$.
\end{theorem}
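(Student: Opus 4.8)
The plan is to verify two claims about the oracle $\nextjump_{\cC_n(\cP)}$: that it is correct, i.e., it carries out a minimal jump of rectangle~$r_j$ in direction~$d$ with respect to~$\cC_n(\cP)$, and that it runs in time $\cO(n\cdot(f_n+t_n))$. For correctness, set $P:=R^{[j-1]}$, where $R\in\cC_n(\cP)$ is the current rectangulation. The essential structural observation — obtained by unwinding the recursive definition of a jump of~$r_j$ — is that every invocation of $\nextjump_{\cC_n}(R,j,d)$ in the fast-forward leaves the sub-rectangulation $R^{[j-1]}$ unchanged, moves~$r_j$ by one position through~$I(P)$ in direction~$d$, and (for $j<n$) carries the rectangles $r_{j+1},\dots,r_n$ along unchanged in the sense that each $R^{[k]}$, $k>j$, stays bottom-based or right-based in the same way throughout. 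In particular, by the correctness of~$\nextjump_{\cC_n}$, all the rectangulations produced along the way lie in~$\cC_n$, so the repeated calls are well-defined, and since a minimal $\cC_n$-jump never skips a rectangulation of~$\cC_n$, the fast-forward visits, in the order induced by~$I(P)$, every rectangulation of~$\cC_n(\cP)\seq\cC_n$ that lies beyond~$R$ in direction~$d$. The loop stops — as detected by the pattern-containment test — at the first such rectangulation, which is precisely the target of a minimal jump of~$r_j$ with respect to~$\cC_n(\cP)$.

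To bound the running time I first bound the number of iterations of the fast-forward loop. Since $P\in\cR_{j-1}$, Lemma~\ref{lem:nu} gives $|I(P)|=\nu(P)\leq j\leq n$, and the loop moves~$r_j$ monotonically through these at most~$n$ insertion points, one per iteration, so it suffices to show that the rectangulation obtained once~$r_j$ reaches the extreme insertion point in direction~$d$ — the last point of~$I(P)$ if $d=\dirr$, the first one if $d=\dirl$ — avoids every pattern of~$\cP$. This is where Theorem~\ref{thm:pattern} enters: it guarantees that $\cC_n(\cP)$ is a zigzag set, hence so is each of its iterated parent sets $\{S^{[i]}\mid S\in\cC_n(\cP)\}$, so from $P\in\{S^{[j-1]}\mid S\in\cC_n(\cP)\}$ we get $c_1(P),c_{\nu(P)}(P)\in\{S^{[j]}\mid S\in\cC_n(\cP)\}$, and successively re-inserting $r_{j+1},\dots,r_n$ at first/last insertion points — forced by the jump definition for $j<n$, vacuous for $j=n$ — keeps us inside~$\cC_n(\cP)$. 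The resulting rectangulation coincides with the one reached by the fast-forward at the extreme insertion point (both are pinned down by the same recursive construction), hence avoids all patterns of~$\cP$, so the loop halts after at most $\nu(P)-1\leq n-1$ iterations. Each iteration costs one call to~$\nextjump_{\cC_n}$ (time~$f_n$) plus one pattern-containment test (time~$t_n$), for a total of $\cO(n\cdot(f_n+t_n))$, which together with correctness proves the theorem.

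The step I expect to require the most care is the structural claim feeding into both halves of the argument: that iterating minimal $\cC_n$-jumps of~$r_j$ — even though for $j<n$ this drags the entire tail $r_{j+1},\dots,r_n$ along — stays inside~$\cC_n$, visits every rectangulation of~$\cC_n$ strictly between the start and the first pattern-avoider (so that the halting point really is the minimal $\cC_n(\cP)$-jump), and reaches a pattern-avoiding rectangulation once~$r_j$ sits at the extreme insertion point. All three statements reduce to carefully combining the recursive definition of a jump with the zigzag closure of~$\cC_n(\cP)$ from Theorem~\ref{thm:pattern}, but this bookkeeping is the technical core; the rest (the $\nu(P)\leq n$ bound from Lemma~\ref{lem:nu} and the per-iteration cost $f_n+t_n$) is routine.
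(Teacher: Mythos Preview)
Your proposal is correct and follows the same approach as the paper, which treats the theorem as an immediate consequence of the preceding discussion: the fast-forward loop terminates after at most $j\leq n$ iterations because the first and last insertion point of $I(R^{[j-1]})$ yield rectangulations in $\cC_n(\cP)$ by the zigzag property (your invocation of Theorem~\ref{thm:pattern}), and each iteration costs $f_n+t_n$. Your write-up is considerably more detailed than the paper's one-sentence justification, carefully unpacking the recursive jump definition for $j<n$ and the use of Lemma~\ref{lem:nu}, but the underlying argument is the same.
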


In some cases the runtime bound for at most~$n$ consecutive calls of $\nextjump_{\cC_n}(R,j,d)$ or several consecutive pattern containment tests can be improved upon the trivial bounds~$\cO(n\cdot f_n)$ and~$\cO(n\cdot t_n)$, respectively (see the proof of Theorem~\ref{thm:nextjumpP-RD} below).
Moreover, for some patterns further optimizations of the function $\nextjump_{\cC_n(\cP)}(R,j,d)$ are possible.
For example, the property of $R$ to be guillotine is invariant under W-jumps and S-jumps, so if $R$ is found to contain one of the windmill patterns, then we only need to check containment after the next T-jump performed by the call $\nextjump_{\cC_n}(R,j,d)$.
Most importantly, when testing for containment of a pattern, we only need to check incidences of walls involving the rectangle~$r_j$.
In the following, we provide functions $\contains(R,j,P)$ that test whether $R$ contains one of the tame patterns~$P$ listed in Lemma~\ref{lem:tame} after a sequence of jumps of rectangle~$r_j$ from a rectangulation that avoids~$P$.
We emphasize here that these functions only work under these assumptions, and are not suitable for general pattern containment testing of arbitrary rectangulations, but only for use within our algorithm~\Mrect{}.

We first present an implementation of such a containment testing function $\contains(R,j,P)$ for the clockwise windmill $P=\millr$; see Figure~\ref{fig:contains}~(a).
It uses the wall data structure~$w_1,\ldots,w_{n+3}$ to quickly move to the end vertex of a wall (without traversing the possibly many edges along the wall).

\begin{algo}{$\contains(R,j,\millr)$}{Check for clockwise windmill pattern after jump of rectangle~$r_j$}
\begin{enumerate}[label={\bfseries C\arabic*.}, leftmargin=8mm, noitemsep, topsep=3pt plus 3pt]
\item{}[Prepare] Set $a\gets r_j.\rnw$.
If $v_a.\vtype\!=\!\bottomT$, return \bfalse.
Otherwise we have $v_a.\vtype\!=\!\rightT$ and proceed with C2.
\item{}[Check] Set $\alpha\gets v_a.\vnorth$, $x\gets e_\alpha.\ewall$, $b\gets w_x.\wlast$, $\beta\gets v_b.\veast$, $y\gets e_\beta.\ewall$, $c\gets w_y.\wlast$, $\gamma\gets v_c.\vsouth$, $z\gets e_\gamma.\ewall$, $d\gets w_z.\wfirst$ and $\delta\gets v_d.\vwest$.
If $e_\delta.\eright=j$, return \btrue, otherwise return \bfalse.
\end{enumerate}
\end{algo}
The function that tests for the counterclockwise windmill $P=\milll$ is symmetric, and is not shown here for simplicity.

\begin{figure}
\includegraphics[page=1]{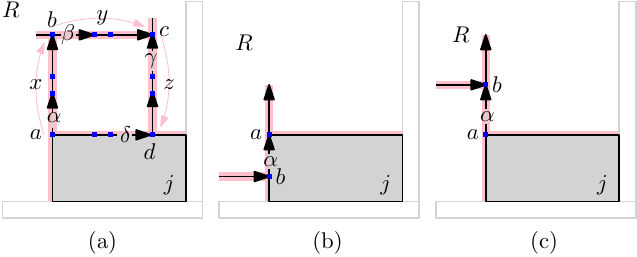}
\caption{Testing containment of the patterns (a) $\millr$, (b) $\zvu$ and (c) $\zvd$.
The forbidden configuration of walls is highlighted.
}
\label{fig:contains}
\end{figure}

The next two functions test for containment of the patterns $P=\zvu$ and $P=\zvd$, respectively; see Figure~\ref{fig:contains}~(b)+(c).

\begin{algo}{$\contains(R,j,\zvu)$}{Check for $\zvu$ after jump of rectangle~$r_j$}
\begin{enumerate}[label={\bfseries C\arabic*.}, leftmargin=8mm, noitemsep, topsep=3pt plus 3pt]
\item{}[Prepare] Set $a\gets r_j.\rnw$.
If $v_a.\vtype\!=\!\bottomT$, return \bfalse.
Otherwise we have $v_a.\vtype\!=\!\rightT$ and proceed with C2.
\item{}[Check] Set $\alpha\gets v_a.\vsouth$ and $b\gets e_\alpha.\etail$.
If $v_b.\vtype=\leftT$, return~\btrue, otherwise return~\bfalse.
\end{enumerate}
\end{algo}

\begin{algo}{$\contains(R,j,\zvd)$}{Check for $\zvd$ after jump of rectangle~$r_j$}
\begin{enumerate}[label={\bfseries C\arabic*.}, leftmargin=8mm, noitemsep, topsep=3pt plus 3pt]
\item{}[Prepare] Set $a\gets r_j.\rnw$.
If $v_a.\vtype\!=\!\bottomT$, return \bfalse.
Otherwise we have $v_a.\vtype\!=\!\rightT$ and proceed with C2.
\item{}[Check] Set $\alpha\gets v_a.\vnorth$ and $b\gets e_\alpha.\ehead$.
If $v_b.\vtype=\leftT$, return~\btrue, otherwise return~\bfalse.
\end{enumerate}
\end{algo}
Similarly to before, testing for the patterns~$P=\zhr$ and~$P=\zhl$ is symmetric to the previous two cases, so we omit those implementations.

It remains to provide containment testing for the patterns~$P=\hvert$ and~$P=\hhor$.
We only show the first case, as the other is symmetric; see Figure~\ref{fig:containsH}.

\begin{algo}{$\contains(R,j,\hvert)$}{Check for $\hvert$ after jump of rectangle~$r_j$}
\begin{enumerate}[label={\bfseries C\arabic*.}, leftmargin=8mm, noitemsep, topsep=3pt plus 3pt]
\item{}[Prepare] Set $a\gets r_j.\rnw$.
If $v_a.\vtype\!=\!\bottomT$, return \bfalse.
Otherwise we have $v_a.\vtype\!=\!\rightT$, set $b\gets r_j.\rsw$ and proceed with~C2.
\item{}[Go up] While $v_b.\vtype\notin\{\bottomT,0\}$ repeat: goto~C3; [*] set $\beta\gets v_b.\vnorth$ and $b\gets e_\beta.\ehead$.
Return \bfalse.
\item{}[Go left] Set $c\gets b$.
While $v_c.\vtype\notin\{\rightT,0\}$ repeat: if $v_c.\vtype=\topT$ goto~C4; [**] set $\gamma\gets v_c.\vwest$ and $c\gets e_\gamma.\etail$.
Go back to [*].
\item{}[Go up] Set $d\gets c$.
While $d\neq b$ and $v_d.\vtype\notin\{\bottomT,0\}$ repeat: if $v_d.\vtype=\leftT$ return \btrue; set $\delta\gets v_d.\vnorth$ and $d\gets e_\delta.\ehead$.
Go back to [**].
\end{enumerate}
\end{algo}
Lines~C2--C4 are essentially a triply nested loop that moves along the edges of the vertical wall~$x$ that contains the left side of~$r_j$ (line~C2), the edges of each horizontal wall~$y$ whose right end vertex lies on~$x$ (line~C3), and the edges of each vertical wall~$z$ whose bottom end vertex lies on~$y$, searching for a vertex of type~$\leftT$ on~$z$ (line~C4); see Figure~\ref{fig:containsH}.
This is realized by repeatedly calling line~C3 from within the while-loop in line~C2, and upon completion returning to from where the call occurred.
Similarly, line~C4 is repeatedly called from within the while-loop in line~C3, and upon completion it returns to from where it was called.

\begin{figure}
\includegraphics[page=2]{contains}
\caption{Testing containment of the pattern $\hvert$.
The forbidden configuration of walls is highlighted.
}
\label{fig:containsH}
\end{figure}

The aforementioned functions have the following runtime guarantees.

\begin{lemma}
\label{lem:contains-time}
The function $\contains(R,j,P)$ takes time $\cO(1)$ for the patterns $P=\millr,\milll,\zvu,\linebreak\zhr,\zvd,\zhl$ and time $\cO(n)$ for the patterns $P=\hvert,\hhor$.
\end{lemma}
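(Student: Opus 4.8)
The plan is to treat the two groups of patterns separately. For the six patterns $\millr,\milll,\zvu,\zhr,\zvd,\zhl$ the bound is immediate from inspection: each function $\contains(R,j,P)$ performs only a fixed number of field accesses on edges, vertices, rectangles, and walls, a fixed number of comparisons, and a single \btrue/\bfalse return. The only step that could conceivably be slow is the use of the wall data structure in $\contains(R,j,\millr)$, where $b\gets w_x.\wlast$, $c\gets w_y.\wlast$, and $d\gets w_z.\wfirst$ jump directly to an endpoint of a wall; but by the definition of $\wfirst$ and $\wlast$ these lookups take time $\cO(1)$ and avoid traversing the possibly $\Theta(n)$ many edges along the wall. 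Hence each of the six functions runs in time $\cO(1)$, and the three that are not written out explicitly inherit the bound from their counterparts by reflection at the main diagonal.

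For $P=\hvert$ (the case $P=\hhor$ being symmetric) I would bound the running time of the triply nested loop in steps C2--C4 by an edge-charging argument. Step~C2 walks upward along the vertical wall~$x$ that contains the left side of~$r_j$, visiting each vertex of~$x$ at most once (the walk is monotone, so it also terminates), and at each such vertex it invokes step~C3. When invoked at a vertex~$b$ of~$x$, step~C3 walks westward along horizontal edges; since a fixed horizontal wall meets~$x$ in at most one vertex, the horizontal edges traversed starting from~$b$ all lie on the single horizontal wall through~$b$, and distinct vertices~$b$ of~$x$ therefore give walks on distinct, hence edge-disjoint, horizontal walls. Whenever step~C3 meets a $\topT$-vertex~$c$, it invokes step~C4, which walks upward along vertical edges starting from~$c$; as~$c$ is the bottom endpoint of the vertical wall that C4 traverses, and each such~$c$ is reached by at most one of the edge-disjoint C3-walks (and at most once within it), the walks performed by C4 lie on distinct vertical walls, the only exception being that C4 may re-trace the wall~$x$ itself once, namely if~$b$ is the bottom endpoint of~$x$. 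Since every edge of~$R$ lies on exactly one wall, it follows that each edge is traversed at most twice in total over steps C2--C4, and as $R$ has $3n+1$ edges, these steps run in time $\cO(n)$; the $\cO(1)$ preparation in step~C1 does not affect this.

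The step I expect to be the main obstacle is making the disjointness claims in the $\hvert$ case precise enough to rule out an $\cO(n^{2})$ blow-up: one must argue cleanly that the westward walks of C3 lie on pairwise edge-disjoint horizontal walls and the upward walks of C4 on pairwise edge-disjoint vertical walls. This reduces to two structural facts — a horizontal wall and a vertical wall share at most one vertex, and C4 always enters a vertical wall precisely at its bottom ($\topT$) endpoint — after which the edge-charging is routine. Everything else, namely termination of the three loops and the $\cO(1)$ analysis of the remaining six functions, is direct.
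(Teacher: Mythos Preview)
Your proposal is correct and follows the same edge-charging approach as the paper. The paper's proof is terser: it simply asserts that each edge index of~$R$ is assigned to one of the loop variables~$\beta,\gamma,\delta$ at most once, and since $R$ has $3n+1$ edges the bound follows. Your write-up supplies exactly the structural justification behind that assertion (C3-walks lie on distinct horizontal walls, C4-walks on distinct vertical walls). One small sharpening: your worry that C4 might re-trace the wall~$x$ when $b$ is the bottom endpoint of~$x$ is in fact excluded by the guard $d\neq b$ in step~C4, so ``at most twice'' can be improved to ``at most once''; this does not affect the $\cO(n)$ conclusion.
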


\begin{proof}
For the first six patterns the statement is obvious, as the specified functions only make constantly many changes to our data structures.
For the pattern $P=\hvert$, note that each edge index of the rectangulation~$R$ is assigned to one of the variables $\beta,\gamma,\delta$ at most once during the call  $\contains(R,j,P)$ in lines~C2--C4, and the number of edges of~$R$ is $3n+1=\cO(n)$.
For the pattern $P=\hhor$ the argument is the same.
\end{proof}

The next theorem combines all the observations from this section, thus establishing most of the runtime bounds stated in Table~\ref{tab:rect}.

\begin{theorem}
\label{thm:nextjumpP-RD}
Let $\cP_1:=\big\{\millr,\milll,\zvu,\zhr,\zvd,\zhl\big\}$ and $\cP_2:=\big\{\hvert,\hhor\big\}$.
Algorithm~\Mrect{} with the minimal jump oracle~$\nextjump_{\cR_n(\cP)}$ or $\nextjump_{\cD_n(\cP)}$ visits each rectangulation from~$\cR_n(\cP)$ or~$\cD_n(\cP)$, respectively, in time~$\cO(n)$ for any set of patterns $\cP\seq\cP_1$ and in time~$\cO(n^2)$ for any set of patterns $\cP\seq\cP_1\cup\cP_2$.
\end{theorem}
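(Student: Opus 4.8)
The plan is to apply the generic bound of Theorem~\ref{thm:nextjumpP-time} wherever it already yields the claimed running time, and to strengthen it in the single case where it does not. First I would record that every pattern in $\cP_1\cup\cP_2$ is tame by Lemma~\ref{lem:tame} (together with the tameness of $\zvu,\zhr$, which describe $\cD_n$), so that for any $\cP\seq\cP_1\cup\cP_2$ the sets $\cR_n(\cP)$ and $\cD_n(\cP)$ are zigzag by Theorem~\ref{thm:pattern} and Algorithm~\Mrect{} with the respective oracle is well defined. Next I would substitute the correct parameters into Theorem~\ref{thm:nextjumpP-time}: a single call to $\nextjump_{\cD_n}$ runs in time $f_n=\cO(1)$ by Lemma~\ref{lem:diag-time}, a single call to $\nextjump_{\cR_n}$ in time $f_n=\cO(n)$ by Lemma~\ref{lem:jump-time} (it performs one local jump, and an $S$- or $T$-jump may touch linearly many incidences), and testing containment of all (at most~$8$) patterns of~$\cP$ takes $t_n=\cO(1)$ if $\cP\seq\cP_1$ and $t_n=\cO(n)$ if $\cP\seq\cP_1\cup\cP_2$ by Lemma~\ref{lem:contains-time}. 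Adding the per-step overhead of Algorithm~\Mrect{} outside line~M4 (the stack updates in lines~M3 and~M5, updating $o$ and $s$, and the bottom-based/right-based tests), which is $\cO(n)$, this immediately settles three of the four cases: time $\cO(n)$ per rectangulation for $\cD_n(\cP)$ with $\cP\seq\cP_1$, and time $\cO(n^2)$ per rectangulation both for $\cD_n(\cP)$ with $\cP\seq\cP_1\cup\cP_2$ and for $\cR_n(\cP)$ with $\cP\seq\cP_1\cup\cP_2$.

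The one case requiring more work — and the main obstacle — is showing time $\cO(n)$ per rectangulation for $\cR_n(\cP)$ with $\cP\seq\cP_1$, since the crude bound $f_n=\cO(n)$ fed into Theorem~\ref{thm:nextjumpP-time} would only give $\cO(n^2)$. Here the plan is to amortize over an entire fast-forward instead of bounding each $\nextjump_{\cR_n}$ call separately. Concretely, for a call $\nextjump_{\cR_n(\cP)}(R,j,d)$ I would set $P:=R^{[j-1]}$ (which is unchanged throughout the call, since only $r_j$ moves) and argue: (i) by the zigzag property, $c_1(P)$ and $c_{\nu(P)}(P)$ avoid all patterns of~$\cP$, so the fast-forward makes at most $\nu(P)-1\le j-1\le n-1$ calls to $\nextjump_{\cR_n}$; (ii) since $\nextjump_{\cR_n}$ uses \emph{every} insertion point of $I(P)$, these calls move $r_j$ through consecutive rectangulations $c_k(P),c_{k\pm1}(P),\ldots$, i.e.\ they execute a contiguous segment, in direction~$d$, of the jump sequence~\eqref{eq:jump-seq-gen} for~$P$; (iii) by Lemma~\ref{lem:gen-time} (and its analogue for direction~$\dirl$) the \emph{total} time for \emph{all} jumps in~\eqref{eq:jump-seq-gen} is $\cO(\nu(P))=\cO(j)=\cO(n)$, hence a fortiori the executed segment costs $\cO(n)$; (iv) the at most $n$ interleaved $\contains$ tests cost $\cO(1)$ each for $\cP\seq\cP_1$, adding another $\cO(n)$. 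The subtlety to verify carefully in step~(iii) is that Lemma~\ref{lem:gen-time} is stated for the last rectangle~$r_n$, whereas here an arbitrary~$r_j$ jumps; this is legitimate because during a jump of~$r_j$ all $R^{[k]}$ with $k>j$ are bottom- or right-based, so only $\cO(1)$ incidences involving those rectangles are modified per jump, which is exactly the situation handled by the proofs of Lemma~\ref{lem:jump-time} and Lemma~\ref{lem:gen-time}. Combining (i)--(iv) with the $\cO(n)$ per-step overhead of Algorithm~\Mrect{} then gives time $\cO(n)$ per visited rectangulation.

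Finally, I would note in passing that further (constant-factor) savings are available but unnecessary for the stated bounds — for instance, when $\cP$ contains the windmill patterns, the guillotine property is invariant under $W$- and $S$-jumps, so $\millr$/$\milll$-containment need only be rechecked after $T$-jumps performed inside $\nextjump_{\cR_n}$; we use this in the implementation but not in the analysis above.
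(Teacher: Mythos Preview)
Your proposal is correct and follows essentially the same approach as the paper: for the $\cD_n(\cP)$ cases you plug $f_n=\cO(1)$ and the appropriate $t_n$ into Theorem~\ref{thm:nextjumpP-time}, and for the hard case $\cR_n(\cP)$ with $\cP\seq\cP_1$ you amortize the fast-forward using Lemma~\ref{lem:gen-time}, exactly as the paper does. A minor difference is that for $\cR_n(\cP)$ with $\cP\seq\cP_1\cup\cP_2$ you invoke Theorem~\ref{thm:nextjumpP-time} directly with the crude bound $f_n=\cO(n)$, whereas the paper uses the same amortization via Lemma~\ref{lem:gen-time} for both $\cR_n$ cases; both routes yield $\cO(n^2)$. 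You also make explicit the point that Lemma~\ref{lem:gen-time} is stated for $r_n$ but is needed for general $r_j$, which the paper leaves implicit.
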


All the bounds stated in Theorem~\ref{thm:nextjumpP-RD} hold in the worst case (not just on average).

\begin{proof}
We first consider the minimal jump oracle $\nextjump_{\cD_n(\cP)}$, which repeatedly calls the function~$\nextjump_{\cD_n}$ described in Section~\ref{sec:oracle-diag}.
Applying Theorem~\ref{thm:nextjumpP-time} with the bound $f_n=\cO(1)$ from Lemma~\ref{lem:diag-time} and the bounds $t_n=\cO(1)$ for $\cP\seq\cP_1$ and $t_n=\cO(n)$ for $\cP\seq\cP_1\cup\cP_2$ from Lemma~\ref{lem:contains-time}, the term $n\cdot(f_n+t_n)$ evaluates to $\cO(n)$ or $\cO(n^2)$, respectively, as claimed.

We now consider the minimal jump oracle $\nextjump_{\cR_n(\cP)}$, which repeatedly calls~$\nextjump_{\cR_n}$ described in Section~\ref{sec:oracle-gen}.
In this case applying Theorem~\ref{thm:nextjumpP-time} directly would not give the desired bounds, so we have to refine the analysis of the while-loop in the algorithm~$\nextjump_{\cR_n(\cP)}$.
Specifically, we consider the sequence of calls to $\nextjump_{\cR_n}(R,j,d)$ from one rectangulation~$R\in\cR_n$ avoiding all patterns from~$\cP$ until the next one.
The length of this sequence is at most~$\nu=\nu(R^{[j-1]})\leq n$ (recall Lemma~\ref{lem:nu}), and by Lemma~\ref{lem:gen-time} the total time of all calls to~$\nextjump_{\cR_n}(R,j,d)$ is~$\cO(\nu)=\cO(n)$.
The total time of all pattern containment tests is at most $\nu\cdot t_n\leq n\cdot t_n$, which is $\cO(n)$ for $\cP\seq\cP_1$ and $\cO(n^2)$ for $\cP\seq\cP_1\cup\cP_2$ by Lemma~\ref{lem:contains-time}.
This proves the claimed bounds, completing the proof of the theorem.
\end{proof}

\begin{remark}
For $\cP:=\big\{\zvu,\zhr\big\}$ we have $\cR_n(\cP)=\cD_n$, so we could use, or rather `misuse', the minimal jump oracle~$\nextjump_{\cR_n(\cP)}$ to generate~$\cD_n$.
However, this would give a worse guarantee of~$\cO(n)$ time per visited diagonal rectangulation, rather than~$\cO(1)$ for~$\nextjump_{\cD_n}$ as guaranteed by Theorem~\ref{thm:next-diag}.
\end{remark}

\begin{remark}
\label{rem:lb}
We write $c(\cC_{n-1}(\cP))$ for the set of all rectangulations from~$\cC_n$ that are obtained by inserting a rectangle into a rectangulation from~$\cC_{n-1}(\cP)$.
To assess the runtime bounds stated in Theorem~\ref{thm:nextjumpP-RD}, one may try to investigate the quantity $|c(\cC_{n-1}(\cP)|/|\cC_n(\cP)|$.
This is a lower bound for the average number of iterations of the while-loop of the algorithm~$\nextjump_{\cC_n(\cP)}$ before it returns a rectangulation from~$\cC_n(\cP)$.
Experimentally, we found that this ratio grows with~$n$ in many cases, though maybe not linearly with~$n$, hinting at the possibility that the time bounds stated in Theorem~\ref{thm:nextjumpP-RD} are too pessimistic and can be improved in an average case analysis.
\end{remark}

\section{Proofs of Theorems~\ref{thm:jump-rect} and~\ref{thm:algo-rect}}
\label{sec:proofs}

In this section we present the proofs of Theorems~\ref{thm:jump-rect} and~\ref{thm:algo-rect}.
For this purpose we first recap the exhaustive generation framework for permutation languages developed in~\cite{DBLP:conf/soda/HartungHMW20,perm_series_i}.
Definitions and terminology intentionally parallel the corresponding definitions given for rectangulations before, and the connection between rectangulations and permutations will be made precise in Lemma~\ref{lem:gamma-jump} below.

\subsection{Permutation basics}

For any two integers $a\leq b$ we define $[a,b]:=\{a,a+1,\ldots,b\}$, and we refer to a set of this form as an \emph{interval}.
We also define $[n]:=[1,n]=\{1,\ldots,n\}$.
We write $S_n$ for the set of permutations on~$[n]$, and we write $\pi\in S_n$ in one-line notation as $\pi=\pi(1)\pi(2)\cdots \pi(n)=a_1a_2\cdots a_n$.
Moreover, we use $\varepsilon\in S_0$ to denote the empty permutation, and $\ide_n=12\cdots n\in S_n$ to denote the identity permutation.

Given two permutations~$\pi$ and~$\tau$, we say that $\pi$ \emph{contains the pattern~$\tau$}, if there is a subsequence of~$\pi$ whose elements have the same relative order as in~$\tau$.
Otherwise we say that $\pi$ \emph{avoids} $\tau$.
For example, $\pi=6\colorbox{black!30}{\!35\!}41\colorbox{black!30}{\!2\!}$ contains the pattern $\tau=231$, as the highlighted entries show, whereas $\pi=654123$ avoids $\tau=231$.
In a \emph{vincular} pattern~$\tau$, there is exactly one underlined pair of consecutive entries, with the interpretation that the underlined entries must match adjacent positions in~$\pi$.
For instance, the permutation $\pi=\colorbox{black!30}{\!3\!}1\colorbox{black!30}{\!4\!}\colorbox{black!30}{\!2\!}$ contains the pattern~$\tau=231$, but it avoids the vincular pattern~$\tau=\ul{23}1$.

\subsection{Deletion, insertion and jumps in permutations}
\label{sec:perm-ops}

For~$\pi\in S_n$, $n\geq 1$, we write $p(\pi)\in S_{n-1}$ for the permutation obtained from~$\pi$ by deleting the largest entry~$n$.
We also define $\pi^{[i]}:=p^{n-i}(\pi)$ for $i=1,\ldots,n$.
Moreover, for any $\pi\in S_{n-1}$ and any $1\leq i\leq n$, we write $c_i(\pi)\in S_n$ for the permutation obtained from~$\pi$ by inserting the new largest value~$n$ at position~$i$ of~$\pi$, i.e., if $\pi=a_1\cdots a_{n-1}$ then $c_i(\pi)=a_1\cdots a_{i-1} \, n\, a_i \cdots a_{n-1}$.
For example, for $\pi=412563$ we have $p(\pi)=41253$ and $c_1(\pi)=7412563$, $c_5(\pi)=4125763$ and $c_7(\pi)=4125637$.
Given a permutation $\pi=a_1\cdots a_n$ with a substring $a_i\cdots a_{i+d}$ with $d>0$ and $a_i>a_{i+1},\ldots,a_{i+d}$, a \emph{right jump of the value~$a_i$ by $d$~steps} is a cyclic left rotation of this substring by one position to $a_{i+1}\cdots a_{i+d} a_i$.
Similarly, given a substring $a_{i-d}\cdots a_i$ with $d>0$ and $a_i>a_{i-d},\ldots,a_{i-1}$, a \emph{left jump of the value~$a_i$ by $d$~steps} is a cyclic right rotation of this substring to $a_i a_{i-d}\cdots a_{i-1}$.
For example, a right jump of the value~5 in the permutation~$265134$ by 2~steps yields~$261354$.

We say that a jump is \emph{minimal} w.r.t.\ a set of permutations~$L_n\seq S_n$, if every jump of the same value in the same direction by fewer steps creates a permutation that is not in~$L_n$.

\subsection{Generating permutations by minimal jumps}

Consider the following analogue of Algorithm~\Jrect{} for greedily generating a set of permutations $L_n\seq S_n$ using minimal jumps.

\begin{algo}{Algorithm~J}{Greedy minimal jumps}
This algorithm attempts to greedily generate a set of permutations $L_n\seq S_n$ using minimal jumps starting from an initial permutation~$\pi_0 \in L_n$.
\begin{enumerate}[label={\bfseries J\arabic*.}, leftmargin=8mm, noitemsep, topsep=3pt plus 3pt]
\item{} [Initialize] Visit the initial permutation~$\pi_0$.
\item{} [Jump] Generate an unvisited permutation from~$L_n$ by performing a minimal jump of the largest possible value in the most recently visited permutation.
If no such jump exists, or the jump direction is ambiguous, then terminate.
Otherwise visit this permutation and repeat~J2.
\end{enumerate}
\end{algo}

The following results were proved in~\cite{perm_series_i}.
A set of permutations~$L_n\seq S_n$ is called a \emph{zigzag language}, if either $n=0$ and $L_0=\{\varepsilon\}$, or if $n\geq 1$ and $L_{n-1}:=\{p(\pi)\mid \pi\in L_n\}$ is a zigzag language and for every $\pi\in L_{n-1}$ we have~$c_1(\pi)\in L_n$ and~$c_n(\pi)\in L_n$.

We now define a sequence~$J(L_n)$ of all permutations from a zigzag language~$L_n\seq S_n$.
For any $\pi\in L_{n-1}$ we let $\rvec{c}(\pi)$ be the sequence of all $c_i(\pi)\in L_n$ for $i=1,2,\ldots,n$, starting with~$c_1(\pi)$ and ending with~$c_n(\pi)$, and we let $\lvec{c}(\pi)$ denote the reverse sequence, i.e., it starts with~$c_n(\pi)$ and ends with~$c_1(\pi)$.
In words, those sequences are obtained by inserting into~$\pi$ the new largest value~$n$ in all possible positions from left to right, or from right to left, respectively, in all possible positions that yield a permutation from~$L_n$, skipping the positions that yield a permutation that is not in~$L_n$.
If $n=0$ then we define $J(L_0):=\varepsilon$, and if $n\geq 1$ then we consider the finite sequence $J(L_{n-1})=:\pi_1,\pi_2,\ldots$ and define
\begin{equation}
\label{eq:JLn}
J(L_n):=\lvec{c}(\pi_1),\rvec{c}(\pi_2),\lvec{c}(\pi_3),\rvec{c}(\pi_4),\ldots,
\end{equation}
i.e., this sequence is obtained from the previous sequence by inserting the new largest value~$n$ in all possible positions alternatingly from right to left, or from left to right.

\begin{theorem}[{\cite[Thm.~1+Lemma~4]{perm_series_i}}]
\label{thm:jump}
Given any zigzag language of permutations~$L_n$ and initial permutation~$\pi_0=\ide_n$, Algorithm~J visits every permutation from~$L_n$ exactly once, in the order~$J(L_n)$ defined by~\eqref{eq:JLn}.
Moreover, if $|L_i|$ is even for all $2\leq i\leq n-1$, then the sequence~$J(L_n)$ is cyclic, i.e., the first and last permutation differ in a minimal jump.
\end{theorem}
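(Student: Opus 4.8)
The plan is to prove both assertions by induction on~$n$, running in parallel with the recursive definitions of zigzag languages and of the sequence $J(L_n)$ in~\eqref{eq:JLn}; this is precisely the argument of \cite[Thm.~1 and Lemma~4]{perm_series_i}, which we recall. The base cases $n\le 1$ are immediate, since $L_0=\{\varepsilon\}$ and $L_1=\{1\}$ are singletons. For the induction step we assume that Algorithm~J run on $L_{n-1}$ visits every permutation exactly once, in the order $J(L_{n-1})=\pi_1,\pi_2,\ldots,\pi_N$ with $N=|L_{n-1}|$, never terminating early because of a missing or an ambiguous jump, and that $J(L_{n-1})$ is cyclic whenever $|L_i|$ is even for $2\le i\le n-2$. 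We must show that the run on $L_n$ produces $J(L_n)=\lvec{c}(\pi_1),\rvec{c}(\pi_2),\lvec{c}(\pi_3),\ldots$

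Two observations carry most of the weight. (i) Since $L_n$ is a zigzag language, $\{p(\sigma)\mid\sigma\in L_n\}=L_{n-1}$, and $c_1(\tau),c_n(\tau)\in L_n$ for every $\tau\in L_{n-1}$; hence for fixed $\tau$ the set of valid positions $\{\,i : c_i(\tau)\in L_n\,\}$ is a nonempty subset of $[n]$ containing $1$ and~$n$. (ii) If the entry $n$ occupies position~$1$ or position~$n$ in some $\sigma\in L_n$, then a jump of any value $v<n$ in~$\sigma$ cannot move $v$ past the entry~$n$ (it is too large to be jumped over), so it acts only on the substring $\sigma^{[n-1]}$ and corresponds to the analogous jump in $\sigma^{[n-1]}\in L_{n-1}$; moreover a strictly shorter such jump produces a permutation whose $p$-image is an intermediate permutation not in $L_{n-1}$, and which is therefore not in $L_n$, while conversely any shorter jump realizable inside $L_{n-1}$ lifts via $c_n$ back into $L_n$. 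Hence minimality w.r.t.\ $L_n$ and minimality w.r.t.\ $L_{n-1}$ coincide for such jumps.

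Now run Algorithm~J on $L_n$, starting from $\pi_0=\ide_n=c_n(\ide_{n-1})=c_n(\pi_1)$, so $n$ sits at position~$n$. As long as the current permutation $\sigma$ satisfies $\sigma^{[n-1]}=\pi_k$, the value~$n$ is the largest, so Algorithm~J jumps~$n$ first; by~(i) a minimal left (resp.\ right) jump of~$n$ moves $c_i(\pi_k)$ to $c_{i'}(\pi_k)$ with $i'$ the valid position immediately below (resp.\ above)~$i$, and after the first such jump the reverse move returns to an already visited permutation, so~$n$ sweeps monotonically through all valid positions of~$\pi_k$ with the direction forced at every step. When~$n$ reaches the extremal valid position it can no longer jump, and by~(ii) the algorithm then performs on $\sigma^{[n-1]}=\pi_k$ exactly the jump that Algorithm~J on~$L_{n-1}$ performs at~$\pi_k$: one checks that the set of values in~$[n-1]$ admitting a minimal jump to a not-yet-visited permutation, as well as the resolution of the direction, is identical on both sides, using the induction hypothesis — which moves us to $c_1(\pi_{k+1})$ or $c_n(\pi_{k+1})$, still with~$n$ at its extreme. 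Iterating over $k=1,\ldots,N$, and using that consecutive blocks in~\eqref{eq:JLn} carry opposite orientations exactly as the forced sweep direction of~$n$ alternates, shows that the run equals $J(L_n)$, visits every element of~$L_n$ exactly once, and terminates right after the last block (because Algorithm~J on $L_{n-1}$ terminates at~$\pi_N$). For cyclicity, assume $|L_i|$ is even for $2\le i\le n-1$. Then $N=|L_{n-1}|$ is even, so by~\eqref{eq:JLn} the last block of $J(L_n)$ is $\rvec{c}(\pi_N)$, ending with $c_n(\pi_N)$, whereas $J(L_n)$ begins with $c_n(\pi_1)$. By the induction hypothesis (whose evenness condition for $2\le i\le n-2$ is implied) the sequence $J(L_{n-1})$ is cyclic, i.e.\ $\pi_1$ and $\pi_N$ differ in a minimal jump of some value $v<n$, and by~(ii) this jump lifts to a minimal jump between $c_n(\pi_1)$ and $c_n(\pi_N)$ w.r.t.\ $L_n$; the cases $n\le 2$, where the hypothesis is vacuous, are checked directly.

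The main obstacle is the step where~$n$ gets stuck at an extremal position: one has to argue that Algorithm~J on~$L_n$, observed only at the instants when $\sigma^{[n-1]}$ changes, behaves exactly like Algorithm~J on~$L_{n-1}$ — in particular that no value larger than the one prescribed by $J(L_{n-1})$ is available, and that the direction is forced — while keeping the bookkeeping of the visited set and of directional unambiguity synchronized across the induction. Observation~(ii) together with the induction hypothesis is precisely what is engineered to make this synchronization go through; everything else is routine unwinding of the definitions.
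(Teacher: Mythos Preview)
The paper does not give its own proof of this theorem; it is quoted verbatim from~\cite[Thm.~1+Lemma~4]{perm_series_i} and used as a black box, so there is no in-paper argument to compare against.

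That said, your reconstruction is the standard inductive proof and is essentially correct. The two observations you isolate are exactly the right ones: (i) guarantees that each block $\lvec{c}(\pi_k)$ or $\rvec{c}(\pi_k)$ has length at least~$2$ with endpoints $c_1(\pi_k)$ and $c_n(\pi_k)$, and (ii) gives the bijection between jumps of values $<n$ at those endpoints and jumps in $L_{n-1}$, preserving minimality in both directions because $c_1(\tau)\in L_n\Leftrightarrow \tau\in L_{n-1}\Leftrightarrow c_n(\tau)\in L_n$ by the zigzag property. The one place that deserves a sentence more than ``one checks'' is the synchronization of the visited sets: at the end of block~$k$ the visited set in $L_n$ is exactly $\{c_i(\pi_{k'}):k'\le k,\ c_i(\pi_{k'})\in L_n\}$, and since any jump of $j<n$ from $c_1(\pi_k)$ (resp.\ $c_n(\pi_k)$) lands in $c_1(\tau)$ (resp.\ $c_n(\tau)$), the target is unvisited iff $\tau\notin\{\pi_1,\ldots,\pi_k\}$, matching the visited set of Algorithm~J on~$L_{n-1}$ exactly. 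With that made explicit, the ``largest value / unique direction'' conditions transfer verbatim, and the induction closes. Your cyclicity argument is fine; the base case $n=2$ works because $L_2=\{12,21\}$ always.
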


A permutation $\pi$ is called \emph{2-clumped} if it avoids each of the vincular patterns $3\ul{51}24$, $3\ul{51}42$, $24\ul{51}3$, and $42\ul{51}3$.
We write $S_n'\seq S_n$ for the set of 2-clumped permutations.

\begin{lemma}[{\cite[Thm.~8+Lemma~10]{perm_series_i}}]
\label{lem:2clumped-zigzag}
We have $S_0'=\{\varepsilon\}$, and for every $n\geq 1$ we have $S_{n-1}'=\{p(\pi)\mid \pi\in S_n'\}$ and $S_n'\supseteq\{c_1(\pi),c_n(\pi)\mid \pi\in S_{n-1}'\}$.
In particular, $S_n'$ is a zigzag language for all~$n\geq 0$.
\end{lemma}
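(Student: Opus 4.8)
The plan is to prove the statement by induction on~$n$, following the recursive definition of a zigzag language verbatim. The base case $n=0$ is immediate, as the empty permutation~$\varepsilon$ vacuously avoids every pattern, so $S_0'=\{\varepsilon\}$. For the inductive step I assume that $S_{n-1}'$ is a zigzag language, and I reduce everything to two claims: (i)~if $\pi\in S_n'$ then $p(\pi)\in S_{n-1}'$, i.e., deleting the maximum from a 2-clumped permutation keeps it 2-clumped; and (ii)~if $\sigma\in S_{n-1}'$ then $c_1(\sigma),c_n(\sigma)\in S_n'$, i.e., prepending or appending a new largest value keeps a 2-clumped permutation 2-clumped. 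Claim~(i) gives $\{p(\pi)\mid\pi\in S_n'\}\subseteq S_{n-1}'$, while claim~(ii) gives the reverse inclusion, since $\sigma=p(c_1(\sigma))$ exhibits every $\sigma\in S_{n-1}'$ as the $p$-image of an element of~$S_n'$; combined with~(ii) and the induction hypothesis, this is precisely the recursive definition of $S_n'$ being a zigzag language.

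For~(ii) I would exploit one feature common to all four forbidden patterns $3\ul{51}24$, $3\ul{51}42$, $24\ul{51}3$, $42\ul{51}3$: the symbol~$5$ (the largest of the five pattern symbols) occupies neither the first nor the last of the five positions. Suppose, for contradiction, that $c_1(\sigma)$ contains one of these patterns; since $\sigma$ avoids it, the occurrence must use position~$1$, that is, the newly inserted value~$n$. Being the global maximum, $n$ plays the role of the symbol~$5$ in any occurrence it belongs to; but being at position~$1$ of $c_1(\sigma)$, it is the leftmost of the occurrence's positions, hence plays the role of the \emph{first} pattern symbol~---~contradicting that $5$ is never first. The argument for $c_n(\sigma)$ is identical, with ``first'' replaced by ``last''.

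The more delicate part is~(i). Suppose $\pi\in S_n'$ but $p(\pi)$ contains one of the four patterns~$\tau$ via an occurrence on positions $q_1<\dots<q_5$ of $p(\pi)$, whose vincular pair (in each of the four patterns the substring~$\ul{51}$) sits at two positions consecutive in~$p(\pi)$. I lift these five positions to~$\pi$. The only obstruction to this being an occurrence of~$\tau$ in~$\pi$ is that the vincular pair, while consecutive in~$p(\pi)$, need not be consecutive in~$\pi$; since passing from~$\pi$ to~$p(\pi)$ removes only the single value~$n$, this occurs exactly when~$n$ lies strictly between the two pair elements in~$\pi$. In that case I repair the occurrence: the vincular pair reads (element playing~$5$)\,(element playing~$1$) in that order, these being the largest and smallest of the five chosen values, and~$n$ sits immediately to the right of the former; I drop the element playing~$5$ and splice in~$n$ instead. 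The resulting five-element subsequence of~$\pi$ again has its maximum immediately followed by its minimum at consecutive positions, and all remaining elements keep their relative order and stay below the maximum, so it realizes the \emph{same} vincular pattern~$\tau$ inside~$\pi$~---~contradicting $\pi\in S_n'$. Hence $p(\pi)\in S_{n-1}'$.

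I expect this last ``un-separation'' step to be the only genuine obstacle: one must check that removing the symbol-$5$ element and inserting~$n$ reproduces exactly~$\tau$, and the reason this works uniformly for all four patterns is precisely that in each of them the vincular pair is the adjacency ``largest symbol immediately before smallest symbol''. Everything else~---~the base case, the insertion closure in~(ii), and assembling the pieces via the recursive definition of a zigzag language~---~is routine, so the statement follows by induction.
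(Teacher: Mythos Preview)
Your proof is correct. The paper itself does not supply a proof of this lemma; it merely cites the result from~\cite{perm_series_i} (specifically Theorem~8 and Lemma~10 there). So there is no in-paper argument to compare against.

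Your argument is the standard one for showing that a set of permutations defined by vincular pattern avoidance is a zigzag language. Both claims are handled correctly. For~(ii), the observation that the symbol~$5$ sits at an interior position in each of $3\ul{51}24$, $3\ul{51}42$, $24\ul{51}3$, $42\ul{51}3$ is exactly what is needed. For~(i), the repair step is sound: if $n$ falls between the two entries realizing~$\ul{51}$ in~$\pi$, then $n$ is adjacent to the entry playing~$1$, and swapping in~$n$ for the entry playing~$5$ preserves the vincular adjacency, preserves all relative orders among the remaining four entries (since~$n$ is the global maximum), and leaves the left/right positions of the non-vincular entries on the correct side of the pair. This works uniformly because, as you note, the vincular block is $\ul{51}$ in all four patterns.
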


We also state the following observations for further reference.

\begin{lemma}
\label{lem:JLn-prop}
The sequence of permutations $J(L_n)$ defined in~\eqref{eq:JLn} has the following properties:
\begin{enumerate}[label=(\alph*), leftmargin=8mm, noitemsep, topsep=3pt plus 3pt]
\item The first permutation in $J(L_n)$ is the identity permutation $\ide_n$.
\item For $j=2,\ldots,n$, the first jump of the value~$j$ in $J(L_n)$ is a left jump.
\item Every jump in the sequence $J(L_n)$ is minimal w.r.t.~$L_n$.
\item Given two consecutive permutations $\pi,\rho$ in~$J(L_n)$ that differ in a jump of some value~$j$, then we have $\pi^{[k]}=c_1(\pi^{[k-1]})$ and $\rho^{[k]}=c_1(\rho^{[k-1]})$, or $\pi^{[k]}=c_k(\pi^{[k-1]})$ and $\rho^{[k]}=c_k(\rho^{[k-1]})$ for all $k=j+1,\ldots,n$.
\item Let $\pi,\rho$ be two consecutive permutations in~$J(L_n)$ such that $\rho$ is obtained from~$\pi$ by a left jump of some value~$j$, and let $\pi',\rho'$ be the next two consecutive permutations in~$J(L_n)$ that differ in a jump of~$j$.
If $j$ is not at the first position in~$\rho$ and the value left of it is smaller than~$j$, then $\rho'$ is obtained from~$\pi'$ by a left jump.
Conversely, if $j$ is at the first position in~$\rho$ or the value left of it is bigger than~$j$, then $\rho'$ is obtained from~$\pi'$ by a right jump.
An analogous statement holds with left and right interchanged.
\end{enumerate}
\end{lemma}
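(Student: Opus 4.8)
The plan is to prove Lemma~\ref{lem:JLn-prop} by induction on~$n$, unwinding the recursive definition~\eqref{eq:JLn} of $J(L_n)$. Everything rests on one structural observation about the block decomposition $J(L_n)=\lvec{c}(\pi_1),\rvec{c}(\pi_2),\lvec{c}(\pi_3),\ldots$, where $J(L_{n-1})=\pi_1,\pi_2,\ldots$: inside any block $\lvec{c}(\pi_i)$ or $\rvec{c}(\pi_i)$ two consecutive permutations differ only in the position of the value~$n$ (all other entries equal $\pi_i$), hence every such transition is a jump of~$n$, and it is a left jump inside a $\lvec{c}$-block (where $n$ migrates from the last towards the first position) and a right jump inside an $\rvec{c}$-block. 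Moreover, since $L_n$ is a zigzag language we have $c_1(\rho),c_n(\rho)\in L_n$ for every $\rho\in L_{n-1}$, so the last permutation of a $\lvec{c}$-block and the first of an $\rvec{c}$-block equal $c_1(\cdot)$, while the last of an $\rvec{c}$-block and the first of a $\lvec{c}$-block equal $c_n(\cdot)$. Thus the transition from block~$i$ to block~$i+1$ is $c_1(\pi_i)\to c_1(\pi_{i+1})$ for odd~$i$ and $c_n(\pi_i)\to c_n(\pi_{i+1})$ for even~$i$; deleting the value~$n$ recovers the transition $\pi_i\to\pi_{i+1}$ in $J(L_{n-1})$, and the boundary transition is a jump of the same value $j<n$ in the same direction. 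Consequently the jumps of a fixed value $j<n$ in $J(L_n)$ are exactly the block boundaries, and they are in order-preserving and direction-preserving bijection with the jumps of~$j$ in $J(L_{n-1})$.

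Statements (a) and (c) are then immediate: the first permutation of $J(L_n)$ is the first permutation of $\lvec{c}(\pi_1)$, namely $c_n(\pi_1)=c_n(\ide_{n-1})=\ide_n$ using the induction hypothesis for~(a); and (c) is nothing but the fact, contained in Theorem~\ref{thm:jump}, that the sequence produced by Algorithm~J, which is $J(L_n)$, consists of minimal jumps by construction. For~(d), if $\pi,\rho$ are consecutive in $J(L_n)$ and differ in a jump of $j<n$, they lie at a block boundary, so by the structural observation $\pi^{[n]}=c_1(\pi^{[n-1]})$ and $\rho^{[n]}=c_1(\rho^{[n-1]})$, or $\pi^{[n]}=c_n(\pi^{[n-1]})$ and $\rho^{[n]}=c_n(\rho^{[n-1]})$; since $\pi^{[n-1]}$ and $\rho^{[n-1]}$ are consecutive in $J(L_{n-1})$ and still differ in a jump of~$j$, the induction hypothesis for~(d) at level $n-1$ supplies the analogous statement for all $k=j+1,\dots,n-1$, completing the induction. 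For $j=n$ there is nothing to prove.

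For~(b) and~(e) one distinguishes $j=n$ from $j<n$. If $j=n$: all jumps of~$n$ in $J(L_n)$ happen inside blocks, those inside $\lvec{c}$-blocks being left jumps and those inside $\rvec{c}$-blocks right jumps; the first jump of~$n$ occurs in $\lvec{c}(\pi_1)$, giving~(b), and after a left jump $\pi\to\rho$ of~$n$ the next jump of~$n$ is again a left jump, unless $\rho$ is the last permutation of its ($\lvec{c}$-)block, equivalently unless~$n$ occupies position~$1$ in $\rho$, in which case the next jump of~$n$ is the first (right) jump of~$n$ in the following $\rvec{c}$-block. Since~$n$ is the largest value, ``$n$ is at position~$1$ in $\rho$'' is precisely the alternative stated in~(e), and the mirror case of a right jump is symmetric. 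If $j<n$: the jumps of~$j$ in $J(L_n)$ correspond, with matching directions, to the jumps of~$j$ in $J(L_{n-1})$, so~(b) follows from~(b) at level $n-1$; and for~(e) one checks that the condition imposed on~$\rho$ is equivalent to the same condition imposed on~$\rho^{[n-1]}$: when $\rho=c_1(\rho^{[n-1]})$ the entry immediately left of~$j$ in $\rho$ is $n>j$ exactly when~$j$ was leftmost in $\rho^{[n-1]}$ and otherwise equals the entry left of~$j$ in $\rho^{[n-1]}$, while when $\rho=c_n(\rho^{[n-1]})$ that entry is unchanged. Transporting the induction hypothesis for~(e) at level $n-1$ through the direction-preserving bijection of jumps then yields~(e) at level~$n$.

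The part needing the most care is~(e): one has to check that the pair $\pi',\rho'$ of consecutive permutations of $J(L_n)$ that differ in the next jump of~$j$ really does descend under $\sigma\mapsto\sigma^{[n-1]}$ to the pair in $J(L_{n-1})$ that differs in the next jump of~$j$ (this is exactly where the order-preserving part of the bijection of jumps is used), and that the left-neighbour bookkeeping above is airtight. The ``analogous statement with left and right interchanged'' is then obtained by repeating the whole argument with the roles of $c_1$ and $c_n$, of $\lvec{c}$ and $\rvec{c}$, and of positions~$1$ and~$n$ exchanged. The base cases $n\le 1$ are trivial, and everything else reduces to manipulations with the definitions of $c_i$, of a jump, and of~\eqref{eq:JLn}.
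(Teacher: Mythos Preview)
Your proof is correct and follows essentially the same approach as the paper: both argue by induction on~$n$ via the block decomposition $J(L_n)=\lvec{c}(\pi_1),\rvec{c}(\pi_2),\ldots$, treat~(a)--(c) as direct consequences of the definition and Theorem~\ref{thm:jump}, and prove~(d) and~(e) by separating the cases $j=n$ (handled inside blocks) and $j<n$ (reduced to $J(L_{n-1})$ via the order- and direction-preserving correspondence between block-boundary transitions and transitions of $J(L_{n-1})$). Your explicit left-neighbour bookkeeping for~(e) when passing from $\rho$ to $\rho^{[n-1]}$ is exactly the verification the paper carries out, just phrased slightly differently.
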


\begin{proof}
Properties~(a) and~(b) follow easily from the definition~\eqref{eq:JLn}.

Property~(c) follows from Theorem~\ref{thm:jump} and line~J2 of Algorithm~J.

We prove~(d) and~(e) by induction on~$n$.
Both statements are trivial for~$n=0$ and~$n=1$, which settles the induction basis.
We now assume that~$n\geq 2$, and suppose that $J(L_{n-1})=:\pi_1,\pi_2,\ldots$ satisfies~(a) and~(b) for jumps of all values~$j\in\{2,\ldots,n-1\}$.

We start with the induction step for~(d).
If $\pi,\rho$ in~$J(L_n)$ differ in a jump of the value~$j=n$, then (d) is satisfied trivially, so it suffices to consider jumps of values~$j\in\{2,\ldots,n-1\}$ in~$J(L_n)$.
However, by~\eqref{eq:JLn}, such jumps only occur at the transitions between~$\lvec{c}(\pi_k)$ and~$\rvec{c}(\pi_{k+1})$ or between~$\rvec{c}(\pi_k)$ and~$\lvec{c}(\pi_{k+1})$ for some~$k$.
In the first case, $\pi=\pi^{[n]}:=n \pi_k=c_1(\pi_k)=c_1(\pi^{[n-1]})$ is followed by $\rho=\rho^{[n]}:=n \pi_{k+1}=c_1(\pi_{k+1})=c_1(\rho^{[n-1]})$, and in the second case $\pi=\pi^{[n]}:=\pi_k n=c_n(\pi_k)=c_n(\pi^{[n-1]})$ is followed by $\rho=\rho^{[n]}:=\pi_{k+1}n=c_n(\pi_{k+1})=c_n(\rho^{[n-1]})$, completing the proof.

We proceed with the induction step for~(e).
If $\pi,\rho$ in~$J(L_n)$ differ in a jump of the value~$j=n$, then (e) follows from the definition~\eqref{eq:JLn} and of the sequences~$\lvec{c}(\pi_i)$ and $\rvec{c}(\pi_i)$.
On the other hand, consider $\pi,\rho$ in~$J(L_{n-1})$ that differ in a jump of some value~$j\in\{2,\ldots,n-1\}$, and let~$\pi',\rho'$ be the next two permutations in~$J(L_{n-1})$ that differ in a jump of~$j$, satisfying~(e) by induction.
Then in $J(L_n)$, after the jump of~$j$ from~$c_1(\pi)$ to~$c_1(\rho)$ or from~$c_n(\pi)$ to~$c_n(\rho)$, the next jump of the value~$j$ is from~$c_1(\pi')$ to~$c_1(\rho')$ or from~$c_n(\pi')$ to~$c_n(\rho')$.
If $j$ is not at the first position in~$\rho$ and the value left of it is smaller than~$j$, then $j$ is not at the first position and the value left of it is smaller in both~$c_1(\rho)$ and~$c_n(\rho)$, and $c_1(\rho')$ and~$c_n(\rho')$ are obtained from $c_1(\pi')$ and~$c_n(\pi')$, respectively, by a left jump.
On the other hand, if $j$ is at the first position in~$\rho$ or the value left of it is bigger than~$j$, then $j$ is at the first position or the value left of it is bigger than~$j$ in both~$c_1(\rho)$ and~$c_n(\rho)$, and $c_1(\rho')$ and~$c_n(\rho')$ are obtained from $c_1(\pi')$ and~$c_n(\pi')$, respectively, by a right jump.

This completes the proof.
\end{proof}

\subsection{A surjection from permutations to generic rectangulations}
\label{sec:surjection}

Observe that a diagonal rectangulation with $n$ rectangles can be laid out canonically so that each rectangle intersects the main diagonal in a $1/n$-fraction.
Specifically, rectangle~$r_i$ intersects the main diagonal in the $i$th such line segment counted from top-left to bottom-right, for $i=1,\ldots,n$.
We say that $r_i$ is \emph{left-fixed} or \emph{left-extended}, if its left side touches or does not touch the diagonal, respectively.
These notions are defined analogously for all the other three sides right, bottom, and top.

\begin{figure}
\makebox[0cm]{ 
\includegraphics[page=1]{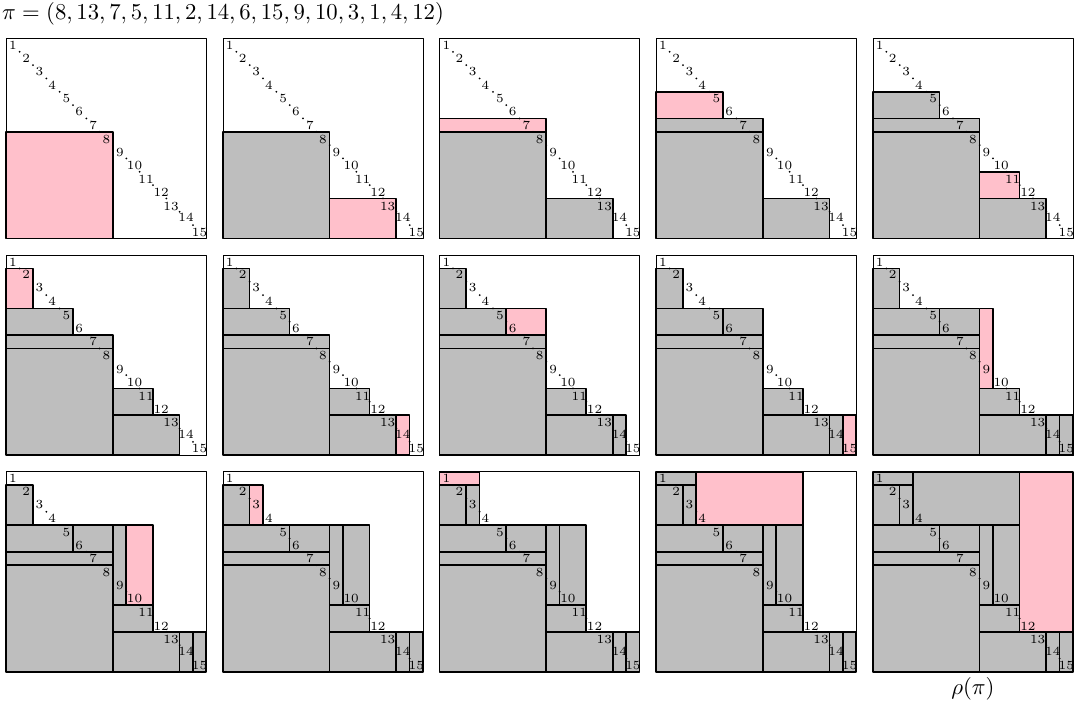}
}
\caption{Illustration of the mapping~$\rho:S_n\to\cD_n$ for the permutation $\pi=(8,13,7,5,11,2,14,6,15,9,10,3,1,4,12)$ (example from~\cite{MR2864445}).}
\label{fig:rho}
\end{figure}

We begin by reviewing a mapping $\rho:S_n\to\cD_n$, $n\geq 1$, from permutations to diagonal rectangulations, first described by Law and Reading~\cite{MR2871762}.
Maps closely related to~$\rho$ have appeared previously in the literature, see e.g.~\cite{MR2244135,MR2763051}.
We consider the outer rectangle and divide its main diagonal into $n$ equally sized line segments numbered $1,\ldots,n$ from top-left to bottom-right.
Given a permutation $\pi=a_1\cdots a_n \in S_n$, the diagonal rectangulation $\rho(\pi)$ is obtained as follows; see Figure~\ref{fig:rho}:
For $i=1,\ldots,n$, in step~$i$ we add the rectangle~$r_{a_i}$ such that it intersects the main diagonal precisely in the $a_i$th line segment, and such that the rectangle is maximal w.r.t.\ the property that the rectangles $r_{a_1}\cup \cdots\cup r_{a_i}$ form a \emph{staircase}, which means that the bottom-left boundary of $r_{a_1}\cup\cdots\cup r_{a_i}$ is an~L-shape, and the top-right boundary is a non-increasing polygonal line.

With any wall~$w$ of a generic rectangulation~$R\in\cR_n$ we associate a \emph{wall shuffle} $\sigma(w)$, which is a permutation of a subset of the rectangles that share a side with~$w$, defined as follows; see Figure~\ref{fig:shuffle}.
If the wall~$w$ is horizontal, we move from the left endpoint of~$w$ to the right endpoint, and whenever we encounter a vertical wall~$w'$ that is incident to~$w$ from the bottom, we record the rectangle whose top side lies on~$w$ and left side lies on~$w'$, and if we encounter a vertical wall~$w'$ that is incident to~$w$ from the top, we record the rectangle whose bottom side lies on~$w$ and right side lies on~$w'$.
Clearly, we record all rectangles whose bottom or top side lies on~$w$, except the first rectangle below~$w$ and the last rectangle above~$w$.
On the other hand, if the wall~$w$ is vertical, we move from the bottom endpoint of~$w$ to the top endpoint, and whenever we encounter a horizontal wall~$w'$ that is incident to~$w$ from the left, we record the rectangle whose right side lies on~$w$ and bottom side lies on~$w'$, and if we encounter a horizontal wall~$w'$ that is incident to~$w$ from the right, we record the rectangle whose left side lies on~$w$ and top side lies on~$w'$.
In this case we record all rectangles whose left or right side lies on~$w$, except the first rectangle to the left of~$w$ and the last rectangle to the right of~$w$.
Observe that wall slides do not affect the rectangles that appear in a wall shuffle, but only their relative order in the shuffle.

\begin{figure}
\includegraphics{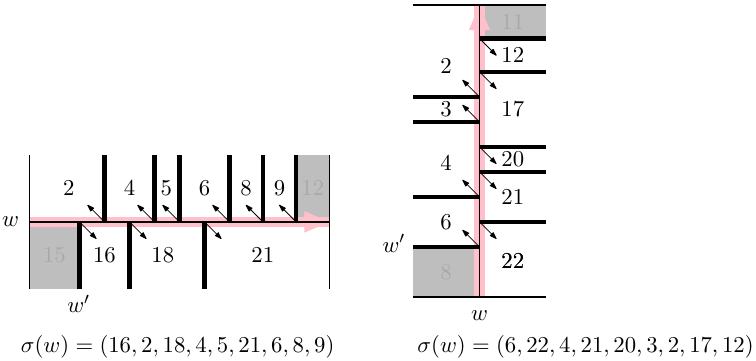}
\caption{Illustration of wall shuffles.}
\label{fig:shuffle}
\end{figure}

\begin{figure}
\includegraphics[page=2]{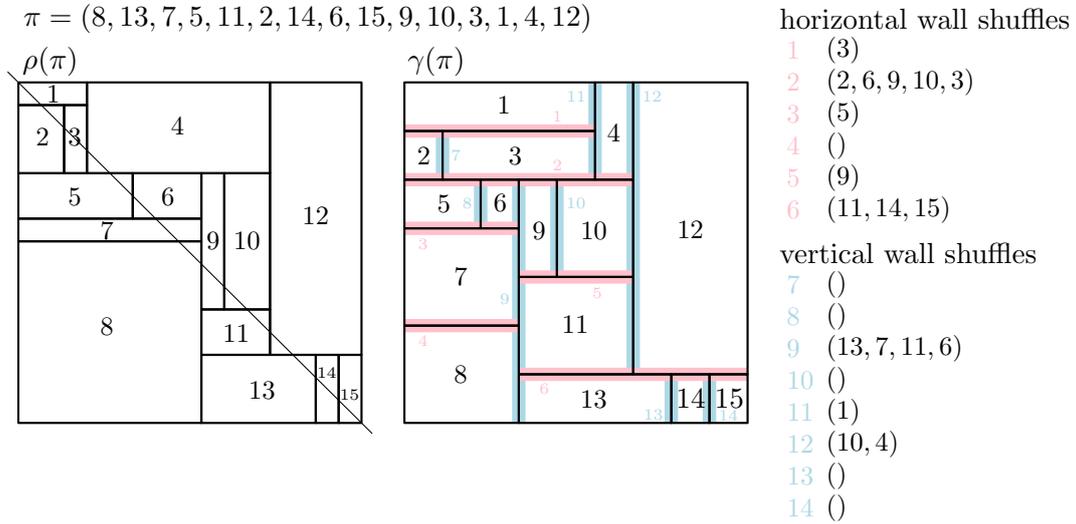}
\caption{Illustration of the surjection $\gamma:S_n\to\cR_n$.
This example continues Figure~\ref{fig:rho}.}
\label{fig:gamma}
\end{figure}

We are now in position to define the mapping $\gamma:S_n\rightarrow \cR_n$, $n\geq 1$, from permutations to generic rectangulations; see Figure~\ref{fig:gamma}.
From $\pi\in S_n$ we first construct the diagonal rectangulation $R:=\rho(\pi)\in\cD_n$.
Let $w$ be a horizontal wall in~$R$, and consider the rectangles in~$R$ whose bottom side lies on~$w$ from left to right.
By construction of~$\rho$, these rectangles form an increasing subsequence of~$\pi$.
Similarly, the rectangles in~$R$ whose top side lies on~$w$ from left to right form an increasing subsequence of~$\pi$.
Thus, we can specify a wall shuffle~$\sigma(w)$ by taking the subsequence of~$\pi$ that contains the appropriate rectangle numbers.
On the other hand, for a vertical wall~$w$ in~$R$, the rectangles in~$R$ whose left side lies on~$w$ from bottom to top form a decreasing subsequence of~$\pi$, and the rectangles whose right side lies on~$w$ form a decreasing subsequence of~$\pi$, so we can specify a wall shuffle~$\sigma(w)$ by taking the subsequence of~$\pi$ containing the appropriate rectangle numbers.
The rectangulation $\gamma(\pi)\in\cR_n$ is obtained from~$R\in\cD_n$ by applying wall slides to it, so as to obtain the wall shuffles specified by~$\pi$.

\begin{lemma}[{\cite[Prop.~4.2]{MR2864445}}]
The map $\gamma:S_n \rightarrow \cR_n$ is surjective.
\end{lemma}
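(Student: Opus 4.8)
The plan is to reduce surjectivity of~$\gamma$ to the structure of R-equivalence. Recall from Section~\ref{sec:flips} that every mosaic floorplan -- that is, every R-equivalence class of~$\cR_n$ -- contains a unique diagonal rectangulation, obtained by destroying the patterns~$\zvu$ and~$\zhr$ with wall slides~\cite{MR3878132}. Since $\gamma(\pi)$ is by construction obtained from the diagonal rectangulation~$\rho(\pi)$ by wall slides, $\gamma(\pi)$ always lies in the R-equivalence class of~$\rho(\pi)$. Hence it suffices to prove \emph{(i)} the map $\rho\colon S_n\to\cD_n$ is surjective, and \emph{(ii)} for every $D\in\cD_n$ the map~$\gamma$ sends the fibre~$\rho^{-1}(D)$ onto the entire R-equivalence class of~$D$. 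Granting~\emph{(i)} and~\emph{(ii)}, an arbitrary $R'\in\cR_n$ lies in the R-class of its unique diagonal rectangulation~$D$, by~\emph{(i)} the fibre $\rho^{-1}(D)$ is nonempty, and by~\emph{(ii)} there is $\pi\in\rho^{-1}(D)$ with $\gamma(\pi)=R'$.

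Statement~\emph{(i)} is part of the Law--Reading correspondence~\cite{MR2871762} and may be quoted; alternatively one peels the rectangles of a given~$D$ off one at a time in a valid order for the staircase construction behind~$\rho$. For~\emph{(ii)}, I would first observe that a rectangulation in the R-class of~$D$ is pinned down by its family of wall shuffles $(\sigma(w))_w$: a wall slide keeps every wall of the rectangulation and the set of rectangles recorded in its shuffle, and only permutes the entries of~$\sigma(w)$ for the single wall~$w$ on which it acts. Since $\gamma(\pi)$ is obtained from $\rho(\pi)=D$ precisely by performing wall slides until the shuffle of every wall~$w$ equals the subsequence of~$\pi$ carried by the rectangles of~$\sigma(w)$, statement~\emph{(ii)} amounts to the following: for every $R'$ in the R-class of~$D$, writing $\sigma_{R'}(w)$ for the wall shuffle of~$w$ in~$R'$, there is a permutation $\pi\in\rho^{-1}(D)$ whose induced subsequence on the rectangle set of~$\sigma_{R'}(w)$ equals~$\sigma_{R'}(w)$, for all walls~$w$ simultaneously.

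To produce such a~$\pi$, I would encode all requirements in a single binary relation~$\prec$ on the rectangle set~$[n]$. On the one hand, $\rho^{-1}(D)$ is an interval of the weak order on~$S_n$ by~\cite{MR2871762}, hence is exactly the set of linear orders on~$[n]$ that extend a fixed partial order~$\leq_D$; let~$\prec$ contain~$\leq_D$. On the other hand, for every wall~$w$ of~$D$ and every pair $a,b$ consecutive in~$\sigma_{R'}(w)$ with~$a$ before~$b$, put $a\prec b$. Assuming~$\prec$ contains no directed cycle, let~$\pi$ be any linear extension of it. Then~$\pi$ refines~$\leq_D$, so $\pi\in\rho^{-1}(D)$; and on the rectangle set of each~$\sigma_{R'}(w)$ the relation~$\prec$ already induces the total order~$\sigma_{R'}(w)$, so~$\pi$ restricts to~$\sigma_{R'}(w)$ there. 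Consequently $\gamma(\pi)$ has diagonal rectangulation~$\rho(\pi)=D$ (the same as~$R'$, since $R'$ is R-equivalent to~$D$) and the same wall shuffles as~$R'$, whence $\gamma(\pi)=R'$ and~\emph{(ii)} follows.

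The heart of the matter, and the step I expect to be the main obstacle, is therefore the \emph{consistency} of~$\prec$: that the per-wall constraints of~$R'$ are mutually compatible and compatible with~$\leq_D$ -- equivalently, that the pairs reordered by wall slides are precisely the ``free'' pairs of the interval $\rho^{-1}(D)$, and that no cyclic chain of forced precedences arises once all walls are considered together. This is exactly the content of Reading's Proposition~4.2 in~\cite{MR2864445}. I would prove it by tracing a hypothetical directed cycle of forced precedences through the walls of~$D$ and deriving a contradiction from the planar T-joint incidence structure of a generic rectangulation -- a cyclic chain of such dependencies would have to encircle a rectangle -- or, equivalently, by exhibiting a global linear order on the walls of~$D$ under which all constraints point forward. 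Once consistency is in place, every linear extension of~$\prec$ works, and surjectivity of~$\gamma$ is proved. An alternative route, which avoids the interval description but is heavier, is induction on~$n$, verifying that $p(\gamma(\pi))=\gamma(p(\pi))$ and that every insertion point of~$\gamma(\pi)$ is realized by inserting the value $n+1$ at a suitable position of~$\pi$; this, however, essentially reproves the rectangulation--permutation dictionary developed later in this section.
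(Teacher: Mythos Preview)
The paper does not give its own proof of this lemma; it simply quotes Reading's Proposition~4.2. So there is nothing in the paper to compare your argument against directly.

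That said, your outline is sound. The reduction to (i) surjectivity of~$\rho$ and (ii) $\gamma$ covering each R-equivalence class is correct, and your claim that a weak-order interval $[u,v]$ coincides with the set of linear extensions of the ``common-order'' poset (where $a\prec b$ iff $a$ precedes $b$ in both $u$ and~$v$) is true and easy to check from the inversion-set description of weak order. The framework of combining these forced pairs with the per-wall shuffle constraints into a single relation~$\prec$ and taking a linear extension is exactly the right shape.

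Where your proposal stops short is precisely where you say it does: the acyclicity of~$\prec$. Your sketch (``a cyclic chain of such dependencies would have to encircle a rectangle'') is a plausible heuristic but not a proof; one has to argue carefully that the wall-shuffle constraints imposed by~$R'$ only relate pairs that are \emph{free} in the interval $\rho^{-1}(D)$, and that constraints coming from distinct walls cannot close a cycle. This is the actual content of Reading's proposition, and you have identified but not discharged it.

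Your alternative inductive route is worth noting: it is essentially Lemma~\ref{lem:gamma-jump} of the present paper, which shows that every child $c_k(\gamma(\pi))$ in the rectangulation tree arises as $\gamma(c_i(\pi))$ for some~$i$. Combined with $\gamma(1)=\square$, this yields surjectivity by induction on~$n$ without any appeal to acyclicity arguments. So although the paper chooses to cite Reading for the surjectivity lemma, its own later machinery provides an independent proof along the lines of your second suggestion.
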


Even though $\gamma$ is not a bijection, Reading~\cite{MR2864445} showed that it becomes a bijection when restricting the domain to 2-clumped permutations.

\begin{theorem}[{\cite[Thm.~4.1]{MR2864445}}]
\label{thm:2clumped-rect}
The map $\gamma$ is a bijection when restricted to the set~$S_n'$ of 2-clumped permutations.
\end{theorem}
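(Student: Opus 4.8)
The plan is to realize $\gamma$ as the projection of a lattice congruence of the weak order on $S_n$ and to identify the $2$-clumped permutations with the bottom elements of its classes. Everything then reduces to two claims: (C1) each fiber $\gamma^{-1}(R)$, $R\in\cR_n$, is an interval of the weak order, and in particular has a unique minimal element; and (C2) a permutation $\pi\in S_n$ is the minimal element of $\gamma^{-1}(\gamma(\pi))$ if and only if $\pi\in S_n'$. Granting these, the assignment $R\mapsto\min\gamma^{-1}(R)$ is a well-defined map $\cR_n\to S_n$ (well-defined because $\gamma$ is surjective by the preceding lemma, so no fiber is empty), it is injective with image exactly $S_n'$ by (C1) and (C2), and it is a two-sided inverse of $\gamma|_{S_n'}$; hence $\gamma|_{S_n'}$ is a bijection onto $\cR_n$.

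For (C1) I would first describe the fibers of $\gamma$ combinatorially. Since $\gamma(\pi)$ is obtained from the diagonal rectangulation $\rho(\pi)$ by wall slides realizing the wall shuffles $\sigma(w)$ read off as subsequences of $\pi$, one has $\gamma(\pi)=\gamma(\pi')$ exactly when $\rho(\pi)=\rho(\pi')$ and, on the rectangles recorded by each wall shuffle of this common diagonal rectangulation, $\pi$ and $\pi'$ induce the same subsequence. Using the staircase description of $\rho$ due to Law and Reading~\cite{MR2871762}, the relation $\rho(\pi)=\rho(\pi')$ is generated by transpositions of two adjacent entries $a,b$ of $\pi$ for which inserting $r_a$ and $r_b$ in either order into the current partial staircase has the same effect, and within a fixed $\rho$-fiber the remaining freedom is exactly to transpose two adjacent entries of $\pi$ that lie consecutively on a common wall shuffle. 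Thus the fiber relation of $\gamma$ is generated by an explicit family $\mathcal A$ of adjacent transpositions $\cdots ba\cdots\leftrightarrow\cdots ab\cdots$ with $a<b$, and (C1) then amounts to verifying that $\mathcal A$ meets the standard forcing/closure conditions that make the generated equivalence a lattice congruence of the weak order, so that every class is an interval; this is a finite local check on permutation patterns of bounded length, in the spirit of the lattice-quotient analysis of~\cite{perm_series_ii}.

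Claim (C2) is the heart of the argument and the step I expect to be hardest. By (C1), $\pi$ is the minimum of its fiber if and only if no descent of $\pi$ admits a fiber-preserving transposition, i.e., no transposition in $\mathcal A$ decreases the number of inversions of $\pi$; I would show that this happens precisely when $\pi$ avoids all four vincular patterns $3\ul{51}24$, $3\ul{51}42$, $24\ul{51}3$, $42\ul{51}3$. For the \emph{if} direction, given an occurrence of, say, $3\ul{51}24$, the underlined pair occupies adjacent positions and forms a descent; one checks that the values $5$ and $1$ lie consecutively on a common wall shuffle of the corresponding diagonal rectangulation, so swapping them is a transposition in $\mathcal A$ that removes an inversion, and $\pi$ is not the fiber minimum. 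For the \emph{only if} direction, suppose a transposition in $\mathcal A$ applies at a descent of $\pi$, say at positions $i,i+1$ with $\pi(i)=b>a=\pi(i+1)$: either it preserves $\rho$, in which case the analysis of the diagonal-rectangulation congruence yields an occurrence of $2\ul{41}3$ or $3\ul{41}2$ that the staircase structure around $r_a$ and $r_b$ extends to one of the four length-$5$ patterns; or it is internal to a wall shuffle, in which case tracing the construction of $\gamma(\pi)$ around $r_a$ and $r_b$ produces the two values flanking the clump together with the value responsible for the shuffle, again realizing one of the four patterns. The delicate part, which I would isolate as a separate lemma, is the precise bookkeeping here: matching the condition ``$a$ and $b$ are consecutive on a common wall shuffle of $\rho(\pi)$'' to the exact list of four forbidden patterns, proved by a case analysis on the orientation of the wall carrying the shuffle and on the relative diagonal positions of $r_a$ and $r_b$, using throughout that along any wall of a diagonal rectangulation the same-side rectangles form a monotone subsequence of~$\pi$.

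A more framework-native alternative, avoiding lattice-congruence theory, is induction on~$n$ along the zigzag structure: using that $S_n'$ is a zigzag language (Lemma~\ref{lem:2clumped-zigzag}) and that $\gamma$ intertwines deletion and insertion of the largest value with the operations $p$ and $c_i$ on rectangulations, one reduces the bijectivity of $\gamma|_{S_n'}$ to the case~$n-1$ together with a matching, for each $\pi\in S_{n-1}'$, between the positions $i$ with $c_i(\pi)\in S_n'$ and the insertion points of $\gamma(\pi)$. This route needs the deletion/insertion compatibility of $\gamma$ proved directly (the content of the later Lemma~\ref{lem:gamma-jump}), after which the induction is routine.
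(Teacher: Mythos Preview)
The paper does not give a proof of this theorem; it is quoted from Reading~\cite{MR2864445} and used as a black box throughout Section~\ref{sec:proofs}. So there is no ``paper's own proof'' to compare against.

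That said, your first approach---realizing the fibers of~$\gamma$ as the classes of a lattice congruence of the weak order and identifying $S_n'$ with the bottom elements of those classes---is exactly Reading's strategy in~\cite{MR2864445}. Your outline of (C1) and (C2) is on the right track, though the local verifications (that the relation generated by~$\mathcal A$ really is a congruence, and that minimality is characterized precisely by the four vincular patterns) are the substantive content of Reading's paper and are not as short as your sketch suggests.

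Your alternative inductive route has a circularity issue you should flag more explicitly. In this paper, the final paragraph of the proof of Lemma~\ref{lem:gamma-jump}---the claim that each interval~$I_k$ contains \emph{exactly one} 2-clumped permutation---invokes Theorem~\ref{thm:2clumped-rect} itself. So you cannot simply cite Lemma~\ref{lem:gamma-jump} as stated; you would have to re-prove that ``exactly one'' step directly from the definition of 2-clumped, which amounts to the same pattern analysis as in (C2) of your first approach. The induction does not actually bypass the hard part.
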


\subsection{The connection between permutations and rectangulations}

The key lemma of this section, Lemma~\ref{lem:gamma-jump} below, asserts that deletion, insertion and jumps in permutations as defined in Section~\ref{sec:perm-ops} are in bijective correspondence under~$\gamma$ to deletion, insertion and jumps in generic rectangulations as defined in Sections~\ref{sec:deletion}, \ref{sec:insertion} and~\ref{sec:jumps}.
In order to prove it, we first establish a coarser version of this statement for the mapping~$\rho$.

\begin{lemma}
\label{lem:rho-jump}
Let $\pi=a_1\cdots a_n\in S_n$, $n\geq 1$, and define $P:=\rho(\pi)\in\cD_n$.
In the sequence~$I(P)=(q_1,\ldots,q_\nu)$, consider the subsequence~$I'(P)=(q_{j_1},\ldots,q_{j_\mu})$ consisting of the first insertion point of every vertical group, and the last insertion point of every horizontal group.
Then we have $p(\rho(c_i(\pi)))=P$ for all $i=1,\ldots,n+1$, and the sequence~$1,\ldots,n+1$ can be partitioned into consecutive nonempty intervals~$I_1,\ldots,I_\mu$ with the following properties:
\begin{enumerate}[label=(\alph*), leftmargin=8mm, noitemsep, topsep=3pt plus 3pt]
\item for every $k=1,\ldots,\mu$ and every $i\in I_k$ we have $\rho(c_i(\pi))=c_{j_k}(P)$;
\item for any interval $I_k=[\ihat,\icheck]$, $1<k<\mu$, such that the top-left vertex of~$r_{n+1}$ in~$\rho(c_i(\pi))$, $i\in I_k$, has type~$\rightT$, we have that the rectangle~$r_{a_{\ihat-1}}$ is the unique rectangle left of~$r_{n+1}$, and the rectangle~$r_{a_\icheck}$ is the leftmost rectangle above~$r_{n+1}$;
\item for any interval $I_k=[\ihat,\icheck]$, $1<k<\mu$, such that the top-left vertex of~$r_{n+1}$ in~$\rho(c_i(\pi))$, $i\in I_k$, has type~$\bottomT$, we have that the rectangle~$r_{a_{\ihat-1}}$ is the topmost rectangle left of~$r_{n+1}$, and the rectangle~$r_{a_\icheck}$ is the unique rectangle above~$r_{n+1}$;
\item we have $I_1=\{1\}$ and $I_\mu=\{n+1\}$.
\end{enumerate}
\end{lemma}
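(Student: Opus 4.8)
The plan is to run the recursive construction of $\rho$ on $c_i(\pi)$ and to monitor the position of the new rectangle $r_{n+1}$ as the insertion position $i$ increases from $1$ to $n+1$. Writing $c_i(\pi)=a_1\cdots a_{i-1}\,(n+1)\,a_i\cdots a_n$, the first $i-1$ steps of the construction of $\rho(c_i(\pi))$ add $r_{a_1},\dots,r_{a_{i-1}}$ and build, up to rescaling the diagonal into $n+1$ equal pieces, exactly the same partial staircase as the first $i-1$ steps of the construction of $P=\rho(\pi)$. Step $i$ then places $r_{n+1}$ into the last diagonal segment, i.e.\ into the bottom-right corner, where it sits strictly below and to the right of the current staircase path. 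The first thing I would check is that the remaining steps, which add $r_{a_i},\dots,r_{a_n}$ on top of the current staircase, yield among $r_1,\dots,r_n$ the same adjacency pattern as the construction of $P$: the rectangle $r_{n+1}$ merely ``cuts off'' the bottom-right corner and does not otherwise interfere. Granting this, $r_{n+1}$ is the bottom-right rectangle of $\rho(c_i(\pi))$, and collapsing it — which is precisely the operation $p$ — recovers $P$. Equivalently one proves the compatibility $p\circ\rho=\rho\circ p$ (which can also be extracted from \cite{MR2871762,MR2864445}), so that $p(\rho(c_i(\pi)))=\rho(p(c_i(\pi)))=\rho(\pi)=P$.

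Given this, by Lemma~\ref{lem:del-ins} each $\rho(c_i(\pi))$ equals $c_m(P)$ for a unique $m\in\{1,\dots,\nu(P)\}$. Since $\rho$ takes values in $\cD_{n+1}$, the rectangulation $c_m(P)$ is diagonal, so $q_m$ must be the first point of a vertical group or the last point of a horizontal group of $I(P)$ — any other choice creates a $\zvu$ or $\zhr$ pattern (cf.\ Section~\ref{sec:oracle-diag}). Hence $q_m\in I'(P)$, say $q_m=q_{j_{k(i)}}$ with $k(i)\in\{1,\dots,\mu\}$, which defines the candidate intervals $I_k:=\{i : k(i)=k\}$ and gives property~(a) as soon as the $I_k$ are shown to be consecutive and nonempty.

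To get the interval structure I would compare $c_i(\pi)$ with $c_{i+1}(\pi)$: their constructions agree for $i-1$ steps, then $c_i(\pi)$ places $r_{n+1}$ and afterwards $r_{a_i}$, whereas $c_{i+1}(\pi)$ places $r_{a_i}$ and afterwards $r_{n+1}$. So passing from $i$ to $i+1$ moves the single rectangle $r_{a_i}$ from being inserted \emph{after} $r_{n+1}$ (hence sitting above it) to being inserted \emph{before} $r_{n+1}$ (hence becoming part of the staircase $r_{n+1}$ is placed against). If $r_{a_i}$ is not the leftmost rectangle above $r_{n+1}$ in $\rho(c_i(\pi))$, then detaching it does not change where $r_{n+1}$ can sit, so $k(i+1)=k(i)$; if $r_{a_i}$ is the leftmost rectangle above $r_{n+1}$, then $r_{n+1}$ grows upward by exactly one diagonal-safe slot, a single $T$- or $S$-jump matching the jump sequence~\eqref{eq:jump-seq-diag}, so $q_{j_{k(i+1)}}$ is the successor of $q_{j_{k(i)}}$ in $I'(P)$ and $k(i+1)=k(i)+1$. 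Thus $k$ is non-decreasing in unit steps; at $i=1$ nothing precedes $r_{n+1}$, so $k(1)=1$ and $I_1=\{1\}$, and at $i=n+1$ everything precedes $r_{n+1}$, so $k(n+1)=\mu$ and $I_\mu=\{n+1\}$. Since $k$ runs from $1$ to $\mu$ in unit steps it is onto, so $I_1,\dots,I_\mu$ are nonempty consecutive intervals partitioning $\{1,\dots,n+1\}$; this proves (a) and~(d). For (b) and (c) I would read off the transitions: the $(k-1)\to k$ transition happens at $i=\ihat-1\to\ihat$, which is a unit step, so $r_{a_{\ihat-1}}$ was the leftmost rectangle above $r_{n+1}$ at $i=\ihat-1$ and therefore becomes the rectangle immediately left of $r_{n+1}$ for all $i\in I_k$ — the unique one if the top-left vertex of $r_{n+1}$ has type $\rightT$, the topmost one if it has type $\bottomT$; dually, the $k\to k+1$ transition at $i=\icheck\to\icheck+1$ is a unit step, so $r_{a_\icheck}$ (the first rectangle placed after $r_{n+1}$ when $i=\icheck$) is the leftmost rectangle above $r_{n+1}$, resp.\ the unique rectangle above it in the $\bottomT$ case.

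The main obstacle is the geometric bookkeeping in the first paragraph and in the ``leftmost rectangle above $r_{n+1}$'' case of the comparison: one must argue precisely how an already-placed corner rectangle constrains the maximal placement of the later rectangles in $\rho$'s construction, and why detaching exactly one rectangle from directly above $r_{n+1}$ advances it by exactly one element of $I'(P)$ rather than by several; pinning down the ``unique'' versus ``leftmost/topmost'' wording in (b) and (c) likewise requires tracking which rectangle is placed immediately after $r_{n+1}$ together with the constraints the type of its top-left vertex imposes on its left and top sides. As a fallback for the surjectivity of $k$ one can instead invoke surjectivity of $\rho\colon S_{n+1}\to\cD_{n+1}$ (see \cite{MR2871762}), but the unit-step argument above is more self-contained.
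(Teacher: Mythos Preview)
Your plan is essentially the paper's proof: compare $\rho(c_i(\pi))$ with $\rho(c_{i+1}(\pi))$ by analysing what happens when the insertion order of $r_{n+1}$ and $r_{a_i}$ is swapped in the staircase construction, and show that this either does nothing or advances the insertion point of $r_{n+1}$ by exactly one step along $I'(P)$. The paper organises this as an explicit case split according to whether $r_{n+1}$ is left-extended/top-fixed or left-fixed/top-extended, with a further subcase $a_i=n$ versus $a_i<n$; this case analysis is precisely the ``geometric bookkeeping'' you flag as the main obstacle, and it also yields $p(\rho(c_i(\pi)))=P$ directly in each case rather than via a separate commutation lemma $p\circ\rho=\rho\circ p$.

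Two small points to tighten. First, your parenthetical ``(hence sitting above it)'' is loose: $r_{a_i}$ being inserted after $r_{n+1}$ does not force it to be directly above $r_{n+1}$, only above-left in the staircase sense; the dichotomy you actually need (and which the paper's case analysis verifies) is that either $r_{a_i}$ and $r_{n+1}$ do not touch at all, or $r_{a_i}$ is above $r_{n+1}$ and then automatically the leftmost such (in the $\rightT$ case) respectively the unique one above (in the $\bottomT$ case), because both left-extend to the same vertical line. Second, your derivation of $I_1=\{1\}$ via the unit-step argument implicitly uses that $r_{a_1}$ is the leftmost rectangle above $r_{n+1}$ in $\rho(c_1(\pi))$, which holds because $r_{a_1}$ is the bottom-left rectangle of~$P$; the paper makes this explicit.
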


\begin{proof}
For the reader's convenience, the proof is illustrated in Figure~\ref{fig:rho-jump}.
Recall the definition of the mapping~$\rho$ via the process described in Section~\ref{sec:surjection} and illustrated in Figure~\ref{fig:rho}.
Using the definition of the rectangle insertion from Section~\ref{sec:insertion}, we first observe that $\rho(c_1(\pi))=c_1(P)$ and $\rho(c_{n+1}(\pi))=c_\nu(P)$.
We consider the sequence of permutations $c_i(\pi)$ for $i=1,\ldots,n+1$ and their images under~$\rho$.
Observe that $c_{i+1}(\pi)$ is obtained from~$c_i(\pi)$ by the adjacent transposition $(n+1)a_i\longrightarrow a_i(n+1)$.
We consider the construction of~$R:=\rho(c_i(\pi))$, specifically the two steps in which the rectangles~$r_{n+1}$ and~$r_{a_i}$ are added, and we analyze how the swapped insertion order of these two rectangles changes the resulting rectangulation~$R':=\rho(c_{i+1}(\pi))$.
Clearly, in both~$R$ and~$R'$, the rectangle~$r_{n+1}$ is either left-extended and top-fixed, or left-fixed and top-extended, and we treat both cases separately.
In particular, these two cases are not symmetric.

\begin{figure}
\includegraphics{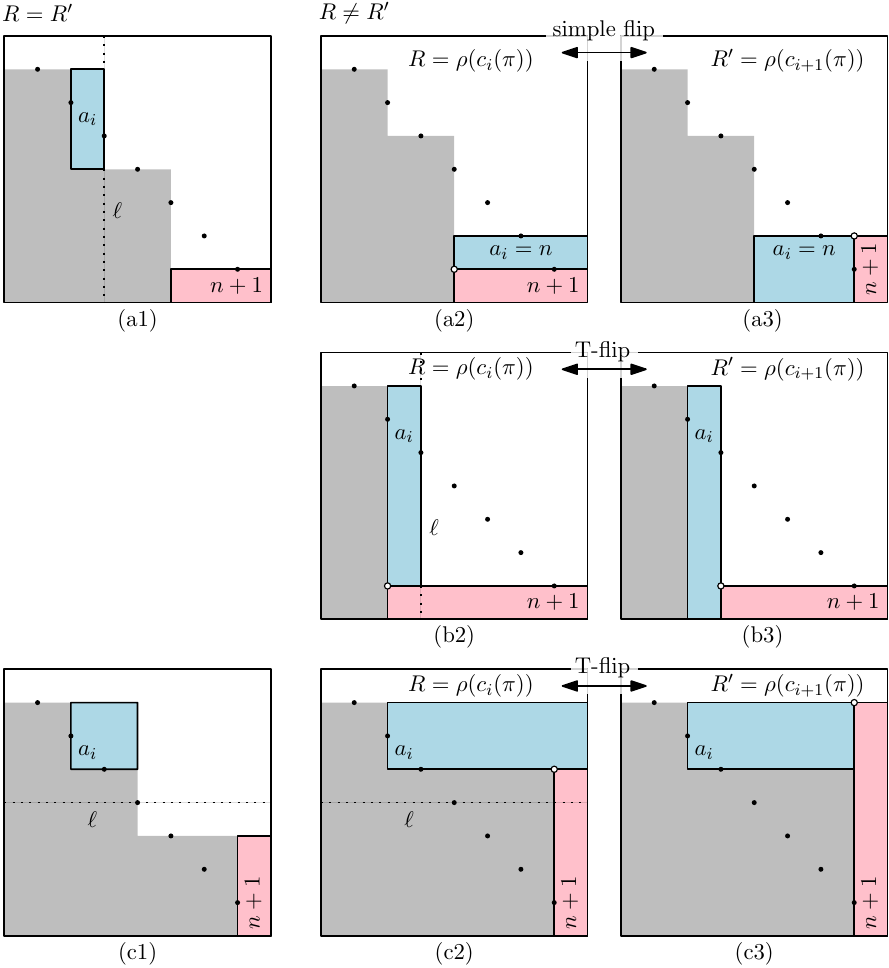}
\caption{Illustration of the proof of Lemma~\ref{lem:rho-jump}.}
\label{fig:rho-jump}
\end{figure}

\textbf{Case~(i):} $r_{n+1}$ is left-extended and top-fixed in~$R$.
This case is shown in the top and middle part of Figure~\ref{fig:rho-jump}.
We distinguish two subcases, namely $a_i=n$ and $a_i<n$.

\textbf{Case~(ia):} $a_i=n$.
In this case the top side of~$r_{n+1}$ coincides with the bottom side of~$r_{a_i}=r_n$ in~$R$, as both rectangles left-extend to the same vertical line by the staircase property and the fact that every rectangle intersects the main diagonal; see Figure~\ref{fig:rho-jump}~(a2).
Consequently, inserting them in the swapped order, first $r_n$ and then~$r_{n+1}$, produces a sub-rectangulation that differs in a simple flip of the wall between these two rectangles; see Figure~\ref{fig:rho-jump}~(a3).
As the height of the top side of~$r_n$ is not determined by~$r_{n+1}$, but only by previous rectangles $r_{a_1},\ldots,r_{a_{i-1}}$, both insertion orders produce the same staircase $r_{a_1}\cup\cdots \cup r_n\cup r_{n+1}$, which by the definition of~$\rho$ is all that matters for the next construction steps.
Consequently, $R$ and~$R'$ differ in a simple flip of the rectangles~$r_{n+1}$ and~$r_n$.
Moreover, by the definition of rectangle deletion given in Section~\ref{sec:deletion} we have $p(R)=p(R')=P$, and by the definition of rectangle insertion and the definition of~$I'(P)$ we have $R=c_{j_k}(P)$ and $R'=c_{j_{k+1}}(P)$ for some index~$k$.
Note that in~$R$, the top-left vertex of~$r_{n+1}$ has type~$\rightT$ and $r_{a_\icheck}$, $\icheck:=i$, is the leftmost rectangle above~$r_{n+1}$.
Furthermore, in~$R'$, the top-left vertex of~$r_{n+1}$ has type~$\bottomT$ and $r_{a_{\ihat-1}}=r_{a_i}$, $\ihat:=i+1$, is the topmost rectangle left of~$r_{n+1}$.

\textbf{Case~(ib):} $a_i<n$.
If $r_{a_i}$ is bottom-fixed in~$R$, or if $r_{n+1}$ does not left-extend beyond the vertical line~$\ell$ through the right endpoint of the $a_i$th interval on the main diagonal, then the rectangles $r_{n+1}$ and $r_{a_i}$ do not touch; see Figure~\ref{fig:rho-jump}~(a1).
Consequently, inserting them in the swapped order, first~$r_{a_i}$ and then~$r_{n+1}$, produces the same sub-rectangulation.
It follows that $R=R'$, i.e., $\rho(c_i(\pi))=\rho(c_{i+1}(\pi))$, and therefore trivially $p(R)=p(R')$.

On the other hand, if $r_{a_i}$ is bottom-extended and $r_{n+1}$ left-extends beyond~$\ell$, then $r_{a_i}$ must be right-fixed because of~$a_i<n$ (otherwise $r_{a_i}$ would reach into the $(a_i+1)$st line segment on the main diagonal).
Moreover, because of the staircase property the top side of~$r_{n+1}$ contains the bottom side of~$r_{a_i}$, and both rectangles left-extend to the same vertical line; see Figure~\ref{fig:rho-jump}~(b2).
Consequently, inserting them in the swapped order, first~$r_{a_i}$ and then~$r_{n+1}$ produces a sub-rectangulation that differs in a T-flip from~$\topT$ to~$\rightT$ around the top-right common vertex of these two rectangles; see Figure~\ref{fig:rho-jump}~(b3).
As the height of the top side of~$r_{a_i}$ is not determined by~$r_{n+1}$, and as $r_{a_i}$ is right-fixed and $r_{n+1}$ is top-fixed in both insertion orders, we obtain that~$R$ and~$R'$ differ in a T-flip around the top-right common vertex of the rectangles~$r_{n+1}$ and~$r_{a_i}$.
Moreover, by the definition of rectangle insertion and deletion and the definition of~$I'(P)$, we have $p(R)=p(R')=P$, and $R=c_{j_k}(P)$ and $R'=c_{j_{k+1}}(P)$ for some index~$k$.
Note that in~$R$, the top-left vertex of~$r_{n+1}$ has type~$\rightT$ and $r_{a_\icheck}$, $\icheck:=i$, is the leftmost rectangle above~$r_{n+1}$.
Furthermore, in~$R'$, the top-left vertex of~$r_{n+1}$ has type~$\rightT$ and $r_{a_{\ihat-1}}=r_{a_i}$, $\ihat:=i+1$, is the unique rectangle left of~$r_{n+1}$.

\textbf{Case~(ii):} $r_{n+1}$ is left-fixed and top-extended in~$R$.
This case is shown in the bottom part of Figure~\ref{fig:rho-jump}.
As $r_{n+1}$ is top-extended, the staircase property implies that~$a_i<n$, as $n$ must be among the first entries $a_1,\ldots,a_{i-1}$ of~$\pi$.

If $r_{a_i}$ is right-fixed in~$R$, or if $r_{n+1}$ does not top-extend beyond the horizontal line~$\ell$ through the right endpoint of the $(a_i+1)$st interval on the main diagonal, then the rectangles~$r_{n+1}$ and~$r_{a_i}$ do not touch; see Figure~\ref{fig:rho-jump}~(c1).
Consequently, inserting them in the swapped order, first~$r_{a_i}$ and then~$r_{n+1}$, produces the same sub-rectangulation.
It follows that $R=R'$, i.e., $\rho(c_i(\pi))=\rho(c_{i+1}(\pi))$, and therefore trivially $p(R)=p(R')$.

On the other hand, if $r_{a_i}$ is right-extended and $r_{n+1}$ top-extends beyond~$\ell$, then $r_{a_i}$ must be bottom-fixed because of~$a_i<n$ (otherwise $r_{a_i}$ would reach into the $(a_i+1)$st line segment on the main diagonal).
Moreover, because of the staircase property the top side of~$r_{n+1}$ must lie on the horizontal line through the bottom side of~$r_{a_i}$, and therefore~$r_{a_i}$ right-extends to the right outer boundary, i.e., the bottom side of $r_{a_i}$ contains the top side of~$r_{n+1}$; see Figure~\ref{fig:rho-jump}~(c2).
Consequently, inserting them in the swapped order, first~$r_{a_i}$ and then~$r_{n+1}$ produces a sub-rectangulation that differs in a T-flip from~$\bottomT$ to~$\leftT$ around the bottom-left common vertex of these two rectangles; see Figure~\ref{fig:rho-jump}~(c3).
As the height of the top side of~$r_{a_i}$ is not determined by~$r_{n+1}$ in the two insertion orders, we obtain that~$R$ and~$R'$ differ in a T-flip around the bottom-left common vertex of the rectangles~$r_{n+1}$ and~$r_{a_i}$.
Moreover, by the definition of rectangle insertion and deletion and the definition of~$I'(P)$, we have $p(R)=p(R')=P$, and $R=c_{j_k}(P)$ and $R'=c_{j_{k+1}}(P)$ for some index~$k$.
Note that in~$R$, the top-left vertex of~$r_{n+1}$ has type~$\bottomT$ and $r_{a_\icheck}$, $\icheck:=i$, is the unique rectangle above~$r_{n+1}$.
Furthermore, in~$R'$, the top-left vertex of~$r_{n+1}$ has type~$\bottomT$ and $r_{a_{\ihat-1}}=r_{a_i}$, $\ihat:=i+1$, is the topmost rectangle left of~$r_{n+1}$.

This proves~(a), (b) and~(c).
For~(d) observe that the rectangle in the bottom-left corner of~$\rho(\pi)$ is $r_{a_1}$, whereas the rectangle in the top-right corner is~$r_{a_n}$, so $\rho(c_1(\pi))\neq \rho(c_2(\pi))$ and $\rho(c_n(\pi))\neq \rho(c_{n+1}(\pi))$.
This completes the proof of the lemma.
\end{proof}

\begin{lemma}
\label{lem:gamma-jump}
Let $\pi=a_1\cdots a_n\in S_n$, $n\geq 1$, and consider the rectangulation $P:=\gamma(\pi)\in\cR_n$ with $\nu=\nu(P)$ insertion points.
Then we have $p(\gamma(c_i(\pi)))=P$ for all $i=1,\ldots,n+1$, and the sequence~$i=1,\ldots,n+1$ can be partitioned into consecutive nonempty intervals~$I_1,\ldots,I_\nu$, such that for every $k=1,\ldots,\nu$ and every $i\in I_k$ we have $\gamma(c_i(\pi))=c_k(P)$.
Furthermore, if $\pi$ is 2-clumped, then each interval~$I_k$ contains exactly one 2-clumped permutation.
\end{lemma}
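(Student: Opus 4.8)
The plan is to deduce all three assertions from the coarser statement for the map $\rho$ proved in Lemma~\ref{lem:rho-jump}, while keeping track of the wall slides by which $\gamma(\cdot)$ differs from $\rho(\cdot)$. I would first dispose of the identity $p(\gamma(c_i(\pi)))=P$. Since $p(c_i(\pi))=\pi$, it is enough to show $p\circ\gamma=\gamma\circ p$ on $\bigcup_{m\geq 1}S_m$ (which in particular asserts that the rectangle of $\gamma(\sigma)$ carrying the largest value sits in the bottom-right corner, so that the two meanings of ``deletion'' coincide). The identity $p\circ\rho=\rho\circ p$ is free from Lemma~\ref{lem:rho-jump}: every $\sigma$ equals $c_j(p(\sigma))$ for $j$ the position of its maximum, and that lemma gives $p(\rho(\sigma))=\rho(p(\sigma))$, with the maximum-value rectangle of $\rho(\sigma)$ occupying the bottom-right corner. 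To lift this to $\gamma$ I would check that the wall slides realizing the wall shuffles of $\sigma$ keep this rectangle in the bottom-right corner (a wall slide on its left-side wall only makes it taller, etc.), so that contracting it in $\gamma(\sigma)$ yields a rectangulation R-equivalent to $p(\rho(\sigma))=\rho(p(\sigma))$ whose wall shuffles are obtained from those of $\sigma$ by deleting the maximum; by the definition of $\gamma$ this is $\gamma(p(\sigma))$. This step is routine but requires some care about how wall slides near the corner interact with deletion.

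Next I would establish the interval partition, which is the heart of the argument. Write $P_\rho:=\rho(\pi)$. Lemma~\ref{lem:rho-jump} already partitions $1,\dots,n+1$ into consecutive intervals $I'_1,\dots,I'_\mu$ on which $\rho(c_i(\pi))$ is constant, equal to $c_{j_k}(P_\rho)$, where $q_{j_1},\dots,q_{j_\mu}$ are the first insertion point of each vertical group and the last of each horizontal group of $I(P_\rho)$. By the first part, $\gamma(c_i(\pi))=c_{m_i}(P)$ for a unique $m_i\in\{1,\dots,\nu(P)\}$, and since $\gamma(c_i(\pi))$ is R-equivalent to $\rho(c_i(\pi))$, within a fixed $I'_k$ all the rectangulations $c_{m_i}(P)$ are R-equivalent to the single rectangulation $c_{j_k}(P_\rho)$, hence differ from one another only by wall slides of $r_{n+1}$. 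I would then show $i\mapsto m_i$ is non-decreasing with $m_1=1$, $m_{n+1}=\nu(P)$, and $m_{i+1}\in\{m_i,m_i+1\}$; setting $I_k:=\{i:m_i=k\}$ then gives the desired partition with $\nu(P)$ nonempty pieces. For $i,i+1$ inside a common $I'_k$, the permutations $c_i(\pi)$ and $c_{i+1}(\pi)$ differ only by the adjacent transposition exchanging the value $n+1$ with the value $a_i$ to its right, so every wall shuffle changes by at most that one adjacent transposition; hence $r_{n+1}$ advances by at most one position among the rectangles it can be wall-slid past, i.e.\ $m_{i+1}\in\{m_i,m_i+1\}$ by Lemma~\ref{lem:jumps-flips}(a), and the advance is monotone because within $I'_k$ the value $n+1$ only moves to the right inside each wall shuffle. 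At a boundary between $I'_k$ and $I'_{k+1}$, Lemma~\ref{lem:rho-jump}(b),(c) describes the change of $\rho(c_i(\pi))$ as a simple flip or T-flip of $r_{n+1}$, i.e.\ (reading Lemma~\ref{lem:jumps-flips}(b),(c) backwards) a single step of $r_{n+1}$ from the last point of a group to the first point of the next in $I(P_\rho)$; since $i$ and $i+1$ are the extreme indices of their $\rho$-intervals, the wall shuffles place $r_{n+1}$ at the last, resp.\ first, wall-slide position of the groups involved, so the transition in $I(P)$ remains a single increasing step. The endpoint values come from the fact that $c_1(\pi)$ and $c_{n+1}(\pi)$ produce bottom-based, resp.\ right-based, rectangulations, whose diagonal rectangulations are $c_1(P_\rho)$ and $c_\nu(P_\rho)$ and for which the defining wall slides leave $r_{n+1}$ on the bottom, resp.\ right, boundary. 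I expect this step---matching the linear order $I(P)$, which lists all vertical groups before all horizontal ones and orders each group by position, against the order in which the insertion point of $r_{n+1}$ advances as $i$ runs from $1$ to $n+1$---to be the main obstacle, and to require a careful case analysis grafted onto the cases in the proof of Lemma~\ref{lem:rho-jump}.

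Finally I would handle the 2-clumped count, which becomes a clean counting argument once the interval partition is in place. Assume $\pi\in S_n'$. By Lemma~\ref{lem:2clumped-zigzag}, $p$ maps $S_{n+1}'$ onto $S_n'$, and by Theorem~\ref{thm:2clumped-rect}, $\gamma$ restricts to a bijection $S_{n+1}'\to\cR_{n+1}$. Using $p\circ\gamma=\gamma\circ p$ from the first part, a permutation $\sigma\in S_{n+1}'$ satisfies $p(\sigma)=\pi$ if and only if $\gamma(\sigma)$ is a child of $P$ in the tree of rectangulations: the forward direction is immediate, and conversely $p(\gamma(\sigma))=P$ gives $\gamma(p(\sigma))=\gamma(\pi)$ with both arguments in $S_n'$, so $p(\sigma)=\pi$ by injectivity of $\gamma$ on $S_n'$. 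Hence $\gamma$ restricts to a bijection between the $2$-clumped extensions of $\pi$ and the children $c_1(P),\dots,c_{\nu(P)}(P)$ of $P$ (Lemma~\ref{lem:del-ins}); in particular there are exactly $\nu(P)$ such extensions, each of the form $c_i(\pi)$. Since $i\mapsto\gamma(c_i(\pi))$ is constant equal to $c_k(P)$ on $I_k$ and takes distinct values on distinct intervals, the $\nu(P)$ indices giving $2$-clumped extensions are distributed exactly one per interval $I_k$, which is the claim.
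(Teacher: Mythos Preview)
Your proposal is correct and follows essentially the same route as the paper: deduce everything from Lemma~\ref{lem:rho-jump} by tracking the extra wall slides that distinguish $\gamma$ from $\rho$, refine the coarse $\rho$-intervals $I'_1,\ldots,I'_\mu$ into the $\nu(P)$ intervals, and finish with the bijectivity of $\gamma$ on $2$-clumped permutations.

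A few comparative remarks. For the first assertion, the paper does not prove the general commutation $p\circ\gamma=\gamma\circ p$; it argues directly that the wall shuffles of $\gamma(c_i(\pi))$ agree with those of $\gamma(\pi)$ except for the single shuffle containing $r_{n+1}$, which immediately gives $p(\gamma(c_i(\pi)))=P$. Your more general statement is fine but not needed. For the interval refinement, the paper bypasses the abstract monotonicity argument you anticipate as the ``main obstacle'': it simply observes that inside a fixed $I'_k=[\ihat,\icheck]$ the relevant wall shuffle of $P$ has the form $(b_1,\ldots,b_\lambda,a_{\icheck},\ldots)$ (vertical case) or $(\ldots,a_{\ihat-1},b_1,\ldots,b_\lambda)$ (horizontal case), and that $\pi$ contains the subsequence $a_{\ihat-1},b_1,\ldots,b_\lambda,a_{\icheck}$; the position of $n+1$ among the $b_j$'s in $c_i(\pi)$ then reads off the subinterval directly. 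This is more concrete than your plan and avoids the case analysis you anticipate at the group boundaries. For the $2$-clumped count, your direct bijection argument is in fact a bit cleaner than the paper's, which phrases the same idea as a proof by contradiction (assuming some $I_k$ contains no $2$-clumped permutation and deriving a second $2$-clumped preimage of $P$).
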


The proof of Lemma~\ref{lem:gamma-jump} shows that $I_1=\{1\}$ and $I_\nu=\{n+1\}$.

\begin{proof}
For any $i=1,\ldots,n+1$, consider the permutation~$c_i(\pi)$, and the two diagonal rectangulations~$P':=\rho(\pi)\in\cD_n$ and $R':=\rho(c_i(\pi))\in\cD_n$.
From Lemma~\ref{lem:rho-jump} we know that $P'=p(R')$ and $R'=c_j(P')$ for some index~$j$ such that the $j$th insertion point in~$I(P')$ is the first of a vertical group, or the last of a horizontal group.
By the definitions from Section~\ref{sec:surjection}, $\gamma$ consists of applying~$\rho$ plus wall slides that are determined by the wall shuffles of~$P'$ and~$R'$ and the relative order of the rectangle indices in those shuffles in~$\pi$ and~$c_i(\pi)$, i.e., $P=\gamma(\pi)$ and $R:=\gamma(c_i(\pi))$ are obtained from~$P'$ and~$R'$ by wall slides.
Observe that in~$R'$ and~$R$, the bottom-right rectangle~$r_{n+1}$ is contained in at most one wall shuffle.
Specifically, if the top-left corner of~$r_{n+1}$ has type~$\rightT$ and does not lie on the left boundary of the rectangulation, then $r_{n+1}$ participates in a single wall shuffle of the vertical wall that contains the left side of~$r_{n+1}$, whereas if the top-left corner of~$r_{n+1}$ has type~$\bottomT$ and does not lie on the upper boundary of the rectangulation, then $r_{n+1}$ participates in a single wall shuffle of the horizontal wall that contains the top side of~$r_{n+1}$.
In particular, in the cases $j=1$ and $j=\nu(P')$ the rectangle~$r_{n+1}$ is not contained in any wall shuffle.
As the elements of all subsequences of~$\pi$ and~$c_i(\pi)$ that do not contain~$n+1$ appear in the same relative order in both permutations, all wall shuffles of~$P$ and $R$ are the same, except the wall shuffle of~$R$ containing~$n+1$, which is obtained from a wall shuffle of~$P$ by inserting the value~$n+1$.
We conclude that $p(R)=P$ and $R=c_k(P)$ for some index~$k$.

The desired interval partition $I_1,\ldots,I_\nu$ of the indices $1,\ldots,n+1$ is obtained by refining the partition $I_1',\ldots,I_\mu'$ guaranteed by Lemma~\ref{lem:rho-jump}, where $\mu$ is the number of vertical and horizontal groups of~$P'$, which is the same for~$P$, as wall slides do not affect it.
As $I_1'=\{1\}$ and $I_\mu'=\{n+1\}$ by Lemma~\ref{lem:rho-jump}~(d), these two intervals are not refined, so we have $I_1:=I_1'=\{1\}$ and $I_\nu:=I_\mu=\{n+1\}$.
It remains to consider the remaining intervals~$I_k'$, $1<k<\mu$.

First consider an interval~$I_k'=:[\ihat,\icheck]$, $1<k<\mu$, such that in $R':=\rho(c_i(\pi))\in\cD_n$, $i\in I_k$, the top-left vertex has type~$\rightT$ (recall Lemma~\ref{lem:rho-jump}~(a)).
By Lemma~\ref{lem:rho-jump}~(b), in~$R'$ the rectangle~$r_{a_{\ihat-1}}$ is the unique rectangle left of~$r_{n+1}$, and the rectangle~$r_{a_\icheck}$ is the leftmost rectangle above~$r_{n+1}$.
Consider the wall shuffle~$\sigma(w)$ of the vertical wall~$w$ between~$r_{a_{\ihat-1}}$ and~$r_{a_\icheck}$ in~$P$.
It has the form $\sigma(w)=(b_1,\ldots,b_\lambda,a_{\icheck},\ldots)$, for some $\lambda\geq 0$, i.e., the rectangles~$r_{b_1},\ldots,r_{b_\lambda}$ are stacked on top of~$r_{a_{\ihat-1}}$ and their bottom sides are incident with~$w$ below the incidence of the top side of~$r_\icheck$ with~$w$.
It follows that $\pi$ contains the subsequence $a_{\ihat-1},b_1,\ldots,b_\lambda,a_\icheck$, and so the permutations $c_i(\pi)$ for $i=\ihat,\ldots,\icheck$ have the value~$n+1$ appear at every possible position within the subsequence $b_1,\ldots,b_\lambda$.
Consequently, the interval~$I_k'$ is refined into $\lambda$ subintervals such that $\gamma(c_i(\pi))=\gamma(c_{i+1}(\pi))$ if $i,i+1$ are in the same subinterval and $\gamma(c_i(\pi))=c_\ell(P)$ and $\gamma(c_{i+1}(\pi))=c_{\ell+1}(P)$ for some index~$\ell$ if $i,i+1$ are in consecutive subintervals.

Now consider an interval~$I_k'=:[\ihat,\icheck]$, $1<k<\mu$, such that in $R':=\rho(c_i(\pi))\in\cD_n$, $i\in I_k$, the top-left vertex has type~$\bottomT$ (recall Lemma~\ref{lem:rho-jump}~(a)).
By Lemma~\ref{lem:rho-jump}~(c), in~$R'$ the rectangle~$r_{a_{\ihat-1}}$ is the topmost rectangle left of~$r_{n+1}$, and the rectangle~$r_{a_\icheck}$ is the unique rectangle above~$r_{n+1}$.
Consider the wall shuffle~$\sigma(w)$ of the horizontal wall~$w$ between~$r_{a_{\ihat-1}}$ and~$r_{a_\icheck}$ in~$P$.
It has the form $\sigma(w)=(\ldots,a_{\ihat-1},b_1,\ldots,b_\lambda)$, for some $\lambda\geq 0$, i.e., the rectangles~$r_{b_\lambda},\ldots,r_{b_1}$ are stacked to the left of~$r_{a_\icheck}$ and their right sides are incident with~$w$ to the right of the incidence of the left side of~$r_{a_{\ihat-1}}$ with~$w$.
It follows that $\pi$ contains the subsequence $a_{\ihat-1},b_1,\ldots,b_\lambda,a_\icheck$, and so the permutations $c_i(\pi)$ for $i=\ihat,\ldots,\icheck$ have the value~$n+1$ appear at every possible position within the subsequence $b_1,\ldots,b_\lambda$.
Consequently, the interval~$I_k'$ is refined into $\lambda$ subintervals such that $\gamma(c_i(\pi))=\gamma(c_{i+1}(\pi))$ if $i,i+1$ are in the same subinterval and $\gamma(c_i(\pi))=c_\ell(P)$ and $\gamma(c_{i+1}(\pi))=c_{\ell+1}(P)$ for some index~$\ell$ if $i,i+1$ are in consecutive subintervals.

It remains to argue that each set of permutations~$C_k:=\{c_i(\pi)\mid i\in I_k\}$, $k=1,\ldots,\nu$, contains exactly one 2-clumped permutation.
Indeed, $C_k$ can contain at most one 2-clumped permutation by~Theorem~\ref{thm:2clumped-rect}, as all $\pi\in C_k$ have the same image under~$\gamma$.
Suppose for the sake of contradiction that~$C_k$ contains no 2-clumped permutation.
Then by Theorem~\ref{thm:2clumped-rect} there is another 2-clumped permutation~$\rho\in S_{n+1}$ with $\rho\notin C:=\{c_i(\pi)\mid i=1,\ldots,n+1\}$ and~$\gamma(\rho)=\gamma(c_i(\pi))$ for all $i\in C_k$.
However, by Lemma~\ref{lem:2clumped-zigzag}, the permutation $\rho':=p(\rho)\in S_n$ is also 2-clumped, i.e., we have $\rho'\in S_n'$.
Moreover, we have $\rho'\neq\pi$ as $\rho\notin C$, and by Lemma~\ref{lem:gamma-jump} we have $\gamma(\rho')=\gamma(\pi)=R$, a contradiction to the fact that~$\gamma$ is a bijection between~$S_n'$ and~$\cR_n$ by Theorem~\ref{thm:2clumped-rect}.
\end{proof}

\subsection{Proof of Theorem~\ref{thm:jump-rect}}

With Lemma~\ref{lem:gamma-jump} in hand, we are now in position to present the proof of Theorem~\ref{thm:jump-rect}.

\begin{proof}[Proof of Theorem~\ref{thm:jump-rect}]
Consider a zigzag set of rectangulations~$\cC_n\seq\cR_n$, and consider the zigzag sets~$\cC_{i-1}:=\{p(R)\mid R\in\cC_i\}$ for $i=n,n-1,\ldots,2$.
By Lemma~\ref{lem:gamma-jump}, for all $i=1,\ldots,n$ there is a set of 2-clumped permutations~$L_i\seq S_i'$ such that $\gamma$ restricted to~$L_i$ is a bijection between~$L_i$ and~$\cC_i$, and such that $L_{i-1}=\{p(\pi)\mid \pi\in L_i\}$ for all $i=2,\ldots,n$.
Moreover, as~$\cC_i$ is a zigzag set, we know that for all~$R\in\cC_{i-1}$ we have $c_1(R)\in\cC_i$ and $c_{\nu(R)}(R)\in\cC_i$, for all $i=2,\ldots,n$.
By Lemma~\ref{lem:gamma-jump}, this implies that for all~$\pi\in L_{i-1}$ we have $c_1(\pi)\in L_i$ and $c_i(R)\in L_i$, for all $i=2,\ldots,n$, i.e., $L_n$ is a zigzag language of 2-clumped permutations (using that $L_1=\{1\}$ and $L_0:=\{p(\pi)\mid \pi\in L_1\}=\{\varepsilon\}$ are zigzag languages).

By Theorem~\ref{thm:jump}, Algorithm~J visits every permutation from~$L_n$ exactly once, in the order~$J(L_n)$ defined by~\eqref{eq:JLn}.
From Lemma~\ref{lem:JLn-prop}~(d) we obtain that if Algorithm~J performs a jump of some value~$j$ in the current permutation~$\pi\in L_n$, then the corresponding rectangulations $R^{[k]}:=\gamma(\pi^{[k]})$ for $k=j+1,\ldots,n$ are either bottom-based or right-based.
Using the definition of jumps from Section~\ref{sec:jumps} and Lemma~\ref{lem:gamma-jump}, a minimal left/right jump of the value~$j$ in~$\pi\in L_n$, as performed by Algorithm~J, corresponds to a minimal left/right jump of the rectangle~$r_j$ in~$\gamma(\pi)\in\gamma(L_n)=\cC_n$, as performed by Algorithm~\Jrect{}.
This together with the observation that $\gamma(\ide_n)=\idrect$ proves the first part of the theorem.
Specifically, the ordering of rectangulations generated by Algorithm~\Jrect{} is
\begin{equation}
\label{eq:JrectLn}
\Jrectm(\cC_n)=\gamma(J(L_n)).
\end{equation}
To prove the second part of the theorem, by Theorem~\ref{thm:jump} it suffices to show that $|\cC_i|=|\gamma(L_i)|$ is even for all $2\leq i\leq n-1$.
For any rectangulation~$R\in\cR_n$, we write $\lambda(R)\in\cR_n$ for the rectangulation obtained by reflection at the main diagonal.
First observe that if $R,R'\in\cR_n$, $n\geq 2$, satisfy $R'=\lambda(R)$, then we also have $p(R')=\lambda(p(R))$.
Consequently, the assumption that $\cC_n$ is symmetric implies that $\cC_i$ is symmetric for all $i=1,\ldots,n$.
Consider a rectangulation $R\in\cR_n$, $n\geq 2$, and observe that if the top-left vertex of the bottom-right rectangle~$r_n$ of~$R$ has type~$\rightT$, then it has type~$\bottomT$ in~$\lambda(R)$, and vice versa.
It follows that $\lambda$ is an involution without fixed points on~$\cC_i$ for all $i=2,\ldots,n$, proving that $|\cC_i|$ is even.

This completes the proof.
\end{proof}

\subsection{Memoryless generation of permutations}

Consider Algorithm~M below, which takes as input a zigzag language of permutations~$L_n\seq S_n$ and generates them exhaustively by minimal jumps in the same order as Algorithm~J, i.e., in the order~$J(L_n)$.

\begin{algo}{\bfseries Algorithm~M}{Memoryless minimal jumps}
This algorithm generates all permutations of a zigzag language $L_n\seq S_n$ by minimal jumps in the same order as Algorithm~J.
It maintains the current permutation in the variable~$\pi$, and auxiliary arrays $o=(o_1,\ldots,o_n)$ and $s=(s_1,\ldots,s_n)$.
\begin{enumerate}[label={\bfseries M\arabic*.}, leftmargin=8mm, noitemsep, topsep=3pt plus 3pt]
\item{} [Initialize] Set $\pi\gets \ide_n=12\cdots n$, and $o_j\gets \dirl$, $s_j\gets j$ for $j=1,\ldots,n$.
\item{} [Visit] Visit the current permutation $\pi$.
\item{} [Select value] Set $j\gets s_n$, and terminate if $j=1$.
\item{} [Jump value] In the current permutation~$\pi$, perform a jump of the value~$j$ that is minimal w.r.t.~$L_n$, where the jump direction is left if $o_j=\dirl$ and right if $o_j=\dirr$.
\item{} [Update $o$ and $s$] Set $s_n\gets n$.
If $o_j=\dirl$ and $j$ is at the first position in~$\pi$ or the value left of it is bigger than~$j$ set $o_j\gets \dirr$, or if $o_j=\dirr$ and $j$ is at the last position in~$\pi$ or the value right of it is bigger than~$j$ set $o_j\gets \dirl$, and in both cases set $s_j\gets s_{j-1}$ and $s_{j-1}\gets j-1$. Go back to~M2.
\end{enumerate}
\end{algo}

\begin{theorem}
\label{thm:algo}
For any zigzag language of permutations~$L_n\seq S_n$, $n\geq 1$, Algorithm~M visits every rectangulation from~$L_n$ exactly once, in the order~$J(L_n)$ defined by~\eqref{eq:JLn}.
\end{theorem}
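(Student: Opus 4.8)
The plan is to show that Algorithm~M, on input a zigzag language $L_n\seq S_n$, produces exactly the sequence $J(L_n)$ of Theorem~\ref{thm:jump}, by arguing that in every step Algorithm~M jumps the same value, in the same direction, as the greedy Algorithm~J. Since line~M4 always performs a \emph{minimal} jump of the selected value in the selected direction, and since Lemma~\ref{lem:JLn-prop}(c) guarantees that every jump occurring in $J(L_n)$ is minimal w.r.t.~$L_n$, matching value and direction in each step forces Algorithm~M to reproduce $J(L_n)$ verbatim, and in particular to terminate exactly when Algorithm~J does. So the whole task reduces to tracking the bookkeeping arrays~$o$ and~$s$.

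I would argue by induction on~$n$, exploiting the recursive definition~\eqref{eq:JLn}. The base case $n=1$ is immediate: line~M3 sets $j\gets s_1=1$ and terminates, so the only visited permutation is $\ide_1=J(L_1)$. For the inductive step, write $J(L_{n-1})=\pi_1,\pi_2,\ldots$ and recall from~\eqref{eq:JLn} that $J(L_n)=\lvec c(\pi_1),\rvec c(\pi_2),\lvec c(\pi_3),\ldots$, where within each block only the value~$n$ moves, and consecutive blocks insert~$n$ alternately from right to left and from left to right. The direction array handles the within-block behaviour: $o_n$ starts at~$\dirl$ (line~M1, consistent with Lemma~\ref{lem:JLn-prop}(b)), and line~M5 flips it exactly when~$n$ reaches the first or last position of~$\pi$ (for the maximal value~$n$ the two M5-conditions collapse to this), which is precisely where a $\lvec c$-block ends and an $\rvec c$-block begins, and vice versa. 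Hence, as long as $s_n=n$, Algorithm~M walks through one complete block of~\eqref{eq:JLn}, and the blocks alternate correctly.

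The transitions between blocks are where the stack array~$s$ does its work, following the classical loopless stack trick of Bitner, Ehrlich and Reingold~\cite{MR0424386}. When a block ends at, say, $c_1(\pi_k)$ (the case $c_n(\pi_k)$ being symmetric), line~M5 executes $s_n\gets n$ and then $s_n\gets s_{n-1}$, $s_{n-1}\gets n-1$, so $s_n$ now holds the value on top of the ``$(n-1)$-stack''. The crux is a secondary claim, to be proved by induction on the number of steps, that the restriction $(s_1,\ldots,s_{n-1})$, $(o_1,\ldots,o_{n-1})$ of the state of Algorithm~M on~$L_n$ stays synchronized with the full state of Algorithm~M run on~$L_{n-1}$, advanced by one step whenever a block of~\eqref{eq:JLn} is traversed. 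Granting this, the value $j'<n$ selected after the block is exactly the value whose jump carries $\pi_k$ to $\pi_{k+1}$ in $J(L_{n-1})$. By Lemma~\ref{lem:JLn-prop}(d) (at level~$n$), the value~$n$ sits at position~$1$ throughout this transition, hence lies outside the substring rotated by the minimal left or right jump of~$j'$, all of whose non-final entries are smaller than $j'<n$; moreover $c_1(\sigma)\in L_n$ iff $\sigma\in L_{n-1}$, using $L_{n-1}=\{p(\pi)\mid\pi\in L_n\}$ and the zigzag property of~$L_n$. Therefore the minimal $j'$-jump in $c_1(\pi_k)$ w.r.t.~$L_n$ is the $c_1(\cdot)$-image of the minimal $j'$-jump in $\pi_k$ w.r.t.~$L_{n-1}$, so Algorithm~M moves from $c_1(\pi_k)$ to $c_1(\pi_{k+1})$, the first element of the next block $\rvec c(\pi_{k+1})$. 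The update of~$o_{j'}$ in line~M5 is a literal transcription of the alternative in Lemma~\ref{lem:JLn-prop}(e) (the only sub-case needing a remark is $j'$ adjacent to~$n$ in $c_1(\pi_k)$, where ``left neighbour equals $n$, hence bigger than $j'$'' matches ``$j'$ at the first position'' in $\pi_k$), and the final $s_n\gets n$ re-enables~$n$ for $\rvec c(\pi_{k+1})$.

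I expect the main obstacle to be making the synchronization claim precise and checking it: one must verify that the ``push top, then pop'' updates performed on $(s_1,\ldots,s_{n-1})$ and $(o_1,\ldots,o_{n-1})$ by the $n$-instance during a block transition are exactly those performed by the $(n-1)$-instance during the corresponding single step (including chains of pops), which is where the alignment of the instruction ``$s_n\gets n$'' of the $n$-instance with ``$s_{n-1}\gets n-1$'' of the $(n-1)$-instance is used. Everything else — the direction array mirroring Lemma~\ref{lem:JLn-prop}(e), and a boundary-positioned value~$n$ never interfering with a smaller value's jump — is a routine case check. With Theorem~\ref{thm:algo} in hand, Theorem~\ref{thm:algo-rect} follows by observing that Algorithm~\Mrect{} is the image of Algorithm~M under the bijection~$\gamma$ restricted to~$L_n$, together with~\eqref{eq:JrectLn}, since deletion, insertion and jumps correspond under~$\gamma$ by Lemma~\ref{lem:gamma-jump}.
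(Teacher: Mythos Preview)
Your proposal is correct and follows essentially the same approach as the paper: both argue by induction on~$n$ that the arrays~$o$ and~$s$ encode the correct jump value and direction at every step, using the properties from Lemma~\ref{lem:JLn-prop}. The paper makes your ``synchronization claim'' precise by explicitly defining target sequences~$s_n^\pi$ via the recursions~\eqref{eq:spi1}--\eqref{eq:spi2} and proving their properties in a separate lemma (Lemma~\ref{lem:sn-prop}), whose cases~(d)--(g) handle exactly the ``chains of pops'' issue you identify as the main obstacle.
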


The rest of this section is devoted to proving Theorem~\ref{thm:algo}.

For any $\pi$ in the sequence~$J(L_n)$ we define a sequence $s_n^\pi=(s_{n,1}^\pi,\ldots,s_{n,n}^\pi)$ as follows:
If $n=1$ we have $J(L_1)=\pi$ with $\pi:=1$ and we define $s_1^\pi=(1)$.
If $n\geq 2$, we consider the permutation~$\pi':=p(\pi)\in S_{n-1}$ in the sequence $J(L_{n-1})$, and we define $\ol{c}(\pi'):=\lvec{c}(\pi')$ if $\pi'$ appears at an odd position in~$J(L_{n-1})$, or $\ol{c}(\pi'):=\rvec{c}(\pi')$ if $\pi'$ appears at an even position.
If $\pi$ is not the last permutation in~$\ol{c}(\pi')$ we define
\begin{subnumcases}{s_{n,i}^{\pi} := \label{eq:spi1}}
s_{n-1,i}^{\pi'} & $\text{if } i\leq n-1,$ \label{eq:spi11} \\
n & $\text{if } i = n,$ \label{eq:spi12}
\end{subnumcases}
for $i=1,\ldots,n$, and if $\pi$ is the last permutation in~$\ol{c}(\pi')$ we define
\begin{subnumcases}{s_{n,i}^{\pi} := \label{eq:spi2}}
s_{n-1,i}^{\pi'} & $\text{if } i\leq n-2,$ \label{eq:spi21} \\
n-1 & $\text{if } i=n-1,$ \label{eq:spi22} \\
s_{n-1,n-1}^{\pi'} & $\text{if } i=n,$ \label{eq:spi23}
\end{subnumcases}
for $i=1,\ldots,n$.

The following lemma captures important properties of the sequences defined in this way.

\begin{lemma}
\label{lem:sn-prop}
The sequences defined in~\eqref{eq:spi1} and~\eqref{eq:spi2} have the following properties.
\begin{enumerate}[label=(\alph*),leftmargin=7mm, noitemsep, topsep=3pt plus 3pt]
\item For the first permutation~$\pi=\ide_n$ in the sequence~$J(L_n)$, we have $s_n^\pi=(1,2,\ldots,n)$.
\item For any two consecutive permutations~$\pi,\rho$ in the sequence~$J(L_n)$, $\rho$ is obtained from~$\pi$ by a jump of the value~$s_{n,n}^\pi$.
\item For the last permutation~$\pi$ in~$J(L_n)$ we have~$s_{n,n}^\pi=1$.
\end{enumerate}
Moreover, for any three consecutive permutations~$\pi,\rho,\sigma$ in~$J(L_n)$ we have:
\begin{enumerate}[label=(\alph*),leftmargin=7mm, noitemsep, topsep=3pt plus 3pt]
\setcounter{enumi}{2}
\item If $\pi$ and~$\rho$ differ in a jump of~$n$, and $\rho$ and~$\sigma$ differ in a jump of~$n$, then we have $s_{n,i}^\rho=s_{n,i}^\pi$ for $i=1,\ldots,n-1$.
\item If $\pi$ and~$\rho$ differ in a jump of~$n$, and $\rho$ and~$\sigma$ differ in a jump of~$j<n$, then we have $s_{n,i}^\rho=s_{n,i}^\pi$ for $i\in\{1,\ldots,n-2\}$ and $s_{n,n-1}^\rho=n-1$.
\item If $\pi$ and~$\rho$ differ in a jump of~$j<n$, $\rho$ and~$\sigma$ differ in a jump of~$n$, and $j$ is not at a boundary position in~$p^{n-j}(\rho)$, then we have $s_{n,i}^\rho=s_{n,i}^\pi$ for $i=1,\ldots,n-1$.
\item If $\pi$ and~$\rho$ differ in a jump of~$j<n$, $\rho$ and~$\sigma$ differ in a jump of~$n$, and $j$ is at a boundary position in~$p^{n-j}(\rho)$, then we have $s_{n,i}^\rho=s_{n,i}^\pi$ for $i\in\{1,\ldots,n-1\}\setminus\{j-1,j\}$, $s_{n,j-1}^\rho=j-1$ and $s_{n,j}^\rho=s_{n,j-1}^\pi$.
\end{enumerate}
\end{lemma}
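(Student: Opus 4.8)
The plan is to prove all seven properties simultaneously by induction on~$n$, with the base cases $n\in\{1,2\}$ settled by direct inspection (for $n\leq 2$ the sequence $J(L_n)$ has at most two entries, so (d)--(g) are vacuous). The backbone of the argument is a handful of structural observations about $J(L_n)$ that follow directly from the recursion~\eqref{eq:JLn}: (O1) for $n\geq 2$ every block $\ol{c}(\pi')$, $\pi'\in L_{n-1}$, has at least two elements, since $c_1(\pi')$ and $c_n(\pi')$ always lie in $L_n$ and are distinct; (O2) a transition between two consecutive permutations of $J(L_n)$ is a jump of the value~$n$ exactly when both permutations lie in the same block~$\ol{c}(\pi')$, and it is a jump of some value $j<n$ exactly when the transition crosses from the last permutation of a block $\ol{c}(\pi')$ to the first permutation of the next block $\ol{c}(\pi'')$, in which case $\pi'$ and $\pi''$ are consecutive in $J(L_{n-1})$, both $\pi=\text{last}(\ol{c}(\pi'))$ and $\rho=\text{first}(\ol{c}(\pi''))$ carry the value~$n$ at the same boundary position, and the jump $\pi\to\rho$ has the same value as the jump $\pi'\to\pi''$ (deleting the boundary entry~$n$ does not change the substring structure of a jump of a smaller value); (O3) hence, by (O1), after a jump of value $j<n$ the next transition (if any) is a jump of value~$n$; and (O4) within a block $\ol{c}(\pi')$ the value~$n$ sits at a boundary position of a permutation precisely in the first and the last permutation of the block.

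For (a)--(c) I would argue as follows. For (a): by Lemma~\ref{lem:JLn-prop}(a) (or induction) the first permutation of $J(L_n)$ is $\ide_n$, which by (O2) equals $c_n(\ide_{n-1})$ and, by (O1), is not the last permutation of its block; so~\eqref{eq:spi11}--\eqref{eq:spi12} together with the inductive claim $s_{n-1}^{\ide_{n-1}}=(1,\dots,n-1)$ give $s_n^{\ide_n}=(1,\dots,n)$. For (b): if $\pi$ is not the last permutation of its block, then $\rho$ is the next permutation of that block, the transition is a jump of~$n$ by (O2), and $s_{n,n}^\pi=n$ by~\eqref{eq:spi12}; if $\pi$ is the last permutation of its block, then, with $\pi'=p(\pi)$ and $\pi''=p(\rho)$, the permutations $\pi'\to\pi''$ are consecutive in $J(L_{n-1})$ and differ by a jump of value $s_{n-1,n-1}^{\pi'}$ by the inductive (b), this lifts by (O2) to a jump of the same value from $\pi$ to $\rho$, and $s_{n,n}^\pi=s_{n-1,n-1}^{\pi'}$ by~\eqref{eq:spi23}. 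For (c): the last permutation $\pi$ of $J(L_n)$ is the last permutation of its block, so $s_{n,n}^\pi=s_{n-1,n-1}^{p(\pi)}$ by~\eqref{eq:spi23}, and $p(\pi)$ is the last permutation of $J(L_{n-1})$, so $s_{n-1,n-1}^{p(\pi)}=1$ by the inductive (c).

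For (d) and (e) the two permutations of the first transition $\pi\to\rho$ lie in the same block (that transition being a jump of~$n$), so comparing~\eqref{eq:spi1} for $\pi$ with~\eqref{eq:spi1} (case~(d)) resp.~\eqref{eq:spi2} (case~(e), where the third transition being a jump of $j<n$ forces $\rho$ to be the last permutation of the block by (O2)+(O3)) for $\rho$ yields the claimed equalities at once, with no appeal to the inductive versions of (d)--(g). Properties (f) and (g) are the crux. Here $\pi$ is the last permutation of a block $\ol{c}(\pi')$ and $\rho$ the first of the next block $\ol{c}(\pi'')$, with $\pi'\to\pi''$ a jump of $j=s_{n-1,n-1}^{\pi'}<n$ in $J(L_{n-1})$; by (O1) $\rho$ is not the last of its block, so $s_{n,i}^\rho=s_{n-1,i}^{\pi''}$ for $i\leq n-1$ by~\eqref{eq:spi1}, while $s_{n,i}^\pi=s_{n-1,i}^{\pi'}$ for $i\leq n-2$ and $s_{n,n-1}^\pi=n-1$ by~\eqref{eq:spi2}. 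One then splits into $j<n-1$ and $j=n-1$. If $j<n-1$, then by (O2) at level $n-1$ the permutation $\pi'$ is the last of its $J(L_{n-1})$-block and $\pi''$ the first of the next, so $s_{n-1,n-1}^{\pi''}=n-1$ by~\eqref{eq:spi1}, and by (O3) at level $n-1$ the permutation following $\pi''$ differs from it by a jump of $n-1$; hence the inductive (f) resp.~(g) applies to the triple $(\pi',\pi'',\cdot)$ — its boundary hypothesis being literally that of the level-$n$ statement since $p^{\,n-j}(\rho)=p^{\,(n-1)-j}(\pi'')$ — and combining its conclusion with the displays above gives (f) resp.~(g). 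If $j=n-1$, then $\pi'$ and $\pi''$ lie in the same $J(L_{n-1})$-block; by (O4) the hypothesis ``$n-1$ at a boundary position of $\pi''=p^{\,n-j}(\rho)$'' means precisely that $\pi''$ is first or last in that block, and since $\pi'$ precedes $\pi''$ it is not first, so cases (f) and (g) correspond exactly to $\pi''$ being an interior permutation (next transition a jump of $n-1$, so the inductive (d) applies) resp.~the last permutation (next transition a jump $<n-1$, so the inductive (e) applies, and~\eqref{eq:spi2} with $p(\pi'')=p(\pi')$ supplies the missing identity $s_{n-1,n-1}^{\pi''}=s_{n-2,n-2}^{p(\pi'')}=s_{n-1,n-2}^{\pi'}$). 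Matching the index sets, in particular $\{j-1,j\}$ becoming $\{n-2,n-1\}$ when $j=n-1$, then completes (f) and (g).

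The main obstacle is exactly this interlocking case analysis for (f) and (g): one must verify in each sub-case that the inductive hypothesis being invoked (one of (d)--(g) at level $n-1$) genuinely applies, i.e.\ that the values of the preceding and following jumps and the boundary-position condition are as required, and that the index bookkeeping ($i\leq n-2$, the exceptional indices $j-1$ and $j$, and their possible collision with $n-1$) is carried out consistently between the two levels. The observations (O1)--(O4) — and in particular the equivalence in (O4) between ``$n$ at a boundary position'' and ``first or last permutation of the block'' — are what make these matchings go through without friction.
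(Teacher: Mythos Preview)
Your proposal is correct and follows essentially the same approach as the paper's own proof: induction on~$n$; parts (a)--(c) handled by unwinding the recursion~\eqref{eq:JLn}; parts (d)--(e) read off directly from comparing~\eqref{eq:spi1} and~\eqref{eq:spi2}; and parts (f)--(g) established by the same two-way case split $j=n-1$ versus $j<n-1$, invoking the inductive (d)/(e) in the former case and the inductive (f)/(g) in the latter, with the identity $s_{n-1,n-1}^{\pi''}=s_{n-2,n-2}^{p(\pi'')}=s_{n-1,n-2}^{\pi'}$ supplying the missing coordinate in case~(g), $j=n-1$. Your explicit structural observations (O1)--(O4) make the block mechanics more transparent than in the paper, but the logical skeleton is the same.
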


\begin{proof}
We prove these properties by induction on~$n$.
The induction basis $n=1$ is trivial.
For the induction step let $n\geq 2$ and assume that all properties hold for the sequence~$J(L_{n-1})$.

We first show the induction step for~(a), (b) and~(c).

Consider a permutation~$\pi$ in~$J(L_n)$ and define $\pi':=p(\pi)\in S_{n-1}$.

To prove~(a), let $\pi=\ide_n$ be the first permutation in~$J(L_n)$ (recall Lemma~\ref{lem:JLn-prop}~(a)).
We have $\pi'=\ide_{n-1}$ and by induction and~(a) we hence have $s_n^{\pi'}=(1,2,\ldots,n-1)$.
Using~\eqref{eq:spi1} we obtain~$s_n^\pi=(1,2,\ldots,n)$, as claimed.

To prove~(b), let $\pi$ be a permutation that is not the last in the sequence~$J(L_n)$, and let $\rho$ be the permutation succeeding~$\pi$ in~$J(L_n)$.
If $\pi$ is not the last permutation in the subsequence~$\ol{c}(\pi')$ of~$J(L_n)$, then by~\eqref{eq:JLn} the permutation~$\rho$ is obtained from~$\pi$ by a jump of the value~$n$, and then (b) follows directly from~\eqref{eq:spi12}.
On the other hand, if $\pi$ is the last permutation in the subsequence~$\ol{c}(\pi')$, then $\rho$ is obtained from~$\pi$ by a jump of the value~$s_{n-1,n-1}^{\pi'}$ by induction and~(b), and by~\eqref{eq:spi23} we have $s_{n,n}^\pi=s_{n-1,n-1}^{\pi'}$, as desired.

To prove~(c), let $\pi$ be the last permutation in~$J(L_n)$.
Then the permutation~$\pi'$ is also the last permutation in~$J(L_{n-1})$, so by induction we have $s_{n-1,n-1}^{\pi'}=1$.
Using~\eqref{eq:spi23} we see that $s_{n,n}^\pi=s_{n-1,n-1}^{\pi'}=1$, as desired.

To prove~(d), note that $p(\pi)=p(\rho)=p(\sigma)$ and therefore $\rho$ is not the last permutation in the subsequence~$\ol{c}(p(\rho))$, so the claim follows directly from~\eqref{eq:spi11}.

To prove~(e), note that $p(\pi)=p(\rho)\neq p(\sigma)$ and therefore $\rho$ is the last permutation in the subsequence~$\ol{c}(p(\rho))$, so the claim follows directly from~\eqref{eq:spi11}, \eqref{eq:spi21} and~\eqref{eq:spi22}.

To prove~(f) and~(g), let $\pi':=p(\pi)$, $\rho':=p(\rho)$ and $\sigma':=p(\sigma)$.
Note that $\pi'\neq \rho'=\sigma'$ and therefore $\pi$ is the last permutation in the subsequence~$\ol{c}(\pi')$, whereas $\rho$ is the first permutation in the subsequence~$\ol{c}(\rho')$.
Consequently, $s_n^\pi$ is defined by~\eqref{eq:spi2} and $s_n^\rho$ is defined by~\eqref{eq:spi1}.
In particular, we have $s_{n,n-1}^\pi=n-1$ by~\eqref{eq:spi22} and $s_{n,n-1}^\rho=s_{n-1,n-1}^{\rho'}$ by~\eqref{eq:spi11}.

We first prove~(f), and we distinguish whether~$j=n-1$ or~$j<n-1$.
If $j=n-1$, then $\pi'$ and~$\rho'$ differ in a jump of~$j=n-1$, and as~$j$ is not at a boundary position in~$\rho'$, $\rho'$ and~$\sigma'$ also differ in a jump of~$n-1$ by~\eqref{eq:JLn}.
Consequently, we have $s_{n-1,i}^{\rho'}=s_{n-1,i}^{\pi'}$ for $i=1,\ldots,n-2$ by induction and~(d), and $s_{n-1,n-1}^{\rho'}=n-1$ by~\eqref{eq:spi12}, so (f) indeed holds in this case.

If $j<n-1$, then $\pi'$ and~$\rho'$ differ in a jump of~$j<n-1$, and $\rho'$ and~$\sigma'$ differ in a jump of~$n-1$ by~\eqref{eq:JLn}.
As $j$ is not at a boundary position in~$p^{n-j}(\rho)=p^{n-1-j}(\rho')$, we have $s_{n-1,i}^{\rho'}=s_{n-1,i}^{\pi'}$ for $i=1,\ldots,n-2$ by induction and~(f), and $s_{n-1,n-1}^{\rho'}=n-1$ by induction and~(b), so (f) indeed holds in this case.

We now prove~(g), and again we distinguish whether~$j=n-1$ or~$j<n-1$.
If $j=n-1$, then $\pi'$ and~$\rho'$ differ in a jump of~$j=n-1$, in particular $p(\pi')=p(\rho')$, and as~$j=n-1$ is at a boundary position in~$p^{n-j}(\rho)=p(\rho)=\rho'$, $\rho'$ and~$\sigma'$ differ in a jump of some value smaller than~$n-1$ by~\eqref{eq:JLn}.
Consequently, we have $s_{n-1,i}^{\rho'}=s_{n-1,i}^{\pi'}$ for $i=1,\ldots,n-3$ and $s_{n-1,n-2}^{\rho'}=n-2$ by induction and~(e).
Moreover, we have $s_{n-1,n-1}^{\rho'}=s_{n-2,n-2}^{p(\rho')}=s_{n-2,n-2}^{p(\pi')}=s_{n-1,n-2}^{\pi'}$ by~\eqref{eq:spi11}.
Combining these observations shows that indeed~(g) holds in this case.

If $j<n-1$, then $\pi'$ and~$\rho'$ differ in a jump of~$j<n-1$, and~$\rho'$ and~$\sigma'$ differ in a jump of~$n-1$ by~\eqref{eq:JLn}.
Clearly, $j$ is at a boundary position in~$p^{n-j}(\rho)=p^{n-1-j}(\rho')$, and by induction and~(g) we have $s_{n-1,i}^{\rho'}=s_{n-1,i}^{\pi'}$ for $i\in\{1,\ldots,n-2\}\setminus \{j-1,j\}$, $s_{n-1,j-1}^{\rho'}=j-1$ and $s_{n-1,j}^{\rho'}=s_{n-1,j-1}^{\pi'}$.
Moreover, we have $s_{n-1,n-1}^{\rho'}=n-1$ by~\eqref{eq:spi11}.
Combining these observations proves~(g) in this last case.
\end{proof}

\begin{proof}[Proof of Theorem~\ref{thm:algo}]
We establish the following invariants about the permutation~$\pi$ visited in line~M2 of the algorithm:
\begin{enumerate}[label={(\Alph*)}, leftmargin=8mm, noitemsep, topsep=3pt plus 3pt]
\item For all $j=2,\ldots,n$, the direction of the next jump of the value~$j$ after the permutation~$\pi$ in~$J(L_n)$ is left if~$o_j=\dirl$ and right if~$o_j=\dirr$.
\item The values in the array $s=(s_1,\ldots,s_n)$ satisfy $s=s_n^\pi$ with $s_n^\pi$ as defined in~\eqref{eq:spi1} and~\eqref{eq:spi2}.
\end{enumerate}

By Lemma~\ref{lem:JLn-prop}~(a), the identity permutation~$\pi:=\ide_n$ is the first permutation in the sequence~$J(L_n)$.
Moreover, by the initialization of~$\pi$ in line~M1, $\pi=\ide_n$ is also the first permutation visited in line~M2.
Combining this with the above invariants, we obtain by induction on the length of~$J(L_n)$ that after visiting a permutation~$\pi\in L_n$, the next permutation visited by Algorithm~M is the permutation that succeeds~$\pi$ in~$J(L_n)$.
Indeed, by the instructions in line~M3 and~M4, the next permutation~$\rho$ visited by the algorithm is obtained from~$\pi$ by a jump of the value~$j:=s_n$ that is minimal w.r.t.~$L_n$, and the jump direction is left if $o_j=\dirl$ and right if $o_j=\dirr$.
Applying (B) and Lemma~\ref{lem:sn-prop}~(b), and~(A) and Lemma~\ref{lem:JLn-prop}~(c), we obtain that~$\rho$ is indeed the permutation that succeeds~$\pi$ in~$J(L_n)$.
Also, the algorithm terminates correctly after visiting the last permutation in the sequence~$J(L_n)$ by the condition in line~M3 and Lemma~\ref{lem:sn-prop}~(c).

We prove (A)+(B) by double induction on~$n$ and the number of iterations of Algorithm~M.
The induction basis $n=1$ is trivial.
For the induction step, let $n\geq 2$, and assume that the invariants hold for the zigzag language~$L_{n-1}=\{p(\pi)\mid \pi\in L_n\}$.
We first verify that (A)+(B) hold in line~M2 during the first iteration of the algorithm when~$\pi=\ide_n$.
By line~M1 we have $o_j=\dirl$ for $j=2,\ldots,n$, so (A) is satisfied by Lemma~\ref{lem:JLn-prop}~(b).
By line~M1 we also have $s=(s_1,\ldots,s_n)=(1,\ldots,n)$, which equals $s_n^\pi=s_n^{\ide_n}$ by Lemma~\ref{lem:sn-prop}~(a).

For the induction step, consider three consecutive permutations~$\pihat,\rhohat,\sigmahat$ in the sequence~$J(L_n)$, and suppose that (A)+(B) are satisfied when Algorithm~M visits~$\pi=\pihat$.
We need to verify that (A)+(B) still hold after one iteration through lines~M2---M5, after which the algorithm visits~$\pi=\rhohat$ by the instructions in lines~M3 and~M4, as argued before.

\textbf{Case~(i):}
We first consider the case that~$\pihat,\rhohat$ satisfy $p(\pihat)=p(\rhohat)=:\pihat'\in L_{n-1}$ and therefore both are contained in the subsequence $\ol{c}(\pihat')$ of~$J(L_n)$.
We only treat the case $\ol{c}(\pihat')=\lvec{c}(\pihat')$, as the other case $\ol{c}(\pihat')=\rvec{c}(\pihat')$ is symmetric.
In this case $\rhohat$ is obtained from~$\pihat$ by a left jump of the value~$n$.
In particular, the variable~$j$ has the value~$j=s_n=n$ and $o_n=\dirl$ in this iteration of the algorithm.

\textbf{Case~(ia):}
The value~$n$ is not at the first position in~$\rhohat$.
Then by~\eqref{eq:JLn} the permutation~$\sigmahat$ is obtained from~$\rhohat$ by another left jump of the value~$n$.
In line~M5, the value of~$s_n$ is set to~$n$, which was the previous value, but none of the conditions in line~M5 holds for $\pi=\rhohat$, so overall none of the arrays~$s$ and~$o$ is modified.
We conclude that~(A) holds after this iteration for $\pi=\rhohat$.
Moreover, (B) holds by Lemma~\ref{lem:sn-prop}~(c) and~\eqref{eq:spi12}.

\textbf{Case~(ib):}
The value~$n$ is at the first position in~$\rhohat$, i.e., we have $\rhohat=c_1(\pihat')=n\,\pihat'$.
Then by~\eqref{eq:JLn} we have $\sigmahat=c_1(\rhohat')=n\,\rhohat'$ where $\rhohat'\in L_{n-1}$ succeeds $\pihat'$ in the sequence~$J(L_{n-1})$.
In line~M5, the value of~$s_n$ is set to~$n$, which is the same as the previous value, but since the first if-condition is satisfied, $s_n$ is then overwritten by~$s_{n-1}$, and $s_{n-1}$ is set to~$n-1$.
Consequently, the new values are $s_{n-1}=n-1$ and $s_n=s_{n,n-1}^\pihat=s_{n-1,n-1}^{\pihat'}$ by induction and~(B) and~\eqref{eq:spi11}.
Moreover, the value of~$o_n$ is flipped to~$o_n=\dirr$.
Using Lemma~\ref{lem:JLn-prop}~(e), we conclude that~(A) holds after this iteration for $\pi=\rhohat$.
Applying Lemma~\ref{lem:sn-prop}~(d) and using that $s_{n,n}^\pi=s_{n-1,n-1}^{\pihat'}$ by~\eqref{eq:spi23}, we obtain that (B) holds as well.

\textbf{Case~(ii):}
It remains to consider the case that~$p(\pihat)\neq p(\rhohat)$, i.e., both permutations have $n$ at the first or last position.
By symmetry, it suffices to consider the case that $n$ is at the first position, i.e., $\pihat=c_1(\pihat')=n\,\pihat'$ and $\rhohat=c_1(\rhohat')=n\,\rhohat'$, where~$\pihat'$ and~$\rhohat'$ are consecutive permutations in~$J(L_{n-1})$.
They differ in a jump of the value~$j:=s_n<n$ by Lemma~\ref{lem:sn-prop}~(b) and~(B).
Then by~\eqref{eq:JLn}, the permutation $\sigmahat$ is obtained from~$\rhohat$ by a right jump of~$n$.
We proceed to show that~(A) and~(B) hold for $\pi=\rhohat$, and for this we distinguish subcases.

\textbf{Case~(iia):}
The value~$j$ is not at a boundary position in~$p^{n-j}(\rhohat)$.
Then by Lemma~\ref{lem:JLn-prop}~(d) in~$\rhohat$ the value~$j$ is surrounded by the same values as in~$p^{n-j}(\rhohat)$, both smaller than~$j$.
In line~M5, the value of~$s_n$ is set to~$n$, and no other entries of~$s$ and~$o$ are modified.
Using Lemma~\ref{lem:JLn-prop}~(e), we conclude that~(A) holds after this iteration for~$\pi=\rhohat$.
Moreover, (B) holds by Lemma~\ref{lem:sn-prop}~(e), also using that $s_{n,n}^\rhohat=n$ by~\eqref{eq:spi12}.

\textbf{Case~(iib):}
The value~$j$ is at a boundary position in~$p^{n-j}(\rhohat)$, and the value~$k$ next to it is smaller than~$j$.
Then by Lemma~\ref{lem:JLn-prop}~(d) in~$\rhohat$ the value~$k$ is also next to~$j$, and either $j$ is at a boundary position (the right boundary, as $n$ is at the first position in~$\rhohat$) or the other value next to in the direction~$o_j$ of the jump is bigger than~$j$.
In line~M5, the value of~$s_n$ is set to~$n$, the value of~$s_j$ is set to~$s_{j-1}$, and $s_{j-1}$ is set to~$j-1$.
Moreover, the value of~$o_j$ is flipped.
Using Lemma~\ref{lem:JLn-prop}~(e), we conclude that~(A) holds after this iteration for~$\pi=\rhohat$.
Moreover, (B) holds by Lemma~\ref{lem:sn-prop}~(f), also using that $s_{n,n}^\rhohat=n$ by~\eqref{eq:spi12}.
This completes the proof of the theorem.
\end{proof}

\subsection{Proof of Theorem~\ref{thm:algo-rect}}

\begin{proof}[Proof of Theorem~\ref{thm:algo-rect}]
In the proof of Theorem~\ref{thm:jump-rect} we showed that the ordering of rectangulations generated by Algorithm~\Jrect{} is given by~\eqref{eq:JrectLn} for some zigzag language~$L_n$ of 2-clumped permutations.
The theorem hence follows by applying Theorem~\ref{thm:algo}.
\end{proof}

\section{S-equivalence of rectangulations}
\label{sec:Sequiv}

Recall that R-equivalence is the equivalence relation on~$\cR_n$ obtained from wall slides, i.e., any two generic rectangulations that differ in a sequence of wall slides are equivalent.
It is well known that every equivalence class contains exactly one diagonal rectangulation, i.e., $\cD_n$ is a set of representatives for R-equivalence (see e.g.~\cite{MR3878132}).

\begin{figure}[b!]
\includegraphics{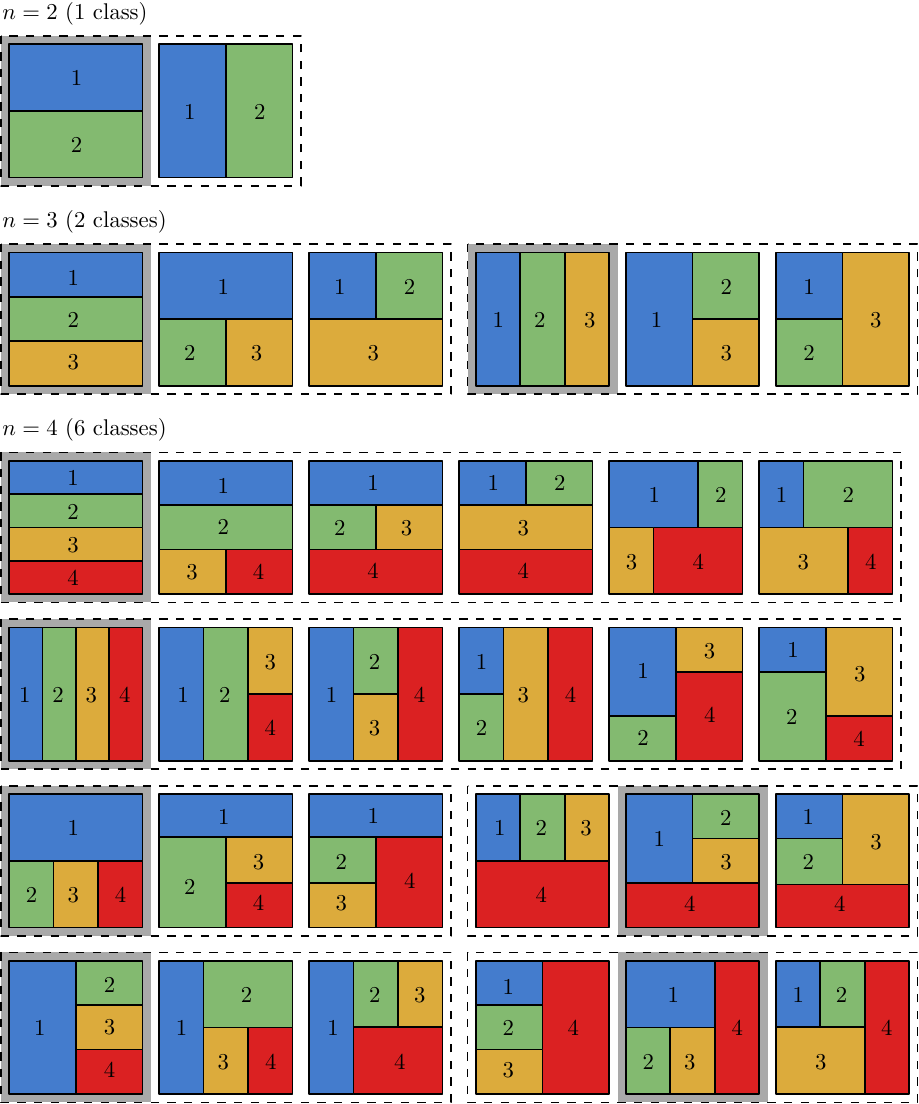}
\caption{Equivalence classes of generic rectangulations under S-equivalence for $n=2,3,4$.
Block-aligned rectangulations as equivalence class representatives are highlighted.}
\label{fig:sequiv}
\end{figure}

We aim to do something analogous for S-equivalence, and to pick a suitable set of representatives for our generation algorithm.
Recall that S-equivalence is the equivalence relation on~$\cR_n$ obtained from wall slides and simple flips, i.e., any two generic rectangulations that differ in a sequence of wall slides or simple flips are equivalent.
Figure~\ref{fig:sequiv} shows the equivalence classes of generic rectangulations under S-equivalence for $n=2,3,4$.
Unfortunately, one can check that there is no choice of representatives~$\cP_2\seq\cR_2$, $\cP_3\seq\cR_3$, $\cP_4\seq\cR_4$ for those equivalence classes (i.e., $|\cP_2|=1$, $|\cP_3|=2$, $|\cP_3|=6$) that is consistent with the operations of rectangle deletion and insertion, i.e., such that $\cP_2=\{p(R)\mid R\in\cP_3\}$ and $\cP_3=\{p(R)\mid R\in\cP_4\}$.
Consequently, for S-equivalence there is no set of unique representatives, one for each equivalence class, that would form a zigzag set, so our generation algorithms cannot be applied directly.
However, we will show how to choose representatives for each equivalence class (highlighted in Figure~\ref{fig:sequiv}), such that those representatives \emph{and} the rectangulations obtained from them by a simple flip of the rectangle~$r_n$ admit a generation tree approach with our algorithms.

\subsection{Representatives for S-equivalence}

By definition, S-equivalence is a coarsening of R-equivalence, and we will therefore choose a subset of diagonal rectangulations as representatives.

We start with some definitions; see Figure~\ref{fig:aligned}.
A rectangulation is \emph{horizontally aligned}, or \emph{H-aligned} for short, if all of its walls are horizontal.
Moreover, a rectangulation is \emph{almost horizontally aligned}, or \emph{AH-aligned} for short, if all of its walls except one at the bottom are horizontal.
Equivalently, it is obtained by gluing copies of~$\square$ on top of~$\boxbar$.

Similarly, a rectangulation is \emph{vertically aligned}, or \emph{V-aligned} for short, if all of its walls are vertical.
Moreover, a rectangulation is \emph{almost vertically aligned}, or \emph{AV-aligned} for short, if all of its walls except one at the right are vertical.
Equivalently, it is obtained by gluing copies of~$\square$ on the left of~$\boxminus$.

\begin{figure}[h!]
\includegraphics[page=1]{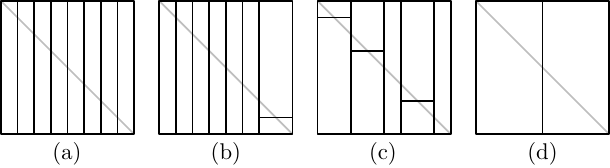}
\caption{Illustration of aligned rectangulations.
Rectangulation~(a) is V-aligned, (b) is AV-aligned, (c) is V-alignable but neither V-aligned nor AV-aligned, (d) is V-aligned and AH-aligned.}
\label{fig:aligned}
\end{figure}

The rectangulation~$\square$ is H-aligned and V-aligned.
The rectangulation~$\boxminus$ is H-aligned and AV-aligned, and the rectangulation~$\boxbar$ is V-aligned and AH-aligned.

A rectangulation is \emph{H- or V-alignable}, if we can apply a sequence of simple flips to make it \emph{H- or V-aligned}, respectively.
Clearly, a rectangulation is H-alignable if it is obtained by vertically gluing together copies of $\square$ and $\boxbar$, and it is V-alignable if it is obtained by horizontally gluing together copies of $\square$ and~$\boxminus$.

A \emph{block} in a rectangulation is a subset of rectangles whose union is a rectangle.
The \emph{size} of a block is the number of rectangles of the block.

\begin{lemma}
\label{lem:blocks}
Every diagonal rectangulation can be partitioned uniquely into maximal alignable blocks.
\end{lemma}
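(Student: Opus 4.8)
The plan is to prove that the inclusion-maximal alignable blocks of a diagonal rectangulation~$D\in\cD_n$ form a partition of~$D$; the uniqueness asserted in the lemma is then immediate, since this collection is canonically determined by~$D$. Every single rectangle is a (trivially alignable) block, so the maximal alignable blocks certainly cover~$D$, and the whole content is to show that two maximal alignable blocks that share a rectangle must coincide. The first ingredient I would record is that \emph{every block of a diagonal rectangulation is an interval} $[i,j]:=\{r_i,r_{i+1},\ldots,r_j\}$ in the order in which the main diagonal~$\Delta$ meets the rectangles (the indexing $r_1,\ldots,r_n$): if $B$ is a block with bounding rectangle~$X$, then a rectangle of~$D$ meets the relative interior of $\Delta\cap X$ if and only if it lies in~$B$ (those in~$B$ are contained in~$X$ and meet~$\Delta$ by the diagonal property, those outside~$B$ are interior-disjoint from~$X$), and since $\Delta\cap X$ is a line segment, the rectangles it meets are consecutive. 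Thus blocks correspond to intervals, two blocks overlap iff the intervals overlap, and I will call two blocks \emph{crossing} if their intervals overlap with neither one contained in the other.

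The heart of the argument is a dichotomy for two crossing \emph{alignable} blocks $[i,j]$ and $[i',j']$, which after relabelling satisfy $i<i'\le j<j'$; in particular both are proper (size~$\ge2$), since a one-element interval is contained in every interval meeting it. I would use the description recalled above: a proper H-alignable block is a vertical stack of layers, each a copy of~$\square$ or of~$\boxbar$ and spanning the full width of the block, and symmetrically a proper V-alignable block is a horizontal stack of copies of~$\square$ and~$\boxminus$. Writing $X=\bigcup[i,j]$ and $X'=\bigcup[i',j']$, the shared set $\bigcup[i',j]$ equals $X\cap X'$ and is again a block. A short ``no notch'' analysis then shows that $r_{i'}$ cannot be the second rectangle of its layer (resp.\ column) in~$X$ and $r_j$ cannot be the first rectangle of its layer (resp.\ column) in~$X'$ — otherwise $X\cap X'$ would fail to be a rectangle — so $X\cap X'$ consists of complete layers/columns of each of~$X$ and~$X'$. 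Concretely, $X\cap X'$ is a full-width bottom strip of~$X$ and, depending on the type of~$X'$, either a full-width top strip (if~$X'$ is H-alignable) or a full-height left strip (if~$X'$ is V-alignable).

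From here I would split into two cases. If~$X$ and~$X'$ have the \emph{same type}, say both H-alignable, then the strip description forces $\operatorname{width}(X)=\operatorname{width}(X\cap X')=\operatorname{width}(X')$, so $X\cup X'=\bigcup[i,j']$ is a rectangle, and gluing the two layer stacks along their common sub-stack exhibits $[i,j']$ as a vertical stack of $\square$/$\boxbar$ layers, hence as an alignable block strictly containing both — so neither was maximal. If instead~$X$ is H-alignable and~$X'$ is V-alignable, then $X\cap X'$ is simultaneously a bottom strip of~$X$ and a left strip of~$X'$; choosing coordinates so that $X\cap X'=[x_0,x_1]\times[y_0,y_1]$, one has $X=[x_0,x_1]\times[y_0,y_1']$ and $X'=[x_0,x_1']\times[y_0,y_1]$ with $y_1'>y_1$ (because $i<i'$) and $x_1'>x_1$ (because $j<j'$). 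The corner $(x_1,y_1)$ then lies in the relative interior of the right side of~$X$ and of the top side of~$X'$, and all four of its quadrants contain a rectangle of~$D$: one in $[i,j]\setminus[i',j]$ above-left, one in $[i',j]$ below-left, one in $[i',j']\setminus[i',j]$ below-right, and — using $y_1'>y_1$ and $x_1'>x_1$ to see that neither side of $X\cap X'$ through $(x_1,y_1)$ lies on $\partial D$ — one outside $X\cup X'$ above-right. These four rectangles are pairwise distinct, contradicting the assumption that~$D$ is generic. Hence crossing maximal alignable blocks cannot occur, so distinct maximal alignable blocks are disjoint intervals, and together with the covering property this yields the partition.

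The part I expect to require the most care is exactly this second case together with the ``no notch'' bookkeeping feeding into it: both rest on tracking precisely how the layered structure of an alignable block sits relative to~$\Delta$ and to the outer boundary of~$D$, and it is only in the mixed-type case that the genericity hypothesis (no four rectangles meeting at a point) is genuinely used. I would also check the small or degenerate configurations — for instance two alignable blocks meeting in a single rectangle, or $n\le 2$ — separately, although these are subsumed by the interval picture once it is set up.
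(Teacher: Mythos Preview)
Your argument is correct, and it is considerably more explicit than the paper's. The paper's proof is four lines: assume two distinct partitions into maximal alignable blocks, pick overlapping $B\neq B'$ from the two partitions, assert that ``as $R$ does not have any points where 4 rectangles meet \dots\ the block $B'$ must be a proper subset or superset of $B$,'' and conclude by maximality. You instead (i) record that blocks of a diagonal rectangulation are intervals along the diagonal, (ii) reduce to the crossing-interval case, and (iii) treat same-type and mixed-type crossings separately, invoking genericity only in the mixed-type case.

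Your same-type branch is exactly what the paper's compressed sentence glosses over: two crossing alignable blocks of the same type---for instance $\{r_1,r_2\}$ and $\{r_2,r_3\}$ in a row of three rectangles---overlap without producing any 4-valent vertex, so genericity alone does not force nesting. What actually rules this case out is that their union is again alignable, contradicting maximality; you make this explicit, the paper does not. So your route is not merely longer but genuinely sharper about where each hypothesis (diagonality for the interval picture, alignability for the same-type case, genericity for the mixed-type case) is used. The paper's version is best read as a sketch in which the ``no four rectangles meet'' clause is doing double duty, silently absorbing the same-type case that you handle separately.
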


\begin{proof}
Suppose for the sake of contradiction that for some rectangulation~$R\in\cD_n$ there were two distinct block partitions~$P,P'$ of~$R$.
Consider a block~$B$ in~$P$ that is not a block in~$P'$.
Consider one of the rectangles in~$B$, and consider the block~$B'$ of~$P'$ containing this rectangle.
As $R$ does not have any points where 4 rectangles meet and $B'\neq B$, the block~$B'$ must be a proper subset or superset of~$B$, contradicting the maximal choice of the blocks.
\end{proof}

Lemma~\ref{lem:blocks} holds more generally for generic rectangulations and for maximal blocks with any additional property (such as alignable), but this is not needed here.

\begin{lemma}
\label{lem:blocks-flips}
For any diagonal rectangulation, the partition into maximal alignable blocks is invariant under simple flips.
\end{lemma}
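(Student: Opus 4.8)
The plan is to show that if $R'$ is obtained from a diagonal rectangulation $R$ by a single simple flip, then $R$ and $R'$ have the same partition into maximal alignable blocks; I write $\Pi$ for the partition of $R$ and $\Pi'$ for that of $R'$ (both exist by Lemma~\ref{lem:blocks} and the remark following it, since a simple flip of a generic rectangulation is again generic). Suppose the flip changes the orientation of the wall $w$ separating the two rectangles $r$ and $s$. As $w$ is the entire common boundary of $r$ and $s$, their union $r\cup s$ is a rectangle, so $\{r,s\}$ is a block of size $2$; it is a copy of $\boxminus$ or of $\boxbar$, both alignable, and the flip of $w$ exchanges the two. First I would record the elementary fact already used in the proof of Lemma~\ref{lem:blocks}: since $R$ (and likewise $R'$) has no point where four rectangles meet, any two distinct blocks that share a rectangle are nested. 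Combining this with maximality, $\{r,s\}$ cannot be split across two blocks of $\Pi$ (the block of $\Pi$ containing $r$ cannot equal $\{r\}$, as the larger block $\{r,s\}$ is alignable), so $\{r,s\}$ lies in a single block $B\in\Pi$.

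The key observation is that the flip is entirely local to $B$: a neighbourhood of $w$ is contained in the interior of $B$, hence in the interior of every block $B''$ with $B\subseteq B''$, and every block of $R$ that is disjoint from $B$ is left unchanged by the flip, together with all its incidences. I would then note the following formal point, which is immediate from the definition of alignability: a simple flip of an \emph{interior} wall of an alignable block preserves alignability. Indeed, for any block $B''\supseteq B$, the wall $w$ is a simple-flippable wall of the sub-rectangulation $R|_{B''}$, and $R'|_{B''}$ is obtained from $R|_{B''}$ by flipping it; since simple flips are involutions, a sequence of simple flips bringing $R|_{B''}$ to H-aligned (resp.\ V-aligned) form, preceded by the $w$-flip, brings $R'|_{B''}$ to the same form, and conversely. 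Hence $R|_{B''}$ is alignable if and only if $R'|_{B''}$ is; in particular $B$ is still an alignable block of $R'$.

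Finally I would assemble these facts to conclude $\Pi'=\Pi$. Let $B'$ be the block of $\Pi'$ containing $r$. Since $B$ is an alignable block of $R'$ containing $r$, nestedness and the maximality of $B'$ give $B\subseteq B'$; and if the inclusion were strict, then $B\subseteq B'$ would place $w$ in the interior of $B'$, so by the previous paragraph $R|_{B'}$ would be alignable, contradicting the maximality of $B$ in $\Pi$. Thus $B'=B$. Symmetrically, any $D\in\Pi\setminus\{B\}$ is disjoint from $B$, hence unchanged by the flip, and a short case check — distinguishing whether a hypothetical alignable block of $R'$ strictly containing $D$ does or does not share a rectangle with $B$, and applying the interior-wall observation in the former case — shows that $D$ is still a maximal alignable block of $R'$. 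Therefore $\Pi'$ contains $B$ and every $D\in\Pi\setminus\{B\}$, and since $\Pi$ and $\Pi'$ partition the same set of rectangles, $\Pi'=\Pi$. I expect the genuinely delicate step to be precisely this last one: not the preservation of alignability, which is formal, but ruling out that the flip allows $B$ (or some $D$) to \emph{merge} into a larger alignable block. This is exactly where the nestedness of blocks is needed, together with the remark that any such larger block would still contain $w$ in its interior and hence could not have its alignability altered by the flip.
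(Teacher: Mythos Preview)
Your proof is correct and follows the same core idea as the paper: the two rectangles $r,s$ adjacent to the flipped wall form an alignable block of size~2, so by maximality they lie in a single block $B$ of the partition, and the flip is therefore internal to~$B$. The paper's proof consists of only that one observation and leaves the rest implicit, whereas you spell out carefully why internality of the flip to~$B$ actually forces the partition to be unchanged (in particular ruling out that the flip could enable $B$ or some other block to merge into a larger alignable block); your argument via the involution property of simple flips and nestedness of blocks is the right way to make this precise.
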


\begin{proof}
Consider a wall that can be simple-flipped, and observe that the two rectangles to both sides of the wall must belong to the same alignable block due to the maximal choice of the blocks.
\end{proof}

From now on, whenever we refer to a block in a rectangulation, we mean a maximal alignable block.
A block is a \emph{base block}, if it contains the bottom boundary of the rectangulation.

Based on the partition of a diagonal rectangulation~$R\in\cD_n$ into blocks, which is unique by Lemma~\ref{lem:blocks}, we introduce the following definitions; see Figure~\ref{fig:blocks}.
We refer to each block of~$R$ as an \emph{H-block} or \emph{V-block}, if it is H-alignable or V-alignable, respectively.
We consider an H-block~$B$ of size at least~2 with rectangle~$r_i$ at the bottom-right.
If $i=n$ or if rectangle~$r_{i+1}$ of~$R$ is below~$r_i$ we say that $B$ is \emph{free}, whereas if $r_{i+1}$ is right of~$r_i$ we say that $B$ is \emph{locked}.
Similarly, we consider a V-block~$B$ of size at least~2 with rectangle~$r_i$ at the bottom-right.
If $i=n$ or if rectangle~$r_{i+1}$ of~$R$ is right of~$r_i$ we say that $B$ is \emph{free}, whereas if $r_{i+1}$ is below~$r_i$ we say that $B$ is \emph{locked}.

\begin{figure}[b!]
\vspace{2mm}
\includegraphics[page=2]{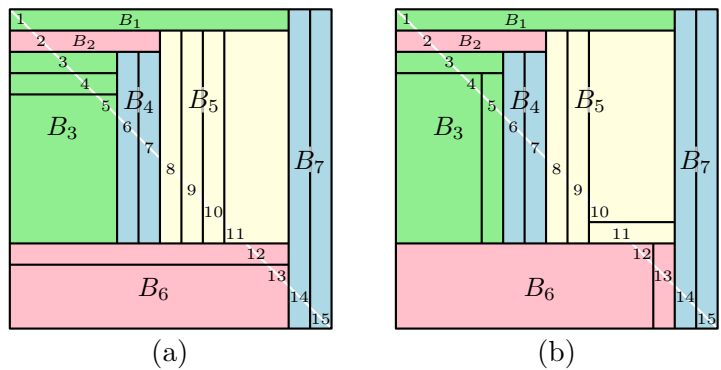}
\caption{Illustration of blocks and block-aligned rectangulations.
Blocks are highlighted by the same shading.
H-blocks are $B_1$, $B_2$, $B_3$, $B_4$, $B_6$ and~$B_7$, with $B_7$ free and $B_3$, $B_4$ and $B_6$ locked.
V-blocks are $B_1$, $B_2$, $B_4$, $B_5$, $B_6$ and~$B_7$, with $B_4$, $B_6$ and $B_7$ free and $B_5$ locked.
Rectangulation~(a) is not block-aligned, whereas (b) is block-aligned.
}
\label{fig:blocks}
\end{figure}

We say that $R\in\cD_n$ is \emph{block-aligned} if for every block~$B$ of size at least~2 in~$R$ the following conditions hold: if $B$ is a free H-block then~$B$ is H-aligned, if $B$ is a free V-block then $B$ is V-aligned, if $B$ is a locked H-block then $B$ is AH-aligned, and if $B$ is a locked V-block then $B$ is AV-aligned.
A special rule applies if the rectangle~$r_n$ is contained in a block of size~2 (which is free and both H-alignable and V-alignable), and then we require this block to be V-aligned, unless it is a base block, in which case it must be H-aligned.
Note that a block~$B$ of size exactly~2 that does not contain~$r_n$ is both an H-block and a V-block, however, if $B$ is a locked/free H-block then $B$ is a free/locked V-block, respectively, so this definition is consistent (as AH-aligned equals V-aligned and H-aligned equals AV-aligned for a block of size~2).

We write $\cB_n\seq\cD_n$ for the set of diagonal rectangulations that are block-aligned.
We partition this set into~$\cB_n^\square$ and~$\cB_n^\boxplus$, respectively, according to whether the rectangle~$r_n$ is contained in a block of size~1 or at least~2, respectively.
Note that if $R\in\cB_n^\square$, then the wall between~$r_n$ and~$r_{n-1}$ does not admit a simple flip, whereas if $R\in\cB_n^\boxplus$, then this wall admits a simple flip.
For any $R\in\cB_n^\boxplus$ we write $s(R)\in\cB_n^\boxplus$ for the rectangulation obtained from~$r_n$ by a simple flip of this wall.
The set $\cB_n^\boxplus$ is partitioned into $\cB_n^\boxminus$ and~$\cB_n^\boxbar$ according to whether this wall is horizontal or vertical, respectively.

As a consequence of Lemma~\ref{lem:blocks-flips}, every equivalence class of generic rectangulations under S-equivalence contains exactly one block-aligned diagonal rectangulation; see Figure~\ref{fig:sequiv}.
Consequently, we will use the block-aligned rectangulations $\cB_n\seq\cD_n$ as representatives for S-equivalence.

\subsection{Insertion in block-aligned rectangulations}

The next two lemmas describe how to construct block-aligned rectangulations by rectangle insertion; see Figure~\ref{fig:tree-sequiv}.

For any diagonal rectangulation~$R\in\cD_n$, we let $I_v(R)$ denote the subsequence of~$I(R)$ of the first insertion point of each vertical group.
Similarly, we let $I_h(R)$ denote the subsequence of~$I(R)$ of the last insertion point of each horizontal group.

\begin{lemma}
\label{lem:block-insert1}
Let $P\in\cB_{n-1}^\square$, $I_v(P)=:(q_{i_1},\ldots,q_{i_\lambda})$ and $I_h(P)=:(q_{j_1},\ldots,q_{j_\mu})$.
Then we have the following:
\begin{itemize}[leftmargin=5mm, noitemsep, topsep=3pt plus 3pt]
\item For any $1\leq k<\lambda$ we have $c_{i_k}(P)\in\cB_n^\square$, and every $R\in\cB_n^\square$ for which the top-left vertex of~$r_n$ has type~$\rightT$ and $r_{n-1}$ forms its own block is obtained by insertion from some $P\in\cB_{n-1}^\square$ in this way.
\item For any $1<k\leq \mu$ we have $c_{j_k}(P)\in\cB_n^\square$, and every $R\in\cB_n^\square$ for which the top-left vertex of~$r_n$ has type~$\bottomT$ and $r_{n-1}$ forms its own block is obtained by insertion from some $P\in\cB_{n-1}^\square$ in this way.
\item If $\lambda>1$ we have $c_{j_1}(P)\in\cB_n^\boxbar$, and every $R\in\cB_n^\boxbar$ for which $r_{n-1}$ and~$r_n$ form a V-aligned block of size~2 is obtained by insertion from some~$P\in\cB_{n-1}^\square$ in this way.
\item If $\lambda=1$ we have $c_{i_1}(P)\in\cB_n^\boxminus$, and every $R\in\cB_n^\boxminus$ for which $r_{n-1}$ and $r_n$ form an H-aligned base block of size~2 is obtained by insertion from some~$P\in\cB_{n-1}^\square$ in this way.
\end{itemize}
\end{lemma}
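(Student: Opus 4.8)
The plan is to isolate the geometry in two preliminary observations and then reduce the four bullets (and their converses) to a case analysis guided by Figure~\ref{fig:tree-sequiv}. The first observation concerns diagonality: since $\cD_n$ is exactly the class of rectangulations avoiding $\zvu$ and $\zhr$, the analysis of insertion into diagonal rectangulations behind~\eqref{eq:jump-seq-diag} (see also Lemma~\ref{lem:rho-jump}) shows that for $P\in\cD_{n-1}$ one has $c_i(P)\in\cD_n$ precisely when $q_i$ is the first insertion point of a vertical group or the last insertion point of a horizontal group of $I(P)$; hence all of the rectangulations $c_{i_k}(P)$ and $c_{j_k}(P)$ appearing in the statement are diagonal. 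The second, and more substantial, observation is a description of how the partition into maximal alignable blocks (Lemma~\ref{lem:blocks}) changes when $r_n$ is inserted: because insertion modifies $P$ only in a bounded neighbourhood of its bottom-right corner, every block of $P$ not containing the bottom-right rectangle $r_{n-1}$ survives as a maximal alignable block of $c_i(P)$ with unchanged free/locked status (the witness $r_{m+1}$ for a block with bottom-right rectangle $r_m$, $m\le n-2$, keeps its position relative to $r_m$), and only the block of $r_{n-1}$ and the base block can be affected.

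Building on this, I would show that for $P\in\cB_{n-1}^\square$ --- so $r_{n-1}$ is a singleton block in the bottom-right corner of $P$, carrying neither a vertical nor a horizontal insertion point on the relevant side --- inserting $r_n$ at a point of $I_v(P)\cup I_h(P)$ yields one of exactly two local pictures: either (i) $r_n$ becomes its own singleton block while $r_{n-1}$ stays a singleton block, or (ii) $\{r_{n-1},r_n\}$ becomes a block of size $2$. In case (ii), the new wall between $r_{n-1}$ and $r_n$ is vertical when $q_i=q_{j_1}$ and horizontal when $q_i$ is the bottom-based insertion point $q_1$ and $\lambda=1$; a size-$2$ block is automatically both H- and V-alignable, so in the first sub-case the V-aligned requirement of the special rule for the block containing $r_n$ is met (the block is not a base block, as $\lambda>1$), and in the second sub-case $\{r_{n-1},r_n\}$ is precisely the base block and is H-aligned, again meeting the rule. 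Since every block of size at least $2$ of $c_i(P)$ other than a possible $\{r_{n-1},r_n\}$ is inherited from the block-aligned $P$ with its alignment and free/locked type intact, $c_i(P)$ is block-aligned, and it lies in $\cB_n^\square$ in case (i) and in $\cB_n^\boxbar$ or $\cB_n^\boxminus$ in case (ii).

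It then remains to match the four families of insertion points to these two pictures by reading off, from the insertion rules of Section~\ref{sec:insertion}, the type of the top-left vertex of $r_n$ and whether $r_n$ aligns with a neighbour. For $q_i=q_{i_k}$ with $1\le k<\lambda$ the point lies on the left boundary (if $k=1$) or in the interior of a vertical wall (if $k\ge2$), the top-left vertex of $r_n$ is a $\rightT$-vertex, and $r_n$ aligns neither with the rectangle on its left (a height mismatch, since $q_{i_k}$ is the lowest point of its wall but not its bottom endpoint) nor with the column above it (which contains $r_{n-1}$ and, as $k<\lambda$, has non-constant top height), so we are in case (i) and $c_{i_k}(P)\in\cB_n^\square$; the case $q_i=q_{j_k}$, $1<k\le\mu$, is its reflection at the main diagonal and gives a $\bottomT$-vertex with $c_{j_k}(P)\in\cB_n^\square$. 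For $q_i=q_{j_1}$ with $\lambda>1$, the rectangles of $P$ below $q_i$ touching the right boundary reduce to $r_{n-1}$ alone, so $r_n$ sits immediately to the right of $r_{n-1}$, giving case (ii) with a vertical wall and $c_{j_1}(P)\in\cB_n^\boxbar$; and when $\lambda=1$, $P$ is bottom-based with $r_{n-1}$ as its singleton base block, so $c_{i_1}(P)=c_1(P)$ stacks $r_n$ below $r_{n-1}$ across the full width, giving an H-aligned base block and $c_{i_1}(P)\in\cB_n^\boxminus$. Each converse is obtained by reversing this reasoning: starting from $R$ in $\cB_n^\square$ (so $r_n$ is a singleton block), $\cB_n^\boxbar$, or $\cB_n^\boxminus$, deleting $r_n$ removes an isolated rectangle (or undoes the size-$2$ block), so $P:=p(R)$ is block-aligned with $r_{n-1}$ still its own block, i.e.\ $P\in\cB_{n-1}^\square$; and the type of the top-left vertex of $r_n$, together with the diagonality characterization, pins $q_i$ down to the right kind of insertion point, the non-lastness of the vertical group in the first bullet following from the fact that inserting at $q_{i_\lambda}$ would force $r_n$ to align with $r_{n-1}$.

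The hard part will be the second observation above --- proving that no two distinct blocks of $P$ get merged by inserting $r_n$ and that the free/locked attribute is preserved --- because once the local configuration near the bottom-right corner is pinned down, the remaining verifications are routine bookkeeping. A secondary source of care is the special rule for the size-$2$ block containing $r_n$, and the degenerate case $\lambda=1$, where the first insertion point of the (unique) vertical group coincides with the bottom-based insertion $c_1(P)$, so that the first and last bullets genuinely describe different regimes.
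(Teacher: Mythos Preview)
Your overall strategy is sound, and your handling of bullets three and four is essentially correct. However, your ``second observation'' contains a genuine gap that undermines the argument for the first two bullets. You claim that every block of $P$ not containing $r_{n-1}$ survives as a maximal alignable block of $c_{i_k}(P)$, and that only the block of $r_{n-1}$ and the base block can be affected. This is false. When $\lambda>1$ and $P\in\cB_{n-1}^\square$ there is no base block at all (the singleton $r_{n-1}$ does not span the bottom boundary), yet a block not containing $r_{n-1}$ can still be \emph{split} by the insertion. Concretely, for $k>1$ let $r_a$ and $r_b$ be the rectangles immediately to the left and right of the edge containing $q_{i_k}$. If they lie in the same maximal alignable block~$B$ of~$P$, then after inserting $r_n$ the rectangles of~$B$ from $r_b$ onward sit above $r_n$ while those up to $r_a$ still touch the bottom boundary; their union is no longer a rectangle, so $B$ is not a block of $c_{i_k}(P)$.

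The paper's proof handles precisely this case. It observes that such a $B$ must be either a free V-block (hence V-aligned) or a locked H-block (hence AH-aligned), with $b=a+1$, and then verifies that the fragments are themselves correctly aligned: two free V-aligned V-blocks in the first case, and a free H-aligned H-block together with two singletons in the second. You correctly anticipate that the second observation is the crux, but you misidentify the difficulty: the danger is not that distinct blocks might merge but that a single block might split, and one must check that the resulting pieces inherit admissible alignment and free/locked status. Without this case analysis your argument does not establish that $c_{i_k}(P)$ is block-aligned.
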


\begin{proof}
The first and second part of the lemma are symmetric, so it suffices to prove the first one.
For this we analyze how the blocks of~$R:=c_{i_k}(P)$ differ from the blocks of~$P$, and prove that they are all aligned as required.

The rectangle~$r_{n-1}$ forms a block of size~1 in~$P$, and as $k<\lambda$ this is also true in~$R$.
Similarly, as $k<\lambda$ the rectangle~$r_n$ forms a block of size~1 in~$R$.
Consequently, we only need to verify whether blocks of~$R$ not containing~$r_{n-1}$ or~$r_n$ in~$P$ are aligned as required.
If $k=1$, then the blocks of~$R$ are the same as those of~$P$, plus the block containing~$r_n$, so we are done; see Figure~\ref{fig:block-insert}~(a).
If $k>1$, we let $r_a$ and~$r_b$ be the rectangles in~$P$ to the left and right of the edge that contains the insertion point~$q_{i_k}$.
If $r_a$ and~$r_b$ belong to two distinct blocks in~$P$, then the blocks of~$R$ are the same as those of~$P$, plus the block containing~$r_n$, so we are done; see Figure~\ref{fig:block-insert}~(b).
On the other hand, if $r_a$ and~$r_b$ belong to the same block~$B$ in~$P$, then it must be a free V-block that is V-aligned or a locked H-block that AH-aligned, and we have $b=a+1$.
If $B$ is a free V-block in~$P$, then in~$R$ this block is split into two free V-blocks~$B'$ and~$B''$, one containing~$r_a$ and the other one containing~$r_b=r_{a+1}$, and both~$B'$ and~$B''$ are V-aligned; see Figure~\ref{fig:block-insert}~(c).
If $B$ is a locked H-block in~$P$, then in~$R$ this block is split into the H-block $B\setminus\{r_a,r_{a+1}\}$, and two blocks of size~1 containing~$r_a$ or~$r_{a+1}$, respectively; see Figure~\ref{fig:block-insert}~(d).
Moreover, if $|B|=3$ then $|B\setminus\{r_a,r_{a+1}\}|=1$, and otherwise $B\setminus\{r_a,r_{a+1}\}$ is a free H-block that is H-aligned in~$R$.
In all cases we obtain $R\in\cB_n^\square$, as claimed.

We continue to prove the third part of the lemma about the rectangulation $R:=c_{j_1}(P)$.
The rectangle~$r_{n-1}$ forms a block of size~1 in~$P$, and together with~$r_n$ it forms a block of size~2 in~$R$; see Figure~\ref{fig:block-insert}~(e).
This block is V-aligned in~$R$, so we have $R\in\cB_n^\boxbar$, as claimed.

It remains to prove the fourth part of the lemma about the rectangulation $R:=c_{i_1}(P)$.
The rectangle~$r_{n-1}$ forms a block of size~1 in~$P$, and together with~$r_n$ it forms a block of size~2 in~$R$; see Figure~\ref{fig:block-insert}~(f).
This block is a base block and H-aligned in~$R$, so we have $R\in\cB_n^\boxminus$, as claimed.
\end{proof}

\begin{figure}
\includegraphics[page=3]{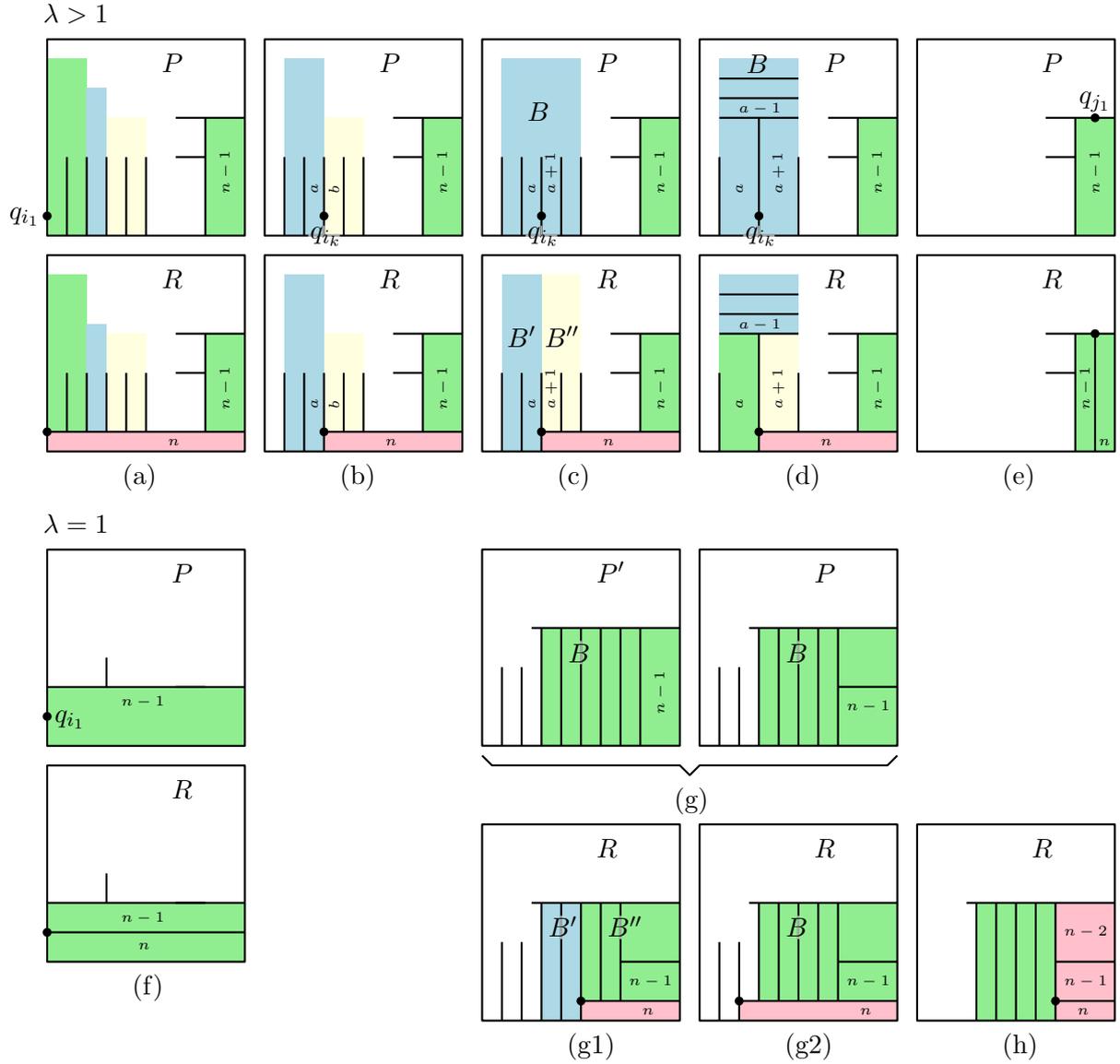}
\caption{Illustration of the proofs of Lemmas~\ref{lem:block-insert1} and~\ref{lem:block-insert2}.}
\label{fig:block-insert}
\end{figure}

\begin{lemma}
\label{lem:block-insert2}
Let $P\in\cB_{n-1}^\boxminus$ and $P':=s(P)$, or let $P'\in\cB_{n-1}^\boxbar$ and $P:=s(P')$, and define $I_v(P)=:(q_{i_1},\ldots,q_{i_\lambda})$ and $I_h(P')=:(q_{j_1},\ldots,q_{j_\mu})$.
Then we have the following:
\begin{itemize}[leftmargin=5mm, noitemsep, topsep=3pt plus 3pt]
\item For any $1\leq k<\lambda$ we have $c_{i_k}(P)\in\cB_n^\square$, and every $R\in\cB_n^\square$ for which the top-left vertex of~$r_n$ has type~$\rightT$ and $r_{n-1}$ is contained in a block of size at least~2 is obtained by insertion from some $P\in\cB_{n-1}^\boxminus$ in this way.
\item For any $1<k\leq \mu$ we have $c_{j_k}(P')\in\cB_n^\square$, and every $R\in\cB_n^\square$ for which the top-left vertex of~$r_n$ has type~$\bottomT$ and $r_{n-1}$ is contained in a block of size at least~2 is obtained by insertion from some $P'\in\cB_{n-1}^\boxbar$ in this way.
\item
We have $c_{i_\lambda}(P)\in\cB_n^\boxminus$, and every $R\in\cB_n^\boxminus$ for which $r_{n-1}$ and~$r_n$ are contained in an H-aligned block of size at least~3 is obtained by insertion from some $P\in\cB_{n-1}^\boxminus$ in this way.
\item
We have $c_{j_1}(P')\in\cB_n^\boxbar$, and every $R\in\cB_n^\boxbar$ for which $r_{n-1}$ and~$r_n$ are contained in a V-aligned block of size at least~3 is obtained by insertion from some $P'\in\cB_{n-1}^\boxbar$ in this way.
\end{itemize}
\end{lemma}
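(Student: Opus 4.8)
The plan is to run the argument in close parallel to the proof of Lemma~\ref{lem:block-insert1}, the only structural difference being that the parent rectangulation now has its bottom-right rectangle inside a block of size at least~$2$ rather than a singleton block. For each of the four bullets I would give a \emph{forward} argument, that the stated insertion produces a rectangulation in the claimed class, by tracking how the partition into maximal alignable blocks (Lemma~\ref{lem:blocks}) is affected, and a \emph{converse} argument, recovering the parent by a single rectangle deletion and reversing this block analysis (deletion and insertion being mutually inverse by Lemma~\ref{lem:del-ins}); see Figure~\ref{fig:block-insert}. As preparation I would record the relevant structure of the parent: if $P\in\cB_{n-1}^\boxminus$, then the top-left vertex of the bottom-right rectangle~$r_{n-1}$ has type~$\rightT$, the rectangles~$r_{n-1}$ and~$r_{n-2}$ share a horizontal wall, and they lie in a common maximal alignable block~$B_0$, which is free (as~$r_{n-1}$ is the last rectangle) and hence H-aligned; passing to~$P'=s(P)\in\cB_{n-1}^\boxbar$ merely reorients that one wall, so by Lemma~\ref{lem:blocks-flips} the block partition is unchanged, $B_0$ is the same set of rectangles, and in~$P'$ the $r_{n-1}r_{n-2}$-wall is vertical while the rest of~$B_0$ is untouched. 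I would also recall from the proof of Lemma~\ref{lem:rho-jump} that~$I_v(P)$ and~$I_h(P')$ list exactly the insertion points keeping a diagonal rectangulation diagonal, so each $c_{i_k}(P)$ and $c_{j_k}(P')$ lies in~$\cD_n$.

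For the first two bullets, inserting~$r_n$ at~$q_{i_k}$ with $k<\lambda$ (respectively at~$q_{j_k}$ with $k>1$) makes~$r_n$ a singleton block, as it is then neither right-based nor bottom-based, exactly as in Lemma~\ref{lem:block-insert1}. The only blocks of the child that can differ from those of the parent are the new singleton~$\{r_n\}$ and the block meeting the insertion edge: if that edge separates two distinct blocks or lies at the left end of~$B_0$, nothing else changes; if it lies inside a block, that block splits, a V-block into two V-aligned V-blocks and a locked H-block into an H-block plus two singletons, just as in Lemma~\ref{lem:block-insert1}. The new point is that~$r_{n-1}$ is now an interior rectangle of~$B_0$ rather than a corner one, and that the free-or-locked status of~$B_0$ in the child depends on whether~$r_n$ lands below or right of the bottom-right rectangle of~$B_0$; one checks in each case that~$B_0$ keeps exactly the alignment required by the definition of block-aligned, so $c_{i_k}(P),c_{j_k}(P')\in\cB_n^\square$. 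Conversely, given $R\in\cB_n^\square$ whose~$r_n$ has top-left vertex of type~$\rightT$ (resp.~$\bottomT$) with~$r_{n-1}$ in a block of size at least~$2$, deleting~$r_n$ yields a rectangulation with~$r_{n-1}$ at the bottom-right in a block of size at least~$2$, i.e.\ in~$\cB_{n-1}^\boxplus$; the orientation of the $r_{n-1}r_{n-2}$-wall decides whether this is the required~$P\in\cB_{n-1}^\boxminus$ or whether we pass to its $s$-image in~$\cB_{n-1}^\boxbar$, and then $R=c_{i_k}(P)$, resp.\ $R=c_{j_k}(P')$, for the appropriate index strictly between the extremes.

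For the last two bullets, inserting~$r_n$ at the last vertical insertion point~$q_{i_\lambda}$ of~$P$ places~$r_n$ directly below~$r_{n-1}$ and merges it into~$B_0$; as~$B_0$ is H-aligned in~$P$, the enlarged block~$B_0\cup\{r_n\}$ is an H-aligned free block of size at least~$3$ (it is maximal because a block of size at least~$3$ is H-alignable but not V-alignable, so maximality passes from the parent), so $c_{i_\lambda}(P)\in\cB_n^\boxminus$; symmetrically, inserting at the first horizontal insertion point~$q_{j_1}$ of~$P'$ places~$r_n$ directly right of~$r_{n-1}$ and enlarges the V-aligned block~$B_0$ to a V-aligned free block of size at least~$3$, so $c_{j_1}(P')\in\cB_n^\boxbar$. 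Conversely, if $R\in\cB_n^\boxminus$ with~$r_{n-1},r_n$ in an H-aligned block of size at least~$3$, deleting~$r_n$ merges it into~$r_{n-1}$, leaving an H-aligned block of size at least~$2$ with~$r_{n-1}$ at the bottom-right, so $p(R)\in\cB_{n-1}^\boxminus$ and $R=c_{i_\lambda}(p(R))$; the case $R\in\cB_n^\boxbar$ is symmetric via~$P'$.

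The hard part will be the alignment bookkeeping in the forward direction: in every splitting and merging case one must verify that the blocks of the child inherit precisely the free-or-locked status and the H-/V-/AH-/AV-aligned shape demanded by the definition of block-aligned, paying particular attention to the asymmetric special rule for size-$2$ blocks containing~$r_n$ and for base blocks, and to how ``locked'' arises from the position of~$r_n$ relative to a block's bottom-right rectangle. This is essentially the content of the case distinctions drawn in Figure~\ref{fig:block-insert}, now to be carried out with~$r_{n-1}$ sitting inside a nontrivial block, and the degenerate size-$2$ instances should be cross-checked against the corresponding bullets of Lemma~\ref{lem:block-insert1} so that the two lemmas together account for every block-aligned rectangulation.
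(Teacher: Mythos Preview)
Your outline handles only half of the lemma's hypothesis. The statement reads ``Let $P\in\cB_{n-1}^\boxminus$ and $P':=s(P)$, \emph{or} let $P'\in\cB_{n-1}^\boxbar$ and $P:=s(P')$''; these are two disjoint cases. In the first, $P$ is block-aligned and the block~$B_0$ containing~$r_{n-1}$ is H-aligned in~$P$, as you say. In the second, it is $P'$ that is block-aligned, $B_0$ is V-aligned in~$P'$, and $P=s(P')$ is \emph{not} block-aligned (for $|B_0|\ge3$ the block~$B_0$ is merely AV-aligned in~$P$). Your preparation asserts $P'=s(P)\in\cB_{n-1}^\boxbar$, which is false in general, and then all subsequent arguments silently assume~$B_0$ is H-aligned in~$P$.

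This matters concretely in the third bullet. You claim the enlarged set $B_0\cup\{r_n\}$ is an H-aligned block in $R=c_{i_\lambda}(P)$. In the second case with $|B_0|\ge3$, the block~$B_0$ in~$P$ looks like a horizontal row with the rightmost pair $\{r_{n-2},r_{n-1}\}$ stacked; adjoining~$r_n$ below~$r_{n-1}$ gives three stacked rectangles on the right and a row of singletons on the left, which is neither H-alignable nor V-alignable. The correct block containing~$r_n$ is $\{r_{n-2},r_{n-1},r_n\}$, while $B_0\setminus\{r_{n-2},r_{n-1}\}$ becomes a separate free V-aligned V-block (or a singleton). The paper's proof singles out exactly this second case as the new content, calling the first case analogous to Lemma~\ref{lem:block-insert1}. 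The same oversight affects your first bullet: in the second case the block~$B_0$ is a V-block in~$R$, now \emph{locked} by~$r_n$ sitting below~$r_{n-1}$, and it may or may not split; you need to check that the resulting pieces are AV-aligned/V-aligned as required, which is the computation in Figure~\ref{fig:block-insert}~(g1)--(g2). Your converse argument also implicitly assumes $p(R)$ is block-aligned, which fails in the second case.
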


\begin{proof}
The proof for the first part in the case $P\in\cB_{n-1}^\boxminus$ and for the second part in the case $P'\in\cB_{n-1}^\boxbar$ is analogous to the proof of Lemma~\ref{lem:block-insert1}.
Therefore, by symmetry, to complete the proof of the first two parts, it suffices to argue about the case $P'\in\cB_{n-1}^\boxbar$, $P:=s(P')$ and the rectangulation $R:=c_{i_k}(P)$ for $1\leq k<\lambda$; see Figure~\ref{fig:block-insert}~(g).
The V-block~$B$ in~$P'$ containing~$r_{n-1}$, which is free and V-aligned in~$P'$, is AV-aligned and free in~$P$.
Consequently, in~$R$ the block~$B$ is either split into two blocks, a free V-block~$B'$ that is V-aligned to the left of a locked V-block~$B''$ (Figure~\ref{fig:block-insert}~(g1)) that is AV-aligned, or $B$ remains a single locked V-block that is AV-aligned in~$R$ (Figure~\ref{fig:block-insert}~(g2)), where the locking is due to the insertion of~$r_n$.
The remaining blocks of~$P'$ are treated as in the proof of Lemma~\ref{lem:block-insert1}.
In all cases we obtain that $R\in\cB_n^\square$, as claimed.

The third and fourth part of the lemma are symmetric, so it suffices to prove the third one about the rectangulation $R:=c_{i_\lambda}(P)$.
In this case $\{r_{n-2},r_{n-1},r_n\}$ is an H-block that is free and H-aligned in~$R$, and either $|B\setminus\{r_{n-2},r_{n-1}\}|=1$ or $B\setminus\{r_{n-2},r_{n-1}\}$ is a V-block that is free and V-aligned in~$R$; see Figure~\ref{fig:block-insert}~(h).
It follows that $R\in\cB_n^\boxminus$, as claimed.
\end{proof}

\begin{figure}
\includegraphics[scale=0.7]{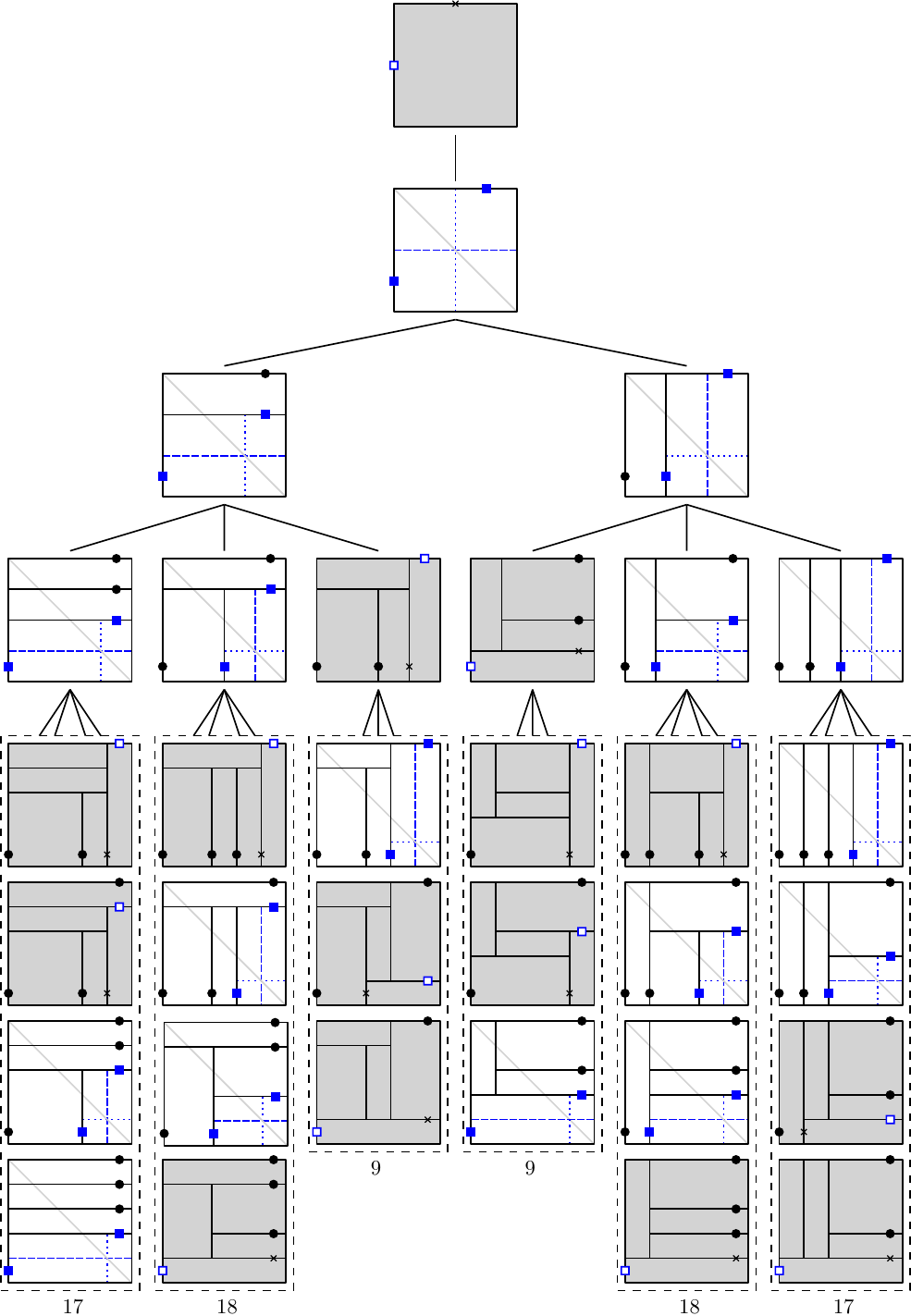}
\caption{Tree of block-aligned rectangulations.
Rectangulations from~$\cB_n^\square$ are drawn gray, and those from~$\cB_n^\boxplus$ are drawn white.
For any $R\in\cB_n^\boxplus$, the wall between rectangles~$r_n$ and~$r_{n-1}$ is drawn dashed, whereas the corresponding wall in $s(R)$ is drawn dotted.
Each insertion point marked by a disk corresponds to one child of the current node as in the first two parts of Lemmas~\ref{lem:block-insert1} and~\ref{lem:block-insert2}.
Each insertion points marked by an empty square corresponds to one child as in the third or fourth part of Lemma~\ref{lem:block-insert1}, whereas crossed insertion points are not used.
Each insertion point marked by a solid square corresponds to one child as in the third or fourth part of Lemma~\ref{lem:block-insert2}.
The numbers at the bottom indicate the number of nodes in the next level of the tree, of which there are $2(17+18+9)=88$ overall (i.e., we have $|\cB_6|=88$).
}
\label{fig:tree-sequiv}
\end{figure}

\subsection{Tree of block-aligned rectangulations}

By Lemmas~\ref{lem:block-insert1} and~\ref{lem:block-insert2}, all block-aligned rectangulations~$\cB_n$ can be obtained by suitable rectangle insertions into all block-aligned rectangulations~$\cB_{n-1}$ and $s(\cB_{n-1}^\boxplus)$.
We consider the subtree of the tree of rectangulations discussed in Section~\ref{sec:tree} induced by the rectangulations~$\cB_n$ and $s(\cB_n^\boxplus)$ for all~$n\geq 1$.
By gluing together pairs of nodes~$(R,s(R))$ for all $R\in\cB_n^\boxplus$, we obtain the tree shown in Figure~\ref{fig:tree-sequiv}.

For any $P\in\cB_{n-1}^\square$, using the notation from Lemma~\ref{lem:block-insert1} we define
\begin{subequations}
\label{eq:JSequiv}
\begin{equation}
\label{eq:JSequiv1}
c(P):=\begin{cases}
\big(c_{i_1}(P),\ldots,c_{i_{\lambda-1}}(P),c_{j_1}(P),c_{j_2}(P),\ldots,c_{j_\mu}(P)\big) & \text{ if } \lambda>1, \\
\big(c_{i_1}(P),c_{j_2}(P),\ldots,c_{j_\mu}(P)\big) & \text{ if } \lambda=1. \\
\end{cases}
\end{equation}
For any $P\in\cB_{n-1}^\boxminus\cup s(\cB_{n-1}^\boxbar)$ and~$P':=s(P)$, using the notation from Lemma~\ref{lem:block-insert2} we define
\begin{equation}
\label{eq:JSequiv2}
c(P):=\big(c_{i_1}(P),\ldots,c_{i_{\lambda-1}}(P),c_{i_\lambda}(P),c_{j_1}(P'),c_{j_2}(P'),\ldots,c_{j_\mu}(P')\big).
\end{equation}
\end{subequations}
These sequences define an ordering among the children of each node in the aforementioned (unordered) tree of block-aligned rectangulations.

Note that any two consecutive rectangulations in the sequence~\eqref{eq:JSequiv1} differ in a T-flip, except $c_{i_{\lambda-1}}(P)$ and~$c_{j_1}(P)$, and $c_{i_1}(P)$ and~$c_{j_2}(P)$, which differ in a T-flip plus a simple flip.
Similarly, any two consecutive rectangulations in the sequence~\eqref{eq:JSequiv2} differ in a T-flip, except $c_{i_\lambda}(P)$ and $c_{j_1}(P')$, which differ in a simultaneous flip of the two walls between $r_n$, $r_{n-1}$ and $r_{n-2}$.
We refer to this operation as a \emph{D-flip} (D like `double').

\subsection{Next oracle for block-aligned rectangulations}

Using~\eqref{eq:JSequiv}, we may modify the minimal jump oracle~$\nextjump_{\cD_n}$ for diagonal rectangulations described in Section~\ref{sec:oracle-diag} for the generation of block-aligned rectangulations within Algorithm~\Mrect{} as follows.
Some Gray code orderings produced by this algorithm are shown in the appendix.

\begin{algo}{$\nextjump_{\cB_n}(R,j,d)$}{Next oracle for block-aligned rectangulations}
\begin{enumerate}[label={\bfseries N\arabic*.}, leftmargin=8mm, noitemsep, topsep=3pt plus 3pt]
\item{}[Prepare] Set $a\gets r_j.\rnw$ and call $\unlock(j,d)$.
If $d=\dirl$ and $v_a.\vtype=\bottomT$, set $\alpha\gets v_a.\vsouth$ and $b\gets e_\alpha.\etail$.
If $v_b.\vtype=\leftT$ goto~N2, otherwise we have $v_b.\vtype=\topT$ and goto~N3.
If $d=\dirr$ and $v_a.\vtype=\bottomT$, set $\alpha\gets v_a.\veast$ and goto~N4.
If $d=\dirr$ and $v_a.\vtype=\rightT$, set $\alpha\gets v_a.\veast$ and $b\gets e_\alpha.\ehead$.
If $v_b.\vtype=\topT$ goto~N5, otherwise we have $v_b.\vtype=\leftT$ and goto~N6.
If $d=\dirl$ and $v_a.\vtype=\rightT$, set $\alpha\gets v_a.\vsouth$ and goto~N7.
\item{}[Horizontal left jump (T/TS)] Set $\gamma\gets v_b.\vwest$ and call $\Tjumph(R,j,\dirl,\gamma)$.
Then set $a\gets r_j.\rnw$, $\alpha\gets v_a.\vsouth$, $b\gets e_\alpha.\etail$, $c\gets r_{j-1}.\rsw$, $\gamma\gets v_c.\vnorth$, $c'\gets r_j.\rse$ and if $v_b.\vtype=\topT$ and [$v_{c'}.\vtype=\leftT$ or [$j=n$ and $e_\gamma.\eleft=0$]] call $\Sjump(R,j,\dirl,\gamma)$.
Call $\lock(R,j,\dirr)$ and return.
\item{}[Horizontal left jump (ST/D)] Set $c\!\gets\! r_{j-1}.\rsw$ and $\gamma\!\gets\! v_c.\vnorth$ and call $\Sjump(R,j,\dirl,\gamma)$.
Then set $\gamma\gets v_c.\vnorth$, $k\gets e_\gamma.\eleft$, $c'\gets r_k.\rsw$ and $\gamma'\gets v_{c'}.\vnorth$ and call $\Tjumpv(R,j,\dirl,\gamma')$.
Set $c\gets r_{j-1}.\rsw$, $\gamma\gets v_c.\vnorth$ and $a\gets e_\gamma.\ehead$.
If $v_a.\vtype=\bottomT$ we have $k=j-2$, set $c'\gets r_{j-2}.\rsw$, $\gamma'\gets v_{c'}.\vnorth$ and call $\Sjump(R,j-1,\dirl,\gamma')$.
Call $\lock(R,j-1,\dirr)$ and return.
\item{}[Horizontal right jump (T/TS)] Set $k\gets e_\alpha.\eleft$, $b\gets r_k.\rne$ and $\gamma\gets v_b.\vwest$ and call $\Tjumph(R,j,\dirr,\gamma)$.
Then set $a\gets r_j.\rnw$, $\alpha\gets v_a.\vsouth$, $b\gets e_\alpha.\etail$, $\beta\gets v_b.\vsouth$, $\gamma\gets v_b.\vwest$, $c\gets e_\beta.\etail$ and $c'\gets e_\gamma.\etail$, and if $v_c.\vtype=\topT$ and $v_{c'}.\vtype=\rightT$ we have $k=j-2$, set $\gamma'\gets v_a.\vwest$ and call $\Sjump(j-1,R,\dirr,\gamma')$.
Call $\lock(R,j,\diru)$ and return.
\item{}[Vertical right jump (T/TS)] Set $\gamma\gets v_b.\vnorth$ and call $\Tjumpv(R,j,\dirr,\gamma)$.
Then set $a\gets r_j.\rnw$, $\alpha\gets v_a.\veast$, $b\gets e_\alpha.\ehead$, $c\gets r_{j-1}.\rne$, $\gamma\gets v_c.\vwest$, $c'\gets r_j.\rse$, $e\gets r_{j-1}.\rnw$ and if $v_b.\vtype=\leftT$ and [$v_{c'}.\vtype=\topT$ or [$j=n$ and not [$v_e.\vtype=\rightT$ and $e_\gamma.\etail=e$]] call $\Sjump(R,j,\dirr,\gamma)$.
Call $\lock(R,j,\diru)$ and return.
\item{}[Vertical right jump (ST/D)] Set $c\gets r_{j-1}.\rne$ and $\gamma\gets v_c.\vwest$ and call $\Sjump(R,j,\dirr,\gamma)$.
Then set $\gamma\gets v_c.\vwest$, $k\gets e_\gamma.\eleft$, $c'\gets r_k.\rne$ and $\gamma'\gets v_{c'}.\vwest$ and call $\Tjumph(R,j,\dirr,\gamma')$.
Set $c\gets r_{j-1}.\rne$, $\gamma\gets v_c.\vwest$ and $a\gets e_\gamma.\etail$.
If $v_a.\vtype=\rightT$ we have $k=j-2$, set $c'\gets r_{j-2}.\rne$, $\gamma'\gets v_{c'}.\vwest$ and call $\Sjump(R,j-1,\dirr,\gamma')$.
Call $\lock(R,j-1,\diru)$ and return.
\item{}[Vertical left jump (T/TS)] Set $k\gets e_\alpha.\eleft$, $b\gets r_k.\rsw$ and $\gamma\gets v_b.\vnorth$ and call $\Tjumpv(R,j,\dirl,\gamma)$.
Then set $a\gets r_j.\rnw$, $\alpha\gets v_a.\veast$, $b\gets e_\alpha.\ehead$, $\beta\gets v_b.\veast$, $\gamma\gets v_b.\vnorth$, $c\gets e_\beta.\ehead$ and $c'\gets e_\gamma.\ehead$, and if $v_c.\vtype=\leftT$ and $v_{c'}.\vtype=\bottomT$ we have $k=j-2$, set $\gamma'\gets v_a.\vnorth$ and call $\Sjump(j-1,R,\dirl,\gamma')$.
Call $\lock(R,j,\dirr)$ and return.
\end{enumerate}
\end{algo}
Lines N2--N4 are symmetric to lines N5--N7, so we only consider N2--N4; see the illustrations in Figure~\ref{fig:Sequiv-next}.
Lines~N2 and~N4 perform a T-flip, possibly followed by a simple flip.
Line~N3 performs a simple flip followed by a T-flip, possibly followed by a simple flip, and in this case the combination of three flips, simple flip plus T-flip plus simple flip, yields a D-flip overall.
The function $\lock(R,j,\dir)$, $\dir\in\{\dirr,\diru\}$, called at the end of each of the lines~N2--N7 checks whether rectangle~$r_j$ participates in an H-aligned block (if $\dir=\dirr$) or V-aligned block (if $\dir=\diru$) that is locked and must be transformed to an AH-aligned block or AV-aligned block by a simple flip.
The function $\unlock(R,j,d)$, $d\in\{\dirl,\dirr\}$, called at the beginning in line~N1 does the converse, namely checking whether~$r_j$ participates in an AH-aligned block or AV-aligned block that must be made H-aligned or V-aligned, respectively, before performing a jump of rectangle~$r_j$ in direction~$d$.
The implementation of these functions is shown below for the cases $\dir=\dirr$ and $d=\dirr$.
The other variants $\dir=\diru$ and $d=\dirl$ are omitted for simplicity.

\begin{figure}
\makebox[0cm]{ 
\includegraphics[page=2]{swtjump}
}
\caption{Flip operations in lines~N2--N4 of the oracle $\nextjump_{\cB_n}$.
}
\label{fig:Sequiv-next}
\end{figure}

\begin{algo}{$\lock(R,j,\dirr)$}{Lock block if necessary}
\begin{enumerate}[label={\bfseries L\arabic*.}, leftmargin=8mm, noitemsep, topsep=3pt plus 3pt]
\item{}[Prepare] Set $a\gets r_j.\rne$, $b\gets r_j.\rsw$, $c\gets r_j.\rse$, $\alpha\gets v_a.\vwest$, $\beta\gets v_b.\veast$ and return if $v_b.\vtype\neq \rightT$ or $v_c.\vtype\neq \leftT$ or $e_\beta.\ehead\neq c$.
\item{}[Lock if necessary] Set $d\gets r_{j+1}.\rse$ and if $v_d.\vtype=\topT$ call $\Sjump(R,j+1,\dirr,\alpha)$.
\end{enumerate}
\end{algo}

\begin{algo}{$\unlock(R,j,\dirr)$}{Unlock block if necessary}
\begin{enumerate}[label={\bfseries U\arabic*.}, leftmargin=8mm, noitemsep, topsep=3pt plus 3pt]
\item{}[Prepare] Set $a\gets r_j.\rne$, $b\gets r_j.\rse$, $c\gets r_j.\rsw$ and $\gamma\gets v_c.\vnorth$.
\item{}[Unlock if necessary] If $v_a.\vtype=\bottomT$ and $v_b.\vtype=\topT$ call $\Sjump(R,j+1,\dirl,\gamma)$.
\end{enumerate}
\end{algo}

To use Algorithm~\Mrect{} with this oracle, in line~M5 we also need to check whether $R^{[j-1]}$ is bottom-based or right-based (in addition to $R^{[j]}$), and whether $R^{[j-1]}$ or $R^{[j]}$ are one simple flip away from such a configuration.
Similarly, to use this oracle in conjunction with the oracle $\nextjump_{\cB_n(\cP)}$ defined in Section~\ref{sec:oracle-pattern}, we need to test containment of a pattern~$P$ in the rectangulation~$R$ after a jump of rectangle~$r_j$ not only via $\contains(R,j,P)$, but also using $\contains(R,j-1,P)$ and $\contains(R,j+1,P)$, as all three rectangles~$r_{j-1}$, $r_j$ and~$r_{j+1}$ may be modified through one call of~$\nextjump_{\cB_n}(R,j,d)$.
For details see our C++ implementation~\cite{cos_rect}.

We obtain the following analogue of Theorems~\ref{thm:pattern} and~\ref{thm:algo-rect}.

\begin{theorem}
\label{thm:algo-baligned}
Let $n\geq 3$.
For any set of patterns~$\cP$ that are neither bottom-based nor right-based nor simple-flippable to a bottom-based or right-based pattern, Algorithm~\Mrect{} with the oracle $\nextjump_{\cB_n(\cP)}$ defined in Section~\ref{sec:oracle-pattern}, which calls $\nextjump_{\cB_n}$ as defined above, visits every rectangulation from~$\cB_n(\cP)$ exactly once, performing a sequence of one T- or D-flip plus at most three simple flips in each step.
\end{theorem}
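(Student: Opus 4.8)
The plan is to mirror the three-part strategy behind Theorems~\ref{thm:pattern}, \ref{thm:jump-rect} and~\ref{thm:algo-rect}, but with the tree of rectangulations from Section~\ref{sec:tree} replaced by the tree of block-aligned rectangulations of Figure~\ref{fig:tree-sequiv}, whose nodes are the singletons $R\in\cB_n^\square$ together with the glued pairs $(R,s(R))$ for $R\in\cB_n^\boxplus$, and whose children are ordered by the sequences $c(P)$ from~\eqref{eq:JSequiv}. First I would isolate the analogue of a zigzag set in this setting: a subset $\cC_n\seq\cB_n$ is \emph{admissible} if the induced set of tree nodes arises from the full block-aligned tree by pruning only subtrees whose root is neither the first nor the last child of its parent in the ordering~\eqref{eq:JSequiv}; one first records that the first and last child of every node is always present. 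Then I would show that if $\cP$ consists of patterns that are neither bottom-based nor right-based nor simple-flippable to such a pattern, then $\cB_n(\cP)$ is admissible. By Lemma~\ref{lem:tame} such patterns are tame, so inserting $r_n$ at an extreme position of~\eqref{eq:JSequiv} --- which by Lemmas~\ref{lem:block-insert1} and~\ref{lem:block-insert2} glues $r_n$ below or to the right of the remaining rectangles, possibly followed by one simple flip of the wall incident to $r_n$ --- cannot create a forbidden pattern; the extra ``simple-flippable'' clause is precisely what absorbs that implicit simple flip and, likewise, the simple flip implicit in passing to $s(\cdot)$ and in a D-flip. This is the exact analogue of Theorem~\ref{thm:pattern}, and its symmetric counterpart handles cyclicity if $\cP$ is symmetric.

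The second part adapts the correctness proof of Algorithm~\Mrect{}. I would define, recursively from~\eqref{eq:JSequiv} and exactly as $\Jrectm$ was built from $\rvec{c},\lvec{c}$ in Section~\ref{sec:tree}, a linear ordering of $\cC_n$, the only new feature being that a node of type $\cB_n^\boxplus$ contributes two consecutive rectangulations $R,s(R)$ rather than one. Then I would rerun the inductive argument of Lemma~\ref{lem:sn-prop} and the proof of Theorem~\ref{thm:algo}, verifying that the stack array $s$ and the direction array $o$ maintained by Algorithm~\Mrect{} --- now with the enlarged test in line~M5 that also inspects $R^{[j-1]}$ and whether $R^{[j]}$ or $R^{[j-1]}$ is one simple flip away from being bottom- or right-based --- correctly track which rectangle jumps next and in which direction, so that the algorithm visits exactly this ordering, each node exactly once. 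This is where I would have to be careful about the asymmetry between $\cB_n^\square$, $\cB_n^\boxminus$ and $\cB_n^\boxbar$ and about the involution $s$ being built into the tree: the case bookkeeping is heavier than in Lemma~\ref{lem:sn-prop}, but structurally the recursion~\eqref{eq:spi1}--\eqref{eq:spi2} carries over, with the ``last permutation in $\ol c(\pi')$'' role now played by the last child in~\eqref{eq:JSequiv1}/\eqref{eq:JSequiv2}.

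The third part is the verification that the oracle $\nextjump_{\cB_n}(R,j,d)$ realizes one step of this ordering. This is a line-by-line check of the pseudocode against the geometric case analysis in the proofs of Lemmas~\ref{lem:block-insert1} and~\ref{lem:block-insert2}: lines~N2--N4, and symmetrically N5--N7, match the four bullets of those lemmas; the call to $\unlock$ in line~N1 turns an AH-/AV-aligned locked block back into an aligned block before the jump, the trailing call to $\lock$ re-establishes the locking afterwards, and the combined effect of $\Sjump$, $\Tjumpv$ and a possible second $\Sjump$ in line~N3 is precisely a D-flip of the walls between $r_n$, $r_{n-1}$, $r_{n-2}$, as announced after~\eqref{eq:JSequiv}. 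Tallying flips then gives at most one $\unlock$ simple flip, the T-flip or D-flip of the jump itself, the optional trailing simple flip in the ``TS'' variant of lines~N2/N4, and at most one $\lock$ simple flip, i.e.\ one T- or D-flip together with at most three simple flips per step, as claimed; the restriction $n\geq 3$ is what lets the special rule for blocks of size $2$ containing $r_n$ and the base-block exceptions be applied consistently.

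The main obstacle I expect is not a single hard idea but the accumulation of bookkeeping: block-aligned rectangulations genuinely fail the zigzag property (the non-existence of consistent representatives illustrated in Figure~\ref{fig:sequiv}), so Theorems~\ref{thm:jump-rect} and~\ref{thm:algo-rect} cannot be invoked as black boxes, and the inductive arguments of Section~\ref{sec:proofs} must be redone on the glued tree while tracking the three node types and the $s$-involution simultaneously. A secondary, purely verificational hurdle is confirming that every branch of $\nextjump_{\cB_n}$, $\lock$ and $\unlock$ --- including all the symmetric variants omitted in the excerpt --- corresponds to a branch of Lemmas~\ref{lem:block-insert1} and~\ref{lem:block-insert2}; I would organize this by the type of the top-left vertex of $r_j$ and the size and orientation of the block of $r_{j-1}$, exactly the invariants used to phrase those lemmas.
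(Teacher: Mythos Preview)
The paper does not give an explicit proof of this theorem; it is simply stated as ``the following analogue of Theorems~\ref{thm:pattern} and~\ref{thm:algo-rect}'' and the details are left to the reader. Your three-part plan --- establishing the tame-pattern analogue on the glued tree of Figure~\ref{fig:tree-sequiv}, rerunning the inductive correctness argument for Algorithm~\Mrect{} (Lemma~\ref{lem:sn-prop} and Theorem~\ref{thm:algo}) on that tree with the children ordered by~\eqref{eq:JSequiv}, and verifying the oracle line by line against Lemmas~\ref{lem:block-insert1} and~\ref{lem:block-insert2} --- is precisely the intended analogy, and you correctly isolate both the essential obstruction (block-aligned rectangulations are not a zigzag set, so the earlier theorems cannot be invoked as black boxes) and the reason for the strengthened hypothesis on~$\cP$ (the implicit simple flips in passing to~$s(\cdot)$ and in the D-flip must not create a forbidden pattern). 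Your flip tally (one \texttt{unlock}, the T- or D-flip, the optional trailing simple flip in the T/TS branches, one \texttt{lock}) matches the ``one T- or D-flip plus at most three simple flips'' claim.
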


It remains to analyze the running time of this algorithm.

\begin{lemma}
\label{lem:Sequiv-time}
Each call $\nextjump_{\cB_n}(R,j,d)$ takes time~$\cO(1)$.
\end{lemma}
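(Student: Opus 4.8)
The plan is to show that $\nextjump_{\cB_n}(R,j,d)$ performs only a bounded number of local flip operations, each of which runs in constant time, and that the auxiliary functions $\lock$ and $\unlock$ likewise run in constant time. Concretely, I would first inspect the structure of the oracle: each of the lines N2--N7 performs at most one T-flip (i.e., a call to $\Tjumph$ or $\Tjumpv$), possibly followed by one S-flip (a call to $\Sjump$), or in the case of lines N3 and N6 a call to $\Sjump$ followed by a call to $\Tjumpv$ or $\Tjumph$ followed by a possible further call to $\Sjump$; in all cases this is a constant number of local jump operations. Together with the single calls to $\unlock$ in line N1 and $\lock$ at the end of each branch, the whole function consists of at most, say, six calls to the local jump primitives plus $\cO(1)$ bookkeeping.

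The key observation making each of these local jumps run in constant time is exactly the one already used in the proof of Lemma~\ref{lem:diag-time}: because we work with a diagonal rectangulation and only ever insert at the first insertion point of a vertical group or the last insertion point of a horizontal group, every S-jump or T-jump performed has $v(R,R')=0$ and $h(R,R')=0$, so by Lemma~\ref{lem:jump-time}~(b)+(c) it takes time~$\cO(1)$ rather than time linear in the size of a block. I would verify that this invariant is maintained after each intermediate flip within a single call of $\nextjump_{\cB_n}$: the crucial point is that block-aligned rectangulations are, in particular, diagonal rectangulations in which every block is aligned or almost-aligned, so the top or left side of $r_j$ (or $r_{j-1}$) is always an edge, not a wall with interior insertion points, at the moment each local flip is applied. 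Hence the while-loops in $\Sjump$, $\Tjumph$, $\Tjumpv$ are each iterated $\cO(1)$ times.

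Finally I would argue that $\lock(R,j,\dir)$ and $\unlock(R,j,d)$ run in constant time: each prepares a constant number of variables by following pointers in the data structure, and then possibly calls $\Sjump$ once; since that $\Sjump$ again occurs between the relevant first/last group insertion points it has $v=h=0$ and costs $\cO(1)$ by Lemma~\ref{lem:jump-time}. Summing the constantly many constant-time operations yields the claimed $\cO(1)$ bound for the whole call.

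The main obstacle I anticipate is not the counting of operations but the verification that the $v(R,R')=h(R,R')=0$ property genuinely holds at every intermediate stage, including after the first S-flip in the ``ST/D'' branches (lines N3 and N6) where we are mid-way through building a D-flip and the rectangulation is momentarily not block-aligned; one must check that even there the relevant side of the jumping rectangle has no interior insertion points, which follows from the block structure analysis in Lemmas~\ref{lem:block-insert1} and~\ref{lem:block-insert2} and the definitions of free/locked H- and V-blocks. Given those lemmas, the argument is a routine case check over the (symmetric) branches N2--N7, paralleling the proof of Lemma~\ref{lem:diag-time}.
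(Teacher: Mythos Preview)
Your proposal is correct and follows essentially the same approach as the paper: the paper's proof simply observes that, since block-aligned rectangulations are diagonal and only the first insertion point of each vertical group and the last of each horizontal group are used, every call to $\Sjump$, $\Tjumph$, $\Tjumpv$ in lines~N2--N7 has $v(R,R')=h(R,R')=0$, and then invokes Lemma~\ref{lem:jump-time}~(b)+(c). Your write-up is more detailed (explicitly counting the bounded number of flip calls, treating $\lock$/$\unlock$, and flagging the intermediate-stage invariant), but the core argument is identical to the paper's, which explicitly notes it parallels the proof of Lemma~\ref{lem:diag-time}.
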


As we are dealing with a subset of diagonal rectangulations, the proof is very similar to the proof of Lemma~\ref{lem:diag-time}.

\begin{proof}
Consider any of the calls $\Sjump(R,j,d)$, $\Tjumph(R,j,d)$, $\Tjumpv(R,j,d)$ in lines~N2--N7 and let~$R'$ be the rectangulation after the call.
As we only consider the first insertion point of each vertical group and the last insertion point of each horizontal group of~$I(R^{[j-1]})$, we have $v(R,R')=0$ and $h(R,R')=0$, so the claim follows from Lemma~\ref{lem:jump-time}~(b)+(c).
\end{proof}

Lemma~\ref{lem:Sequiv-time} immediately yields the following result.

\begin{theorem}
\label{thm:next-block}
Algorithm~\Mrect{} with the oracle~$\nextjump_{\cB_n}$ takes time~$\cO(1)$ to visit each block-aligned rectangulation.
\end{theorem}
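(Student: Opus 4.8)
The plan is to derive the statement as an immediate corollary of Lemma~\ref{lem:Sequiv-time}, by checking that all the remaining per-iteration bookkeeping of Algorithm~\Mrect{} also costs only constant time when the oracle $\nextjump_{\cB_n}$ is plugged in. Concretely, I would trace through one pass of the main loop, i.e.\ lines~M2--M5. Line~M2 (visiting the current rectangulation) and line~M3 (reading $j\gets s_n$ and testing $j=1$) are $\cO(1)$ by inspection. Line~M4 is a single invocation of $\nextjump_{\cB_n}(R,j,d)$, which runs in time $\cO(1)$ by Lemma~\ref{lem:Sequiv-time}. Hence the whole argument reduces to bounding line~M5.

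For line~M5 the subtlety specific to block-aligned rectangulations---spelled out in the remarks following the definition of $\nextjump_{\cB_n}$---is that one must now test not only whether $R^{[j]}$ is bottom-based or right-based, but also whether $R^{[j-1]}$ is, and in addition whether $R^{[j-1]}$ or $R^{[j]}$ is a single simple flip away from such a configuration. I would argue that each of these tests is $\cO(1)$: whether $R^{[k]}$ is bottom-based or right-based depends only on the local structure around the bottom-right rectangle $r_k$, accessible in constant time from the corner pointers $r_k.\rsw,r_k.\rse,r_k.\rnw$ and the wall records of the data structures from Section~\ref{sec:data}; and being ``one simple flip away'' from such a configuration is decided by inspecting the type of the vertex $r_k.\rnw$ together with the wall containing its incident edge, again constant time. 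The stack updates $s_n\gets n$, $s_j\gets s_{j-1}$, $s_{j-1}\gets j-1$ and the reversal of $o_j$ are each $\cO(1)$, exactly as in the analysis underpinning Theorem~\ref{thm:next-diag}.

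Combining these observations, each pass through lines~M2--M5 costs $\cO(1)$; since by Theorem~\ref{thm:algo-baligned} (applied with an empty pattern set) Algorithm~\Mrect{} with the oracle $\nextjump_{\cB_n}$ visits every rectangulation of $\cB_n$ exactly once, one per pass, each block-aligned rectangulation is generated in time $\cO(1)$. I do not anticipate a genuine obstacle here: the entire difficulty is already absorbed into Lemma~\ref{lem:Sequiv-time}, and the only point requiring care is making the constant-time ``simple-flip-away'' bottom-/right-based checks precise in terms of the edge, vertex, wall and rectangle records of Section~\ref{sec:data}---routine bookkeeping that is carried out in detail in the accompanying C++ implementation~\cite{cos_rect}.
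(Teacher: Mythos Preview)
Your proposal is correct and takes essentially the same approach as the paper: the paper's proof is a single sentence stating that the result follows immediately from Lemma~\ref{lem:Sequiv-time}, and your argument does exactly this while additionally spelling out the (routine) constant-time bounds for lines~M2, M3, and M5. The extra care you take with the modified bottom-/right-based checks in M5 is appropriate and consistent with the remarks in the paper, though the paper itself does not elaborate on these in the proof.
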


For the pattern avoidance version of this algorithm, we obtain the following runtime bounds.

\begin{theorem}
\label{thm:nextjumpP-S}
For any set of patterns $\cP\seq\big\{\millr,\milll\big\}$, Algorithm~\Mrect{} with the oracle~$\nextjump_{\cB_n(\cP)}$ visits each rectangulation from~$\cB_n(\cP)$ in time~$\cO(n)$.
\end{theorem}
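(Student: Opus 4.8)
The plan is to mirror the proof of Theorem~\ref{thm:nextjumpP-RD} for diagonal rectangulations, replacing the oracle $\nextjump_{\cD_n}$ by $\nextjump_{\cB_n}$ and using that the windmill patterns admit constant-time containment tests. First I would note that $\millr$ and $\milll$ are neither bottom-based nor right-based nor simple-flippable to a bottom-based or right-based pattern, so for any $\cP\seq\big\{\millr,\milll\big\}$ Theorem~\ref{thm:algo-baligned} applies: Algorithm~\Mrect{} with the oracle $\nextjump_{\cB_n(\cP)}$, whose fast-forward step repeatedly calls $\nextjump_{\cB_n}$, visits every rectangulation of $\cB_n(\cP)$ exactly once. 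It then remains to bound the running time of a single invocation $\nextjump_{\cB_n(\cP)}(R,j,d)$, which amounts to bounding (i)~the number of iterations of its fast-forward loop, and (ii)~the cost of one iteration together with the subsequent pattern-containment test.

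For~(i) I would argue as follows. With $j$ and $d$ fixed, each call of $\nextjump_{\cB_n}(R,j,d)$ advances rectangle~$r_j$ to the next element, in direction~$d$, of the children ordering $c(R^{[j-1]})$ defined in~\eqref{eq:JSequiv}. This ordering is indexed by a subsequence of the vertical and horizontal groups of~$I(R^{[j-1]})$, of which there are at most $\nu(R^{[j-1]})\leq n$ many of each kind by Lemma~\ref{lem:nu}, so $c(R^{[j-1]})$ has $\cO(n)$ entries. Moreover, by Lemmas~\ref{lem:block-insert1} and~\ref{lem:block-insert2} the first and last entries of $c(P)$ are $c_1(P)$ and $c_{\nu(P)}(P)$ (respectively $c_{\nu(P')}(P')$ for $P'=s(P)$), which are bottom-based and right-based and hence avoid $\millr$ and $\milll$ whenever the parent does; since $\square$ avoids every pattern, the fast-forward therefore reaches a windmill-avoiding rectangulation after at most $\cO(n)$ calls of $\nextjump_{\cB_n}$.

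For~(ii), each such call takes time $\cO(1)$ by Lemma~\ref{lem:Sequiv-time}, and after each call we must test containment of every pattern $P\in\cP$; since a call of $\nextjump_{\cB_n}(R,j,d)$ may alter rectangles $r_{j-1}$, $r_j$ and $r_{j+1}$ (cf.\ the discussion preceding Theorem~\ref{thm:algo-baligned}), this is done via $\contains(R,j',P)$ for $j'\in\{j-1,j,j+1\}$, each of which runs in time $\cO(1)$ for $P\in\big\{\millr,\milll\big\}$ by Lemma~\ref{lem:contains-time}. Multiplying the $\cO(n)$ iterations by the $\cO(1)$ cost of each iteration (including the three containment tests) yields that $\nextjump_{\cB_n(\cP)}(R,j,d)$ runs in time $\cO(n)$, hence Algorithm~\Mrect{} visits each rectangulation of $\cB_n(\cP)$ in time $\cO(n)$. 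I expect the termination bound in step~(i) to be the only genuinely nontrivial point: unlike the generic and diagonal cases, the relevant generating tree here is the pair-glued tree of Section~\ref{sec:Sequiv} and the children are ordered by~\eqref{eq:JSequiv} rather than directly by insertion points, so one has to verify both that the endpoints of these orderings are bottom-/right-based and that $\nextjump_{\cB_n}$ walks through them monotonically in the requested direction; the remaining estimates are a routine accounting of constant-time data-structure operations.
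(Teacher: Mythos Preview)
Your proposal is correct and follows essentially the same approach as the paper: the paper's proof simply invokes Theorem~\ref{thm:nextjumpP-time} with $f_n=\cO(1)$ from Lemma~\ref{lem:Sequiv-time} and $t_n=\cO(1)$ from Lemma~\ref{lem:contains-time}, yielding $n\cdot(f_n+t_n)=\cO(n)$. Your argument unpacks this black box, bounding the loop length and per-iteration cost directly; in doing so you also make explicit the point (glossed over when citing Theorem~\ref{thm:nextjumpP-time} verbatim) that the endpoints of the children orderings~\eqref{eq:JSequiv} are bottom-/right-based and hence windmill-free, and that three containment tests at $j-1,j,j+1$ are needed, both of which the paper treats informally.
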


\begin{proof}
The oracle $\nextjump_{\cB_n(\cP)}$ repeatedly calls the function~$\nextjump_{\cB_n}$.
Applying Theorem~\ref{thm:nextjumpP-time} with the bound $f_n=\cO(1)$ from Lemma~\ref{lem:Sequiv-time} and the bound $t_n=\cO(1)$ from Lemma~\ref{lem:contains-time}, the term $n\cdot(f_n+t_n)$ evaluates to $\cO(n)$, as claimed.
\end{proof}

\section{Counting pattern-avoiding rectangulations}
\label{sec:counting}

In this section we report on computer experiments that count pattern-avoiding rectangulations $\cC_n(\cP)$ for all interesting subsets of patterns $\cP\seq\{P_1,\ldots,P_8\}$ where $P_1=\millr$, $P_2=\milll$, $P_3=\zvu$, $P_4=\zhr$, $P_5=\zvd$, $P_6=\zhl$, $P_7=\hvert$, $P_8=\hhor$.
Clearly, we can omit sets of patterns that are equivalent to another set of patterns under $D_4$ actions (rotations and mirroring vertically or horizontally).
Table~\ref{tab:countRn} shows the results for generic rectangulations $\cC_n=\cR_n$ as a base class, and Table~\ref{tab:countBn} for block-aligned rectangulations~$\cC_n=\cB_n$ as a base class.
The set of patterns used in each row of the table is denoted by the pattern indices, omitting curly brackets and commas.
For example, the row 1478 refers to the set $\cP=\{P_1,P_4,P_7,P_8\}$.
When counting block-aligned rectangulations with our algorithms, the patterns $P_3,\ldots,P_8$ cannot be used, as they do not satisfy the conditions of Theorem~\ref{thm:algo-baligned}.

\renewcommand{\arraystretch}{0.7}
\setlength\tabcolsep{1pt}
\begin{table}[htp!]
\caption{Counts for pattern-avoiding rectangulations with generic rectangulations as a base class.}
\label{tab:countRn}
\scriptsize
\begin{tabular}{l|rrrrrrrrrrrrr|l}
Patterns $\cP$ & \multicolumn{13}{c|}{Counts $|\cR_n(\cP)|$ for $n=1,\ldots,12$} & \hspace{3mm}OEIS \\\hline
$\emptyset$ & $1,$ & $2,$ & $6,$ & $24,$ & $116,$ & $642,$ & $3938,$ & $26194,$ & $186042,$ & $1395008,$ & $10948768,$ & $89346128,$ & $\dots $ & \href{https://oeis.org/A342141}{A342141} \\
1 & $1,$ & $2,$ & $6,$ & $24,$ & $115,$ & $624,$ & $3712,$ & $23704,$ & $160140,$ & $1132628,$ & $8321372,$ & $63129494,$ & $\dots $ &  \\
3 & $1,$ & $2,$ & $6,$ & $23,$ & $104,$ & $530,$ & $2958,$ & $17734,$ & $112657,$ & $750726,$ & $5207910,$ & $37387881,$ & $\dots $ & \href{https://oeis.org/A117106}{A117106} ? \\
7 & $1,$ & $2,$ & $6,$ & $24,$ & $115,$ & $619,$ & $3607,$ & $22265,$ & $143667,$ & $960854,$ & $6622454,$ & $46841852,$ & $\dots $ &  \\
12 & $1,$ & $2,$ & $6,$ & $24,$ & $114,$ & $606,$ & $3494,$ & $21434,$ & $138100,$ & $926008,$ & $6418576,$ & $45755516,$ & $\dots $ &  \\
13 & $1,$ & $2,$ & $6,$ & $23,$ & $103,$ & $514,$ & $2779,$ & $15983,$ & $96557,$ & $607174,$ & $3947335,$ & $26393968,$ & $\dots $ &  \\
14 & $1,$ & $2,$ & $6,$ & $23,$ & $103,$ & $514,$ & $2779,$ & $15983,$ & $96557,$ & $607174,$ & $3947335,$ & $26393968,$ & $\dots $ &  \\
17 & $1,$ & $2,$ & $6,$ & $24,$ & $114,$ & $601,$ & $3391,$ & $20070,$ & $123156,$ & $777836,$ & $5031860,$ & $33225018,$ & $\dots $ &  \\
34 & $1,$ & $2,$ & $6,$ & $22,$ & $92,$ & $422,$ & $2074,$ & $10754,$ & $58202,$ & $326240,$ & $1882960,$ & $11140560,$ & $\dots $ & \href{https://oeis.org/A001181}{A001181} \\
35 & $1,$ & $2,$ & $6,$ & $22,$ & $94,$ & $450,$ & $2349,$ & $13128,$ & $77533,$ & $479250,$ & $3077864,$ & $20421177,$ & $\dots $ &  \\
36 & $1,$ & $2,$ & $6,$ & $22,$ & $92,$ & $422,$ & $2074,$ & $10754,$ & $58202,$ & $326240,$ & $1882960,$ & $11140560,$ & $\dots $ & \href{https://oeis.org/A001181}{A001181} ? \\
37 & $1,$ & $2,$ & $6,$ & $23,$ & $103,$ & $514,$ & $2779,$ & $15987,$ & $96664,$ & $608933,$ & $3970441,$ & $26661194,$ & $\dots $ &  \\
38 & $1,$ & $2,$ & $6,$ & $23,$ & $103,$ & $507,$ & $2641,$ & $14245,$ & $78619,$ & $441174,$ & $2508688,$ & $14429287,$ & $\dots $ &  \\
78 & $1,$ & $2,$ & $6,$ & $24,$ & $114,$ & $596,$ & $3276,$ & $18396,$ & $103718,$ & $581636,$ & $3229888,$ & $17730584,$ & $\dots $ &  \\
123 & $1,$ & $2,$ & $6,$ & $23,$ & $102,$ & $498,$ & $2606,$ & $14378,$ & $82725,$ & $492520,$ & $3017043,$ & $18933201,$ & $\dots $ &  \\
127 & $1,$ & $2,$ & $6,$ & $24,$ & $113,$ & $583,$ & $3183,$ & $18077,$ & $105813,$ & $634838,$ & $3889236,$ & $24262094,$ & $\dots $ &  \\
134 & $1,$ & $2,$ & $6,$ & $22,$ & $91,$ & $408,$ & $1938,$ & $9614,$ & $49335,$ & $260130,$ & $1402440,$ & $7702632,$ & $\dots $ & \href{https://oeis.org/A000139}{A000139} ? \\
135 & $1,$ & $2,$ & $6,$ & $22,$ & $93,$ & $436,$ & $2209,$ & $11889,$ & $67159,$ & $394692,$ & $2397407,$ & $14974319,$ & $\dots $ &  \\
136 & $1,$ & $2,$ & $6,$ & $22,$ & $91,$ & $408,$ & $1938,$ & $9614,$ & $49335,$ & $260130,$ & $1402440,$ & $7702632,$ & $\dots $ & \href{https://oeis.org/A000139}{A000139} ? \\
137 & $1,$ & $2,$ & $6,$ & $23,$ & $102,$ & $498,$ & $2605,$ & $14362,$ & $82567,$ & $491285,$ & $3008821,$ & $18886524,$ & $\dots $ &  \\
138 & $1,$ & $2,$ & $6,$ & $23,$ & $102,$ & $491,$ & $2472,$ & $12763,$ & $66908,$ & $354396,$ & $1892049,$ & $10169071,$ & $\dots $ &  \\
145 & $1,$ & $2,$ & $6,$ & $22,$ & $91,$ & $408,$ & $1938,$ & $9614,$ & $49335,$ & $260130,$ & $1402440,$ & $7702632,$ & $\dots $ & \href{https://oeis.org/A000139}{A000139} ? \\
147 & $1,$ & $2,$ & $6,$ & $23,$ & $102,$ & $491,$ & $2472,$ & $12763,$ & $66908,$ & $354396,$ & $1892049,$ & $10169071,$ & $\dots $ &  \\
148 & $1,$ & $2,$ & $6,$ & $23,$ & $102,$ & $498,$ & $2605,$ & $14362,$ & $82567,$ & $491285,$ & $3008821,$ & $18886524,$ & $\dots $ &  \\
178 & $1,$ & $2,$ & $6,$ & $24,$ & $113,$ & $578,$ & $3070,$ & $16496,$ & $88378,$ & $468780,$ & $2455332,$ & $12694892,$ & $\dots $ &  \\
345 & $1,$ & $2,$ & $6,$ & $21,$ & $82,$ & $346,$ & $1547,$ & $7236,$ & $35090,$ & $175268,$ & $897273,$ & $4690392,$ & $\dots $ & \href{https://oeis.org/A281784}{A281784} ? \\
347 & $1,$ & $2,$ & $6,$ & $22,$ & $91,$ & $406,$ & $1905,$ & $9264,$ & $46288,$ & $236364,$ & $1229209,$ & $6494549,$ & $\dots $ &  \\
357 & $1,$ & $2,$ & $6,$ & $22,$ & $93,$ & $439,$ & $2257,$ & $12407,$ & $71963,$ & $436176,$ & $2742686,$ & $17791880,$ & $\dots $ &  \\
358 & $1,$ & $2,$ & $6,$ & $22,$ & $93,$ & $427,$ & $2044,$ & $9975,$ & $49089,$ & $242458,$ & $1199855,$ & $5947447,$ & $\dots $ &  \\
367 & $1,$ & $2,$ & $6,$ & $22,$ & $91,$ & $406,$ & $1905,$ & $9264,$ & $46288,$ & $236364,$ & $1229209,$ & $6494549,$ & $\dots $ &  \\
378 & $1,$ & $2,$ & $6,$ & $23,$ & $102,$ & $491,$ & $2462,$ & $12534,$ & $63842,$ & $322875,$ & $1615726,$ & $7990347,$ & $\dots $ &  \\
1234 & $1,$ & $2,$ & $6,$ & $22,$ & $90,$ & $394,$ & $1806,$ & $8558,$ & $41586,$ & $206098,$ & $1037718,$ & $5293446,$ & $\dots $ & \href{https://oeis.org/A006318}{A006318} \\
1235 & $1,$ & $2,$ & $6,$ & $22,$ & $92,$ & $422,$ & $2073,$ & $10738,$ & $58029,$ & $324648,$ & $1869482,$ & $11031813,$ & $\dots $ &  \\
1236 & $1,$ & $2,$ & $6,$ & $22,$ & $90,$ & $394,$ & $1806,$ & $8558,$ & $41586,$ & $206098,$ & $1037718,$ & $5293446,$ & $\dots $ & \href{https://oeis.org/A006318}{A006318} ? \\
1237 & $1,$ & $2,$ & $6,$ & $23,$ & $101,$ & $482,$ & $2437,$ & $12877,$ & $70514,$ & $397823,$ & $2302074,$ & $13614952,$ & $\dots $ &  \\
1238 & $1,$ & $2,$ & $6,$ & $23,$ & $101,$ & $475,$ & $2309,$ & $11409,$ & $56879,$ & $285220,$ & $1436772,$ & $7267279,$ & $\dots $ &  \\
1278 & $1,$ & $2,$ & $6,$ & $24,$ & $112,$ & $560,$ & $2872,$ & $14780,$ & $75512,$ & $381320,$ & $1901292,$ & $9366128,$ & $\dots $ &  \\
1345 & $1,$ & $2,$ & $6,$ & $21,$ & $81,$ & $334,$ & $1446,$ & $6498,$ & $30074,$ & $142556,$ & $689248,$ & $3388453,$ & $\dots $ &  \\
1346 & $1,$ & $2,$ & $6,$ & $21,$ & $81,$ & $334,$ & $1446,$ & $6498,$ & $30074,$ & $142556,$ & $689248,$ & $3388453,$ & $\dots $ &  \\
1347 & $1,$ & $2,$ & $6,$ & $22,$ & $90,$ & $392,$ & $1774,$ & $8236,$ & $38961,$ & $187093,$ & $909961,$ & $4475961,$ & $\dots $ &  \\
1348 & $1,$ & $2,$ & $6,$ & $22,$ & $90,$ & $392,$ & $1774,$ & $8236,$ & $38961,$ & $187093,$ & $909961,$ & $4475961,$ & $\dots $ &  \\
1357 & $1,$ & $2,$ & $6,$ & $22,$ & $92,$ & $425,$ & $2119,$ & $11210,$ & $62164,$ & $358200,$ & $2130760,$ & $13019572,$ & $\dots $ &  \\
1358 & $1,$ & $2,$ & $6,$ & $22,$ & $92,$ & $413,$ & $1914,$ & $8981,$ & $42310,$ & $199500,$ & $940788,$ & $4437867,$ & $\dots $ &  \\
1367 & $1,$ & $2,$ & $6,$ & $22,$ & $90,$ & $392,$ & $1774,$ & $8236,$ & $38961,$ & $187093,$ & $909961,$ & $4475961,$ & $\dots $ &  \\
1378 & $1,$ & $2,$ & $6,$ & $23,$ & $101,$ & $475,$ & $2298,$ & $11178,$ & $54030,$ & $258192,$ & $1217964,$ & $5673144,$ & $\dots $ &  \\
1457 & $1,$ & $2,$ & $6,$ & $22,$ & $90,$ & $392,$ & $1774,$ & $8236,$ & $38961,$ & $187093,$ & $909961,$ & $4475961,$ & $\dots $ &  \\
1478 & $1,$ & $2,$ & $6,$ & $23,$ & $101,$ & $475,$ & $2298,$ & $11178,$ & $54030,$ & $258192,$ & $1217964,$ & $5673144,$ & $\dots $ &  \\
3456 & $1,$ & $2,$ & $6,$ & $20,$ & $72,$ & $274,$ & $1088,$ & $4470,$ & $18884,$ & $81652,$ & $360054,$ & $1614618,$ & $\dots $ &  \\
3457 & $1,$ & $2,$ & $6,$ & $21,$ & $81,$ & $335,$ & $1461,$ & $6643,$ & $31235,$ & $150960,$ & $746522,$ & $3764017,$ & $\dots $ &  \\
3458 & $1,$ & $2,$ & $6,$ & $21,$ & $81,$ & $330,$ & $1386,$ & $5925,$ & $25614,$ & $111638,$ & $489937,$ & $2164127,$ & $\dots $ &  \\
3478 & $1,$ & $2,$ & $6,$ & $22,$ & $90,$ & $390,$ & $1736,$ & $7794,$ & $34926,$ & $155340,$ & $683920,$ & $2977794,$ & $\dots $ &  \\
3578 & $1,$ & $2,$ & $6,$ & $22,$ & $92,$ & $416,$ & $1952,$ & $9270,$ & $43986,$ & $207340,$ & $968862,$ & $4486184,$ & $\dots $ &  \\
3678 & $1,$ & $2,$ & $6,$ & $22,$ & $90,$ & $390,$ & $1736,$ & $7794,$ & $34926,$ & $155340,$ & $683920,$ & $2977794,$ & $\dots $ &  \\
12345 & $1,$ & $2,$ & $6,$ & $21,$ & $80,$ & $322,$ & $1347,$ & $5798,$ & $25512,$ & $114236,$ & $518848,$ & $2384538,$ & $\dots $ & \href{https://oeis.org/A106228}{A106228} ? \\
12347 & $1,$ & $2,$ & $6,$ & $22,$ & $89,$ & $378,$ & $1647,$ & $7286,$ & $32574,$ & $146866,$ & $667088,$ & $3050619,$ & $\dots $ &  \\
12357 & $1,$ & $2,$ & $6,$ & $22,$ & $91,$ & $411,$ & $1985,$ & $10099,$ & $53547,$ & $293602,$ & $1655170,$ & $9551440,$ & $\dots $ &  \\
12358 & $1,$ & $2,$ & $6,$ & $22,$ & $91,$ & $399,$ & $1788,$ & $8057,$ & $36291,$ & $163158,$ & $732385,$ & $3285369,$ & $\dots $ &  \\
12367 & $1,$ & $2,$ & $6,$ & $22,$ & $89,$ & $378,$ & $1647,$ & $7286,$ & $32574,$ & $146866,$ & $667088,$ & $3050619,$ & $\dots $ &  \\
12378 & $1,$ & $2,$ & $6,$ & $23,$ & $100,$ & $459,$ & $2140,$ & $9944,$ & $45676,$ & $206855,$ & $923746,$ & $4073045,$ & $\dots $ &  \\
13456 & $1,$ & $2,$ & $6,$ & $20,$ & $71,$ & $264,$ & $1018,$ & $4042,$ & $16438,$ & $68196,$ & $287724,$ & $1231514,$ & $\dots $ &  \\
13457 & $1,$ & $2,$ & $6,$ & $21,$ & $80,$ & $323,$ & $1362,$ & $5941,$ & $26628,$ & $122036,$ & $569781,$ & $2702496,$ & $\dots $ &  \\
13458 & $1,$ & $2,$ & $6,$ & $21,$ & $80,$ & $318,$ & $1290,$ & $5287,$ & $21803,$ & $90351,$ & $376174,$ & $1573975,$ & $\dots $ &  \\
13467 & $1,$ & $2,$ & $6,$ & $21,$ & $80,$ & $318,$ & $1290,$ & $5287,$ & $21803,$ & $90351,$ & $376174,$ & $1573975,$ & $\dots $ &  \\
13468 & $1,$ & $2,$ & $6,$ & $21,$ & $80,$ & $323,$ & $1362,$ & $5941,$ & $26628,$ & $122036,$ & $569781,$ & $2702496,$ & $\dots $ &  \\
13478 & $1,$ & $2,$ & $6,$ & $22,$ & $89,$ & $376,$ & $1610,$ & $6878,$ & $29094,$ & $121498,$ & $500688,$ & $2037758,$ & $\dots $ &  \\
13578 & $1,$ & $2,$ & $6,$ & $22,$ & $91,$ & $402,$ & $1824,$ & $8318,$ & $37750,$ & $169880,$ & $757488,$ & $3348274,$ & $\dots $ &  \\
13678 & $1,$ & $2,$ & $6,$ & $22,$ & $89,$ & $376,$ & $1610,$ & $6878,$ & $29094,$ & $121498,$ & $500688,$ & $2037758,$ & $\dots $ &  \\
14578 & $1,$ & $2,$ & $6,$ & $22,$ & $89,$ & $376,$ & $1610,$ & $6878,$ & $29094,$ & $121498,$ & $500688,$ & $2037758,$ & $\dots $ &  \\
34567 & $1,$ & $2,$ & $6,$ & $20,$ & $71,$ & $263,$ & $1006,$ & $3949,$ & $15839,$ & $64700,$ & $268477,$ & $1129385,$ & $\dots $ &  \\
34578 & $1,$ & $2,$ & $6,$ & $21,$ & $80,$ & $319,$ & $1300,$ & $5340,$ & $21946,$ & $89909,$ & $366626,$ & $1487463,$ & $\dots $ &  \\
123456 & $1,$ & $2,$ & $6,$ & $20,$ & $70,$ & $254,$ & $948,$ & $3618,$ & $14058,$ & $55432,$ & $221262,$ & $892346,$ & $\dots $ & \href{https://oeis.org/A078482}{A078482} \\
123457 & $1,$ & $2,$ & $6,$ & $21,$ & $79,$ & $311,$ & $1265,$ & $5275,$ & $22431,$ & $96900,$ & $424068,$ & $1876143,$ & $\dots $ & \href{https://oeis.org/A033321}{A033321} ? \\
123458 & $1,$ & $2,$ & $6,$ & $21,$ & $79,$ & $306,$ & $1196,$ & $4681,$ & $18308,$ & $71564,$ & $279820,$ & $1095533,$ & $\dots $ &  \\
123478 & $1,$ & $2,$ & $6,$ & $22,$ & $88,$ & $362,$ & $1488,$ & $6034,$ & $24024,$ & $93830,$ & $359824,$ & $1357088,$ & $\dots $ &  \\
123578 & $1,$ & $2,$ & $6,$ & $22,$ & $90,$ & $388,$ & $1700,$ & $7434,$ & $32212,$ & $138040,$ & $585246,$ & $2457712,$ & $\dots $ &  \\
123678 & $1,$ & $2,$ & $6,$ & $22,$ & $88,$ & $362,$ & $1488,$ & $6034,$ & $24024,$ & $93830,$ & $359824,$ & $1357088,$ & $\dots $ &  \\
134567 & $1,$ & $2,$ & $6,$ & $20,$ & $70,$ & $253,$ & $938,$ & $3553,$ & $13708,$ & $53736,$ & $213588,$ & $859335,$ & $\dots $ &  \\
134578 & $1,$ & $2,$ & $6,$ & $21,$ & $79,$ & $307,$ & $1206,$ & $4738,$ & $18532,$ & $72070,$ & $278718,$ & $1072739,$ & $\dots $ &  \\
134678 & $1,$ & $2,$ & $6,$ & $21,$ & $79,$ & $307,$ & $1206,$ & $4738,$ & $18532,$ & $72070,$ & $278718,$ & $1072739,$ & $\dots $ &  \\
345678 & $1,$ & $2,$ & $6,$ & $20,$ & $70,$ & $252,$ & $924,$ & $3432,$ & $12870,$ & $48620,$ & $184756,$ & $705432,$ & $\dots $ & \href{https://oeis.org/A000984}{A000984} ? \\
1234567 & $1,$ & $2,$ & $6,$ & $20,$ & $69,$ & $243,$ & $870,$ & $3159,$ & $11611,$ & $43130,$ & $161691,$ & $611065,$ & $\dots $ &  \\
1234578 & $1,$ & $2,$ & $6,$ & $21,$ & $78,$ & $295,$ & $1114,$ & $4166,$ & $15390,$ & $56167,$ & $202738,$ & $724813,$ & $\dots $ &  \\
1345678 & $1,$ & $2,$ & $6,$ & $20,$ & $69,$ & $242,$ & $858,$ & $3068,$ & $11050,$ & $40052,$ & $145996,$ & $534888,$ & $\dots $ & \href{https://oeis.org/A026029}{A026029} ? \\
12345678 & $1,$ & $2,$ & $6,$ & $20,$ & $68,$ & $232,$ & $792,$ & $2704,$ & $9232,$ & $31520,$ & $107616,$ & $367424,$ & $\dots $ & \href{https://oeis.org/A006012}{A006012} \\
\end{tabular}
\end{table}

\begin{table}[htp!]
\caption{Counts for pattern-avoiding rectangulations with block-aligned rectangulations as a base class.}
\label{tab:countBn}
\scriptsize
\begin{tabular}{l|rrrrrrrrrrrrrr|l}
Patterns $\cP$ & \multicolumn{14}{c|}{Counts $|\cB_n(\cP)|$ for $n=1,\ldots,13$} & \hspace{1.5mm}OEIS \\\hline
$\emptyset$ & $1,$ & $1,$ & $2,$ & $6,$ & $22,$ & $88,$ & $374,$ & $1668,$ & $7744,$ & $37182,$ & $183666,$ & $929480,$ & $4803018,$ & $\dots $ & \href{https://oeis.org/A214358}{A214358} \\
1 & $1,$ & $1,$ & $2,$ & $6,$ & $21,$ & $79,$ & $312,$ & $1280,$ & $5416,$ & $23506,$ & $104198,$ & $470192,$ & $2154204,$ & $\dots $ &  \\
12 & $1,$ & $1,$ & $2,$ & $6,$ & $20,$ & $70,$ & $254,$ & $948,$ & $3618,$ & $14058,$ & $55432,$ & $221262,$ & $892346,$ & $\dots $ & \href{https://oeis.org/A078482}{A078482} \\
\end{tabular}
\end{table}

Several of these counting sequences appear in the OEIS~\cite{oeis}, and are related to pattern-avoiding permutations (see e.g.~\cite{MR3882946}).
The matching OEIS entries marked with ? are observed through are numerical experiments, but no formal bijective proof has been obtained yet, even though finding one should be straightforward in some cases.
The last two rows in Table~\ref{tab:countRn} with ? are interesting, as the correspondence to the objects mentioned in those OEIS entries is not obvious.
This is true in particular for OEIS sequence~A000984, which are the central binomial coefficients~$\binom{2n}{n}$.

\section{Open questions}
\label{sec:open}

The subject of pattern-avoiding rectangulations deserves further systematic investigation, and may still hold many undiscovered gems; recall Table~\ref{tab:countRn}.
Understanding the number of pattern-avoiding rectangulations that are obtained by rectangle insertion may also help to improve the runtime bounds for our generation algorithms (recall Remark~\ref{rem:lb}).
Moreover, does the avoidance of a rectangulation pattern always correspond to the avoidance of a particular permutation pattern, and what is this correspondence?

In our paper we considered R-equivalence and S-equivalence of generic rectangulations~$\cR_n$, and these equivalence relations are induced by wall slides, or by wall slides and simple flips, respectively.
Considering all three basic flip operations~$F=\{W,S,T\}$, namely wall slides, simple flips, and T-flips, there are $2^3=8$ possible subsets of~$F$ to induce an equivalence relation on~$\cR_n$.
Which of these equivalence relations are interesting (apart from $\emptyset$, $\{W\}$ and $\{W,S\}$ considered here), and what are suitable representatives that can be generated efficiently?

Another interesting question to investigate would be Gray codes for rectangulations of point sets as introduced by Ackerman, Barequet and Pinter~\cite{MR2244135}.
Some first results in this direction have been obtained by Yamanaka, Rahman and Nakano~\cite{DBLP:journals/ieicet/YamanakaRN18}.
In particular, can we apply our permutation-based generation framework for this task?

\section*{Acknowledgements}

We thankfully acknowledge several discussions in the early phases of this manuscript with Hung P.~Hoang, which took place at the 17th Gremo Workshop on Open Problems in Switzerland.
We thank the organizers for the invitation to the workshop, and the other participants for the pleasant and stimulating working atmosphere.
Furthermore, we thank the reviewers for their numerous helpful comments, which helped improving the manuscript.

\bibliographystyle{alpha}
\bibliography{refs}
\appendix

\newpage

\section{Visualization of Gray codes}

In this section we visualize the Gray codes obtained from our algorithms for generic rectangulations, diagonal rectangulations and block-aligned rectangulations (without any forbidden patterns).
The corresponding 2-clumped permutations are shown below each rectangulation.

\subsection{Generic rectangulations}
\hphantom{x}

\renewcommand{\arraystretch}{0.5}
\setlength\tabcolsep{3pt}

\begin{figure}[h!]
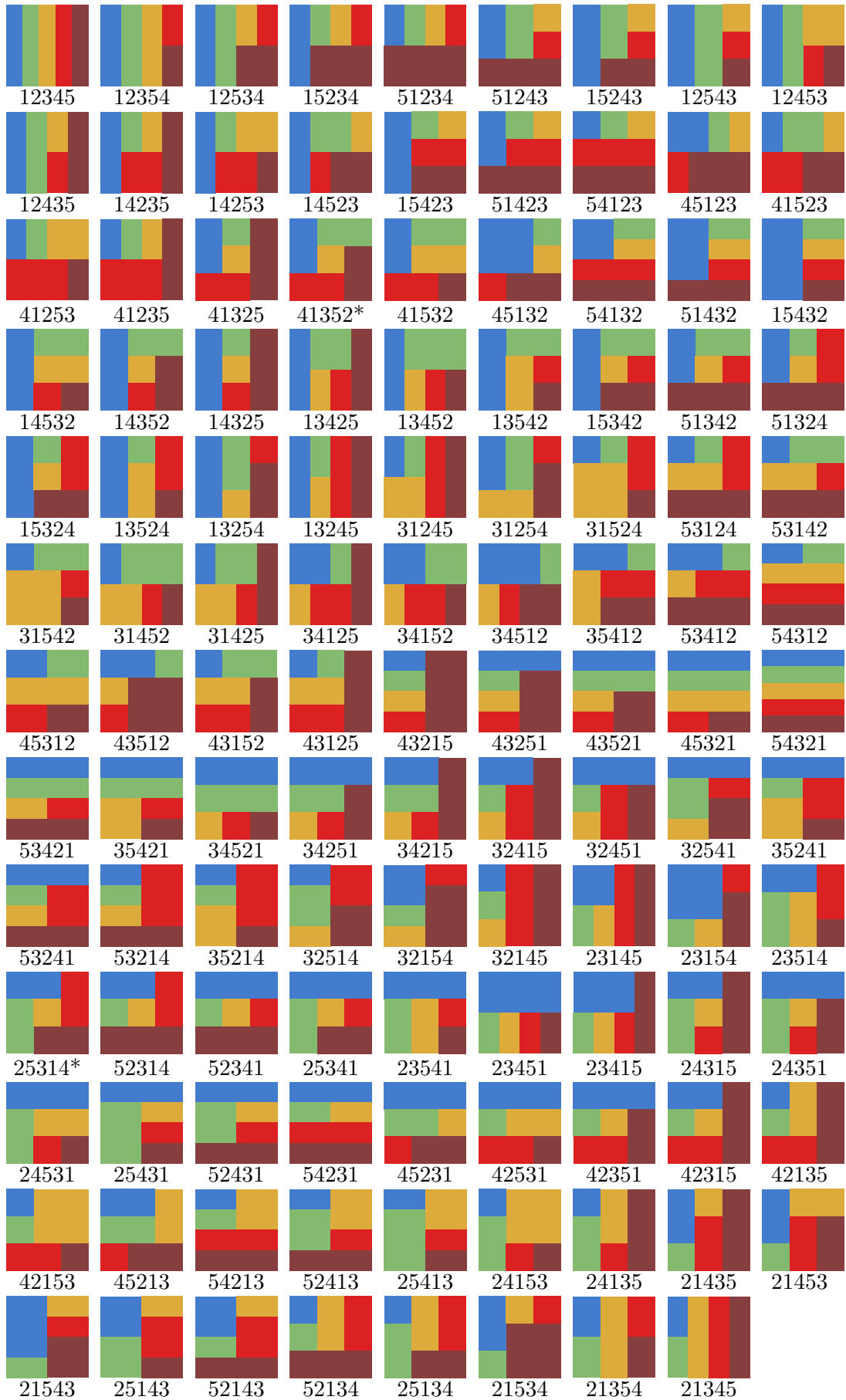


\caption{$n=5$. The 2 non-guillotine rectangulations are marked by~*.}
\end{figure}

\end{document}